\newcommand\org@hypertarget{}
\let\org@hypertarget\hypertarget
\renewcommand\hypertarget[2]{%
  \Hy@raisedlink{\org@hypertarget{#1}{}}#2%
} 
\newtheorem{theorem}{Theorem}[section]
\newtheorem{lemma}[theorem]{Lemma}
\newtheorem{corollary}[theorem]{Corollary}
\newtheorem{proposition}[theorem]{Proposition}
\theoremstyle{definition}
\newtheorem{definition}[theorem]{Definition}
\newtheorem{remark}[theorem]{Remark}
\newtheorem{example}[theorem]{Example}
\newcommand{\xysquare}[8]{
\[\xymatrix{
#1 \ar@{#5}[r] \ar@{#6}[d] & #2 \ar@{#7}[d]\\
#3 \ar@{#8}[r] & #4
}\]
}
\newcommand{\al}{\alpha}
\newcommand{\bb}{\mathbb}
\newcommand{\blob}{\bullet}
\newcommand{\comment}[1]{}
\newcommand{\ep}{\varepsilon}
\newcommand{\into}{\hookrightarrow}
\newcommand{\isoto}{\stackrel{\simeq}{\to}}
\newcommand{\Isoto}{\stackrel{\simeq}{\longrightarrow}}
\newcommand{\onto}{\twoheadrightarrow}
\newcommand{\op}{\operatorname}
\newcommand{\pid}[1]{\langle #1 \rangle}
\newcommand{\quis}{\stackrel{\sim}{\to}}
\newcommand{\res}{\overline}
\newcommand{\roi}{\mathcal{O}}
\newcommand{\sub}[1]{{\mbox{\scriptsize #1}}}
\newcommand{\To}{\longrightarrow}
\newcommand{\ul}[1]{\underline{#1}}
\newcommand{\xto}{\xrightarrow}
\newcommand{\TR}{T\!R}
\newcommand{\TC}{T\!C}
\newcommand{\CH}{C\!H}
\renewcommand{\cal}{\mathcal}
\renewcommand{\hat}{\widehat}
\renewcommand{\frak}{\mathfrak}
\renewcommand{\tilde}{\widetilde}
\renewcommand{\Im}{\operatorname{Im}}
\renewcommand{\ker}{\operatorname{Ker}}
\renewcommand{\projlim}{\varprojlim}
\DeclareMathOperator{\Pic}{Pic}
\DeclareMathOperator{\Spec}{Spec}
\DeclareMathOperator{\Spf}{Spf}
\DeclareMathOperator{\Tor}{Tor}
\newcommand{\dlog}{\op{dlog}}
\newcommand{\sinfty}{{\scalebox{0.5}{$\infty$}}}
\newcommand{\dotimes}{\otimes^{\bb L}}
\newcommand{\xTo}[1]{\stackrel{#1}{\To}}
\begin{document}

\title{$K$-theory and logarithmic Hodge--Witt sheaves of formal schemes in characteristic $p$}

\author{Matthew Morrow}

%\classno{}
\date{}
%\extraline{}

\maketitle

\begin{abstract}
We describe the mod $p^r$ pro $K$-groups $\{K_n(A/I^s)/p^r\}_s$ of a regular local $\bb F_p$-algebra $A$ modulo powers of a suitable ideal $I$, in terms of logarithmic Hodge--Witt groups, by proving pro analogues of the theorems of Geisser--Levine and Bloch--Kato--Gabber. This is achieved by combining the pro Hochschild--Kostant--Rosenberg theorem in topological cyclic homology with the development of the theory of de Rham--Witt complexes and logarithmic Hodge--Witt sheaves on formal schemes in characteristic $p$.

Applications include the following: the infinitesimal part of the weak Lefschetz conjecture for Chow groups; a $p$-adic version of Kato--Saito's conjecture that their Zariski and Nisnevich higher dimensional class groups are isomorphic; continuity results in $K$-theory; and criteria, in terms of integral or torsion \'etale-motivic cycle classes, for algebraic cycles on formal schemes to admit infinitesimal deformations.

Moreover, in the case $n=1$, we compare the \'etale cohomology of $W_r\Omega^1_\text{\scriptsize{log}}$ and the fppf cohomology of $\bm\mu_{p^r}$ on a formal scheme, and thus present equivalent conditions for line bundles to deform in terms of their classes in either of these cohomologies.
\end{abstract}

\tableofcontents

\setcounter{section}{-1}
\section{Introduction}
\subsection{$K$-theory}
The primary goal of this article is to extend results concerning the $K$-theory and motivic cohomology of smooth varieties in characteristic $p$ to the case of regular formal schemes. If $A$ is an $\bb F_p$-algebra, then we consider the natural homomorphisms \begin{equation}K_n(A)/p^r\longleftarrow K_n^M(A)/p^r\xTo{\dlog[\cdot]}W_r\Omega_{A,\sub{log}}^n,\label{eqn:1}\end{equation} where $W_r\Omega_{A,\sub{log}}^n$ (also denoted by $\nu_r^n(A)$ in the literature) is the subgroup of the Hodge--Witt group $W_r\Omega_A^n$ consisting of elements which can be written \'etale locally as sums of dlog forms, and the map $\dlog[\cdot]$ is given by $\{a_1,\dots,a_n\}\mapsto\dlog[a_1]\cdots\dlog[a_n]$ as usual. If $A$ is regular and local then both of these homomorphisms are known to be isomorphisms: this reduces, via Gersten sequences, to the case that $A$ is a field, in which case the leftwards isomorphism is due to Geisser and Levine \cite{GeisserLevine2000}, who also proved that $K_n(A)$ is $p$-torsion-free, and the rightwards isomorphism is the Bloch--Kato--Gabber theorem (see Theorem \ref{theorem_GL_BKG} for more details and references; also, to avoid issues caused by finite residue fields, we use Kerz--Gabber's improved Milnor $K$-theory throughout).

We extend these results to the pro abelian groups $\{K_n(A/I^s)\}_s$, where $I\subseteq A$ is an ideal. We must first describe two hypotheses: the first of these is that $A$ is $F$-finite (i.e., a finitely generated module over its subring of $p^\sub{th}$-powers); the second is that our closed subschemes $Y$ are often required to be {\em generalised normal crossing}, or {\em gnc}, meaning that $Y$ admits a closed cover by subschemes such that the reduced subscheme of any possible multiple intersection is regular (e.g., it suffices for $Y$ to be regular, or for it to be a normal crossing divisor on a regular scheme, which we believe cover all cases of interest for the applications; see Section \ref{subsection_gnc_schemes}); a ring is said to be gnc if and only if its spectrum is.

The following is our pro version of the isomorphisms recalled above:

\begin{theorem}[See Thm.~\ref{theorem_pro_GL} \& Corol.~\ref{corollary_thm_from_intro}]\label{theorem_main_1}
Let $A$ be a regular, F-finite $\bb F_p$-algebra, and $I\subseteq A$ an ideal such that $A/I$ is gnc and local; fix $n,r\ge 0$. Then the natural homomorphisms of pro abelian groups \[\{K_n(A/I^s)/p^r\}_s\longleftarrow \{K_n^M(A/I^s)/p^r\}_s\xTo{\dlog[\cdot]} \{W_r\Omega_{A/I^s,\sub{log}}^n\}_s\] are surjective and have the same kernel, thereby inducing an isomorphism \[\{K_n(A/I^s)/p^r\}_s\Isoto \{W_r\Omega_{A/I^s,\sub{log}}^n\}_s.\] Moreover, the pro abelian group $\{K_n(A/I^s)\}_s$ is $p$-torsion-free.
%; fix $n\ge0$. Then there is a natural (in pairs $A,I$) homomorphism of pro abelian groups \[\dlog^n_{r,A/I^\infty}:\{K_n(A/I^s)\}_s\To \{W_r\Omega_{A/I^s,\sub{log}}^n\}_s\] which is given by $\dlog[\cdot]$ on symbols. If $A/I$ is moreover local and regular, then the pro abelian group $\{K_n(A/I^s)\}_s$ is $p$-torsion free and, for any $r\ge1$, the induced map \[\dlog_{A/I^\infty,\bb Z/p^r}^n:\{K_n(A/I^s)/p^r\}_s\To\{W_r\Omega_{A/I^s,\sub{log}}^n\}_s\] is an isomorphism.
\end{theorem}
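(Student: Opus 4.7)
The strategy is to combine the pro Hochschild--Kostant--Rosenberg theorem, applied to topological Hochschild homology and its cyclotomic structure, with the Dundas--Goodwillie--McCarthy theorem on relative $K$-theory of nilpotent extensions, reducing the pro claim to the Geisser--Levine and Bloch--Kato--Gabber statements for the base ring $A/I$. First I would handle the case $s=1$: because $A/I$ is gnc and local, the extension of the Geisser--Levine and Bloch--Kato--Gabber theorems from the regular local case (Theorem \ref{theorem_GL_BKG}) to gnc local rings, obtained from the closed cover via Gersten/Mayer--Vietoris type devissage, supplies the isomorphisms $K_n(A/I)/p^r \cong K_n^M(A/I)/p^r \cong W_r\Omega^n_{A/I,\sub{log}}$ together with the fact that $K_n(A/I)$ is $p$-torsion-free.

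For the pro statement the surjection $A/I^s \onto A/I$ has nilpotent kernel, so Dundas--Goodwillie--McCarthy yields an equivalence $K(A/I^s, I/I^s) \simeq TC(A/I^s, I/I^s)$ of relative spectra; taking pro $\bb Z/p^r$-homotopy groups and comparing with the case $s=1$, the main pro isomorphism reduces to identifying $\{TC_n(A/I^s; \bb Z/p^r)\}_s$ with $\{W_r\Omega^n_{A/I^s,\sub{log}}\}_s$. For this I would apply the pro HKR theorem to $\{THH(A/I^s)\}_s$ and extract the pro log Hodge--Witt groups as the Frobenius fixed points of the cyclotomic structure, making essential use of the de Rham--Witt theory for formal schemes developed earlier in the paper. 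The Milnor $K$-theory intermediary is then controlled as follows: the dlog map factors as $K_n^M(A/I^s)/p^r \to K_n(A/I^s)/p^r \to W_r\Omega^n_{A/I^s,\sub{log}}$, and a pro Bloch--Kato--Gabber surjection, obtained by successively lifting log forms through the filtration $I^s/I^{s+1}$ using the pro HKR computation, forces both structural pro maps to be surjective with coinciding kernel.

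For $p$-torsion-freeness of $\{K_n(A/I^s)\}_s$, I would combine the pro isomorphisms $\{K_n(A/I^s)/p^r\}_s \cong \{W_r\Omega^n_{A/I^s,\sub{log}}\}_s$ for all $r\ge 1$, compatible under reduction, with the Bockstein sequences attached to $0\to\bb Z/p^r\to\bb Z/p^{r+1}\to\bb Z/p\to 0$; pro injectivity of multiplication-by-$p$ then follows from pro surjectivity of the restriction maps $W_{r+1}\Omega^n_{\sub{log}}\onto W_r\Omega^n_{\sub{log}}$ on formal schemes, again supplied by the paper's de Rham--Witt framework. The principal obstacle I anticipate is the extraction of logarithmic forms from pro $TC$ in the key step above: the pro HKR theorem naturally produces ordinary pro K\"ahler differentials, and isolating the logarithmic sub-pro-groups requires a careful handle on the cyclotomic Frobenius on pro Hodge--Witt sheaves in the formal-scheme setting, which is the principal technical advance of the paper.
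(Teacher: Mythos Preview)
Your overall architecture---McCarthy/Goodwillie for the relative groups, pro HKR to get to de Rham--Witt, then extract the logarithmic subgroup via the cyclotomic Frobenius---matches the paper's approach in the case where $A/I$ is \emph{regular}. There, the quotient $A/I^s\to A/I$ splits (Lemma~\ref{lemma_F-finite_formal_smooth}), so one really does reduce to the known $s=1$ base case (Theorem~\ref{theorem_GL_BKG}) plus a relative computation, exactly as you outline.

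The gap is your treatment of the gnc case. You propose to handle $s=1$ by extending Geisser--Levine and Bloch--Kato--Gabber from regular local rings to gnc local rings via a Mayer--Vietoris devissage on the closed cover. This does not work: for a non-regular ring such as a normal crossing divisor, neither $K_n^M(A/I)/p^r\to K_n(A/I)/p^r$ nor $p$-torsion-freeness of $K_n(A/I)$ need hold, and the devissage you suggest would require excision for $K$-theory along the closed cover at a single level, which fails. (Only \emph{pro} excision is available, cf.\ \cite{Morrow_pro_H_unitality}.) The theorem is genuinely a pro statement; it says nothing about $A/I$ itself when $A/I$ is not regular.

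The paper instead runs the gnc case entirely at the pro level: first it proves the full pro statement when $A/I$ is regular (your strategy, essentially Theorem~\ref{theorem_mccarthy} plus Theorem~\ref{theorem_GL_BKG}); then it inducts on the complexity of the gnc structure using pro excision for $K$-theory together with the matching pro excision for logarithmic Hodge--Witt sheaves (Corollary~\ref{corollary_pro_excision}), comparing the resulting Mayer--Vietoris sequences. The $p$-torsion-freeness then drops out at the end from surjectivity of $\{K_n(A/I^s)\}_s\to\{K_n(A/I^s;\bb Z/p^r)\}_s$ (forced by the surjectivity of $\dlog[\cdot]$ on Milnor $K$-theory and the established isomorphism), via the Bockstein sequence---simpler than the argument you sketch.
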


The kernel of the surjection $\{K_n^M(A/I^s)/p^r\}_s\to \{K_n(A/I^s)/p^r\}_s$ appearing in the statement of the theorem is reasonably well controlled; see Section \ref{subsection_milnor_vs_quillen} for some precise results, where we show in particular that it vanishes if $I$ is principal and $A/I$ is regular. This covers the traditional case of curves on $K$-theory, namely when $A=R[[t]]$ and $I=(t)$; a consequence of this is a curious, and
seemingly new, log/exp isomorphism between the relative part of $W_r\Omega_{R[t]/t^s,\sub{log}}^n$ and the big Hodge--Witt groups of $R$ itself:

\begin{corollary}[See Corol.~\ref{corollary_log_exp}]
Let $R$ be a regular, local, F-finite $\bb F_p$-algebra; fix $n\ge0$, $r\ge 1$. Then there exists a short exact sequence of pro abelian groups \[0\To\{\bb W_s\Omega_R^{n-1}/p^r\}_s\xTo{\dlog[\cdot]\circ\gamma_n}\{W_r\Omega_{R[t]/t^s,\sub{log}}^n\}_s\To W_r\Omega_{R,\sub{log}}^n\To 0,\] where $\gamma_n:\{\bb W_{s-1}\Omega^{n-1}_R\}_s\isoto \{K_n^\sub{sym}(R[t]/t^s,(t))\}_s$ is the original comparison map of Bloch--Deligne--Illusie between the de Rham--Witt complex and curves on $K$-theory (see Section \ref{subsection_Milnor} for more details).
\end{corollary}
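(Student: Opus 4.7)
I would deduce this corollary by combining Theorem~\ref{theorem_main_1} (applied to $A=R[t]$ with $I=(t)$) with the classical Bloch--Deligne--Illusie identification of the curves on $K$-theory, exploiting the splitting induced by the projection $t\mapsto 0$.

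First, $A=R[t]$ is regular and F-finite since $R$ is, and $A/I=R$ is regular local, hence gnc local; so Theorem~\ref{theorem_main_1} applies and yields
\[\{K_n^M(R[t]/t^s)/p^r\}_s \onto \{K_n(R[t]/t^s)/p^r\}_s \Isoto \{W_r\Omega^n_{R[t]/t^s,\sub{log}}\}_s.\]
Since $I=(t)$ is principal and $A/I=R$ is regular, the statement announced in Section~\ref{subsection_milnor_vs_quillen} implies that the left surjection is also an isomorphism, so all three pro systems coincide (canonically, via $\dlog[\cdot]$).

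Next, the projection $\pi:R[t]/t^s\To R$ sending $t\mapsto 0$ admits the inclusion $\iota:R\hookrightarrow R[t]/t^s$ as a section. By functoriality, each of the three functors above splits at every level into the value at $R$ (which is constant in $s$) plus the relative part $\ker(\pi_*)$. Taking relative parts gives a split short exact sequence
\[0\To\{K_n^M(R[t]/t^s,(t))/p^r\}_s \To \{K_n^M(R[t]/t^s)/p^r\}_s \To K_n^M(R)/p^r\To 0,\]
and an analogous one for $W_r\Omega^n_{\sub{log}}$, and the isomorphisms of the previous paragraph identify the relative pro systems term by term. In particular we obtain a split short exact sequence
\[0\To \{W_r\Omega^n_{R[t]/t^s,\sub{log},\sub{rel}}\}_s\To \{W_r\Omega^n_{R[t]/t^s,\sub{log}}\}_s \To W_r\Omega^n_{R,\sub{log}}\To 0,\]
and it remains only to identify the relative part on the left with $\{\bb W_s\Omega^{n-1}_R/p^r\}_s$ via $\dlog[\cdot]\circ\gamma_n$.

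By the previous paragraph this relative part coincides with $\{K_n^M(R[t]/t^s,(t))/p^r\}_s$ via $\dlog[\cdot]$, and the Bloch--Deligne--Illusie pro-isomorphism $\gamma_n:\{\bb W_{s-1}\Omega^{n-1}_R\}_s\Isoto \{K_n^{\sub{sym}}(R[t]/t^s,(t))\}_s$ recalled in Section~\ref{subsection_Milnor}, after reducing mod $p^r$ and composing with the (obviously pro-iso) shift-of-index map $\{\bb W_s\Omega^{n-1}_R/p^r\}_s\Isoto\{\bb W_{s-1}\Omega^{n-1}_R/p^r\}_s$, supplies exactly the required identification. This gives the corollary.

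The main obstacle is not in the corollary itself but in its two black-boxed inputs: the vanishing of the Milnor-to-Quillen pro kernel (which relies crucially on the principality of $I$ and the regularity of $A/I$, established elsewhere in Section~\ref{subsection_milnor_vs_quillen}), and the classical Bloch--Deligne--Illusie computation of the curves together with its compatibility with $\dlog[\cdot]$. Once these are in hand, the proof is a formal splitting argument applied to Theorem~\ref{theorem_main_1}.
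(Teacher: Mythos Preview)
Your proposal is correct and the overall shape---split along the section $R\hookrightarrow R[t]/t^s$, then identify the relative part with big Hodge--Witt via the Bloch--Deligne--Illusie comparison---is precisely what the paper does. The route differs slightly, however. The paper works directly with the \emph{relative} Quillen $K$-groups via Corollary~\ref{corollary_mccarthy}, obtaining $\{K_n(R[t]/t^s,(t))\}_s\isoto\{W_s\Omega^n_{(R[t]/t^s,(t)),\sub{log}}\}_s$ integrally, composes with $\gamma_n$, and only then reduces mod $p^r$ using Theorem~\ref{theorem_de_Rham--Witt_log}. You instead start from the absolute mod-$p^r$ isomorphism of Theorem~\ref{theorem_pro_GL} and invoke the Milnor-versus-Quillen comparison of Corollary~\ref{corollary_principal_case} to pass from $K_n/p^r$ to $K_n^M/p^r$, then split. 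That extra input is not circular (the proof of Corollary~\ref{corollary_principal_case} rests only on Corollary~\ref{corollary_mccarthy} and R\"ulling--Saito), and once you have $\{K_n^M(R[t]/t^s,(t))/p^r\}_s\cong\{W_r\Omega^n_{(R[t]/t^s,(t)),\sub{log}}\}_s$ the identification with $\{\bb W_s\Omega_R^{n-1}/p^r\}_s$ follows from $\gamma_n$ as you say. Both routes are valid; the paper's is marginally cleaner in that it bypasses Milnor $K$-theory and the principal-ideal hypothesis entirely, relying only on the relative version of the formal $\dlog$ map.
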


Applying $\projlim_s$ to Theorem \ref{theorem_main_1}, together with a continuity result for logarithmic Hodge--Witt groups, we establish the following continuity result for $K$-theory; this is already known if $A/I$ is regular, thanks to Geisser and Hesselholt \cite{GeisserHesselholt2006b}:

\begin{theorem}[See Thm.~\ref{theorem_continuity}]
With notation as in Theorem \ref{theorem_main_1}, assume moreover that $A$ is $I$-adically complete. Then the canonical maps \[K_n(A;\bb Z/p^r)\To \pi_n\op{holim}_sK_n(A/I^s;\bb Z/p^r)\To \projlim_sK_n(A/I^s;\bb Z/p^r)\] are isomorphisms for all $n\ge0$, $r\ge1$. 
\end{theorem}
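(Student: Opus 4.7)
The plan is to reduce this continuity statement to the analogous one for logarithmic Hodge--Witt sheaves, using Theorem \ref{theorem_main_1} as the bridge. Since the pro abelian group $\{K_n(A/I^s)\}_s$ is $p$-torsion-free by Theorem \ref{theorem_main_1}, the natural universal coefficient short exact sequence
\[0\To \{K_n(A/I^s)/p^r\}_s\To \{K_n(A/I^s;\bb Z/p^r)\}_s\To \{K_{n-1}(A/I^s)[p^r]\}_s\To 0\]
degenerates in the category of pro abelian groups to the pro-isomorphism $\{K_n(A/I^s;\bb Z/p^r)\}_s\cong\{K_n(A/I^s)/p^r\}_s$, which Theorem \ref{theorem_main_1} further identifies with $\{W_r\Omega^n_{A/I^s,\sub{log}}\}_s$. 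The whole problem is thereby reduced to a continuity statement for log Hodge--Witt cohomology.

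The second map in the statement fits into the standard Milnor short exact sequence
\[0\To\projlim{}^1_s K_{n+1}(A/I^s;\bb Z/p^r)\To\pi_n\holim_s K(A/I^s;\bb Z/p^r)\To\projlim_s K_n(A/I^s;\bb Z/p^r)\To 0,\]
so it suffices to show that the $\projlim{}^1$ term vanishes. By the pro-identification of the previous paragraph applied in degree $n+1$, this group coincides with $\projlim{}^1_s W_r\Omega^{n+1}_{A/I^s,\sub{log}}$, whose vanishing is the Mittag--Leffler part of the continuity theorem for logarithmic Hodge--Witt sheaves established earlier in the paper on the formal scheme $\Spf A$.

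For the first map I would compare both source and target to $W_r\Omega^n_{A,\sub{log}}$. Because $A$ is regular local and $F$-finite, the Geisser--Levine and Bloch--Kato--Gabber theorems recalled in the introduction give that $K_n(A)$ is $p$-torsion-free with $K_n(A)/p^r\cong W_r\Omega^n_{A,\sub{log}}$, whence $K_n(A;\bb Z/p^r)\cong W_r\Omega^n_{A,\sub{log}}$ via the universal coefficient sequence for $A$ itself. On the target side, the pro-identification above together with the continuity statement $W_r\Omega^n_{A,\sub{log}}\isoto\projlim_s W_r\Omega^n_{A/I^s,\sub{log}}$ (valid because $A$ is $I$-adically complete) yields $\projlim_s K_n(A/I^s;\bb Z/p^r)\cong W_r\Omega^n_{A,\sub{log}}$. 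Naturality of the $\dlog$ map ensures that the composite $K_n(A;\bb Z/p^r)\to\projlim_s K_n(A/I^s;\bb Z/p^r)$ corresponds to the identity on $W_r\Omega^n_{A,\sub{log}}$ under these identifications, so both maps in the statement are isomorphisms.

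The substantive obstacle is not the $K$-theoretic bookkeeping above, which is formal once Theorem \ref{theorem_main_1} is in hand, but the continuity theorem for $W_r\Omega^n_{\sub{log}}$ itself, which must be proved separately. That requires the de Rham--Witt theory of formal schemes developed earlier in the paper, and in particular a sufficiently fine analysis of the transition maps in the tower $\{W_r\Omega^n_{A/I^s,\sub{log}}\}_s$ to yield both the isomorphism on $\projlim$ and the vanishing of $\projlim{}^1$; this is where the real work sits.
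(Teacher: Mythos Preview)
Your proposal is correct and follows essentially the same route as the paper: use Theorem~\ref{theorem_main_1} (together with Geisser--Levine/Bloch--Kato--Gabber for $A$ itself) to replace all the $K$-groups by logarithmic Hodge--Witt groups, and then invoke the continuity result $W_r\Omega^n_{A,\sub{log}}\isoto\projlim_s W_r\Omega^n_{A/I^s,\sub{log}}$ for the latter. The only minor difference is that the paper dispatches the $\projlim^1$ term by the direct observation that the transition maps in $\{W_r\Omega^n_{A/I^s,\sub{log}}\}_s$ are surjective (units lift along the nilpotent surjections $A/I^t\to A/I^s$, hence so do dlog forms), rather than packaging this as part of the continuity theorem; your version is equivalent but slightly less explicit about why Mittag--Leffler holds.
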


We present some similar continuity results for Milnor $K$-theory in Corollary \ref{corollary_continuity_Milnor}.

\subsection{Infinitesimal deformations of Chow groups}
We prove variations on Theorem \ref{theorem_main_1} for relative $K$-groups and in the context of sheaves in the Zariski, Nisnevich, and \'etale topologies; combining these with our development of the theory of logarithmic Hodge--Witt groups on formal $\bb F_p$-schemes, we prove a number of theorems concerning Chow groups and infinitesimal thickenings, including the following:

\begin{theorem}[See Thm.~\ref{theorem_class_groups}]\label{theorem_intro_class}
Let $X$ be a regular, F-finite $\bb F_p$-scheme, and $Y\into X$ a gnc closed subscheme. Then the canonical map of pro abelian groups \[\{H^i_\sub{Zar}(X,\cal K_{n,(X,Y_s)}/p^r)\}_s\To \{H^i_\sub{Nis}(X,\cal K_{n,(X,Y_s),\sub{Nis}}/p^r)\}_s\] is an isomorphism for all $n,i,r\ge0$, where $Y_s$ denotes the $s^\sub{th}$ infinitesimal thickening of $Y$ inside $X$.
\end{theorem}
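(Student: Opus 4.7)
The plan is to transfer the problem from $K$-theory to logarithmic Hodge--Witt sheaves, using the sheafified form of Theorem \ref{theorem_main_1}, and then invoke the Nisnevich/Zariski comparison for those sheaves developed elsewhere in the paper.

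First, I would verify that the hypotheses of Theorem \ref{theorem_main_1} pass to Zariski and Nisnevich stalks of $X$: regularity and F-finiteness are preserved under localization and henselization of stalks in this setting, and the gnc property of $Y$ is étale-local, so the stalk of $Y_s$ at any (Nisnevich) point of $X$ is a gnc local $\bb F_p$-algebra. Applied pointwise, Theorem \ref{theorem_main_1} then yields a pro-isomorphism $\{K_n(\cal O_{X,x}/I_x^s)/p^r\}_s \isoto \{W_r\Omega^n_{\cal O_{X,x}/I_x^s,\sub{log}}\}_s$ at every Zariski stalk, and similarly at every Nisnevich stalk after replacing $\cal O_{X,x}$ by its henselization.

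Second, I would sheafify. Since $W_r\Omega^n_{-,\sub{log}}$ is defined as an étale-local construction, the associated pro-sheaf $\{W_r\Omega^n_{Y_s,\sub{log}}\}_s$ on $X$ is already Nisnevich; and the stalkwise isomorphism above promotes to identifications of pro-sheaves
\[\{\cal K_{n,(X,Y_s)}/p^r\}_s \isoto \{W_r\Omega^n_{Y_s,\sub{log}}\}_s \qquad\text{(in the Zariski topology)},\]
\[\{\cal K_{n,(X,Y_s),\sub{Nis}}/p^r\}_s \isoto \{W_r\Omega^n_{Y_s,\sub{log}}\}_s \qquad\text{(in the Nisnevich topology)}.\]
Under these identifications, the map in the statement becomes the canonical comparison map $H^i_\sub{Zar}(X, \{W_r\Omega^n_{Y_s,\sub{log}}\}_s) \to H^i_\sub{Nis}(X, \{W_r\Omega^n_{Y_s,\sub{log}}\}_s)$ between the Zariski and Nisnevich cohomology of one and the same pro-sheaf.

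It remains to show this map is an isomorphism of pro abelian groups. In the smooth (hence $Y_s = Y$) case this is classical, following from the Gersten resolution of Gros--Suwa for $W_r\Omega^n_{\sub{log}}$. In the general gnc pro-thickening case, I expect this to be established in the body of the paper via a pro-Gersten-type argument: one devisses the pro-system $\{W_r\Omega^n_{Y_s,\sub{log}}\}_s$ using the de Rham--Witt filtration and the gnc stratification into components whose reduced intersections are regular, reducing each associated graded piece to the known regular case. This last step is the main obstacle, since the thickenings $Y_s$ are neither regular nor reduced, so a naive Gersten resolution is unavailable; it is precisely here that the theory of de Rham--Witt complexes on formal $\bb F_p$-schemes developed in the paper takes over, providing the short exact sequences and local computations needed to collapse the higher Nisnevich cohomology sheaves $R^q\pi_* \{W_r\Omega^n_{Y_s,\sub{log}}\}_s$ in the pro-sense.
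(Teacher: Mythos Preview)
Your overall strategy is exactly that of the paper: identify the $K$-theory sheaves with log Hodge--Witt sheaves via the sheafified pro Geisser--Levine theorem (Theorem~\ref{theorem_pro_GL} on the thickenings $Y_s$, together with Theorem~\ref{theorem_GL_BKG} on $X$ itself), and then invoke the Zariski/Nisnevich comparison for those sheaves (Corollary~\ref{corollary_relative_nilpotent_hw} in the paper).

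Two points to correct. First, your identification is misstated: $\cal K_{n,(X,Y_s)}$ is by definition the kernel of $\cal K_{n,X}\to\cal K_{n,Y_s}$, so under the two isomorphisms mod $p^r$ it corresponds to the \emph{relative} log sheaf $W_r\Omega^n_{(X,Y_s),\sub{log}}$, not to $W_r\Omega^n_{Y_s,\sub{log}}$. The comparison you must establish is
\[\{H^i_\sub{Zar}(X,W_r\Omega^n_{(X,Y_s),\sub{log},\sub{Zar}})\}_s\To\{H^i_\sub{Nis}(X,W_r\Omega^n_{(X,Y_s),\sub{log},\sub{Nis}})\}_s.\]
Second, the mechanism for this comparison is not really a ``de Rham--Witt filtration'' argument. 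The paper (Corollary~\ref{corollary_relative_nilpotent_hw}) first treats the relative nilpotent case $(Y_s,Y)$ via the exact $1-F$ sequence and the structure of the $p$-filtration on log forms (Corollaries~\ref{corollary_Zar_to_etale} and~\ref{corollary_de_Rham--Witt_log_diagonal}), then uses the Gersten resolution (Gros--Suwa, Shiho) on the regular schemes $Y$ and $X$, and finally handles general gnc $Y$ by induction on the complexity via pro-excision for log Hodge--Witt sheaves (Corollary~\ref{corollary_pro_excision}). Your sketch gestures at the right ingredients (gnc stratification, regular case via Gersten) but misidentifies the filtration that does the work.
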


In particular, if $X$ is a smooth variety over a perfect field of characteristic $p$ and $Y\into X$ is a normal crossing divisor, then Theorem \ref{theorem_intro_class} implies that \[\projlim_sH^i_\sub{Zar}(X,\cal K_{n,(X,Y_s)}/p^r)\Isoto \projlim_sH^i_\sub{Nis}(X,\cal K_{n,(X,Y_s),\sub{Nis}}/p^r).\] Replacing Quillen by Milnor $K$-theory and removing the mod $p^r$, this was conjectured to be true by Kato and Saito \cite[pg.~256]{KatoSaito1986} when $i=n=\dim X$, as part of their higher dimensional class field theory, in which the left and right sides play the role of certain Zariski/Nisnevich class groups in their theory.

To state our applications to the deformation of algebraic cycles, we consider for any $\bb F_p$-scheme $Y$ its ``cohomological Chow groups'' and ``\'etale-motivic cohomology groups'' \[\CH^n(Y):=H^n_\sub{Zar}(Y,\cal K_{n,Y}),\qquad H^*_\sub{\'et}(Y,\bb Z_p(n)):=H^{*-n}_\sub{\'et}(Y,\{W_r\Omega_{Y,\sub{log}}^n\}_r),\] where the left denotes cohomology of a Zariski sheafified $K$-group and the right denotes Jannsen's continuous \'etale cohomology of the pro \'etale sheaf $\{W_r\Omega_{Y,\sub{log}}^n\}_r$. If $Y$ is a smooth variety over a perfect field of characteristic $p$, then $\CH^n(Y)$ is the usual Chow group, by the Bloch--Quillen formula, and $H^*_\sub{\'et}(Y,\bb Z_p(n))$ is the \'etale-/Lichtenbaum- motivic cohomology with $\bb Z_p(n)$ coefficients, by Geisser--Levine \cite{GeisserLevine2000}; in this case of a smooth variety, or more generally a regular $\bb F_p$-scheme $Y$, the isomorphisms (\ref{eqn:1}) recalled at the start of the introduction induce the \'etale-motivic cycle class map $c_n:\CH_n(Y)\to H^{2n}_\sub{\'et}(Y,\bb Z_p(n))$.

Here in the introduction we state our general deformation result only in the case of schemes for simplicity, but it holds also for non-algebrisable formal schemes: 

\begin{theorem}[See Thm.~\ref{theorem_higher_codim}]\label{theorem_main_3}
Let $X$ be a regular, F-finite $\bb F_p$-scheme, and $Y\into X$ a regular closed subscheme. Let $z\in\CH^n(Y)$. Then:
\begin{enumerate}
\item Given $r\ge 1$ there exist $t\ge p^r$ (depending only on $X$ and $Y$, not $z$) such that, if the image of $c_n(z)$ in $H^{2n}_\sub{\'et}(Y,\bb Z/p^r\bb Z(n))$ lifts to $H^{2n}_\sub{\'et}(Y_t,\bb Z/p^r\bb Z(n))$ then $L$ lifts to $\CH^n(Y_{p^r})$.
\item $z$ lifts to $\projlim_s\CH^n(Y_s)$ if and only if $c_n(z)$ lifts to $\projlim_sH^{2n}_\sub{\'et}(Y_s,\bb Z_p(n))$.
\end{enumerate}
\end{theorem}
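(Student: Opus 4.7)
The plan is to deduce both parts of the theorem from a single pro-isomorphism on mod-$p^r$ cycle classes, namely that
\[
\{c_n:\CH^n(Y_s)/p^r \To H^{2n}_\sub{\'et}(Y_s,\bb Z/p^r\bb Z(n))\}_s
\]
is an isomorphism of pro abelian groups. Granted this, part (ii) follows at once by applying $\projlim_s$ and then $\projlim_r$, the $\projlim^1$ terms being controlled by the $p$-torsion-freeness part of Theorem \ref{theorem_main_1}. Part (i) follows by unwinding the pro-isomorphism at mod-$p^r$ level: pro-surjectivity produces an integer $t=t(X,Y,r)\ge p^r$ such that any lift of $c_n(z)$ to $H^{2n}_\sub{\'et}(Y_t,\bb Z/p^r\bb Z(n))$ yields a mod-$p^r$ Chow-theoretic lift in $\CH^n(Y_{p^r})/p^r$, which then promotes to an honest lift in $\CH^n(Y_{p^r})$ again by the $p$-torsion-freeness part of Theorem \ref{theorem_main_1}.

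To establish the pro-isomorphism I would first sheafify Theorem \ref{theorem_main_1} on $Y_\sub{\'et}=(Y_s)_\sub{\'et}$. The henselian local rings of $X$ at geometric points of $Y$ are regular, $F$-finite, and local; their quotients by $I^s$ (where $I$ cuts out $Y$) are therefore precisely the rings to which Theorem \ref{theorem_main_1} applies, since $A/I$ is regular and hence gnc. Applied stalkwise this yields a pro-isomorphism of \'etale sheaves
\[
\{\cal K_{n,Y_s}/p^r\}_s \isoto \{W_r\Omega^n_{Y_s,\sub{log}}\}_s,
\]
and taking $H^n_\sub{\'et}(Y,-)$ produces the right-hand pro group of the claim.

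For the left-hand pro group I would invoke the Bloch--Quillen formula together with Theorem \ref{theorem_intro_class} to rewrite $\{\CH^n(Y_s)/p^r\}_s$ as the Nisnevich cohomology of the relative $K$-sheaves $\{\cal K_{n,(X,Y_s)}/p^r\}_s$. The remaining point is then a pro Nisnevich-to-\'etale comparison for these $K$-sheaves modulo $p^r$: on the regular scheme $Y$ this is standard (Gersten resolution plus Geisser--Levine), and the pro Gersten machinery developed earlier in the paper for relative $K$-sheaves should propagate it across the non-regular thickenings $Y_s$.

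The main obstacle will be precisely this last step, namely the pro Nisnevich-to-\'etale comparison on the singular thickenings, because the usual Gersten-resolution arguments on regular schemes do not apply directly and must be replaced by a relative, pro-theoretic version. A secondary, more bookkeeping difficulty is identifying the absolute Chow group $\CH^n(Y_s)$ with the relative $K$-sheaf cohomology of the pair $(X,Y_s)$ appearing in Theorem \ref{theorem_intro_class}, which should follow from the excision fibre sequence for relative $K$-theory applied uniformly in the pro variable $s$, together with the regularity of $X$.
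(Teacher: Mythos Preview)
Your central claim---that the \'etale-motivic cycle class maps
\[
\{c_n:\CH^n(Y_s)/p^r\To H^{2n}_\sub{\'et}(Y_s,\bb Z/p^r\bb Z(n))\}_s
\]
assemble into an isomorphism of pro abelian groups---is false, and this breaks the argument. Already for $s=1$ on the regular scheme $Y$ the map $c_n:\CH^n(Y)/p^r\to H^n_\sub{\'et}(Y,W_r\Omega_{Y,\sub{log}}^n)$ is the change-of-topology map from Zariski to \'etale cohomology of $W_r\Omega^n_{\sub{log}}$, and this is generally not surjective (for $n=1$ it is $\Pic(Y)/p^r\to H^2_\sub{fppf}(Y,\bm\mu_{p^r})$, which picks up Brauer-type contributions). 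So the ``pro Nisnevich-to-\'etale comparison'' you flag as the main obstacle is not a technical difficulty to be overcome: it simply does not hold for the absolute sheaves $\{W_r\Omega^n_{Y_s,\sub{log}}\}_s$. Your paragraph invoking Theorem~\ref{theorem_intro_class} and Bloch--Quillen is also off: that theorem concerns $\cal K_{n,(X,Y_s)}$ on $X$, not $\cal K_{n,Y_s}$ on $Y_s$, and gives a Zariski--Nisnevich comparison, not Nisnevich--\'etale.

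The paper's proof avoids this by never asserting an absolute comparison. Instead it shows that the \emph{relative} square
\[
\xymatrix{
\{\cal K_{n,Y_s}\}_s\ar[d]\ar[r] & \{R\ep_*W_s\Omega_{Y_s,\sub{log}}^n\}_s\ar[d]\\
\cal K_{n,Y}\ar[r] & \{R\ep_*W_s\Omega_{Y,\sub{log}}^n\}_s
}
\]
is homotopy cartesian as pro complexes of Zariski sheaves. The sheaf-level isomorphism you describe gives the bicartesian square with Zariski log Hodge--Witt sheaves on the right; the passage to $R\ep_*$ uses Corollary~\ref{corollary_Zar_to_etale}, which says precisely that the Zariski-to-\'etale comparison holds for the \emph{relative} sheaves $\{W_r\Omega^n_{(Y_s,Y),\sub{log}}\}_r$ (because $R-F$ is Zariski-locally surjective in the nilpotent relative setting). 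The two assertions of the theorem then follow by reading off the long exact sequences on cohomology and chasing obstructions into the relative group $H^{n+1}(Y,\cal K_{n,(Y_s,Y)})$, exactly as in the line-bundle case. In short: the right statement is that lifting $z$ and lifting $c_n(z)$ have the \emph{same} obstruction, not that $c_n$ is a pro-isomorphism.
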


By proving an algebrisation lemma for \'etale-motivic cohomology, this has the following consequence for deformation in families:

\begin{corollary}[See Thm.~\ref{corollary_variational_in_families}]\label{corol:deforma_intro}
Let $A$ be a Noetherian, F-finite $\bb F_p$-algebra which is complete with respect to an ideal $I\subseteq X$, and let $X$ be a proper scheme over $A$; assume that $X$ and the special fibre $Y:=X\times_AA/I$ are regular\footnote{e.g., we could suppose $A$ is regular and $X$ is smooth over $A$; but the case in which $X$ is a desingularisation, with regular exceptional fibre, of $\Spec A$ is also interesting.}. For any $z\in\CH^n(Y)$, the following are equivalent:
\begin{enumerate}
\item $z$ lifts to $\projlim_s\CH^n(Y_s)$;
\item $c_n(z)$ lifts to $\projlim_sH^{2n}_\sub{\'et}(Y_s,\bb Z_p(n))$;
\item $c_n(z)$ lifts to $H^{2n}_\sub{\'et}(X,\bb Z_p(n))$.
\end{enumerate}
\end{corollary}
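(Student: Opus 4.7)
The plan is to reduce the corollary to two inputs: Theorem \ref{theorem_main_3}(ii), which gives (i)$\Leftrightarrow$(ii), and an algebrisation lemma for continuous \'etale cohomology of logarithmic Hodge--Witt pro-sheaves, which will give (ii)$\Leftrightarrow$(iii). The implication (iii)$\Rightarrow$(ii) is immediate from functoriality: the compatible closed immersions $Y_s\hookrightarrow X$ induce pullback maps $H^{2n}_\sub{\'et}(X,\bb Z_p(n))\to H^{2n}_\sub{\'et}(Y_s,\bb Z_p(n))$, and a lift of $c_n(z)$ to $X$ restricts to a compatible system of lifts on the $Y_s$. The heart of the matter is therefore to establish that the natural map
\[H^{2n}_\sub{\'et}(X,\bb Z_p(n))\To \projlim_sH^{2n}_\sub{\'et}(Y_s,\bb Z_p(n))\]
is surjective (in fact bijective).

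To prove this I would invoke Illusie's fundamental short exact sequence
\[0\To W_r\Omega^n_{\sub{log}}\To W_r\Omega^n\xTo{1-F}W_r\Omega^n\To 0\]
of \'etale sheaves on $X$ and on each $Y_s$, whose validity in this generality is part of the theory of logarithmic Hodge--Witt sheaves on formal schemes developed in the body of the paper. This reduces the problem to comparing the coherent cohomology groups $H^i_\sub{\'et}(X,W_r\Omega_X^n)$ with $\projlim_sH^i_\sub{\'et}(Y_s,W_r\Omega_{Y_s}^n)$. Because $W_r\Omega_X^n$ is a coherent $W_r\roi_X$-module, $W_rX\to\Spec W_rA$ is proper, and $W_rA$ is complete with respect to the ideal generated by $I$, Grothendieck's theorem on formal functions (formal GAGA) applied after Witt vectors delivers exactly this comparison, together with the vanishing of the $\projlim^1_s$-terms needed to commute limits past cohomology. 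Feeding this into the five lemma on the long exact cohomology sequences of Illusie's sequence transfers the isomorphism to the logarithmic sheaves, for each fixed $r$; a further passage to $\projlim_r$ in the sense of Jannsen's continuous \'etale cohomology then yields the algebrisation lemma, and hence (ii)$\Rightarrow$(iii).

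The main obstacle I anticipate is handling the two-parameter inverse system indexed by $(r,s)$ cleanly: one must ensure that the coherent cohomology groups entering Illusie's sequence form Mittag-Leffler systems both in $r$ and in $s$, so that all the relevant $\projlim^1$-terms really vanish and the various limits may be interchanged. This bookkeeping reduces to finiteness of coherent cohomology of the de Rham--Witt complex over the Noetherian base $W_rA$, which is exactly the package that the body of the paper sets up for formal schemes; once it is in place, the Illusie sequence together with the five lemma reduce the logarithmic statement to the coherent formal GAGA statement in a routine manner.
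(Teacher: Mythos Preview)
Your outline is essentially the paper's proof: (i)$\Leftrightarrow$(ii) from Theorem~\ref{theorem_main_3}(ii), (iii)$\Rightarrow$(ii) by functoriality, and (ii)$\Rightarrow$(iii) via an algebrisation statement proved by combining the $1-F$ sequence for $W_r\Omega^n_\sub{log}$ with the formal functions theorem applied to the coherent $W_r\roi_X$-module $W_r\Omega_X^n$ on the proper $W_r(A)$-scheme $W_r(X)$.

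Two points of divergence are worth noting. First, the Illusie sequence $0\to W_r\Omega^n_\sub{log}\to W_r\Omega^n\to W_r\Omega^n\to 0$ at a \emph{fixed} level $r$ is not available on the non-regular thickenings $Y_s$; what the paper establishes (Corollary~\ref{corollary_log_vs_R-F}(iii)) is only the \emph{pro} exact sequence $0\to\{W_r\Omega^n_{\sub{log}}\}_r\to\{W_r\Omega^n\}_r\to\{W_r\Omega^n\}_r\to 0$ on an arbitrary $\bb F_p$-scheme. Accordingly the paper does not work level-by-level in $r$ and then take $\projlim_r$, but rather passes immediately to $\op{Rlim}_r$ and uses the resulting fibre sequence; this also sidesteps the two-parameter $(r,s)$ bookkeeping you anticipate by taking the diagonal $\{W_r\Omega^n_{Y_r}\}_r$ directly. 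Second, your parenthetical ``in fact bijective'' is too strong: the paper only proves \emph{surjectivity} of $H^{2n}_\sub{\'et}(X,\bb Z_p(n))\to\projlim_s H^{2n}_\sub{\'et}(Y_s,\bb Z_p(n))$, since a $\projlim^1_s$ obstruction (Remark~\ref{remark_ses_in_continuous_cohomology}) sits between the continuous cohomology of the diagonal system and the inverse limit of the individual $H^{2n}_\sub{\'et}(Y_s,\bb Z_p(n))$. Surjectivity is all that is needed for (ii)$\Rightarrow$(iii).
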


Similarly, taking advantage of the weak Lefschetz theorem in crystalline cohomology, the arguments used to prove Theorem \ref{theorem_main_3} establish the infinitesimal part of the weak Lefschetz conjecture for Chow groups in characteristic $p$; the analogous result over an algebraically closed field of characteristic $0$ is due to Patel and Ravindra \cite{PatelRavindra2014}:

\begin{theorem}[Infinitesimal weak Lefschetz for Chow groups; see Thm.~\ref{theorem_lefschetz}]
Let $X$ be a smooth, projective, $d$-dimensional variety over a perfect field $k$ of characteristic $p$, and $Y\into X$ a smooth ample divisor. Then the canonical map \[\projlim_sH^i_\sub{Zar}(Y_s,\cal K_{n,Y_s})\To H^i_\sub{Zar}(Y,\cal K_{n,Y})\] has kernel and cokernel killed by a power of $p$ if $i+n<d-1$. In particular, if $2n<d-1$ then \[(\projlim_s\CH^n(Y_s))\otimes_{\bb Z}\bb Z[\tfrac1p]\Isoto\CH^n(Y)\otimes_{\bb Z}\bb Z[\tfrac1p].\]
\end{theorem}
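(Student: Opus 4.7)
The strategy is the one hinted at in the preamble to the statement: use Theorem~\ref{theorem_main_1} and its cohomological refinement (Theorem~\ref{theorem_intro_class}) to replace $\cal K$-cohomology of the thickenings $Y_s$ by logarithmic Hodge--Witt cohomology, and then transfer the problem to crystalline/de Rham--Witt cohomology, where the weak Lefschetz theorem is classically available.

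\emph{Step 1: reduction to log Hodge--Witt cohomology.} Since $Y_s$ is a nilpotent thickening of $Y$, Gabber rigidity kills the prime-to-$p$ part of the relative $K$-theory, so the kernel and cokernel of the map in question are automatically $p$-primary; it therefore suffices to bound the mod $p^r$ version by a power of $p$ that is uniform in $r$. The sheaf version of Theorem~\ref{theorem_main_1}, combined with the Zariski/Nisnevich comparison in Theorem~\ref{theorem_intro_class}, yields a pro-isomorphism
\[\{H^i_\sub{Zar}(Y_s,\cal K_{n,Y_s})/p^r\}_s\Isoto\{H^i_\sub{Zar}(Y_s,W_r\Omega^n_{Y_s,\sub{log}})\}_s.\]
So the problem reduces to proving that $\projlim_s H^i_\sub{Zar}(Y_s,W_r\Omega^n_{Y_s,\sub{log}})\to H^i_\sub{Zar}(Y,W_r\Omega^n_{Y,\sub{log}})$ has kernel and cokernel killed by a power of $p$ bounded uniformly in $r$, whenever $i+n<d-1$.

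\emph{Step 2: passage to de Rham--Witt.} Writing $\hat X$ for the completion of $X$ along $Y$, the Milne--Illusie short exact sequence $0\to W_r\Omega^n_{\sub{log}}\to W_r\Omega^n\xTo{1-F}W_r\Omega^n\to 0$ applied on $Y_s$, on $\hat X$, and on $Y$, combined with the continuity results for (logarithmic and non-logarithmic) Hodge--Witt sheaves on formal schemes already developed in the body of the paper, identifies $\projlim_s H^i(Y_s,W_r\Omega^n_{Y_s,\sub{log}})$ with $H^i(\hat X,W_r\Omega^n_{\hat X,\sub{log}})$ and reduces the assertion to bounding the $p$-torsion in the kernel and cokernel of
\[H^i_\sub{Zar}(\hat X,W_r\Omega^n_{\hat X})\To H^i_\sub{Zar}(Y,W_r\Omega^n_Y)\]
for $i+n<d-1$.

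\emph{Step 3: crystalline weak Lefschetz.} This last statement follows from the classical crystalline weak Lefschetz theorem (Berthelot): for the smooth ample divisor $Y\into X$, the restriction $H^j_\sub{crys}(X/W_r)\to H^j_\sub{crys}(Y/W_r)$ is an isomorphism up to bounded $p$-torsion in the range $j<d-1$. Combining this with the formal-vs-algebraic comparison $H^j_\sub{crys}(X/W_r)\Isoto H^j_\sub{crys}(\hat X/W_r)$ (excision along an ample divisor, in the same range) and extracting the $[n,n{+}1)$-slope piece via the slope spectral sequence---which integrally contributes only bounded $p$-torsion---gives the desired statement on $H^i(W_r\Omega^n)$ when $i+n<d-1$. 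Running Steps~1 and~2 in reverse yields the theorem. The ``in particular'' assertion is then immediate: set $i=n$, so that $H^n(Y_s,\cal K_{n,Y_s})=\CH^n(Y_s)$, and invert $p$ to kill the residual bounded torsion.

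The main obstacle is ensuring uniformity of the $p$-torsion bounds through each reduction---the Milne--Illusie sequence, the slope spectral sequence, crystalline weak Lefschetz, formal excision, and the pro-isomorphism of Theorem~\ref{theorem_main_1}. Each of these steps is well behaved individually, but controlling the resulting power of $p$ uniformly in both $s$ (so it survives $\projlim_s$) and $r$ (so it survives the passage to integral coefficients and then to $\bb Z[\tfrac1p]$) requires care, and this is where the majority of the bookkeeping will go.
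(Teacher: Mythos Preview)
Your overall architecture---pass from $\cal K$-cohomology to logarithmic Hodge--Witt, then to de Rham--Witt/crystalline, then invoke weak Lefschetz---matches the paper's. But Step~2 reduces to the wrong target, and Step~3 cannot recover from this.

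The issue is that in Step~2 you land on the \emph{single-degree} Hodge--Witt groups $H^i(\hat X,W_r\Omega^n_{\hat X})\to H^i(Y,W_r\Omega^n_Y)$, and then in Step~3 propose to deduce this from crystalline weak Lefschetz by ``extracting the $[n,n{+}1)$-slope piece via the slope spectral sequence.'' Crystalline weak Lefschetz is a statement about $\bb H^j(-,W_r\Omega^\blob)$, i.e.\ the hypercohomology of the full complex, and extracting a single Hodge--Witt degree from it is genuinely delicate in characteristic~$p$: the slope spectral sequence only degenerates modulo torsion, and there is no uniform bound of the type you need without substantial extra input (indeed, Hodge-type Lefschetz for $H^i(-,\Omega^n)$ can fail in characteristic~$p$). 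You flag this uniformity as ``the main obstacle'' but do not indicate how to overcome it; in fact this is exactly where the approach breaks.

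The paper avoids this by never isolating a single Hodge--Witt degree. Instead of the $1-F$ sequence on $W_r\Omega^n$, it uses Corollary~\ref{corollary_p_power_eigenspace}: the sequence
\[0\To W_r\Omega^n_{\sub{log}}[-n]\To W_r\Omega^\blob\xTo{p^n-\phi}W_r\Omega^\blob\To 0\]
is exact up to $p^{n+1}$-torsion, relating $H^i_\sub{\'et}(-,W_r\Omega^n_{\sub{log}})$ directly to the hypercohomology of the full de Rham--Witt complex, with a torsion bound depending only on $n$. The comparison of $\bb H^i(X,W_r\Omega_X^\blob)$ with $\{\bb H^i(Y_s,W_r\Omega_{Y_s}^\blob)\}_s$ is then done by filtering by the canonical filtration (Corollary~\ref{corollary_graded_pieces_are_de_rham}), reducing to $\{H^i(X,\Omega^q_{(X,Y_s)})\}_s=0$ for $i\le d-1$, which follows from coherent duality and ampleness of $\cal I^{-1}$---not from any ``formal excision for crystalline cohomology,'' which is not a standard result. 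Only at the end does crystalline weak Lefschetz enter, to compare $X$ with $Y$ (not $\hat X$ with $Y$). The passage back to $\cal K$-cohomology uses the homotopy cartesian square~(\ref{eqn_heart}) rather than working mod~$p^r$ and tracking bounds in~$r$.
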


\subsection{Logarithmic Hodge--Witt sheaves on regular formal $\bb F_p$-schemes}
The technical heart of the article is the development of the theory of logarithmic Hodge--Witt sheaves on regular formal $\bb F_p$-schemes. To understand why, we very briefly sketch the proof of Theorem \ref{theorem_main_1}. The trace map from $K$-theory to topological cyclic homology, together with the pro Hochschild--Kostant--Rosenberg theorem for the latter \cite{Morrow_Dundas}, allows us to circumvent Milnor $K$-theory and directly construct a homomorphism of pro abelian groups \[\dlog^n_{r,A/I^\infty}:\{K_n(A/I^s)\}_s\To \{W_r\Omega_{A/I^s,\sub{log}}^n\}_s\] which is given by $\dlog[\cdot]$ on symbols. (This even exists without any conditions on $A/I$.) From McCarthy's theorem and the isomorphisms (\ref{eqn:1}) for $A/I$, it is then not difficult to obtain an isomorphism \[\{K_n(A/I^s)/p^s\}_s\Isoto \{W_s\Omega_{A/I^s,\sub{log}}^n\}_s,\] in which the diagonal indexing of the pro abelian groups should be noted. Modding out by multiples of $p^r$, the key to proving Theorem \ref{theorem_main_1} becomes the following:

\begin{theorem}[See \S\ref{subsection_dlog_n>1}]\label{theorem_main_2a}
Let $A$ be a regular, local, F-finite $\bb F_p$-algebra, and $I\subseteq A$ any ideal; fix $n,r\ge0$. Then the canonical reduction map \[\{W_{r'}\Omega_{A/I^s,\sub{log}}^n/p^r\}_s\To\{W_r\Omega_{A/I^s,\sub{log}}^n\}_s\] is an isomorphism of pro abelian groups for any $r'\ge r$.
\end{theorem}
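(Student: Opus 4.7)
The plan is to reduce to the absolute analogue of the theorem on the regular ring $A$, where it follows from the standard short exact sequence of log Hodge--Witt sheaves, and then transfer the statement to the pro system $\{A/I^s\}_s$ by controlling the pro system of relative kernels of reduction.

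On $\Spec A$, the preceding development of the theory yields the classical Illusie--Raynaud short exact sequence of \'etale sheaves
\[
0 \to W_{r'-r}\Omega^n_{A,\log} \xrightarrow{\underline{p}^r} W_{r'}\Omega^n_{A,\log} \xrightarrow{R^{r'-r}} W_r\Omega^n_{A,\log} \to 0,
\]
which encodes the absolute isomorphism $W_{r'}\Omega^n_{A,\log}/p^r \isoto W_r\Omega^n_{A,\log}$. Combining this with the natural reductions modulo $I^s$ produces a commutative ladder in which the top row is the above SES and the bottom row is the candidate SES we would like on $A/I^s$. Pro-surjectivity of the reduction $W_r\Omega^n_{A,\log} \to W_r\Omega^n_{A/I^s,\log}$, which is supplied by the earlier sections on dlog symbols for formal schemes, allows a pro nine-lemma to reduce the theorem to proving a corresponding pro short exact sequence on the ``relative part''
\[
\{W_r\Omega^n_{(A,I^s),\log}\}_s := \{\ker(W_r\Omega^n_{A,\log} \to W_r\Omega^n_{A/I^s,\log})\}_s,
\]
namely
\[
0 \to \{W_{r'-r}\Omega^n_{(A,I^s),\log}\}_s \xrightarrow{\underline{p}^r} \{W_{r'}\Omega^n_{(A,I^s),\log}\}_s \xrightarrow{R^{r'-r}} \{W_r\Omega^n_{(A,I^s),\log}\}_s \to 0.
\]

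Pro-exactness of this relative sequence in the middle and on the right follows essentially formally from the absolute SES and functoriality of $\underline{p}^r$ and $R^{r'-r}$ with respect to reduction. The main obstacle, and the step in which passing to the pro category is essential, is pro-exactness on the left: for each $s$ one must produce $s' \ge s$ with the property that any $\omega \in W_{r'-r}\Omega^n_{A,\log}$ whose $\underline{p}^r$-image lies in the kernel of reduction modulo $I^{s'}$ already lies in the kernel of reduction modulo $I^s$. I would attack this via the Artin--Schreier presentation $W_r\Omega^n_{A,\log} \into W_r\Omega^n_A$ combined with the explicit $V$-filtration on $W_r\Omega^n_A$ developed in the paper's earlier sections on de Rham--Witt complexes of formal $\bb F_p$-schemes; the $F$-finiteness of $A$ guarantees the finite generation needed to convert the Noetherianity of the $I$-adic filtration on $A$ into controllable pro-nilpotence on the log Hodge--Witt side, providing the required index shift and completing the nine-lemma.
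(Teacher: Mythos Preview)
Your nine-lemma reduction is formally valid but tautological: once the columns are exact and the absolute row is exact, the long exact sequence in homology gives
\[
H^0(\text{quot})\cong H^1(\text{rel}),\qquad H^1(\text{quot})\cong H^2(\text{rel}),
\]
so exactness of the relative row is \emph{equivalent} to exactness of the quotient row, not a simplification of it.

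This is where the proposal breaks down. You assert that middle and right exactness of the relative sequence ``follow essentially formally'', but they do not. Chase it: given $\omega\in W_{r}\Omega^{n}_{(A,I^{s}),\log}$, lift to $\tilde\omega\in W_{r'}\Omega^{n}_{A,\log}$ via the absolute surjection; to correct $\tilde\omega$ into the relative group you need to know that its reduction $\bar{\tilde\omega}\in W_{r'}\Omega^{n}_{A/I^{s},\log}$, which lies in $\ker R^{r'-r}$, already lies in the image of $\underline{p}^{r}$. That is precisely middle exactness of the \emph{quotient} row, i.e.\ the theorem itself. The same circularity occurs for middle exactness of the relative row. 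So the step you dismiss as formal is exactly the substantive content.

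Conversely, the statement you isolate as the ``main obstacle'' (that $\underline{p}^{r}\omega$ relative modulo $I^{s'}$ forces $\omega$ relative modulo $I^{s}$) is pro-injectivity of $\underline{p}^{r}$ on the quotient, i.e.\ $H^{0}(\text{quot})=0$; this is a separate assertion, and your sketch via ``Artin--Schreier plus $V$-filtration plus Artin--Rees'' cannot work as stated because $W_{r}\Omega^{n}_{\log}$ is not a finitely generated $W_{r}(A)$-module, so the Noetherian Artin--Rees machinery does not apply to it directly.

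The paper's proof does not pass through the relative groups at all. It works directly on $A/I^{s}$ and mimics Illusie's original argument: using the pro versions of the exact sequences relating $B_{i}\Omega^{n}$, $Z_{i}\Omega^{n}$ to the operators $V^{i}$, $dV^{i}$, $F^{i}$ on $W_{\bullet}\Omega^{n}_{A/I^{s}}$ (Lemma~\ref{lemma_Cartier_drw}), together with the coincidence of the canonical and $V$-filtrations, one carefully chooses a chain of indices $t=s_{0}\ge s_{1}\ge s_{2}\ge s$ and shows by hand that any element of $W_{r}\Omega^{n,F=1}_{A/I^{t}}\cap\operatorname{Fil}^{i}W_{r}\Omega^{n}_{A/I^{t}}$ lands, after restriction and passage to level $s$, in $p^{i}W_{r-1}\Omega^{n}_{A/I^{s},\log,\mathrm{Zar}}$. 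The argument is a descending induction on $i$ and uses in an essential way the description of $\Omega^{n}_{\log}$ via $1-C^{-1}$ on $\Omega^{n}/B_{i}\Omega^{n}$ (Remark~\ref{remark_log_forms_cartier_filtration}). None of this structure is visible from the relative/absolute comparison you propose.
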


If $I=0$ and $A$ is strictly Henselian then Theorem \ref{theorem_main_2a} reduces to the well-known result of Illusie that if $Y$ is a smooth variety over a perfect field of characteristic $p$, then the sequence of \'etale sheaves 
\[W_{r'}\Omega_{Y,\sub{log}}^n\xTo{p^r}W_{r'}\Omega_{Y,\sub{log}}^n\To W_r\Omega_{Y,\sub{log}}^n\To0\]
is exact \cite[\S I.5.7]{Illusie1979}. It is exactly this type of result which we are obliged to extend to regular formal $\bb F_p$-schemes, while at the same time analysing the logarithmic Hodge--Witt sheaves $W_r\Omega_{Y,\sub{log}}^n$ in the Zariski topology. Apart from Theorem~\ref{theorem_main_2a}, our most interesting results in this direction are perhaps the following:

\begin{theorem}[See \S\ref{subsection_dlog_n=1}]\label{theorem_main_5}
Let $X$ be a regular, F-finite $\bb F_p$-scheme, and $Y\into X$ a closed subscheme; fix $r\ge0$. Then:
\begin{enumerate}
\item The sequence of pro \'etale sheaves
\[0\To\{\bb G_{m,Y_s}\}_s\xTo{p^r}\{\bb G_{m,Y_s}\}_s\xTo{\dlog[\cdot]}\{W_r\Omega_{Y_s,\sub{log}}^1\}_s\To 0\] is exact.
\item The canonical map of pro abelian groups \[\{H^i_\sub{fppf}(Y_s,\bm\mu_{p^r,Y_s})\}_s\To \{H^{i-1}_\sub{\'et}(Y_s,W_r\Omega_{Y_s,\sub{log}}^1)\}_s\] is an isomorphism for all $i\ge0$.
\end{enumerate}
\end{theorem}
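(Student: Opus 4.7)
The plan is to first establish part (i) --- the structural result about logarithmic Hodge--Witt sheaves along infinitesimal thickenings --- and then deduce (ii) formally by comparing two long exact sequences of cohomology.

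For (i), I would work étale locally at a geometric point of $Y$, reducing to the setting where $X = \Spec A$ with $A$ a strictly Henselian, F-finite, regular $\bb F_p$-algebra, and $\mathcal I \subseteq A$ is the ideal cutting out $Y$. Surjectivity of $\dlog[\cdot]$ on each $Y_s$ is essentially definitional, since $W_r\Omega_{Y_s,\sub{log}}^1$ is by construction the étale subsheaf of $W_r\Omega_{Y_s}^1$ generated by $\dlog$ symbols of units. The classical base case $\mathcal I = 0$ (so $Y = X$) is the Bloch--Kato--Illusie exact sequence of \cite[\S I.5.7]{Illusie1979}.

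For the pro system over the thickenings, the remaining content is to establish (a) pro-injectivity of $p^r$ on $\{\bb G_{m,Y_s}\}_s$ and (b) the pro-inclusion $\ker\dlog[\cdot] \subseteq \Im p^r$. The strategy is to filter $\bb G_{m,Y_s}$ by subgroups $1 + \mathcal I^a\mathcal O_{Y_s}$ and to equip $W_r\Omega_{Y_s,\sub{log}}^1$ with a compatible filtration, reducing the problem to an analysis of graded pieces on which the multiplicative and differential structures both become additive. For (a), since $(1+x)^{p^r} = 1+x^{p^r}$ in characteristic $p$, the kernel of $p^r$ on $\bb G_{m,Y_t}$ corresponds to the ideal $\{x \in \mathcal I : x^{p^r} \in \mathcal I^t\}$; the F-finite regularity of $A$, combined with Krull's intersection theorem and an Artin--Rees style argument exploiting that Frobenius on $A$ is faithfully flat (Kunz), yields for each $s$ an integer $t(s)$ with $\{x \in \mathcal I : x^{p^r} \in \mathcal I^t\} \subseteq \mathcal I^s$ for $t \geq t(s)$, whence pro-injectivity. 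For (b), one argues along the filtration via a five lemma to reduce to the classical regular case, with the essential additional input being a Frobenius--Cartier analysis on the graded pieces of $W_r\Omega_{Y_s,\sub{log}}^1$ furnished by the theory of logarithmic Hodge--Witt sheaves on regular formal $\bb F_p$-schemes developed in the preceding sections. This step is the main obstacle and constitutes the technical heart of the argument.

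With (i) in hand, (ii) follows formally. Applying étale cohomology levelwise to the short exact sequence of pro sheaves in (i) produces, for each $i \geq 0$, a short exact sequence of pro abelian groups
\[0 \to \{H^{i-1}_\sub{\'et}(Y_s, \bb G_m)/p^r\}_s \to \{H^{i-1}_\sub{\'et}(Y_s, W_r\Omega_{Y_s,\sub{log}}^1)\}_s \to \{H^i_\sub{\'et}(Y_s, \bb G_m)[p^r]\}_s \to 0,\]
while the fppf Kummer sequence $0 \to \bm\mu_{p^r} \to \bb G_m \xTo{p^r} \bb G_m \to 0$ on each $Y_s$ yields
\[0 \to \{H^{i-1}_\sub{fppf}(Y_s, \bb G_m)/p^r\}_s \to \{H^i_\sub{fppf}(Y_s, \bm\mu_{p^r})\}_s \to \{H^i_\sub{fppf}(Y_s, \bb G_m)[p^r]\}_s \to 0.\]
Grothendieck's comparison $H^*_\sub{fppf}(Y_s, \bb G_m) = H^*_\sub{\'et}(Y_s, \bb G_m)$ for the smooth group scheme $\bb G_m$ identifies the outer terms of these two sequences canonically as pro abelian groups. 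Invoking the canonical map between the middle terms and applying the five lemma completes the argument.
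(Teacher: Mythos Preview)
Your deduction of (ii) from (i) is correct and matches the paper's argument (the paper phrases it at the level of sheaves, computing $R^i\rho_*\bm\mu_{p^r}$ via Grothendieck's $R\rho_*\bb G_{m,\sub{fppf}}=\bb G_{m,\sub{\'et}}$ and then identifying $R^1\rho_*\bm\mu_{p^r}$ with $W_r\Omega^1_\sub{log}$ using (i); your cohomology-level five-lemma is an equivalent repackaging).

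For (i), your treatment of surjectivity is fine, and your approach to pro-injectivity via Kunz and the intertwining of the chains $I^s$ and $(I^{[p^r]})^s$ is a legitimate alternative to what the paper does (which is to restrict the $n=0$ formal Cartier isomorphism to units). The gap is in the middle exactness. You propose to filter $\bb G_{m,Y_s}$ by the subgroups $1+\cal I^a\roi_{Y_s}$ and to match this with a compatible filtration on $W_r\Omega^1_{Y_s,\sub{log}}$, reducing to graded pieces. But there is no evident compatible filtration on the logarithmic Hodge--Witt side indexed by powers of $\cal I$: the subgroup of dlog forms does not interact well with the $\cal I$-adic filtration on $W_r\Omega^1$, and the graded analysis you gesture at is not carried out anywhere in the paper's preliminary sections. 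You yourself flag this step as ``the main obstacle,'' and indeed it is not clear how to close it along these lines.

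The paper takes a different and much cleaner route: it inducts on $r$ rather than on powers of $\cal I$. For any $\bb F_p$-algebra $B$ there is a natural diagram
\[\xymatrix{
& B^\times/B^{\times p^{r-1}}\ar[r]^p \ar[d]^{\dlog[\cdot]}& B^\times/B^{\times p^r}\ar[r] \ar[d]^{\dlog[\cdot]}& B^\times/B^{\times p}\ar[r] \ar[d]^{\dlog}& 0\\
0\ar[r] & \op{Fil}_p^1W_r\Omega_B^1\ar[r] & W_r\Omega_B^1\ar[r]^{R^{r-1}}&\Omega_B^1\ar[r]&0
}\]
Setting $B=A/I^s$ and passing to pro objects, the bottom row becomes exact because the $p$-filtration and the canonical filtration on $\{W_r\Omega_{A/I^s}^1\}_s$ agree (Proposition~\ref{proposition_filtrations_are_equal}); the right vertical is injective by the formal Cartier isomorphism (Corollary~\ref{corollary_pro_Cartier}); and the left vertical is injective by the inductive hypothesis, since its composition with $R$ is $\dlog[\cdot]_{r-1}$. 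A diagram chase then gives injectivity of the middle vertical, which is exactly the missing middle exactness. The point is that the relevant filtration is the $p$-filtration (varying $r$), not the $\cal I$-adic one.
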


\begin{theorem}[See Corol.~\ref{corollary_log_vs_R-F} \& \ref{corollary_Zar_vs_etale}]\label{theorem_main_6}
Let $Y$ be any $\bb F_p$-scheme. Then:
\begin{enumerate}
\item The sequence of pro \'etale sheaves \[0\To\{W_r\Omega_{Y,\sub{log}}^n\}_r\To\{W_r\Omega_Y^n\}_r\xto{1-F} \{W_r\Omega_Y^n\}_r\To 0\] is exact.
\item $W_r\Omega_{Y,\sub{log}}^n$ is generated Zariski locally by dlog forms.
\end{enumerate}
\end{theorem}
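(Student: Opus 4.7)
The plan is to handle the two parts sequentially, using part (1) as the main input for part (2).

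For part (1), the strategy is to reduce to Illusie's classical exact sequence in the smooth case and deduce the general case by a pro-argument in $r$. When $Y$ is smooth over a perfect field of characteristic $p$, Illusie showed that $0\To W_r\Omega^n_{Y,\sub{log}}\To W_r\Omega^n_Y\xto{1-F} W_r\Omega^n_Y\To 0$ is exact as \'etale sheaves at every finite $r$, from which the pro version is immediate. For a general $\bb F_p$-scheme $Y$, I would work \'etale-locally and present an affine $\Spec A\sseq Y$ as $\Spec(R/I)$ with $R$ a polynomial $\bb F_p$-algebra (hence smooth). The Frobenius-compatible surjection $W_r\Omega^n_R\onto W_r\Omega^n_A$, together with Illusie's sequence for $R$, reduces the problem to a diagram chase. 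Left exactness and exactness in the middle follow from the \'etale-local definition of $W_r\Omega^n_{Y,\sub{log}}$ as the subsheaf generated by the $F$-fixed dlog forms. The crux is pro-surjectivity of $1-F$: although it may fail at each finite level, the obstruction lies in a quotient of $\ker(W_r\Omega^n_R\To W_r\Omega^n_A)$ which is killed by the transition map $W_{r'}\Omega^n_A\To W_r\Omega^n_A$ once $r'$ is sufficiently large.

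For part (2), with part (1) in hand, $\{W_r\Omega^n_{Y,\sub{log}}\}_r$ is pro-identified with the $(1-F)$-kernel of $\{W_r\Omega^n_Y\}_r$, so the remaining question is whether a section of the latter is Zariski-locally a sum of dlog symbols. The plan is a \v{C}ech-style descent: any Zariski-local section is \'etale-locally a dlog sum; two such \'etale presentations differ on overlaps by a $(1-F)$-boundary; and the resulting Artin--Schreier-style $H^1$-obstruction pro-vanishes by part (1), allowing the dlog representation to descend to the Zariski topology. That the individual dlog symbols factor through the Zariski sheaf $\bb G_m$ is what makes the comparison of \'etale presentations sensible to begin with.

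The hard part will be pro-surjectivity of $1-F$ on $W_r\Omega^n_Y$ for non-smooth $Y$: without Illusie's Cartier isomorphism, the obstruction to lifting an $F$-fixed element at finite level must be explicitly analysed in terms of the presentation $A=R/I$ and shown to be killed by passage to a larger pro-index $r'\ge r$. This analysis of the kernel of $W_r\Omega^n_R\To W_r\Omega^n_A$ under $1-F$ is the technical heart of the argument; everything else is formal diagram chasing and descent theory.
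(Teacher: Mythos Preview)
Your approach to part~(1) has the right starting point (present $A=R/I$ with $R$ regular and compare to Illusie's sequence), but you have misidentified the crux. Surjectivity of $R-F$ is easy: lift $\omega\in W_r\Omega_A^n$ to $W_r\Omega_R^n$, solve $(R-F)\eta=\tilde\omega$ there \'etale-locally using Illusie, and project back --- no obstruction arises. The genuinely hard step is \emph{middle exactness}: given $\omega\in W_r\Omega_A^{n,F=1}$, one must show that (after one application of $R$) it lies in $W_{r-1}\Omega^n_{A,\sub{log}}$. A diagram chase reduces this to lifting $\omega$ to an $F$-fixed element of $W_r\Omega_R^n$, i.e.\ to surjectivity of $R-F$ on the \emph{relative} groups $W_r\Omega_{(R,I)}^n$. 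This does not follow from Illusie's sequence for $R$; the paper proves it by an explicit telescoping construction when $I$ is nilpotent (Lemma~\ref{lemma_surj_of_1-F_in_nilp_case}), extends to the $I$-adic completion $\hat R$ (Proposition~\ref{proposition_limit_of_F-fixed_points}), and then uses an Artin approximation trick via N\'eron--Popescu to descend to strictly Henselian $A$. In particular, the paper proves surjectivity of $R-F$ at \emph{each fixed level} \'etale-locally, not merely pro-surjectivity.

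Your approach to part~(2) has a genuine gap: the statement is about $W_r\Omega^n_{Y,\sub{log}}$ at a \emph{fixed} $r$, but your \v{C}ech descent using pro-vanishing of $H^1$ would at best yield a pro-result. Moreover, the descent argument is unclear: the difference of two \'etale-local dlog presentations of the same form lies in $\ker(\dlog[\cdot])$, which has no useful description on a singular scheme, so there is no evident cocycle to analyse. The paper's route is entirely different and much shorter: the lifting argument for middle exactness above actually shows $R(W_{r+1}\Omega_A^{n,F=1})\subseteq W_r\Omega^n_{A,\sub{log},\sub{Zar}}$ (Zariski, not merely \'etale), because the regular case of this inclusion is already known via Bloch--Kato--Gabber. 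Since every \'etale-log form is easily seen to lie in $\ker(\pi-\overline F)$, and one checks $\ker(\pi-\overline F)=R(W_{r+1}\Omega_A^{n,F=1})$ by a short diagram chase, part~(2) follows at each fixed $r$.
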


Theorem \ref{theorem_main_5} gives analogues, for the formal completion of $X$ along $Y$, of the exact ``Cartier sequence'' \cite[Prop.~3.23.2]{Illusie1979} \[0\To\bb G_{m,X}\xTo{p^r}\bb G_{m,X}\xTo{\dlog[\cdot]}W_r\Omega_{X,\sub{log}}^1\To 0\]  and well-known isomorphism $H^i_\sub{fppf}(X,\bm\mu_{p^r,X})\isoto H^{i-1}_\sub{\'et}(X,W_r\Omega_{X,\sub{log}}^1)$. The key to proving the formal analogues is to mimic Illusie's proof in the smooth case by first establishing a Cartier isomorphism for regular formal $\bb F_p$-schemes (see \S\ref{subsection_cartier}) and then analysing the various filtrations on the de Rham--Witt complex (see \S\ref{subsection_filtrations}).

Theorem \ref{theorem_main_6}(i) is also due to Illusie in the smooth case, when  part (ii) is a consequence of the isomorphisms (\ref{eqn:1}). We eliminate the smoothness hypothesis by using the following type of argument (see \S\ref{subsection_F_fixed}): we may first assume that $Y=\Spec A$ is affine, and that we have a representation of $A$ as $B/I$, where $B$ is regular and $I$-adically complete; using infinitesimal deformations we lift the assertions to $W_r\Omega_{B,\sub{log}}^n$; by N\'eron--Popescu desingularision we then reduce the assertions the smooth case.

\subsection{$n=1$ and line bundles}
In Section \ref{section_line_bundles}, after developing the necessary foundations for the de Rham--Witt complex on a formal $\bb F_p$-scheme, but before turning to higher algebraic $K$-theory, we focus on line bundles and $W_r\Omega_\sub{log}^1$. There are two main reasons for this interlude. The first is to reprove and strengthen the deformation results for line bundles from \cite{Morrow_Variational_Tate} without using any $K$-theory or topological cyclic homology; indeed, we prove Theorem \ref{theorem_main_3} in the case $n=1$ using only arguments with logarithmic Hodge--Witt sheaves:

\begin{theorem}\label{theorem_intro_9}
Let $\cal Y$ be a regular F-finite, formal $\bb F_p$-scheme whose reduced subscheme of definition $Y=Y_1$ is regular. Let $L\in\Pic(Y)$. Then:
\begin{enumerate}
\item Given $r\ge 1$ there exists $t\ge p^r$ (depending only on $\cal Y$, not $L$) such that, if $c_1(L)\in H_\sub{\'et}^2(Y,\bb Z/p^r\bb Z(1))$ lifts to $H_\sub{\'et}^2(Y_t,\bb Z/p^r\bb Z(1))$ then $L$ lifts to $\Pic(Y_{p^r})$.
\item If $c_1(L) \in H_\sub{\'et}^2(Y,\bb Z_p(1))$ lifts to some $c\in\projlim_sH_\sub{\'et}^2(Y_s,\bb Z_p(1))$, then there exists $\tilde L\in\projlim_s\Pic(Y_s)$ which lifts $L$ and satisfies $c_1(\tilde L)=c$. In other words, the sequence \[\projlim_s\Pic(Y_s)\To \Pic(Y)\oplus \projlim_sH_\sub{\'et}^2(Y_s,\bb Z_p(1))\To H_\sub{\'et}^2(Y,\bb Z_p(1)) \] is exact.
\end{enumerate}
\end{theorem}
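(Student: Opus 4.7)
The plan is to derive both statements from the pro-long exact sequence in \'etale cohomology associated to the pro-Cartier short exact sequence of Theorem \ref{theorem_main_5}(i). Neither $K$-theory nor topological cyclic homology is needed; everything is read off from the sheaf-theoretic input of Theorem \ref{theorem_main_5}, plus the $p$-torsion-freeness of $\{\Pic(Y_s)\}_s$ supplied by the $n=1$ case of Theorem \ref{theorem_main_1}.

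\textbf{Step 1 (setup).} The short exact sequence of Theorem \ref{theorem_main_5}(i) lives in the abelian category of pro \'etale sheaves, so taking \'etale cohomology (term-wise in $s$) yields for each $r\ge 1$ a long exact sequence of pro abelian groups
\[ \cdots\to\{\Pic(Y_s)\}_s\xTo{p^r}\{\Pic(Y_s)\}_s\xTo{\dlog[\cdot]}\{H^1_\sub{\'et}(Y_s,W_r\Omega^1_{Y_s,\sub{log}})\}_s\to\{H^2_\sub{\'et}(Y_s,\bb G_m)\}_s\xTo{p^r}\cdots \]
By Theorem \ref{theorem_main_5}(ii), $\{H^1_\sub{\'et}(Y_s,W_r\Omega^1_{Y_s,\sub{log}})\}_s\cong\{H^2_\sub{fppf}(Y_s,\bm\mu_{p^r})\}_s=\{H^2_\sub{\'et}(Y_s,\bb Z/p^r\bb Z(1))\}_s$; under this identification the connecting map becomes the mod-$p^r$ cycle class map~$c_1$.

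\textbf{Step 2 (part (ii)).} Assume $c\in\projlim_s H^2_\sub{\'et}(Y_s,\bb Z_p(1))=\projlim_r\projlim_s H^2_\sub{\'et}(Y_s,\bb Z/p^r\bb Z(1))$ lifts $c_1(L)$. For each $r$, apply $\projlim_s$ to the pro-exact sequence of step 1. The pro-surjectivities built into that sequence give the Mittag-Leffler property required for $\projlim^1_s$ to vanish on the relevant subsystems, yielding $\tilde L_r\in\projlim_s\Pic(Y_s)/p^r$ whose cycle class is $c_r$ and whose reduction to $Y$ matches $L\bmod p^r$. To assemble the $\tilde L_r$ into an honest element of $\projlim_s\Pic(Y_s)$ lifting $L$ with $c_1(\tilde L)=c$, I would take $\projlim_r$ and invoke the pro-$p$-torsion-freeness of $\{\Pic(Y_s)\}_s$ from Theorem \ref{theorem_main_1} (case $n=1$) to kill the obstructions to patching across different $r$.

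\textbf{Step 3 (part (i)).} Unravel the meaning of pro-exactness at the spot $\{H^2_\sub{\'et}(Y_s,\bb Z/p^r\bb Z(1))\}_s$. Applied at the index $s_0=p^r$, this produces an integer $t\ge p^r$ (depending only on the pro-system, hence only on $\cal Y$ and $r$) such that any class $\alpha\in H^2_\sub{\'et}(Y_t,\bb Z/p^r\bb Z(1))$ whose boundary in $H^2_\sub{\'et}(Y_{p^r},\bb G_m)$ vanishes lies in the image of $c_1:\Pic(Y_{p^r})\to H^2_\sub{\'et}(Y_{p^r},\bb Z/p^r\bb Z(1))$. Given a lift $\tilde\alpha$ of $c_1(L)$ to $H^2_\sub{\'et}(Y_t,\bb Z/p^r\bb Z(1))$, the boundary $\partial\tilde\alpha\in H^2_\sub{\'et}(Y_t,\bb G_m)$ is $p^r$-torsion and restricts to zero on $Y$; enlarging $t$ if necessary, pro-exactness at the next spot $\{H^2_\sub{\'et}(Y_s,\bb G_m)\}_s\xTo{p^r}\{H^2_\sub{\'et}(Y_s,\bb G_m)\}_s$ (with Bockstein target $\{H^3_\sub{\'et}(Y_s,\bm\mu_{p^r})\}_s$) forces $\partial\tilde\alpha|_{Y_{p^r}}=0$. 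Pick $\tilde L_0\in\Pic(Y_{p^r})$ with $c_1(\tilde L_0)=\tilde\alpha|_{Y_{p^r}}$; then $\tilde L_0|_Y-L\in p^r\Pic(Y)$, and pro-surjectivity of multiplication by $p^r$ on $\{\Pic(Y_s)\}_s$ (from the same pro-LES) lifts this error class to $\Pic(Y_{p^r})$ after further enlarging $t$, giving the desired lift of $L$.

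\textbf{Main obstacle.} The technical crux is part (i): pro-exactness is an existential statement, and the bound $t$ must be uniform in $L$ and depend only on $\cal Y$. This requires combining pro-exactness at two consecutive spots of the LES (the $H^2(\bm\mu_{p^r})$-spot and the $H^2(\bb G_m)$-spot) and carefully tracking how the error class in $p^r\Pic(Y)$ is infinitesimally corrected; I expect the rigorous extraction of $t$ to be the most delicate part of the argument. Part (ii), by contrast, is largely a Mittag-Leffler computation once the pro-LES is in hand.
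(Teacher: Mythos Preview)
Your overall strategy—take \'etale cohomology of the pro Cartier sequence of Theorem~\ref{theorem_main_5}(i) and read off the obstructions—is exactly the paper's starting point. But you are missing the key device that makes the argument close, and two of your inputs are either wrong or unnecessary.

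\textbf{The missing device.} The paper does not work with the absolute long exact sequence alone. It compares the Cartier sequence for $Y_s$ with the one for $Y$ (exact since $Y$ is regular, by Illusie) and takes kernels, obtaining a \emph{relative} pro-exact row
\[
0\to\{\bb G_{m,(Y_s,Y)}\}_s\xto{p^r}\{\bb G_{m,(Y_s,Y)}\}_s\xto{\dlog[\cdot]}\{W_r\Omega^1_{(Y_s,Y),\sub{log}}\}_s\to 0.
\]
The obstruction to lifting $L$ to $\Pic(Y_s)$ is then $\delta(L)\in H^2_\sub{\'et}(Y_s,\bb G_{m,(Y_s,Y)})$, and the decisive observation is that the sheaf $\bb G_{m,(Y_s,Y)}$ is killed by $p^r$ whenever $p^r\ge s$ (one-units modulo a nilpotent ideal in characteristic $p$). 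Pro-exactness of the relative row gives $t\ge p^r$ such that $\ker c_{1,(Y_t,Y)}$ lands in $p^r\cdot H^2_\sub{\'et}(Y_{p^r},\bb G_{m,(Y_{p^r},Y)})=0$; a one-line diagram chase then finishes~(i). Your Step~3 instead tries to show that a $p^r$-torsion Brauer class $\partial\tilde\alpha\in H^2_\sub{\'et}(Y_t,\bb G_m)$ restricting to zero on $Y$ must vanish on $Y_{p^r}$; this does \emph{not} follow from pro-exactness of the absolute sequence (the absolute Brauer groups are not annihilated by any power of $p$), and your appeal to ``pro-exactness at the next spot'' does not supply it.

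\textbf{Problematic inputs.} Your Step~2 invokes Theorem~\ref{theorem_main_1} for ``$p$-torsion-freeness of $\{\Pic(Y_s)\}_s$''. That theorem concerns $K$-groups of \emph{local} rings; it says nothing about global $\Pic$, and indeed $\Pic(Y)$ itself may have $p$-torsion. Moreover, the paper's explicit goal in this section is to avoid $K$-theory entirely. The paper needs no such statement: for~(ii) it takes the diagonal $r=s$ in the $3\times 3$ diagram, observes that the pro sheaf $\{\bb G_{m,(Y_s,Y)}\}_s$ with transition maps scaled by $p$ is zero (again because $p^r$ kills $\bb G_{m,(Y_s,Y)}$ for $p^r\ge s$), and obtains a bicartesian square of pro \'etale sheaves
\[
\xymatrix{
\{\bb G_{m,Y_s}\}_s\ar[r]^-{\dlog[\cdot]}\ar[d] & \{W_s\Omega_{Y_s,\sub{log}}^1\}_s\ar[d]\\
\bb G_{m,Y}\ar[r]_-{\dlog[\cdot]} & \{W_s\Omega_{Y,\sub{log}}^1\}_s
}
\]
from which~(ii) follows by taking continuous cohomology and a short Mittag--Leffler check (Remark~\ref{remark_ses_in_continuous_cohomology}). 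Your double-limit patching over $r$ is replaced by this single diagonal step.

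In short: introduce the relative sheaves $\bb G_{m,(Y_s,Y)}$ and exploit that they are $p^r$-torsion for $s\le p^r$; then both parts fall out cleanly, without any appeal to $K$-theory or to $p$-torsion-freeness of $\Pic$.
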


The second reason for the interlude is to relate this deformation result (in terms of $W_r\Omega^1_\sub{log}$) to earlier results of de Jong (in terms of the fppf cohomology of $\bm\mu_{p^r}$). This is achieved through Theorem \ref{theorem_main_5}, which allows us to rewrite Theorem \ref{theorem_intro_9} in terms of the fppf cohomolgies of $\bm\mu_{p^r,Y_s}$: see Remarks \ref{remark_flat_approach} and \ref{remark_relation_to_Tate_paper} for details.

\subsection{Guide}
Section \ref{section_prelim}, which presents various preliminary, known results, should be ignored by most readers initially and referred back to when necessary.

The applications to line bundles, namely the main results of Section \ref{section_line_bundles}, depend only on Sections \ref{subsection_cartier} -- \ref{subsection_filtrations} (minus Lemmas \ref{lemma_completion_of_dRW_complex} and Lemma \ref{lemma_Cartier_drw}).

The applications to $K$-theory, namely Section \ref{section_K_theory}, depend on all of Section \ref{section_drw} and \ref{section_hw_n>1}, but not Section\ref{section_line_bundles}. The five topics in Section \ref{section_applications} are largely independent of one another, but all require the material on $K$-theory in Section \ref{section_K_theory} and hence also the dependences of the previous sentence.

\subsection{Acknowledgements}
Part of this research was carried out during visits to the IHES during September--October 2014 and September 2015, and I thank the institute for their support. I am particularly grateful to A.~Abbes, whose suggestion during the first visit of establishing a suitable Cartier isomorphism had a profound effect on the paper. I also thank A.~J.~de Jong for discussions about deforming line bundles using fppf cohomology, which provided the major motivation for linking the fppf and log Hodge--Witt methods. Finally, I am grateful to T.~Geisser, K.~R\"ulling, and S.~Saito for helpful relevant conversations. I was funded by the Hausdorff Center for Mathematics during the project.

\section{Preliminary notation, hypotheses, and results}\label{section_prelim}
In this section we present various notations, conventions, and folklore results which will be used throughout; most readers should skip it and refer back when necessary. All rings in the paper are commutative.

\subsection{Regularity, F-finitentess, and N\'eron--Popescu desingularisation}\label{subsection_F-finite_etc}
It is too restrictive to work only with smooth algebras over perfect fields of characteristic $p$, mainly since this does not allow completions of such algebras to be uniformly treated. Therefore we work with the much wider class of regular $\bb F_p$-algebras. Since any regular $\bb F_p$-algebra is a filtered colimit of smooth (which includes the hypothesis of finite type) $\bb F_p$-algebras by N\'eron--Popescu desingularision \cite{Popescu1985, Popescu1986}, any result which commutes with filtered colimits automatically extends from the smooth setting to the regular case.

If $A$ is an algebra over a perfect field $k$ of characteristic $p$, then the $A$-modules \[\Omega_A^n:=\Omega_{A/\bb Z}^n,\quad \Omega_{A/\bb F_p}^n,\quad\Omega_{A/k}^n\] are identical, since $d(a)=pa^{(p-1)/p}d(a^{1/p})=0$ for all $a\in k$; we henceforth identify them without mention. If $A$ is Noetherian and {\em F-finite}, i.e., finitely generated over its subring of $p^\sub{th}$-powers, then it is easy to check that $\Omega_A^n$ is a finitely generated $A$-module. (More generally, a $\bb Z_{(p)}$-algebra $A$ is called $F$-finite if and only if $A/pA$ is F-finite in the previous sense.)

Due to this consequence, as well as many others, of F-finiteness (for example, although we will not use this fact, it is a remarkable theorem of Kunz \cite{Kunz1976} that Noetherian, F-finite, $\bb F_p$-algebras are always excellent and have finite Krull dimension), it will play an important role throughout and so we now explicitly mention its ``preservation properties''. If $A$ is a Noetherian, F-finite $\bb F_p$-algebra, then so are (see, e.g., \cite{Morrow_Dundas} for the proofs, which are not difficult): any finite type $A$-algebra; any localisation of $A$; the completion or Henselisation of $A$ along any ideal; and the strict Henselisation of $A$ along any prime ideal. Moreover, if $A$ is a Noetherian $\bb F_p$-algebra which is complete with respect to some ideal $I\subseteq A$, then Nakayama's lemma shows that $A$ is F-finite if and only if $A/I$ is F-finite.
%More generally, a Noetherian $\bb Z_{(p)}$-algebra $A$ is said to be {\em F-finite} if and only if $A/pA$ is F-finite in the previous sense; assuming in addition that $p$ is nilpotent in $A$, then it easily follows that $\Omega_A^n$ is a finitely generated $A$-module.
Finally, we need the following:

\begin{lemma}\label{lemma_F-finite_formal_smooth}
Let $A$ be a regular, F-finite $\bb F_p$-algebra. Then $A$ is formally smooth over $\bb F_p$, i.e., it satisfies the usual infinitesimal lifting property.
\end{lemma}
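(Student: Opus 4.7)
The plan is to deduce the lemma from N\'eron--Popescu desingularisation, which has already been cited in this subsection. The strategy has three steps: first, verify that $\bb F_p \to A$ is a regular ring map; second, apply N\'eron--Popescu to present $A$ as a filtered colimit of smooth $\bb F_p$-algebras; third, invoke the standard fact that filtered colimits of formally smooth algebras are formally smooth.

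For the first step, since $\bb F_p$ is a field the only fibre of $\bb F_p \to A$ is $A$ itself, and regularity of this map amounts to requiring $A \otimes_{\bb F_p} k$ to be regular for every finite field extension $k/\bb F_p$. Any such $k$ is separable, hence \'etale, over $\bb F_p$, so $A \to A \otimes_{\bb F_p} k$ is \'etale, and regularity is preserved under \'etale base change. One should note in passing that F-finiteness plays no role in this step---the relevant input is simply the perfectness of $\bb F_p$. N\'eron--Popescu now furnishes a presentation $A = \varinjlim_i A_i$ in which each $A_i$ is smooth, and in particular formally smooth, over $\bb F_p$.

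The third step---that formal smoothness (in the discrete topology) is preserved under filtered colimits---is the point I expect to require the most care to spell out, though the input is entirely standard. Concretely, given a square-zero extension $R \onto R/J$ of $\bb F_p$-algebras and a map $f \colon A \to R/J$, the restrictions $f_i \colon A_i \to R/J$ admit lifts by formal smoothness of each $A_i$, and these lifts form torsors $L_i$ under $\Hom_{A_i}(\Omega^1_{A_i/\bb F_p}, J)$. Because each transition map $A_i \to A_j$ is itself formally smooth, the restriction map $L_j \to L_i$ is surjective, so a standard inductive construction along the filtered index set produces a coherent family of lifts, and hence the desired lift $\tilde f \colon A \to R$ of $f$.
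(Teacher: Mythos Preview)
Your argument has a genuine gap in the third step. In N\'eron--Popescu desingularisation, the statement is that $A$ can be written as a filtered colimit of smooth $\bb F_p$-algebras $A_i$, but the transition maps $A_i \to A_j$ are \emph{not} themselves asserted to be smooth or formally smooth --- they are arbitrary $\bb F_p$-algebra homomorphisms. So your claim that ``each transition map $A_i \to A_j$ is itself formally smooth'' is unjustified, and without it the surjectivity of $L_j \to L_i$ fails: given a lift $\tilde f_i : A_i \to R$, there is no reason it should extend to a lift $\tilde f_j : A_j \to R$ along an arbitrary map $A_i \to A_j$. You are then left with a cofiltered system of nonempty torsors whose inverse limit may well be empty, and the construction of a coherent family breaks down.

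This is not a minor oversight: your aside that ``F-finiteness plays no role'' is exactly backwards. The paper's proof uses F-finiteness in an essential way. After N\'eron--Popescu one knows only that $\Omega_A^1$ is a filtered colimit of finitely generated projective $A$-modules, hence flat; but flat modules over a Noetherian ring need not be projective. F-finiteness guarantees that $\Omega_A^1$ is \emph{finitely generated}, and a finitely generated flat module over a Noetherian ring is projective. Combined with the vanishing of higher cotangent cohomology (which \emph{does} follow from ind-smoothness), this gives $\bb L_{A/\bb F_p} \simeq \Omega_A^1$ projective in degree zero, which is the standard criterion for formal smoothness. Your argument, if it worked, would prove that every ind-smooth algebra is formally smooth, which is false; the F-finiteness hypothesis is precisely what bridges this gap.
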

\begin{proof}
Since $A$ is regular and $\bb F_p$ is a perfect field, the morphism $\bb F_p\to A$ is geometrically regular, and hence $A$ is a filtered colimit of smooth, finite type $\bb F_p$-algebras $A_i$ by N\'eron--Popescu desingularisation. Therefore $\Omega_A^1$ is the filtered colimit of the projective $A$-modules $\Omega_{A_i}^1\otimes_{A_i}A$, and hence it is flat. But, as remarked above, $\Omega_A^1$ is finitely generated; hence it is a projective $A$-module.

Since $A$ is a filtered colimit of smooth, finite type $k$-algebras, it is moreover true that the cotangent complex $\bb L_{A/\bb F_p}$ is supported in degree one. So we have proved that $\bb L_{A/\bb F_p}$ is quasi-isomorphic to the projective module $\Omega_A^1$. This is well-known to imply that $A$ is formally smooth over $\bb F_p$, e.g., \cite[Prop.~III.3.1.2]{Illusie1971}.
\end{proof}

\subsection{Logarithmic Hodge--Witt sheaves in various topologies}\label{subsection_HW_sheaves}
Let $X$ be an $\bb F_p$-scheme, and let $\tau$ denote the Zariski, Nisinevich, or \'etale topology. Viewing the the Hodge--Witt sheaf $W_r\Omega_X^n$ (which will be recalled in more detail in Section \ref{subsection_de_Rham_Witt}) as a sheaf in the $\tau$-topology, we denote by \[W_r\Omega^n_{X,\sub{log},\tau}\subseteq W_r\Omega_X^n\] the subsheaf which is generated $\tau$-locally by dlog forms, i.e., the $\tau$-sheafification of the image of the map of presheaves \[\dlog[\cdot]:\bb G_{m,X}^{\otimes n}\to W_r\Omega_X^n,\quad \al_1\otimes\cdots\otimes\al_n\mapsto\dlog[\al_1]\cdots\dlog[\al_n].\] When $X=\Spec A$ is affine, we write $W_r\Omega_{A,\sub{log},\tau}^n$ for the global sections of $W_r\Omega_{X,\sub{log},\tau}^n$. If $Y$ is a closed subscheme of $X$ then, as with many other sheaves in the paper, we write \[W_r\Omega_{(X,Y),\sub{log},\tau}^n:=\ker(W_r\Omega_{X,\sub{log},\tau}^n\to W_r\Omega_{Y,\sub{log},\tau}^n).\] When $\tau$ is the \'etale topology we will tend to omit it from the notation, in which case $W_r\Omega_{X,\sub{log}}^n$ is also denoted by $\nu^n_{r,X}$ or $\nu_r(n)_X$ in the literature. 

We let $\ep$ (resp.~$\ep_\sub{Nis}$) denote the projection from the Zariski (resp.~Nisnevich) topos to the \'etale topos. Then are then inclusions of Zariski sheaves \[W_r\Omega_{X,\sub{log},\sub{Zar}}^n\subseteq \ep_{\sub{Nis}*}W_r\Omega_{X,\sub{log},\sub{Nis}}^n\subseteq \ep_*W_r\Omega_{X,\sub{log}}^n,\] which are known to experts to be equalities is $X$ is regular, though the statement seems not to be in the literature:

\begin{theorem}\label{theorem_Zar=et}
If $X$ is a regular $\bb F_p$-scheme, then the inclusion of Zariski sheaves $W_r\Omega_{X,\sub{log},\sub{Zar}}^n\subseteq \ep_*W_r\Omega_{X,\sub{log}}^n$ is an equality.
\end{theorem}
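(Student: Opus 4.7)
The plan is to verify the equality of Zariski sheaves on stalks. Fix $x\in X$ and set $R:=\cal O_{X,x}$, which is a regular local $\bb F_p$-algebra. By the very definition of the Zariski-subsheaf generated by dlog, the Zariski stalk of $W_r\Omega^n_{X,\sub{log},\sub{Zar}}$ at $x$ is the subgroup of $W_r\Omega^n_R$ generated by products $\dlog[\alpha_1]\cdots\dlog[\alpha_n]$ with $\alpha_i\in R^\times$. The Zariski stalk of $\ep_*W_r\Omega^n_{X,\sub{log}}$ at $x$ is, on the other hand, $\indlim_{U\ni x}\Gamma(U_{\sub{\'et}},W_r\Omega^n_{\sub{log}})$; by continuity of \'etale cohomology under cofiltered limits of quasi-compact schemes (SGA 4, VII, 5.7) this equals $\Gamma((\Spec R)_{\sub{\'et}},W_r\Omega^n_{\sub{log}})=:W_r\Omega^n_{R,\sub{log}}$. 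Consequently the statement reduces to the following: for every regular local $\bb F_p$-algebra $R$, the map $\dlog\colon K^M_n(R)/p^r\to W_r\Omega^n_{R,\sub{log}}$ is surjective --- i.e.\ the surjectivity half of the Bloch--Kato--Gabber isomorphism already recalled in the introduction.

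To prove this surjectivity I would invoke the Gersten-type resolution of the \'etale sheaf $W_r\Omega^n_{\sub{log}}$ on $\Spec R$, of the form
\[
0\To W_r\Omega^n_{\sub{log}}\To\bigoplus_{y\in(\Spec R)^{(0)}}i_{y*}W_r\Omega^n_{k(y),\sub{log}}\To\bigoplus_{y\in(\Spec R)^{(1)}}i_{y*}W_r\Omega^{n-1}_{k(y),\sub{log}}\To\cdots,
\]
together with the analogous Gersten resolution of $K^M_n/p^r$ due to Kato and the classical Bloch--Kato--Gabber isomorphism over each residue field $k(y)$. Compatibility of the residue maps on the two sides under $\dlog$, combined with a standard diagram chase, identifies $W_r\Omega^n_{R,\sub{log}}$ (the kernel of the first residue) with the dlog-image of $K^M_n(R)/p^r$ (the analogous kernel on the Milnor side), which is the desired surjectivity.

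For $R$ the local ring of a smooth variety over a perfect field, the above Gersten resolution of $W_r\Omega^n_{\sub{log}}$ is due to Gros--Suwa. The extension to an arbitrary regular local $\bb F_p$-algebra proceeds by N\'eron--Popescu desingularisation (\S\ref{subsection_F-finite_etc}): write $R=\indlim_i A_i$ with each $A_i$ smooth over $\bb F_p$, and exploit the fact that the de Rham--Witt pro-complex, $\dlog$, (improved) Milnor $K$-theory, and the \'etale cohomology of the sheaves involved all commute with filtered colimits of rings (for \'etale cohomology, via SGA 4, VII, 5.7 once more), in order to pass the Gersten resolutions for the $A_i$ to the limit. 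The most delicate point I anticipate is precisely this colimit bookkeeping --- matching the codimension filtration on $\Spec R$ with the limits of those on the $\Spec A_i$ and checking compatibility of residue maps. Granting this, the theorem reduces cleanly to the field case, which is classical Bloch--Kato--Gabber.
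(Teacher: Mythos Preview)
Your approach is essentially the paper's: reduce to stalks, then invoke the surjectivity of $\dlog[\cdot]:K_n^M(R)/p^r\to W_r\Omega_{R,\sub{log}}^n$ for regular local $R$, which the paper records as part of Theorem~\ref{theorem_GL_BKG} via precisely the Gersten comparison you describe (Gros--Suwa on the Hodge--Witt side, Kato/Rost/Elbaz-Vincent--M\"uller-Stach/Kerz on the Milnor side, reducing to Bloch--Kato--Gabber over fields).

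The one place you make life harder than necessary is the N\'eron--Popescu step. You propose to pass the Gersten \emph{resolutions} to the filtered colimit $R=\indlim_i A_i$, and rightly flag the codimension bookkeeping as delicate --- it genuinely is, since the codimension filtration on $\Spec R$ need not match the colimit of those on the $\Spec A_i$. The paper sidesteps this entirely: since both $K_n^M(-)/p^r$ and $W_r\Omega_{-,\sub{log}}^n$ commute with filtered colimits of rings, one first reduces the \emph{conclusion} (surjectivity of $\dlog$) to the case where $R$ is a localisation of a smooth $\bb F_p$-algebra, and only then runs the Gersten argument there. No limit of resolutions is needed. Alternatively, you could cite Shiho~\cite{Shiho2007}, who established the Gersten resolution for $W_r\Omega^n_{\sub{log}}$ directly on arbitrary regular $\bb F_p$-schemes; the paper itself invokes this later (proof of Corollary~\ref{corollary_relative_nilpotent_hw}).
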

\begin{proof}
In Theorem \ref{theorem_GL_BKG} we will recall the result that $\dlog[\cdot]:K_n^M(A)/p^r\to W_r\Omega_{A,\sub{log}}^n$ is surjective for any regular, local $\bb F_p$-algebra $A$, which is exactly the desired assertion.
\end{proof}

We will extend this result to arbitrary $\bb F_p$-schemes in Corollary \ref{corollary_Zar_vs_etale} using an infinitesimal deformation argument to reduce to the regular case.

\subsection{\'Etale-motivic cohomology}\label{subsection_etale_motivic}
For any $\bb F_p$-scheme $Y$, we adopt the notations \[H_\sub{\'et}^*(Y,\bb Z/p^r\bb Z(n)):=H^{*-n}_\sub{\'et}(Y,W_r\Omega_{Y,\sub{log}}^n),\qquad H_\sub{\'et}^*(Y,\bb Z_p(n)):=H^{*-n}_\sub{\'et}(Y,\{W_r\Omega_{Y,\sub{log}}^n\}_r),\] where the right-most group denotes Jannsen's continuous \'etale cohomology \cite{Jannsen1988} of the pro \'etale sheaf $\{W_r\Omega_{Y,\sub{log}}^n\}_r$ (the more common notation would be $H^{*-n}_\sub{cont}(Y_\sub{\'et},W_.\Omega_{Y,\sub{log}}^n)$).

Suppose now that $Y$ is smooth over a perfect field of characteristic $p$. Then it follows from Geisser--Levine \cite[Thm.~8.5]{GeisserLevine2000} that $W_r\Omega_{Y,\sub{log}}^n[n]\simeq z^n(-,\blob)_\sub{\'et}\dotimes_{\bb Z}\bb Z/p^r\bb Z$, where $z^n(-,\blob)_\sub{\'et}$ denotes Bloch's cycle complex of \'etale presheaves on $Y$, and so $H_\sub{\'et}^*(Y,\bb Z/p^r\bb Z(n))$ is the \'etale-/Lichenbaum-motivic cohomology of $Y$ with $\bb Z/p^r\bb Z$ coefficients (e.g., \cite[Def.~10.1]{MazzaWeibelVoevodsky2006}). Regardless of this identification, it had been understood much earlier that the cohomology groups $H_\sub{\'et}^*(Y,\bb Z/p^r\bb Z(n))$ and $H_\sub{\'et}^*(Y,\bb Z_p(n))$, as we have defined them, are the correct $p$-adic replacements for the $\ell$-adic \'etale cohomology of $Y$, particularly for the study of the Tate conjecture (e.g., \cite{Milne1988}). In the particular case $n=1$, there is a canonical identification $H_\sub{\'et}^*(Y,\bb Z/p^r\bb Z(1))=H^*_\sub{fppf}(Y,\bm\mu_{p^r,Y})$; this will essentially be recalled in the proof of Corollary \ref{corollary_fppf_mu_n}.

\subsection{Formal schemes}\label{subsection_formal_schemes}
If $\cal Y$ is a Noetherian formal $\bb F_p$-scheme, then we denote by $Y_1\into \cal Y$ a subscheme of definition and $Y_s\into\cal Y$ its $s^\sub{th}$-infinitesimal thickening; thus $\cal Y$ can be identified with the ind scheme $\{Y_s\}_s$, and the particular choice of $Y_1$ is irrelevant. If $Y\mapsto\cal F_Y\in Y_\sub{\'et}^\sim$ is a functorial collection of \'etale sheaves on $\bb F_p$-schemes (e.g., $\roi_Y$, $W_r\Omega_Y^n$, $\roi_Y^\times$, $W_r\Omega^n_{Y,\sub{log}}$, etc.), then by identifying the \'etale sites of $Y_1,Y_2,\dots$, we will view $\{\cal F_{Y_s}\}_s$ as a pro \'etale sheaf on $Y_1$. We will do the same in the Zariski and Nisnevich topologies.

We say that $\cal Y$ is {\em regular} (resp.~{\em F-finite}) if and only if it admits an open affine cover by the formal spectra $\Spf A$ of regular (resp.~F-finite) $\bb F_p$-algebras $A$.

Let $A$ be a Noetherian $\bb F_p$-algebra complete with respect to an ideal $I$. Then an $A$-algebra $A'$ is {\em $I$-formally \'etale} over $A$ if and only if it has the usual unique lifting property for diagrams of $A$-algebras
\[\xymatrix{
&C\ar[d]\\
A'\ar@{-->}[ur]^{\exists\,!}\ar[r] & C/J
}\]
in which $J$ is a nilpotent ideal of the $A$-algebra $C$ and the horizontal morphism is required to kill a power of $IA'$. If moreover $A'$ is $IA'$-adically complete and $A'/IA'$ is of finite type over $A/IA$, then we will say that $A'$ is a {\em topologically finite type (tft)}, $I$-formally \'etale $A$-algebra. Standard arguments show that the functor $A'\mapsto A'/IA'$ defines an equivalence of categories from tft $I$-formally \'etale $A$-algebras to \'etale $A/I$-algebras. Moreover, if $A$ is regular (resp.~F-finite), then so is any tft $I$-formally \'etale $A$-algebra.

\subsection{Generalised normal crossing schemes}\label{subsection_gnc_schemes}
We will use induction, via pro excision theorems, to reduce some assertions to the case of a regular scheme; this approach works for normal crossing divisors, or more generally for the class of schemes in the following definition, for which we know no standard terminology:

\begin{definition}
A Noetherian scheme $Y$ will be said to be {\em generalised normal crossing} (or simply {\em gnc}) if and only if it admits a cover by closed subschemes $Y^1,\dots,Y^c$ such that $(\bigcap_{i\in S}Y^i)_\sub{red}$ is regular for any subset $S\subseteq \{1,\dots,c\}$. The smallest such $c$ will be called the {\em complexity} of $Y$.

If $Y=\Spec A$ is affine, then we also say that $A$ is gnc.
\end{definition}

\begin{example}
The following examples are all obvious, but important enough to state:
\begin{enumerate}
\item A Noetherian scheme is gnc of complexity $\le 1$ if and only if its underlying reduced closed subscheme is regular.
\item If $X$ is a regular scheme and $Y\into X$ is a normal crossing divisor, then $Y$ is gnc. %[SGA V, Exp I, 3.1.5]
\item The union of the $x,y$-plane and the $z$-axis in $\bb A^3$ is a gnc scheme of complexity $2$ which does not fall under either of the previous examples.
\end{enumerate}
\end{example}

\begin{lemma}\label{lemma_complexity}
Let $Y$ be a Noetherian gnc scheme of complexity $\le c$. Then $Y$ admits a closed cover $Z,Z'$ such that $Z$ is regular and such that $Z'$ and $Z\cap Z'$ are gnc schemes of complexity $<c$.
\end{lemma}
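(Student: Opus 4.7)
The plan is to peel off one of the covering subschemes and place everything else into the other piece. Let $Y^1,\dots,Y^c$ be closed subschemes of $Y$ witnessing that its complexity is $\le c$. Set $Z:=(Y^1)_{\mathrm{red}}$, which is regular by the $S=\{1\}$ case of the gnc hypothesis on $Y$. For $Z'$, I take the closed subscheme of $Y$ whose underlying set is $Y^2\cup\cdots\cup Y^c$, equipped with the reduced induced scheme structure. Then $Z\cup Z'=Y$ as closed subsets, so $Z,Z'$ indeed form a closed cover of $Y$.

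To bound the complexity of $Z'$, I would use the evident cover of it by the $c-1$ closed subschemes $(Y^2)_{\mathrm{red}},\dots,(Y^c)_{\mathrm{red}}$; any multiple intersection of these agrees, topologically, with $\bigcap_{i\in S}Y^i$ for some $S\subseteq\{2,\dots,c\}$, and so is regular after reduction by the gnc hypothesis on $Y$. Hence $Z'$ is gnc of complexity $\le c-1$. For $Z\cap Z'$, the underlying set is
\[ Y^1\cap(Y^2\cup\cdots\cup Y^c)=\bigcup_{i=2}^c(Y^1\cap Y^i), \]
so I would cover it by the $c-1$ closed subschemes $(Y^1\cap Y^i)_{\mathrm{red}}$; any multiple intersection corresponds to $\bigcap_{i\in S\cup\{1\}}Y^i$ for some $S\subseteq\{2,\dots,c\}$, whose reduction is again regular by hypothesis. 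Hence $Z\cap Z'$ is also gnc of complexity $\le c-1$.

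The argument is essentially a combinatorial exercise, so there is no serious obstacle. The only place where one pauses is the bookkeeping around scheme structures in the phrase ``closed cover''; but since the gnc property depends only on the reductions of intersections, assigning the reduced induced structure to $Z$, $Z'$, and $Z\cap Z'$ throughout costs nothing and keeps the inductive step clean.
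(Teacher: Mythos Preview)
Your proof is correct and follows essentially the same approach as the paper's: peel off one of the $Y^i$ (reduced) to form $Z$, and take the union of the remaining $c-1$ pieces as $Z'$. The paper's proof is a one-liner that sets $Z=(Y^c)_{\mathrm{red}}$ and $Z'=\bigcup_{i=1}^{c-1}Y^i$ without further comment; you simply relabel the distinguished index and spell out the verification that $Z'$ and $Z\cap Z'$ are gnc of lower complexity, and your choice to use the reduced structure on $Z'$ rather than the scheme-theoretic union is harmless since the gnc condition only sees reductions of intersections.
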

\begin{proof}
Let $Y^1,\dots,Y^c$ be a closed cover of $Y$ with the property of the definition, and put $Z=(Y^c)_\sub{red}$ and $Z'=\bigcup_{i=i}^{c-1}Y^i$.
% Then $Z$ is a gnc scheme of complexity $\le 1$; $Z'$ is a gnc scheme of complexity $\le c-1$; and $Z\cap Z'=\bigcup_{i=1}^{c-1}(Y^i\cap Y^c)$ is a gnc scheme of complexity $\le c-1$.
\end{proof}

It is plausible that, assuming embedded resolution of singularities in characteristic $p$, pro cdh descent for algebraic $K$-theory \cite{Morrow_pro_cdh_descent} would allow us to extend some of our results beyond the class of gnc schemes, but we have not seriously considered this.

\subsection{Artin--Rees properties in characteristic $p$}
The absolute Frobenius $a\mapsto a^p$ on an $\bb F_p$-algebra $A$ is denoted by $\phi$, or by $\phi^A$ when the ring must be made explicit. Given an $A$-module $M$, its restriction $\phi_*M$ along the Frobenius is the new $A$-module with underlying group equal to $M$ and action $a\cdot m:=a^pm$. Assuming $A$ is F-finite, then $M$ is finitely generated over $A$ if and only if $\phi_*M$ is finitely generated over $A$.

The following Artin--Rees properties will be used often in Section \ref{section_drw}; part (i) states that the functor $M\mapsto M/I^\infty M:=\{M/I^sM\}_s$ (we will occasionally use such $I^\infty$ notation when it is unlikely to cause confusion), from finitely generated $A$-modules to pro $A$-modules, is exact:

\begin{proposition}\label{proposition_AR_p}
Let $A$ be a ring, $I\subseteq A$ an ideal, and $M$ an $A$-module.
\begin{enumerate}
\item If $A$ is Noetherian and $M$ is finitely generated, then $\{\Tor^A_i(M,A/I^s)\}_s=0$ for all $i>0$.
\item If $A$ is an $\bb F_p$-algebra and $I$ is finitely generated then the canonical map \[\{\phi^A_*M\otimes_AA/I^s\}_s\To \{\phi^{A/I^s}_*(M\otimes_A A/I^s)\}_s\] is an isomorphism of pro $A$-modules.
\end{enumerate}
\end{proposition}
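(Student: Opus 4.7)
The plan for (i) is the standard reduction to the Artin--Rees lemma. Choose a resolution $F_\bullet \to M$ by finitely generated free $A$-modules (possible since $A$ is Noetherian and $M$ is finitely generated), so that $\Tor_i^A(M, A/I^s) = H_i(F_\bullet/I^s F_\bullet)$. Fix $i \geq 1$ and apply Artin--Rees to $\Im(d_i) \subseteq F_{i-1}$: there is a constant $c \geq 0$ with $\Im(d_i) \cap I^{s+c}F_{i-1} \subseteq I^s\Im(d_i) = d_i(I^s F_i)$ for all $s$. A class in $\Tor_i^A(M, A/I^{s+c})$ is represented by some $x \in F_i$ with $d_i(x) \in I^{s+c}F_{i-1}$; pick $y \in I^s F_i$ with $d_i(y) = d_i(x)$, so that $x - y \in \ker(d_i) = \Im(d_{i+1})$ by exactness of the resolution in positive degrees. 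Modulo $I^s F_i$ the class of $x$ becomes a boundary, which forces the transition $\Tor_i^A(M, A/I^{s+c}) \to \Tor_i^A(M, A/I^s)$ to vanish; hence the pro system is pro-trivial.

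For (ii), the plan is to identify both sides explicitly and then reduce the comparison to a cofinality statement for ideals of $A$. Let $(I^s)^{[p]} \subseteq A$ denote the ideal generated by $\{a^p : a \in I^s\}$. Unwinding definitions, the $\phi^A_*$-action of $I^s$ on $M$ is via $p$-th powers, so $\phi^A_*M \otimes_A A/I^s = M/(I^s)^{[p]} M$; likewise $\phi^{A/I^s}_*(M \otimes_A A/I^s) = M/I^s M$. Both carry the same $A$-module structure $a \cdot [m] = [a^p m]$, and the canonical map of the proposition is the natural surjection $M/(I^s)^{[p]} M \twoheadrightarrow M/I^s M$, which is well defined because $(I^s)^{[p]} \subseteq I^{ps} \subseteq I^s$.

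It therefore suffices to show that the descending filtrations $\{(I^s)^{[p]}\}_s$ and $\{I^s\}_s$ of $A$ are cofinal, as the induced surjections will then assemble into a pro-isomorphism on any $M$. Fixing generators $f_1, \dots, f_k$ of $I$, a pigeonhole on exponent vectors gives $\sum_i \lfloor \beta_i/p \rfloor \geq (|\beta| - k(p-1))/p$, so whenever $|\beta| \geq ps + k(p-1)$ we can write $\beta = p\gamma + \delta$ with $|\gamma| = s$ and $\delta \geq 0$, yielding $f^\beta = f^\delta (f^\gamma)^p \in (I^s)^{[p]}$. Hence $I^{ps + k(p-1)} \subseteq (I^s)^{[p]} \subseteq I^s$, which is the required cofinality.

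No substantive obstacle is expected. The only point deserving care is the explicit identification in (ii), which hinges on correctly unwinding the tensor product along the Frobenius restriction $\phi^A_*$; once that is pinned down, the pigeonhole check is elementary, and (i) is a direct application of Artin--Rees.
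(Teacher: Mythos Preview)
Your proof is correct and follows essentially the same approach as the paper. The paper simply cites Andr\'e and Quillen for (i) and, for (ii), states that it amounts to the intertwining of the chains $I^s$ and $\phi(I^s)A$; you have spelled out the Artin--Rees argument behind (i) and the explicit cofinality bound $I^{ps+k(p-1)}\subseteq (I^s)^{[p]}\subseteq I^s$ behind (ii), which is exactly the content the paper leaves implicit.
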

\begin{proof}
(i) is due to M.~Andr\'e \cite[Prop.~10 \& Lem.~11]{Andre1974} and D.~Quillen \cite[Lem.~9.9]{Quillen1968}. (ii) is simply the statement that the chains of ideals $I^s$ and $\phi(I^s)A$, for $s\ge1$, are intertwined, which is an easy consequence of $I$ being finitely generated.
\end{proof}

\section{The de Rham--Witt complex of a formal $\bb F_p$-scheme}\label{section_drw}
In this section we develop the theory of the de Rham--Witt complex on a regular, F-finite, formal $\bb F_p$-scheme. Since the calculations are usually of a local nature, this reduces to studying the pro $W_r(A)$-modules $\{W_r\Omega_{A/I^s}^n\}_s$ when $A$ is a regular, F-finite $\bb F_p$-algebra, and $I\subseteq A$ is an ideal, and we will state many of our results only in such an affine case in order to simplify notation.
 
\subsection{The Cartier isomorphism}\label{subsection_cartier}
For any $\bb F_p$-algebra $A$, the {\em inverse Cartier maps} $C^{-1}:\Omega_A^n\To H^n(\Omega_A^\blob)$, for $n\ge0$, are the additive maps characterised by the properties \[C^{-1}(a)=a^p,\quad C^{-1}(da)=a^{p-1}da,\quad C^{-1}(\omega\wedge\omega')=C^{-1}(\omega)\wedge C^{-1}(\omega').\] Replacing $\Omega_A^\blob$ by $\phi_*\Omega_A^\blob$ so that the de Rham differentials $d$ become $A$-linear, the inverse Cartier maps become morphisms of $A$-modules $C^{-1}:\Omega_A^n\to H^n(\phi_*\Omega_A^\blob)$.

The following celebrated theorem was proved by P.~Cartier in the case that $A$ is a smooth algebra over a perfect field of characteristic $p$; the more general case of a regular algebra follows immediately from N\'eron--Popescu desingularistion, as explained in Section \ref{subsection_F-finite_etc}:

\begin{theorem}[Cartier isomorphism]
Let $A$ is a regular $\bb F_p$-algebra and $n\ge0$. Then the inverse Cartier map $C^{-1}:\Omega_A^n\to H^n(\phi_*\Omega_A^\blob)$ is an isomorphism of $A$-modules.
\end{theorem}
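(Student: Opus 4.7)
The approach is already suggested by the author in Section \ref{subsection_F-finite_etc}: reduce the regular case to the classical smooth case via N\'eron--Popescu desingularisation, using that the Cartier isomorphism is a statement about a natural map which commutes with filtered colimits.

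\textbf{Step 1.} First, I would observe that the inverse Cartier maps $C^{-1}$ are natural in the $\bb F_p$-algebra $A$: since they are determined by their values on $a$ and $da$ via the multiplicative/additive formulas, any homomorphism of $\bb F_p$-algebras $A\to B$ induces a commutative square of inverse Cartier maps. In particular, the construction $A\mapsto C^{-1}_A$ assembles into a natural transformation of functors from $\bb F_p$-algebras to $\bb F_p$-modules.

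\textbf{Step 2.} Next, I would check that both source and target of $C^{-1}$ commute with filtered colimits of $\bb F_p$-algebras. The functor $A\mapsto\Omega_A^n$ commutes with filtered colimits, as is standard (it is left adjoint-like; more directly, $\Omega^n$ is a quotient of $A^{\otimes(n+1)}$ by relations involving only finitely many elements at a time). The Frobenius restriction $\phi_*(-)$ commutes with arbitrary colimits of modules since it only changes the $A$-action, not the underlying abelian group. Finally, taking cohomology of the complex $\phi_*\Omega_A^\bullet$ (in each fixed degree) commutes with filtered colimits, because filtered colimits are exact in the category of abelian groups and thus commute with kernels and images.

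\textbf{Step 3.} Now I invoke N\'eron--Popescu desingularisation, exactly as recalled in Section \ref{subsection_F-finite_etc}: since $A$ is a regular $\bb F_p$-algebra and $\bb F_p$ is perfect, $A$ can be written as a filtered colimit $A=\varinjlim_i A_i$ of smooth, finite-type $\bb F_p$-algebras. By Cartier's classical theorem, $C^{-1}_{A_i}:\Omega_{A_i}^n\to H^n(\phi_*\Omega_{A_i}^\bullet)$ is an isomorphism for each $i$. By Steps 1 and 2, $C^{-1}_A$ is the filtered colimit of these isomorphisms, hence itself an isomorphism of $A$-modules.

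\textbf{Main obstacle.} There is essentially no hard step; the only thing one has to be slightly careful about is the interaction of cohomology with filtered colimits in Step 2, and the naturality of $C^{-1}$ in Step 1 (in particular, that the formulas are compatible with transition maps of $\bb F_p$-algebras, which they are because those maps commute with Frobenius). Everything else is a packaged consequence of N\'eron--Popescu, and this is why the author states that the result ``follows immediately'' from the smooth case.
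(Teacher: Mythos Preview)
Your proposal is correct and follows exactly the approach the paper indicates: the paper does not give a proof at all beyond the sentence that the regular case ``follows immediately from N\'eron--Popescu desingularisation, as explained in Section~\ref{subsection_F-finite_etc}'', and you have simply spelled out the obvious details (naturality of $C^{-1}$ and compatibility of both sides with filtered colimits) that justify that sentence.
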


Combining this with an Artin--Rees argument we obtain the following analogue of Cartier's theorem for formal schemes:

\begin{theorem}[Formal Cartier isomorphism]\label{theorem_pro_Cartier}
Let $A$ be a regular, F-finite $\bb F_p$-algebra, $I\subseteq A$ an ideal, and $n\ge0$. Then the inverse Cartier maps $C^{-1}:\Omega_{A/I^s}^n\To H^n(\phi_*\Omega_{A/I^s}^\blob)$ induce an isomorphism of pro $A$-modules \[C^{-1}:\{\Omega_{A/I^s}^n\}_s\Isoto\{H^n(\phi_*\Omega_{A/I^s}^\blob)\}_s.\]
\end{theorem}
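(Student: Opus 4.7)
The plan is to reduce the formal Cartier isomorphism to the classical one for the regular $\mathbb{F}_p$-algebra $A$ by means of two Artin--Rees steps encoded by Proposition \ref{proposition_AR_p}. Throughout I exploit that F-finiteness of $A$ forces $\Omega^n_A$, and hence also $\phi^A_*\Omega^n_A$, to be finitely generated over the Noetherian ring $A$. First I would show that the natural surjection of complexes $\Omega^\bullet_A\otimes_A A/I^s\twoheadrightarrow \Omega^\bullet_{A/I^s}$ becomes a pro-isomorphism: its kernel in degree $n$ is generated by $df\wedge \omega$ for $f\in I^s$ and $\omega\in \Omega^{n-1}_A$, and since $d(f_1\cdots f_s)=\sum_i f_1\cdots \hat f_i\cdots f_s\, df_i\in I^{s-1}\Omega^1_A$, this kernel lies in $I^{s-1}\Omega^n_A/I^s\Omega^n_A$, which is annihilated by the transition to level $s-1$.

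Next I would apply $\phi_*$ termwise and invoke Proposition \ref{proposition_AR_p}(ii) to obtain
\[
\{\phi_*\Omega^\bullet_{A/I^s}\}_s\;\cong\;\{\phi^{A/I^s}_*(\Omega^\bullet_A\otimes_A A/I^s)\}_s\;\cong\;\{\phi^A_*\Omega^\bullet_A\otimes_A A/I^s\}_s
\]
as pro complexes of $A$-modules. Since the right-hand side is the tensor product of a bounded complex of finitely generated $A$-modules with $A/I^s$, splitting $\phi^A_*\Omega^\bullet_A$ into its cycle/boundary short exact sequences (whose terms remain finitely generated as $A$ is Noetherian) and invoking the long exact sequence of $\Tor$ together with the pro-vanishing of Proposition \ref{proposition_AR_p}(i), I would deduce
\[
\{H^n(\phi^A_*\Omega^\bullet_A\otimes_A A/I^s)\}_s\;\cong\;\{H^n(\phi^A_*\Omega^\bullet_A)\otimes_A A/I^s\}_s.
\]

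Finally, the classical Cartier isomorphism $C^{-1}\colon\Omega^n_A\isoto H^n(\phi^A_*\Omega^\bullet_A)$ (valid because $A$ is regular), combined with the pro-isomorphism of the first paragraph, yields
\[
\{H^n(\phi_*\Omega^\bullet_{A/I^s})\}_s\;\cong\;\{\Omega^n_A\otimes_A A/I^s\}_s\;\cong\;\{\Omega^n_{A/I^s}\}_s,
\]
and naturality of $C^{-1}$ under the quotient $A\twoheadrightarrow A/I^s$ identifies the composite pro-map with the one induced by the Cartier maps described in the statement.

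The main obstacle is the intermediate step of commuting $H^n$ past $-\otimes_A A/I^s$: one must verify carefully that the hyper-$\Tor$ spectral sequence degenerates in the pro-category. This is routine once one notes that the short exact sequences $0\to Z^n\to \phi_*\Omega^n_A\to B^{n+1}\to 0$ of cycles and boundaries preserve finite generation, so that Proposition \ref{proposition_AR_p}(i) applies at every stage of an induction on the length of $\phi_*\Omega^\bullet_A$; this is precisely where the F-finite hypothesis is essential.
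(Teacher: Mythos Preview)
Your proof is correct and follows essentially the same approach as the paper: both reduce to the classical Cartier isomorphism for $A$ via the Leibniz-rule pro-isomorphism $\{\Omega_A^\bullet\otimes_A A/I^s\}_s\cong\{\Omega_{A/I^s}^\bullet\}_s$, the Frobenius base-change of Proposition~\ref{proposition_AR_p}(ii), and the Artin--Rees commutation of cohomology with $-\otimes_A A/I^s$ from Proposition~\ref{proposition_AR_p}(i). The paper packages the same three steps as a single commutative diagram rather than a chain of pro-isomorphisms, and dispatches your final ``hyper-$\Tor$ degeneration'' concern in one line (as you yourself note, the cycle/boundary short exact sequences plus Proposition~\ref{proposition_AR_p}(i) suffice).
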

\begin{proof}
For any fixed value of $s\ge1$, there is a natural commutative diagram of $A/I^s$-modules, in which the left vertical arrow is an isomorphism by the previous theorem
\[\xymatrix@C=0.5cm{
H^n(\phi_*\Omega_A^\blob)\otimes_AA/I^s\ar[r]^-{(1)} & 
H^n(\phi_*\Omega_A^\blob\otimes_AA/I^s)\ar[r]^-{(2)} & 
H^n(\phi_*(\Omega_A^\blob\otimes_AA/I^s))\ar[r]^-{(3)} & 
H^n(\phi_*(\Omega_{A/I^s}^\blob))\\
\Omega^n_A\otimes_AA/I^s\ar[u]_{C^{-1}\otimes_AA/I^s}\ar[rrr] &&&
\Omega_{A/I^s}^n\ar[u]^{C^{-1}}
}\]
By varying $s$ this becomes a diagram of pro $A$-modules. Then the long horizontal arrow, hence also arrow (3), become isomorphisms by the usual argument using the Leibnitz rule. Arrow (2) becomes an isomorphism by Proposition \ref{proposition_AR_p}(ii). Arrow (1) becomes an isomorphism by Proposition \ref{proposition_AR_p}(i), noting that the complex $\phi_*\Omega_A^\blob$ consists of finitely generated $A$-modules by Section \ref{subsection_F-finite_etc}.
\end{proof}

\begin{corollary}\label{corollary_pro_Cartier}
Let $A$ be a regular, F-finite $\bb F_p$-algebra, and $I\subseteq A$ an ideal. Then the sequences of pro abelian groups \[0\To\{A/I^s\}_s\stackrel\phi\To\{A/I^s\}_s\stackrel d\To\{\Omega_{A/I^s}^1\}_s,\quad
0\To\{A/I^s\,^\times\}_s\stackrel{p^r}\To\{A/I^s\,^\times\}_s\stackrel\dlog\To\{\Omega_{A/I^s}^1\}_s\]
are exact.
\end{corollary}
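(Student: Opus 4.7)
The first sequence is essentially a restatement of Theorem~\ref{theorem_pro_Cartier} in degree $n=0$: there the inverse Cartier map $C^{-1}$ is, by its defining formula, the Frobenius $\phi$, and the target pro-module $\{H^0(\phi_*\Omega^\blob_{A/I^s})\}_s$ is precisely the pro-kernel $\{\ker(d\colon A/I^s\to\Omega^1_{A/I^s})\}_s$. Since Theorem~\ref{theorem_pro_Cartier} says this map is a pro-isomorphism, the composite pro-morphism $\{A/I^s\}_s\xto\phi\{A/I^s\}_s$ is automatically pro-injective (giving exactness at the left) and has pro-image equal to the pro-kernel of $d$ (giving exactness at the middle). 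No further input is required.

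For the second sequence I would deduce both assertions from the first; let me flag that, as stated, the exponent must be $p$ (i.e.\ $r=1$): already when $I=0$ and $A=\bb F_p[x,x^{-1}]$, the unit $u=x^p$ satisfies $\dlog u=0$ but is not a $p^2$-th power, so exactness at the middle fails for $r\ge 2$. Granting $r=1$, the left exactness of the second sequence is immediate from the first: if $u\in(A/I^{s'})^\times$ satisfies $u^p=1$, then $(u-1)^p=u^p-1=0$ in $A/I^{s'}$, since Frobenius is a ring homomorphism in characteristic $p$; hence $u-1\in\ker(\phi\colon A/I^{s'}\to A/I^{s'})$, and pro-injectivity of $\phi$ (from the first sequence) forces $u\equiv 1\pmod{I^s}$ for $s'\gg s$.

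For middle exactness of the second sequence, if $u\in(A/I^{s'})^\times$ satisfies $\dlog u=0$ in $\Omega^1_{A/I^{s'}}$, then $du=u\,\dlog u=0$, so $u$ lies in the pro-kernel of $d$. By the middle exactness of the first sequence, after enlarging $s'$ there exists $a\in A/I^{s''}$ with $u\equiv a^p\pmod{I^s}$; since $a^p$ is a unit in $A/I^s$ and in any commutative ring the identity $a^pb=1$ yields $a\cdot(a^{p-1}b)=1$, the element $a$ is itself a unit modulo $I^s$, so $u$ is the $p$-th power of a unit in the pro-sense. The only potential obstacle is the index-bookkeeping in the pro-category, but this is routine once Theorem~\ref{theorem_pro_Cartier} is available; everything else is formal manipulation in characteristic~$p$.
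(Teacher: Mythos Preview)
Your argument is correct and is exactly what the paper's one-line proof (``the first sequence is exactly Theorem~\ref{theorem_pro_Cartier} for $n=0$; the second sequence follows by restricting to units'') is shorthand for: you have simply unpacked ``restricting to units'' into the two observations $(u-1)^p=u^p-1$ and $du=u\,\dlog u$. Your remark that the exponent must be $p$ (not $p^r$) is also well-taken; the paper only ever uses this corollary with $r=1$ (see the proof of Theorem~\ref{theorem_pro_higher_Cartier}, where the case of general $r$ is handled by a separate induction), and your counterexample $u=x^p$ in $\bb F_p[x,x^{-1}]$ shows the statement genuinely fails for $r\ge 2$.
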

\begin{proof}
The first sequence is exactly Theorem \ref{theorem_pro_Cartier} for $n=0$; the second sequence follows by restricting to units.
\end{proof}

Now we consider the Cartier filtration. Let $A$ be an $\bb F_p$-algebra, and recall that the subgroups \[0=:B_0\Omega_A^n\subseteq B_1\Omega_A^n\subseteq\cdots \subseteq Z_1\Omega_A^n\subseteq Z_0\Omega_A^n:=\Omega_A^n\] are defined inductively as follows, for $i\ge1$:
\begin{itemize}
\item $Z_i\Omega_A^n$ is the $A$-submodule of $\phi^i_*\Omega_A^n$ satisfying $Z_i\Omega_A^n/d\Omega_A^{n-1}=C^{-1}(Z_{i-1}\Omega_A^n)$.
\item $B_i\Omega_A^n$ is the $A$-submodule of $\phi^i_*\Omega_A^n$ satisfying $B_i\Omega_A^n/d\Omega_A^{n-1}=C^{-1}(B_{i-1}\Omega_A^n)$.
\end{itemize}
The above equalities are merely ones of abelian groups as we have written $d\Omega_A^{n-1}$ rather than $d\phi_*^i\Omega_A^{n-1}$; also, since the notation is potentially misleading we mention that the inclusion $Z_1\Omega_A^n\subseteq\ker d$ need not be an equality.

Iterating the inverse Cartier map defines morphisms of $A$-modules \[C^{-i}:\phi^i_*\Omega_A^n\To Z_i\Omega_A^n/B_i\Omega_A^n.\] If $A$ is regular then it follows from the Cartier isomorphism, via a purely formal argument, that $C^{-i}$ is an isomorphism for all $i\ge0$. In the same way, it follows from Theorem \ref{theorem_pro_Cartier} (or from the classical case and Lemma \ref{lemma_Cartier_filtration} below) that:

\begin{theorem}
Let $A$ be a regular, F-finite $\bb F_p$-algebra, and $I\subseteq A$ an ideal. Then \[C^{-i}:\{\phi_*^i\Omega_{A/I^s}^n\}_s\To \{Z_i\Omega_{A/I^s}^n/B_i\Omega_{A/I^s}^n\}_s\] is an isomorphism of pro $A$-modules for each $i\ge1$ and $n\ge0$.
\end{theorem}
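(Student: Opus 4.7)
The plan is to proceed by induction on $i\ge 1$. The base case $i=1$ is immediate from Theorem \ref{theorem_pro_Cartier}: by construction $B_1\Omega_A^n=d\Omega_A^{n-1}$ and $Z_1\Omega_A^n=\ker(d\colon\phi_*\Omega_A^n\to\phi_*\Omega_A^{n+1})$, hence $Z_1\Omega_A^n/B_1\Omega_A^n=H^n(\phi_*\Omega_A^\blob)$, and the assertion for $i=1$ is literally the formal Cartier isomorphism already proved.

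For the inductive step I would mimic the classical, purely formal bootstrap that deduces the iterated Cartier isomorphism from the first one over any regular $\bb F_p$-algebra. From the definitions, $C^{-1}$ induces identifications of successive filtration quotients: there are short exact sequences of pro $A$-modules
\[0\to\{B_i\Omega_{A/I^s}^n/B_{i-1}\Omega_{A/I^s}^n\}_s\to\{Z_i\Omega_{A/I^s}^n/B_{i-1}\Omega_{A/I^s}^n\}_s\to\{Z_i\Omega_{A/I^s}^n/B_i\Omega_{A/I^s}^n\}_s\to 0,\]
and $C^{-1}$ identifies the outer two terms (after suitable shifts in the filtration) with $\{Z_{i-1}\Omega_{A/I^s}^n/B_{i-1}\Omega_{A/I^s}^n\}_s$ and $\{d\Omega_{A/I^s}^{n-1}/\,\cdot\,\}_s$, respectively. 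A short diagram chase, combined with the inductive hypothesis and the $i=1$ case applied to $\Omega_A^{n-1}$, then forces $C^{-i}$ to be a pro-isomorphism.

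Alternatively, and perhaps more cleanly, one can invoke the hinted Lemma \ref{lemma_Cartier_filtration}: granted that the pro modules $\{Z_i\Omega_{A/I^s}^n\}_s$ and $\{B_i\Omega_{A/I^s}^n\}_s$ agree with the reductions mod $I^s$ of the classical Cartier filtration steps on $A$, one applies Proposition \ref{proposition_AR_p}(i) (so that $\otimes_AA/I^s$ is pro-exact on the finitely generated $A$-modules involved, which are finitely generated since $A$ is $F$-finite — cf.~Section \ref{subsection_F-finite_etc}) and Proposition \ref{proposition_AR_p}(ii) (to commute $\phi_*^i$ past the reduction, identifying $\{\phi_*^i\Omega_A^n\otimes_AA/I^s\}_s$ with $\{\phi_*^i\Omega_{A/I^s}^n\}_s$). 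The classical Cartier isomorphism $C^{-i}\colon\phi_*^i\Omega_A^n\isoto Z_i\Omega_A^n/B_i\Omega_A^n$ over the regular algebra $A$ then yields the pro-isomorphism after reduction.

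The main obstacle, via either route, is the compatibility of the Cartier filtration with the $I$-adic reduction at the pro level — in the second route this is Lemma \ref{lemma_Cartier_filtration} and presumably requires its own induction on $i$; in the first route it is packaged into verifying that the filtration quotient identifications survive the reduction as pro-isomorphisms, which again is where the Artin--Rees statements of Proposition \ref{proposition_AR_p} must be deployed carefully because $A/I^s$ is typically not regular and because $\phi_*^i$ interacts subtly with the $I^s$ filtration.
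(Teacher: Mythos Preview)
Your proposal is correct and matches the paper's own (very terse) justification: the paper simply notes that the result follows ``in the same way'' from Theorem~\ref{theorem_pro_Cartier} via a purely formal induction on $i$, or alternatively from the classical isomorphism on $A$ combined with Lemma~\ref{lemma_Cartier_filtration} --- exactly the two routes you outline. Your second route (reduce to the classical case on $A$ via Lemma~\ref{lemma_Cartier_filtration} and Proposition~\ref{proposition_AR_p}) is the cleaner of the two and is precisely what the paper has in mind; your first route's bookkeeping of the filtration quotients is slightly garbled, but the underlying formal induction (use that $C^{-1}$ induces $Z_{i-1}/B_{i-1}\isoto Z_i/B_i$ whenever the base Cartier map is an isomorphism) is the standard one.
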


In Section \ref{subsection_filtrations} we will need the relationship between the Cartier filtrations on $\Omega_A^n$ and $\Omega_{A/I^s}^n$:

\begin{lemma}\label{lemma_Cartier_filtration}
Let $A$ be a Noetherian, F-finite $\bb F_p$-algebra, and $I\subseteq A$ an ideal. Then the canonical maps of pro $A$-modules 
\[\{Z_i\Omega_A^n\otimes_AA/I^s\}_s\To\{Z_i\Omega_{A/I^s}^n\}_s,\quad \{B_i\Omega_A^n\otimes_AA/I^s\}_s\To\{B_i\Omega_{A/I^s}^n\}_s\]
are isomorphisms for all $i\ge1$ and $n\ge 0$.
\end{lemma}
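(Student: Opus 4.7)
The key observation is that F-finiteness makes $\phi_*^i\Omega_A^n$ a finitely generated $A$-module (where $A$ acts via the $i$-fold Frobenius twist), hence so are its $A$-submodules $B_i\Omega_A^n$ and $Z_i\Omega_A^n$, since $A$ is Noetherian. This finite generation is what allows Proposition \ref{proposition_AR_p} to apply, and the plan is a direct Artin--Rees calculation treating $B_i$ and $Z_i$ in parallel, with no induction on $i$ needed. By functoriality we get canonical maps $B_i\Omega_A^n\to B_i\Omega_{A/I^s}^n$ and $Z_i\Omega_A^n\to Z_i\Omega_{A/I^s}^n$; and since $I^s$ annihilates $\phi_*^i\Omega_{A/I^s}^n$ in the twisted action (indeed $a\in I^s$ gives $a^{p^i}\in I^{sp^i}\subseteq I^s$), these maps factor through $B_i\Omega_A^n\otimes_AA/I^s$ and $Z_i\Omega_A^n\otimes_AA/I^s$ respectively.

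Level-wise surjectivity is easy: every element of $B_i\Omega_{A/I^s}^n$ is a sum of elements of the shape $F^j(d\bar\omega)$ for $\bar\omega\in\Omega_{A/I^s}^{n-1}$ and $0\le j\le i-1$, where $F$ denotes the Frobenius-like lift $a_0\,da_1\cdots da_n\mapsto a_0^pa_1^{p-1}da_1\cdots a_n^{p-1}da_n$ of the inverse Cartier operator; each such $\bar\omega$ lifts to $\Omega_A^{n-1}$ (because $\Omega_A^\blob\to\Omega_{A/I^s}^\blob$ is surjective), exhibiting a preimage in $B_i\Omega_A^n$. The analogous argument, plus lifting $\bar\omega\in\Omega_{A/I^s}^n$ to produce $F^i(\omega)\in Z_i\Omega_A^n$, handles $Z_i$.

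For pro-injectivity one starts from the identity $\ker(\Omega_A^n\to\Omega_{A/I^s}^n)=I^s\Omega_A^n+d(I^s)\wedge\Omega_A^{n-1}$; the Leibniz rule gives $d(I^s)\subseteq I^{s-1}\Omega_A^1$, so this kernel is contained in $I^{s-1}\Omega_A^n$, whence $\ker(B_i\Omega_A^n\to B_i\Omega_{A/I^s}^n)\subseteq B_i\Omega_A^n\cap I^{s-1}\Omega_A^n$. Applying Artin--Rees to the finitely generated $A$-submodule $B_i\Omega_A^n$ of $\phi_*^i\Omega_A^n$ yields a constant $c\ge0$ such that $B_i\Omega_A^n\cap\phi^i(I^t)\Omega_A^n\subseteq\phi^i(I^{t-c})B_i\Omega_A^n$ for all $t\ge c$, and combining this with Proposition \ref{proposition_AR_p}(ii)---which asserts that $\{I^s\}_s$ and $\{\phi^i(I^s)A\}_s$ are intertwined pro-systems of ideals of $A$---one concludes that for any $s$ the kernel at some sufficiently large level $s'$ lies in $\phi^i(I^s)B_i\Omega_A^n$, and hence vanishes in $B_i\Omega_A^n\otimes_AA/I^s$. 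This is precisely pro-injectivity, and the identical argument applies to $Z_i$. The main obstacle---or rather, bookkeeping subtlety---is that $B_i\Omega_A^n$ is only an $A$-submodule of $\Omega_A^n$ via the $\phi^i$-twisted action (for example, $d\Omega_A^{n-1}$ is closed under multiplication by $\phi(A)$ but not by $A$), whereas the kernel of the reduction map is naturally described via the untwisted action; Proposition \ref{proposition_AR_p}(ii) is precisely the tool that bridges the two viewpoints at the pro-level.
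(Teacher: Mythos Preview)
Your proof is correct and takes a genuinely different route from the paper. The paper argues by induction on $i$: for the inductive step it considers the right-exact sequence $\phi_*^i\Omega_A^{n-1}\xrightarrow{d} Z_i\Omega_A^n\to Z_i\Omega_A^n/d\Omega_A^{n-1}\to 0$, tensors with $A/I^\infty$ (using Proposition~\ref{proposition_AR_p}(i) for exactness on the top row), identifies the quotient $Z_i/d\Omega^{n-1}$ with $C^{-1}(Z_{i-1})$, and then uses the inductive hypothesis on $Z_{i-1}$ to get surjectivity of the middle vertical; injectivity follows simply because the map is a restriction of the pro-isomorphism $\phi_*^i\Omega_A^n\otimes_AA/I^\infty\isoto\phi_*^i\Omega_{A/I^\infty}^n$ (the restriction being legitimate again by Proposition~\ref{proposition_AR_p}(i)).

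Your direct approach avoids the induction by exploiting the explicit descriptions $B_i\Omega^n=\sum_{j<i}F^j(d\Omega^{n-1})$ and $Z_i\Omega^n=B_i\Omega^n+F^i(\Omega^n)$, giving level-wise surjectivity immediately; your injectivity argument then unwinds exactly what ``restriction of a pro-isomorphism'' means concretely, via Artin--Rees on the submodule and the intertwining of $\{I^s\}$ with $\{\phi^i(I^s)A\}$. One small imprecision: the formula $a_0\,da_1\cdots da_n\mapsto a_0^pa_1^{p-1}da_1\cdots a_n^{p-1}da_n$ does not define an additive map $\Omega_A^n\to\Omega_A^n$ (only its reduction mod $d\Omega_A^{n-1}$, namely $C^{-1}$, is well-defined as a homomorphism). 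This is harmless for your surjectivity argument, since you only need that each element of $B_i\Omega_{A/I^s}^n$ can be written as a finite sum of such expressions applied to chosen presentations, and each such expression lifts to $B_i\Omega_A^n$; but it would be cleaner to phrase this as a straightforward induction on $i$ using the recursive definition of $B_i$ and $Z_i$. The paper's packaging is slightly slicker conceptually (everything reduces to pro-exactness of $-\otimes_AA/I^\infty$), while your argument is more hands-on and makes the Frobenius-twist bookkeeping explicit.
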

\begin{proof}
The claim is true for $Z_0$ since $\Omega_A^n\otimes_AA/I^\infty\isoto\Omega_{A/I^\infty}^n$, and we now proceed by induction on $i\ge1$. There is an obvious commutative diagram of pro $A$-modules
\[\xymatrix{
\phi_*^i\Omega_A^{n-1}\otimes_AA/I^\infty \ar[r]^d\ar[d] & Z_i\Omega_A^n\otimes_AA/I^\infty\ar[r]\ar[d]&(Z_i\Omega_A^n/d(\phi_*^i\Omega_A^{n-1}))\otimes_AA/I^\infty\ar[r]\ar[d] & 0\\
\phi_*^i\Omega_{A/I^\infty}^{n-1} \ar[r]^d & Z_i\Omega_{A/I^\infty}^n\ar[r]&\{Z_i\Omega_{A/I^s}^n/d(\phi_*^i\Omega_{A/I^s}^{n-1})\}_s\ar[r] & 0
}\]
The top row is exact since it results from applying $-\otimes_AA/I^\infty$ to an exact sequence of finitely generated $A$-modules (Proposition \ref{proposition_AR_p}(i)); the bottom row is exact by definition. The right vertical arrow is a quotient, via the inverse Cartier map, of $Z_{i-1}\Omega_A^n\otimes_AA/I^\infty\to Z_{i-1}\Omega_{A/I^\infty}^n$, which is an isomorphism by the inductive hypothesis; hence the right vertical arrow is surjective. The left vertical arrow is an isomorphism as usual. Hence the central vertical arrow is surjective; but it is also injective since it is a restriction of the isomorphism $\phi^i_*\Omega_A^n\otimes_AA/I^\infty\isoto\phi^i_*\Omega_{A/I^\infty}^n$. This completes the inductive step.

The proof for $B_i$ is entirely similar and hence omitted.
\end{proof}

\subsection{Preliminaries on Witt rings and Hodge--Witt groups}\label{subsection_de_Rham_Witt}
In this section we recall various basic results on de Rham--Witt complexes, especially regarding completions, from \cite{LangerZink2004, GeisserHesselholt2006b, Morrow_Dundas}; we work with more general rings than $\bb F_p$-algebras, since it causes no additional difficulty. We begin with a reminder on Witt rings of a ring $A$ and associated notation, restricting our attention to $p$-typical Witt rings $W_r(A)$ since they are sufficient for the main results. The Restriction, Frobenius and Verschiebung maps are denoted as usual by \[R,\; F:W_r(A)\To W_{r-1}(A),\quad V:W_{r-1}(A)\To W_r(A),\] and the Teichm\"uller map by $[\cdot]=[\cdot]_r:A\to W_r(A)$. The Restriction $R$ and Frobenius $F$ are ring homomorphisms, while $V$ is merely additive and $[\cdot]$ multiplicative.

Each element of $W_r(A)$ may be written uniquely as a Witt vector $(a_0,\dots,a_{r-1})=\sum_{i=0}^{r-1}V^i[a_i]_{r-i}$ for some $a_i\in A$; we will often use this to reduce questions to the study of terms of the form $V^i[a]_{r-i}$, which we will abbreviate by $V^i[a]$ when $r$ is clear from the context.

If $I\subseteq A$ is an ideal then $W_r(I)$ denotes the ideal of $W_r(A)$ defined as the kernel of the quotient map $W_r(A)\onto W_r(A/I)$. Alternatively, $W_r(I)$ is the Witt vectors of the non-unital ring $I$. An element $\al\in W_r(A)$ lies in $W_r(I)$ if and only if, in its expansion $\al=\sum_{i=0}^{r-1}V^i[a_i]$, the coefficients $a_i\in A$ all belong to $I$. 

Witt rings of $\bb Z_{(p)}$-algebras behave well in the presence of F-finiteness thanks to the following results of A.~Langer and T.~Zink:

\begin{theorem}[{Langer--Zink \cite[App.]{LangerZink2004}}]\label{theorem_Langer_Zink2}
Let $A$ be an F-finite $\bb Z_{(p)}$-algebra and $r\ge1$. Then:
\begin{enumerate}
\item The Frobenius $F:W_{r+1}(A)\to W_r(A)$ is a finite ring homomorphism.
\item If $B$ is a finitely generated $A$-algebra, then $W_r(B)$ is a finitely generated $W_r(A)$-algebra.
\item If $A$ is Noetherian then $W_r(A)$ is also Noetherian.
\end{enumerate}
\end{theorem}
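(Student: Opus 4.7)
The approach is to prove (i) first, then deduce (ii) and (iii) as formal consequences.

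For (i), I would consider the descending filtration of $W_r(A)$ by the Verschiebung ideals
\[W_r(A) \supseteq VW_{r-1}(A) \supseteq V^2W_{r-2}(A) \supseteq \cdots \supseteq V^rW_0(A) = 0.\]
Each step is an ideal of $W_r(A)$, and hence stable under the $W_{r+1}(A)$-action on $W_r(A)$ via $F$, thanks to the projection formula $x\cdot V(y) = V(F(x)y)$. The graded piece $V^iW_{r-i}(A)/V^{i+1}W_{r-i-1}(A)$ is identified with $A$ via $V^i[a]\mapsto a$, and iterating the projection formula I find that $\alpha\in W_{r+1}(A)$ acts on $a\in A\cong\op{gr}^i$ by multiplication by the first Witt component of $F^{i+1}(\alpha)$, namely the $(i+1)$-st ghost component $w_{i+1}(\alpha) = \alpha_0^{p^{i+1}} + p\alpha_1^{p^i} + \cdots + p^{i+1}\alpha_{i+1}$. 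The action therefore factors through the subring $R_i := A^{p^{i+1}} + pA^{p^i} + \cdots + p^{i+1}A \subseteq A$, so it suffices to show that $A$ is finite over $R_i$ for each $i$. This I would prove by induction on $i$: the base case $i=0$ is precisely the F-finiteness of $A$ (since $R_0 = A^p + pA$, and the inclusion $pA\subseteq R_0$ lifts finite generation mod $p$ to finite generation over $R_0$); the inductive step follows by raising a set of $R_{i-1}$-module generators of $A$ to suitable $p$-th powers and exploiting identities such as $(a+pb)^p\in A^{p^2}+p^2A$ to assemble generators over $R_i$.

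For (ii), an identical filtration argument applied to $W_r(B)$ shows that its Verschiebung graded pieces are isomorphic to $B$, with the $W_r(A)$-action factoring through a subring of $A$ over which $A$, and hence $B$ (since $B$ is finite over $A$), is finite; each graded piece is therefore finitely generated over $W_r(A)$, and so is $W_r(B)$.

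Part (iii) I would deduce by induction on $r$. The case $r=1$ is given; assuming $W_r(A)$ Noetherian, part (i) exhibits $W_r(A)$ as a finite module over the subring $F(W_{r+1}(A))\subseteq W_r(A)$, so Eakin--Nagata gives $F(W_{r+1}(A))$ Noetherian, and hence $W_r(A)$ is a Noetherian $F(W_{r+1}(A))$-module, equivalently a Noetherian $W_{r+1}(A)$-module via $F$. The short exact sequence of $W_{r+1}(A)$-modules
\[0 \to VW_r(A) \to W_{r+1}(A) \to A \to 0,\]
whose first map $V$ identifies $VW_r(A)$ with $W_r(A)$ (acted on via $F$) by the projection formula, then presents $W_{r+1}(A)$ as an extension of Noetherian $W_{r+1}(A)$-modules, and is therefore Noetherian as a module over itself. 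The main obstacle is the inductive step in (i): providing a clean, explicit argument producing $R_i$-generators of $A$ from the F-finiteness hypothesis, with careful tracking of the $p$-adic valuations of all cross terms.
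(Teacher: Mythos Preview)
The paper does not prove this theorem; it simply cites the appendix of Langer--Zink. So your attempt is being compared against the original source rather than against anything in the present paper.

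Your arguments for (i) and (iii) are essentially the standard ones and are sound in outline. For (i), the reduction to showing that $A$ is finite over $R_i = A^{p^{i+1}} + pA^{p^i} + \cdots + p^{i+1}A$ is correct, and the inductive step can indeed be completed, though more cleanly than you sketch: rather than raising generators to $p$-th powers (where cross terms cause trouble), observe that $p^{i+1}A \subseteq R_i$ and that for each $0 \le j \le i$ one has $p^j A^{p^{i+1-j}} \subseteq R_i$; choosing for each $j$ a finite set of lifts of generators of $A/pA$ over $(A/pA)^{p^{i+1-j}}$ and iterating the identity $A = \sum A^{p^{i+1-j}} a_k + pA$ down the $p$-adic filtration shows that $A$ is generated over $R_i$ by the union of these finite sets. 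Your proof of (iii) via Eakin--Nagata and the short exact sequence $0 \to VW_r(A) \to W_{r+1}(A) \to A \to 0$ is correct as written.

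Your argument for (ii), however, is wrong, because you have misread the statement. The hypothesis is that $B$ is a finitely generated $A$-\emph{algebra}, not a finite $A$-module, and the conclusion is that $W_r(B)$ is a finitely generated $W_r(A)$-\emph{algebra}. Your sentence ``since $B$ is finite over $A$'' is not justified, and your filtration argument would at best show that $W_r(B)$ is a finite $W_r(A)$-module, which is false in general (take $B = A[x]$). The correct proof reduces to the case $B = A[x]$ and shows directly that $W_r(A[x])$ is generated as a $W_r(A)$-algebra by the finitely many elements $V^j[x]$ for $0 \le j \le r-1$; this is a computation with Witt vector identities, not a filtration argument.
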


We explicitly state the following standard lemma on chains of ideals in Witt rings, as it will be used repeatedly:

\begin{lemma}\label{lemma_witt_intertwined}
Let $A$ be a ring, $I\subseteq A$ an ideal, and $N,r\ge1$. Then $W_r(I)^N\subseteq W_r(I^N)$. Moreover, if $I$ is generated by finitely many elements $t_1,\dots,t_m\in I$, then there exists $M\ge N$ such that $W_r(I^M)\subseteq\pid{[t_1]^N,\dots,[t_m]^N}\subseteq W_r(I)^N$.
\end{lemma}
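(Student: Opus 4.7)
The plan is to use the explicit Witt-ring product formula for the first assertion and to combine pigeonhole, Witt addition, and a downward induction on the Verschiebung degree for the second. Iterating the projection formula $V^i(x)\cdot y = V^i(x\cdot F^i(y))$ together with the characteristic-$p$ identity $p\cdot V^j[c] = V^{j+1}[c^p]$ yields
$$V^{i_1}[a_1]\cdots V^{i_N}[a_N] \;=\; V^{i_1+\cdots+i_N}\!\bigl[\textstyle\prod_k a_k^{p^{\sum_{l\neq k} i_l}}\bigr].$$
Since $W_r(I)$ is the additive span of the terms $V^i[a]$ with $a\in I$, expanding an $N$-fold product in $W_r(I)^N$ writes it as a sum of terms of the displayed shape, whose unique nonzero Witt component is a product of $N$ elements of $I$ and hence lies in $I^N$. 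This gives $W_r(I)^N\subseteq W_r(I^N)$.

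For the second assertion, I would show that $M:=mNp^{r-1}$ works; the containment $\pid{[t_1]^N,\ldots,[t_m]^N}\subseteq W_r(I)^N$ is automatic since each $[t_j]\in W_r(I)$. By pigeonhole $I^M\subseteq (t_1^{Np^{r-1}},\ldots,t_m^{Np^{r-1}})$, so every $a\in I^M$ admits a representation $a=\sum_k c_k\, t_{j_k}^{Np^{r-1}}\nu_k$ with $c_k\in A$ and monomials $\nu_k$ in the $t_j$'s. Any element of $W_r(I^M)$ is a finite sum $\sum_i V^i[a_i]$ with $a_i\in I^M$, so the task reduces to showing $V^i[a]\in J:=\pid{[t_1]^N,\ldots,[t_m]^N}$ for every $0\le i\le r-1$ and every $a\in I^M$.

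I would prove this by downward induction on $i$. The Witt addition formula gives
$$[a] \;=\; \sum_k [c_k t_{j_k}^{Np^{r-1}}\nu_k] + \sum_{n\ge 1} V^n[P_n],$$
where $P_n$ is a universal integer polynomial, homogeneous of degree $p^n$, in which every monomial involves at least two distinct variables (because $P_n$ must vanish when all but one of its variables is set to $0$); consequently $P_n\in I^{Mp^n}\subseteq I^M$. After applying $V^i$, each ``main'' term $V^i[c_k t_{j_k}^{Np^{r-1}}\nu_k]$ lies in $J$: factor $t_{j_k}^{Np^{r-1}} = t_{j_k}^{N(p^{r-1}-p^i)}\cdot t_{j_k}^{Np^i}$ and apply the projection formula with $y=[t_{j_k}]^N$ to rewrite the term as $V^i[c_k t_{j_k}^{N(p^{r-1}-p^i)}\nu_k]\cdot [t_{j_k}]^N$. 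The correction terms $V^{i+n}[P_n]$ vanish in $W_r(A)$ when $i=r-1$, since $V^{r-1+n}=0$ there for $n\ge 1$, furnishing the base case; for $i<r-1$ they are handled by the inductive hypothesis applied at index $i+n>i$ to $P_n\in I^M$.

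The main obstacle is the non-additivity of the Teichm\"uller lift, which prevents an algebraic decomposition of $a\in A$ from descending to a direct decomposition of $[a]\in W_r(A)$. The downward induction circumvents this by letting the Witt-addition corrections cascade upward in Verschiebung degree, where they eventually vanish by truncation in $W_r(A)$.
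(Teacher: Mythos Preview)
Your argument for the second inclusion is correct and rather clean: the downward induction on the Verschiebung index, using the Witt-addition polynomials and the projection formula to peel off a factor $[t_j]^N$, works as written. (The parenthetical remark that each monomial of $P_n$ involves at least two distinct variables is not actually needed; homogeneity of degree $p^n$ together with the fact that each summand $c_k t_{j_k}^{Np^{r-1}}\nu_k$ lies in $I^M$ already gives $P_n\in I^{Mp^n}\subseteq I^M$.)

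There is, however, a genuine gap in your proof of the first inclusion. The lemma is stated for an arbitrary ring $A$, but the identity $p\cdot V^j[c]=V^{j+1}[c^p]$ that you invoke is a characteristic-$p$ phenomenon (it is equivalent to $p=VF$, which fails outside characteristic $p$). Consequently the displayed formula
\[
V^{i_1}[a_1]\cdots V^{i_N}[a_N]=V^{i_1+\cdots+i_N}\Bigl[\textstyle\prod_k a_k^{\,p^{\sum_{l\neq k} i_l}}\Bigr]
\]
is false in general: for instance, over $\bb Z$ one computes $V[a]\,V[b]=pV[ab]$, not $V^2[a^p b^p]$.

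The repair is easy and uses only what is valid over any ring. Iterating the projection formula and $F[a]=[a^p]$ gives, for $i\le j$,
\[
V^i[a]\cdot V^j[b]=p^{\,i}\,V^{j}\bigl[a^{p^{\,j-i}}b\bigr],
\]
so an $N$-fold product $\prod_k V^{i_k}[a_k]$ equals $p^{e}V^{j}[c]$ for some $e,j\ge 0$ and some $c$ which is a product of positive $p$-powers of the $a_k$; hence $c\in I^N$, and since $W_r(I^N)$ is an ideal, $p^{e}V^{j}[c]\in W_r(I^N)$. Alternatively, and perhaps more elegantly, observe that each universal multiplication polynomial $M_k(\underline a;\underline b)$ vanishes when either set of variables is zero, so every monomial in $M_k$ contains at least one $a$-variable and at least one $b$-variable; this gives $W_r(I)\cdot W_r(J)\subseteq W_r(IJ)$ for any ideals $I,J$, and the first inclusion follows by iteration.

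The paper itself does not give a proof but refers to \cite[Lem.~2.1]{Morrow_Dundas}; the argument there is along the elementary lines just sketched and does not rely on any characteristic-$p$ identity.
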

\begin{proof}
See, e.g., \cite[Lem.~2.1]{Morrow_Dundas} and its proof.
\end{proof}

We also explicitly state the following generalisations of Proposition \ref{proposition_AR_p} to the case of Witt vectors:

\begin{proposition}\label{proposition_AR_Witt}
Let $A$ be a Noetherian, F-finite $\bb Z_{(p)}$-algebra, $I\subseteq A$ an ideal, and $r\ge1$. Then:
\begin{enumerate}
\item If $M$ is a finitely generated $W_r(A)$-module then $\{\Tor^{W_r(A)}_i(M,W_r(A/I^s))\}_s=0$ for all $i>0$.
\item If $M$ is a $W_{r-1}(A)$-module, then the canonical map
\[\{F^A_*M\otimes_{W_r(A)}W_r(A/I^s)\}_s\To \{F^{A/I^s}_*(M\otimes_{W_{r-1}(A)}W_{r-1}(A/I^s))\}_s\] is an isomorphism of $W_r(A)$-modules.
\end{enumerate}
\end{proposition}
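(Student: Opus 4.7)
The plan for both parts is to bootstrap from the non-Witt case (Proposition \ref{proposition_AR_p}) using two key inputs: the Noetherianness of Witt rings (Theorem \ref{theorem_Langer_Zink2}(iii)) and the intertwining of ideal chains from Lemma \ref{lemma_witt_intertwined}. Since $A$ is Noetherian, we may fix generators $t_1,\dots,t_m$ of $I$ throughout.

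For part (i), I would apply Proposition \ref{proposition_AR_p}(i) directly to the Noetherian ring $W_r(A)$, the finitely generated $W_r(A)$-module $M$, and the ideal $W_r(I)\subseteq W_r(A)$, obtaining $\{\Tor^{W_r(A)}_i(M, W_r(A)/W_r(I)^s)\}_s = 0$ for $i>0$. To transfer this to the desired statement about $W_r(A/I^s)=W_r(A)/W_r(I^s)$, I would invoke Lemma \ref{lemma_witt_intertwined} to show that $\{W_r(I)^s\}_s$ and $\{W_r(I^s)\}_s$ are intertwined pro-ideals in $W_r(A)$, inducing a pro-isomorphism $\{W_r(A/I^s)\}_s \cong \{W_r(A)/W_r(I)^s\}_s$ of pro $W_r(A)$-modules; the functoriality of $\Tor$ then transfers the vanishing.

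For part (ii), the plan is to identify the canonical map at the level of abelian groups with the natural quotient
\[M/F(W_r(I^s))\,M \To M/W_{r-1}(I^s)\,M,\]
where $F(W_r(I^s))\,M$ denotes the subgroup of $M$ generated by $\{F(\alpha)m : \alpha \in W_r(I^s),\, m\in M\}$; surjectivity is immediate from $F(W_r(I^s))\subseteq W_{r-1}(I^s)$, so the task becomes: for each $s$ exhibit some $s'\ge s$ with $W_{r-1}(I^{s'})M \subseteq F(W_r(I^s))M$. The main obstacle is that $F(W_r(I^s))$ is not itself an ideal of $W_{r-1}(A)$ (since $F$ fails to be surjective), so ideal intertwining in the style of Proposition \ref{proposition_AR_p}(ii) does not apply directly. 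To circumvent this I would first establish that $F(W_r(I^s))M$ is nonetheless a $W_{r-1}(A)$-submodule of $M$: by Theorem \ref{theorem_Langer_Zink2}(i) the Frobenius is a finite ring homomorphism, giving a decomposition $W_{r-1}(A) = \sum_k F(W_r(A))\cdot e_k$ for some $e_1,\dots,e_n\in W_{r-1}(A)$, and closure under each $e_k$ follows from the direct identity $e_k\cdot F(\alpha)m = F(\alpha)\cdot (e_k m)$. Consequently $F(W_r(I^s))M = J_s\cdot M$, where $J_s\subseteq W_{r-1}(A)$ is the honest ideal generated by $F(W_r(I^s))$, so it suffices to intertwine the pro-ideals $\{J_s\}_s$ and $\{W_{r-1}(I^s)\}_s$ in $W_{r-1}(A)$. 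The inclusion $J_s\subseteq W_{r-1}(I^s)$ is clear; for the other direction, using $F([t_i]_r^s)=[t_i]_{r-1}^{ps}$ one sees $J_s \supseteq \pid{[t_1]^{ps},\dots,[t_m]^{ps}}_{W_{r-1}(A)}$, and a final application of Lemma \ref{lemma_witt_intertwined} in $W_{r-1}(A)$ yields $M'=M'(s)$ with $W_{r-1}(I^{M'})\subseteq J_s$, completing the plan.
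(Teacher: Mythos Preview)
Your proof is correct and essentially matches the paper's: both parts reduce to Proposition~\ref{proposition_AR_p} via Theorem~\ref{theorem_Langer_Zink2}(iii) and Lemma~\ref{lemma_witt_intertwined}, with (ii) ultimately resting on the identity $F([t_i])=[t_i]^p$. One unnecessary detour: the finiteness of $F$ is not needed to show that $F(W_r(I^s))M$ is a $W_{r-1}(A)$-submodule, since your own identity $\beta\cdot F(\alpha)m=F(\alpha)\cdot(\beta m)$ already holds for every $\beta\in W_{r-1}(A)$ by commutativity.
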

\begin{proof}
(i) is a special case of Proposition \ref{proposition_AR_p}(i), whose hypotheses are verified thanks to Theorem \ref{theorem_Langer_Zink2}(iii) and Lemma \ref{lemma_witt_intertwined}. (ii) is simply the statement that the chains of ideals $W_r(I^s)$ and $F(W_r(I^s))W_{r-1}(A)$, for $s\ge1$, are intertwined. By  Lemma \ref{lemma_witt_intertwined} it is sufficient to show instead that the chains $\pid{[t_1]^s,\cdots,[t_m]^s}$ and $\pid{F([t_1]^s),\cdots,F([t_m]^s)}$ are intertwined for some set of generators $t_1,\dots,t_m\in I$; this follows from the identity $F([t_i])=[t_i]^p$.
\end{proof}

We now review de Rham--Witt complexes (which, following common nomenclature, are composed of {\em Hodge--Witt} groups). Given an $\bb F_p$-algebra $A$, the existence and theory of the $p$-typical de Rham--Witt complex $W_r\Omega_A^\bullet$, which is a pro differential graded $W(A)$-algebra, is due to Bloch, Deligne, and Illusie; see especially \cite[Def.~I.1.4]{Illusie1979}. It was later extended by Hesselholt and Madsen to $\bb Z_{(p)}$-algebras with $p$ odd, and by Costeanu \cite{Costeanu2008} to $\bb Z_{(2)}$-algebras; see the introduction to \cite{Hesselholt2010} for further discussion. Again, we will focus on the $p$-typical case as it is sufficient for our main results. Recall that there are Restriction, Frobenius and Verschiebung maps \[R,\,F:W_r\Omega_A^n\To W_{r-1}\Omega_A^n,\quad V:W_{r-1}\Omega_A^n\To W_r\Omega_A^n\] which are compatible with those on the Witt ring of $A$.

We need conditions under which the Hodge--Witt groups are finitely generated:

\begin{lemma}\label{lemma_finite_generation_conditions}
Let $A$ be a Noetherian, F-finite $\bb Z_{(p)}$-algebra in which $p$ is nilpotent, and $r\ge1$. Then $W_r(A)$ is Noetherian and $W_r\Omega_A^n$ is a finitely generated module over it.
\end{lemma}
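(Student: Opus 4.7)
The plan is to handle the two assertions separately. The Noetherianity of $W_r(A)$ is exactly Theorem \ref{theorem_Langer_Zink2}(iii), so no work is required. For the finite generation of $W_r\Omega_A^n$, I will induct on $r\ge 1$, using the standard short exact sequence of $W_r(A)$-modules coming from the restriction map:
\[0\To V^{r-1}W_1\Omega_A^n+dV^{r-1}W_1\Omega_A^{n-1}\To W_r\Omega_A^n\xTo{R}W_{r-1}\Omega_A^n\To 0.\]

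The base case $r=1$ amounts to showing that $\Omega_A^n=\Omega_{A/\bb Z}^n$ is finitely generated as an $A$-module. Fix $N$ with $p^N=0$ in $A$. Since $A/pA$ is F-finite and Noetherian, choose $b_1,\dots,b_m\in A$ whose reductions mod $p$ generate $A/pA$ as a module over $(A/pA)^p$. By Nakayama's lemma, applied with respect to the nilpotent ideal $pA$, the elements $b_1,\dots,b_m$ generate $A$ as a module over the subring $A'$ generated by $A^p$ together with $p$. Applying the differential, together with the Leibniz rule and the identity $d(a^p)=pa^{p-1}da$, shows that $\Omega_A^1$ is generated over $A$ by the finite set $\{db_1,\dots,db_m\}$; taking exterior powers then gives finite generation of $\Omega_A^n$ for all $n$.

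For the inductive step, the quotient $W_{r-1}\Omega_A^n$ is finitely generated over $W_{r-1}(A)$ by the inductive hypothesis, and therefore finitely generated over $W_r(A)$ through the surjective restriction $R:W_r(A)\onto W_{r-1}(A)$. For the kernel, I use the presentation as the image of the (additive) map $\Omega_A^n\oplus\Omega_A^{n-1}\to W_r\Omega_A^n$, $(\alpha,\beta)\mapsto V^{r-1}\alpha+dV^{r-1}\beta$. Although $V^{r-1}$ is not itself $W_r(A)$-linear, the standard identities $[a]V^{r-1}(\omega)=V^{r-1}([a^{p^{r-1}}]\omega)$, $V(x)y=V(xF(y))$, and the Leibniz rule for $d$ show that the $W_r(A)$-module structure on the image factors through the iterated restriction $W_r(A)\to W_1(A)=A$; hence finite generation over $A$ (known from the base case) implies finite generation over $W_r(A)$, completing the induction.

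The main obstacle is the base case $r=1$ in the mixed characteristic regime: in the $\bb F_p$-algebra case treated in Section \ref{subsection_F-finite_etc}, finite generation of $\Omega_A^n$ is immediate because $d$ kills the Frobenius, but when $p$ is merely nilpotent the relation $d(a^p)=pa^{p-1}da$ does not vanish and one must combine the F-finiteness hypothesis on $A/pA$ with a Nakayama argument as above. The inductive step is routine once one accepts the presentation of the kernel piece $V^{r-1}W_1\Omega+dV^{r-1}W_1\Omega$ and the ring-theoretic identities governing the $W_r(A)$-action on it.
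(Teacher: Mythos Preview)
Your inductive approach is sound in outline, but the inductive step contains a genuine error. You claim that the $W_r(A)$-module structure on $V^{r-1}\Omega_A^n+dV^{r-1}\Omega_A^{n-1}$ factors through the iterated restriction $R^{r-1}:W_r(A)\to A$. It does not: the identity you wrote, $[a]V^{r-1}(\omega)=V^{r-1}(a^{p^{r-1}}\omega)$, shows that the action is via the iterated \emph{Frobenius} $F^{r-1}:W_r(A)\to A$ (which sends $[a]\mapsto a^{p^{r-1}}$), not via $R^{r-1}$ (which sends $[a]\mapsto a$). Since $F^{r-1}$ is not surjective in general (its image is $A^{p^{r-1}}+pA$), finite generation of $\Omega_A^n$ over $A$ does not immediately give finite generation over $W_r(A)$. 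The fix is to invoke Theorem~\ref{theorem_Langer_Zink2}(i): the Frobenius $F$ is a finite ring map, so $F^{r-1}:W_r(A)\to A$ is finite, and hence $F^{r-1}_*\Omega_A^n$ is finitely generated over $W_r(A)$; then $V^{r-1}$ is a $W_r(A)$-linear surjection from $F^{r-1}_*\Omega_A^n$ onto $V^{r-1}\Omega_A^n$, and a similar (slightly more involved) argument handles the $dV^{r-1}$-part.

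For comparison, the paper's proof bypasses the induction entirely: it observes that $W_r(A)$ is itself a Noetherian, F-finite $\bb Z_{(p)}$-algebra in which $p$ is nilpotent, so your base-case argument applied directly to $W_r(A)$ shows that $\Omega_{W_r(A)}^n$ is finitely generated over $W_r(A)$; then the natural surjection $\Omega_{W_r(A)}^n\twoheadrightarrow W_r\Omega_A^n$ (coming from the universal property of the de Rham--Witt complex) finishes the proof in one line. Your approach, once repaired, gives more explicit information about generators via the $V$-filtration, but the paper's route is shorter and avoids the bookkeeping with the kernel of $R$.
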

\begin{proof}
%By the usual idempotent decomposition of big Witt vectors of $\bb Z_{(p)}$-algebras \cite[Prop.~1.10]{Hesselholt2010}, it is enough to consider the case that $S=\{1,\dots,p^r\}$, in which case $\bb W_S(A)=W_r(A)$.

Langer--Zink's Theorem \ref{theorem_Langer_Zink2}(iii) states that $W_r(A)$ is Noetherian; it is also an F-finite ring in which $p$ is nilpotent (e.g., \cite[Lem.~2.9]{Morrow_Dundas}). Hence $\Omega_{W_r(A)}^n$ is a finitely-generated $W_r(A)$-module, as mentioned in Section \ref{subsection_F-finite_etc}. Since there is a natural surjection $\Omega_{W_r(A)}^n\to W_r\Omega_A^n$, the proof is now complete.
\end{proof}

Next we recall some basic properties of the Hodge--Witt groups in the presence of an ideal $I\subseteq A$; in this situation we write \[W_r\Omega_{(A,I)}^n:=\ker(W_r\Omega_A^n\to W_r\Omega_{A/I}^n).\] The following lemma recalls a standard result about this kernel:

\begin{lemma}\label{lemma_Witt_dg_ideal_gen_by_I}
Let $A$ be a ring, $I\subseteq A$ a finitely generated ideal, and $r\ge 1$. Then:
\begin{enumerate}
\item $W_r\Omega_{(A,I)}^\blob$ is the dg ideal of the dg algebra $W_r\Omega_A^\blob$ generated by $W_r(I)\subseteq W_r(A)$
\item For any $n\ge0$, the canonical maps $W_r(A/I^s)\otimes_{W_r(A)}W_r\Omega_A^n\to W_r\Omega_{A/I^s}$ induce an isomorphism of pro $W_r(A)$-modules \[\{W_r\Omega_A^n\otimes_{W_r(A)}W_r(A/I^s)\}_s\Isoto\{W_r\Omega_{A/I^s}^n\}_s.\]
\end{enumerate}
\end{lemma}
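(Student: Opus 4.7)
For part~(i), the inclusion $\supseteq$ is immediate from functoriality: the map $W_r\Omega_A^\bullet\to W_r\Omega_{A/I}^\bullet$ is a morphism of differential graded algebras, so its kernel is a dg ideal; since it contains $W_r(I)=\ker(W_r(A)\to W_r(A/I))$ in degree zero, it contains the dg ideal these elements generate. For the reverse inclusion $\subseteq$, let $J^\bullet\subseteq W_r\Omega_A^\bullet$ denote the dg ideal generated by $W_r(I)$, so $J^n=W_r(I)\cdot W_r\Omega_A^n+dW_r(I)\cdot W_r\Omega_A^{n-1}$. I would form the quotient dga $Q^\bullet:=W_r\Omega_A^\bullet/J^\bullet$ and verify that it carries the structure of a Witt complex over $A/I$: in degree zero we have $Q^0=W_r(A)/W_r(I)=W_r(A/I)$, and the Restriction, Frobenius and Verschiebung on $W_r\Omega_A^\bullet$ descend to $Q^\bullet$ since $R$, $F$ and $V$ preserve $W_r(I)$ (and hence also $dW_r(I)$, by compatibility of $F$, $V$ with $d$ in a Witt complex). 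The universal property of $W_r\Omega_{A/I}^\bullet$ then supplies a unique morphism $W_r\Omega_{A/I}^\bullet\to Q^\bullet$ of Witt complexes, which is a two-sided inverse to the evident surjection $Q^\bullet\onto W_r\Omega_{A/I}^\bullet$ by uniqueness in the universal property. Hence $J^\bullet=W_r\Omega_{(A,I)}^\bullet$.

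For part~(ii), I would combine (i) with the Leibniz rule and Lemma~\ref{lemma_witt_intertwined}. By (i), the natural surjection
\[
W_r\Omega_A^n\otimes_{W_r(A)}W_r(A/I^s)\;=\;W_r\Omega_A^n\big/W_r(I^s)\cdot W_r\Omega_A^n\;\onto\;W_r\Omega_{A/I^s}^n
\]
has kernel equal to the image of $dW_r(I^s)\cdot W_r\Omega_A^{n-1}$. It therefore suffices to show that for every $s\ge 1$ there exists $t\ge s$ such that
\[
dW_r(I^t)\cdot W_r\Omega_A^{n-1}\;\subseteq\;W_r(I^s)\cdot W_r\Omega_A^n.
\]
Applying Lemma~\ref{lemma_witt_intertwined} with $N=s+1$ and the finitely generated ideal $I$, choose $t$ with $W_r(I^t)\subseteq W_r(I)^{s+1}$. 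Any $\al\in W_r(I^t)$ is then a sum of products $\al_1\cdots\al_{s+1}$ with each $\al_j\in W_r(I)$, and the Leibniz rule gives
\[
d(\al_1\cdots\al_{s+1})\;=\;\sum_{j=1}^{s+1}\pm\,\al_1\cdots d\al_j\cdots\al_{s+1}\;\in\;W_r(I)^s\cdot W_r\Omega_A^1.
\]
Since $W_r(I)^s\subseteq W_r(I^s)$ by Lemma~\ref{lemma_witt_intertwined} again, we conclude $dW_r(I^t)\subseteq W_r(I^s)\cdot W_r\Omega_A^1$, whence the displayed inclusion.

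\textbf{Main obstacle.} Part~(i) is the substantive step: the reverse inclusion is not visible from a direct calculation with generators, so the argument must route through the universal property of the de Rham--Witt complex, and this requires checking that $W_r(I)\cdot W_r\Omega_A^\bullet+dW_r(I)\cdot W_r\Omega_A^{\bullet-1}$ is stable under $R$, $F$, and $V$. The stabilities under $R$ and $F$ are immediate from the ring-homomorphism properties together with the identity $F\,d[a]=[a]^{p-1}d[a]$, but stability under $V$ relies on the Witt-complex relations $V(\omega\cdot F\eta)=V\omega\cdot\eta$ and $dV=V\,d+\cdots$ (in the form appropriate to the particular Witt-complex formalism); this is the only place where real bookkeeping is needed, and once it is in hand part~(ii) is a short formal consequence.
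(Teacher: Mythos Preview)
Your proposal is correct and follows essentially the same route as the paper: for (i) you check that $R$, $F$, $V$ descend to the quotient by the dg ideal generated by $W_r(I)$ and then invoke the universal property of the de Rham--Witt complex (the paper cites \cite[Lem.~2.4]{GeisserHesselholt2006b} for exactly this verification), and for (ii) you combine (i) with Lemma~\ref{lemma_witt_intertwined} and the Leibniz rule, which is precisely the paper's argument (citing \cite[Prop.~2.5]{GeisserHesselholt2006b}). Your identification of the $V$-stability check as the only genuine bookkeeping is accurate; the paper likewise treats this as routine and defers to the reference.
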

\begin{proof}
Claim (i), which does not require finite generation of $I$, is proved by directly checking that the Restriction, Frobenius, and Verschiebung maps on $W_r\Omega_A^\blob$ descend to the quotient by the dg ideal generated by $W_r(I)$; see \cite[Lem.~2.4]{GeisserHesselholt2006b}.
% for the case of $\bb F_p$-algebras, and \cite[Lem.~1.20]{Rulling2007} for $\bb Z_{(p)}$-algebras with $p\neq 2$.\todo{Case of non $\bb Z_{(p)}$-algebras?}

For (ii), note that the canonical maps are surjective, so one needs only to show that the pro abelian group arising from the kernels is zero. This is an easy consequence of Lemma \ref{lemma_witt_intertwined} and the Leibnitz rule; see, e.g.,~\cite[Prop.~2.5]{GeisserHesselholt2006b}, where the same result is proved.
\end{proof}

\begin{lemma}\label{lemma_completion_of_dRW_complex}
Let $A$ be a Noetherian, F-finite $\bb Z_{(p)}$-algebra in which $p$ is nilpotent, $I\subseteq A$ an ideal, $r\ge1$, and $n\ge0$. Then the canonical maps \[W_r\Omega_A^n\otimes_{W_r(A)}W_r(\hat A)\To W_r\Omega_{\hat A}^n\To \projlim_s  W_r\Omega_{A/I^s}^n\] are isomorphisms, where $\hat A$ denotes the $I$-adic completion of $A$.
\end{lemma}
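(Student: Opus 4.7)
The plan is to prove both maps are isomorphisms by first showing the composite is an isomorphism, and then applying the very same argument with $A$ replaced by $\hat A$ to see that the second map alone is an isomorphism. The crucial structural observation to set up is that $\hat A$ inherits all the hypotheses on $A$: it is Noetherian, F-finite (by the preservation properties recalled in Section~\ref{subsection_F-finite_etc}), and a $\bb Z_{(p)}$-algebra with $p$ nilpotent. Hence by Lemma~\ref{lemma_finite_generation_conditions} applied to both $A$ and $\hat A$, the Witt rings $W_r(A)$, $W_r(\hat A)$ are Noetherian and the Hodge--Witt groups $W_r\Omega_A^n$, $W_r\Omega_{\hat A}^n$ are finitely generated modules over them.

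For the composite, I would first identify $W_r(\hat A)$ as the $W_r(I)$-adic completion of $W_r(A)$. Since $W_r$ commutes with inverse limits (it is set-theoretically an $r$-fold Cartesian product with polynomial ring structure), one has $W_r(\hat A) = \varprojlim_s W_r(A/I^s) = \varprojlim_s W_r(A)/W_r(I^s)$. Because $A$ is Noetherian, $I$ is finitely generated, so Lemma~\ref{lemma_witt_intertwined} intertwines the chains $\{W_r(I^s)\}_s$ and $\{W_r(I)^s\}_s$, identifying the right-hand side with the $W_r(I)$-adic completion. Now Lemma~\ref{lemma_Witt_dg_ideal_gen_by_I}(ii) identifies the pro systems $\{W_r\Omega_A^n\otimes_{W_r(A)}W_r(A/I^s)\}_s$ and $\{W_r\Omega_{A/I^s}^n\}_s$; taking $\varprojlim_s$ and using that completion of a finitely generated module over a Noetherian ring equals tensoring with the completed ring, I obtain
\[
W_r\Omega_A^n\otimes_{W_r(A)}W_r(\hat A) \;=\; \varprojlim_s W_r\Omega_A^n/W_r(I^s)W_r\Omega_A^n \;\cong\; \varprojlim_s W_r\Omega_{A/I^s}^n,
\]
and this is precisely the composite map.

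Next, to isolate the middle term, I would apply the argument of the previous paragraph to the pair $(\hat A, I\hat A)$ in place of $(A,I)$. Since $\hat A$ is $I\hat A$-adically complete, we have $\widehat{\hat A} = \hat A$, hence $W_r(\widehat{\hat A}) = W_r(\hat A)$, and moreover $\hat A/I^s\hat A = A/I^s$ for every $s$. Therefore
\[
W_r\Omega_{\hat A}^n \;=\; W_r\Omega_{\hat A}^n \otimes_{W_r(\hat A)} W_r(\hat A) \;\cong\; \varprojlim_s W_r\Omega_{\hat A/I^s\hat A}^n \;=\; \varprojlim_s W_r\Omega_{A/I^s}^n,
\]
so the second map is an isomorphism. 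Combined with the previous step this forces the first map to be an isomorphism as well.

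The only genuinely delicate point is the interchange between the two natural $W_r(A)$-module filtrations on $W_r\Omega_A^n$ (the $W_r(I)$-adic one and the $W_r(I^s)$-indexed one arising from quotienting to $W_r\Omega_{A/I^s}^n$); everything else is bookkeeping with standard commutative algebra. This hurdle is cleared by Lemmas~\ref{lemma_witt_intertwined} and~\ref{lemma_Witt_dg_ideal_gen_by_I}(ii) together with the finite generation provided by Lemma~\ref{lemma_finite_generation_conditions}, so the proof reduces to assembling these ingredients in the order sketched above.
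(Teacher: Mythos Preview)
Your proposal is correct and follows essentially the same approach as the paper: prove the composite is an isomorphism via finite generation (Lemma~\ref{lemma_finite_generation_conditions}), the identification of $W_r(\hat A)$ with the $W_r(I)$-adic completion, the intertwining of Lemma~\ref{lemma_witt_intertwined}, and Lemma~\ref{lemma_Witt_dg_ideal_gen_by_I}(ii), then rerun the argument for $(\hat A,I\hat A)$ to isolate the second map. The only cosmetic difference is that the paper cites \cite[Lem.~2.3]{Morrow_Dundas} for $W_r(\hat A)$ being the $W_r(I)$-adic completion, whereas you derive it directly from $W_r$ commuting with inverse limits plus Lemma~\ref{lemma_witt_intertwined}.
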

\begin{proof}
Since $W_r\Omega_A^n$ is finitely generated over the Noetherian ring $W_r(A)$ by Lemma \ref{lemma_finite_generation_conditions}, and since $W_r(\hat A)$ coincides with the $W_r(I)$-adic completion of $W_r(A)$ by \cite[Lem.~2.3]{Morrow_Dundas}, the canonical map \[W_r\Omega_A^n\otimes_{W_r(A)}W_r(\hat A)\To\projlim_s\,W_r\Omega_A^n\otimes_{W_r(A)}W_r(A)/W_r(I)^s\] is an isomorphism by standard commutative algebra \cite[Thm.~8.7]{Matsumura1989}. By Lemma \ref{lemma_witt_intertwined}, the chains of ideals $W_r(I^s)$ and $W_r(I)^s$ are intertwined, so we may next replace $W_r(A)/W_r(I)^s$ on the right by $W_r(A)/W_r(I^s)=W_r(A/I^s)$. Then applying Lemma \ref{lemma_Witt_dg_ideal_gen_by_I}(ii) reveals that \[W_r\Omega_A^n\otimes_{W_r(A)}W_r(\hat A)\isoto \projlim_sW_r\Omega_{A/I^s}^n\]
But this isomorphism remains valid after replacing $A$ by $\hat A$ (which is still a Noetherian, F-finite -- by Section \ref{subsection_F-finite_etc} --, $\bb Z_{(p)}$-algebra in which $p$ is nilpotent) and $I$ by $I\hat A$; since $\hat A/I^s\hat A=A/I^s$ this means that \[W_r\Omega_{\hat A}^n\isoto \projlim_sW_r\Omega_{A/I^s}^n.\] Combining the two isomorphisms completes the proof.
\end{proof}

\subsection{Filtrations on the de Rham--Witt complex of $\bb F_p$-algebras}\label{subsection_filtrations}
In this section we study the usual filtrations on the de Rham--Witt complex.

\begin{definition}\label{definition_filtrations}
Let $A$ be an $\bb F_p$-algebra. The descending {\em canonical}, {\em $p$-}, and {\em $V$}-{\em filtrations} on $W_r\Omega_A^n$ are defined respectively by
\begin{align*}
\op{Fil}^iW_r\Omega_A^n&:=\ker(W_r\Omega_A^n\xto{R^{r-i}}W_i\Omega_A^n).\\
\op{Fil}_p^iW_r\Omega_A^n&:=\ker(W_r\Omega_A^n\xto{p^{r-i}}W_r\Omega_A^n)\\
\op{Fil}^i_VW_r\Omega_A^n&:=V^iW_{r-i}\Omega_A^n+dV^iW_{r-i}\Omega_A^{n-1}
\end{align*}
% The descending {\em $p$-filtration} is defined by \[.\] The descending {\em V-filtration} is defined by \[\op{Fil}^i_VW_r\Omega_A^n:=V^iW_{r-i}\Omega_A^n+dV^iW_{r-i}\Omega_A^n.\] We will use the obvious convention for these filtrations if $i$ is $<0$ or $i>r$.
\end{definition}

It is an easy exercise, using standard de Rham--Witt identities which will occur for example in Remark \ref{remark_drw_etale_base_change}, to see that $\op{Fil}^i_VW_r\Omega_A^*$ is the dg ideal of the dg algebra $W_r\Omega_A^\blob$ generated by $V^iW_{r-i}(A)\subseteq W_r(A)$, and that there are inclusions of $W_r(A)$-submodules of $W_r\Omega_A^n$: \[\op{Fil}^iW_r\Omega_A^n\supseteq \op{Fil}^i_VW_r\Omega_A^n\subseteq \op{Fil}^i_pW_r\Omega_A^n.\] It was proved by Illusie \cite[Prop.~I.3.2 \& I.3.4]{Illusie1979} that these three filtrations coincide if $A$ is regular (to be precise, Illusie proved the equality whenever $A$ is smooth over a perfect field of characteristic $p$, which extends to the regular case N\'eron--Popescu desingularisation). Hesselholt then observed that the canonical and $V$-filtrations in fact coincide for any $\bb F_p$-algebra (the key observation is simply that  for any fixed $i\ge1$, the groups $W_{r+i}\Omega_A^n/\op{Fil}_V^iW_{r+i}\Omega_A^n$, for $n\ge0$ and $r\ge1$, have an induced structure of a Witt complex over $A$; a detailed proof in the generality of log structures may be found in \cite[Lem.~3.2.4]{Hesselholt2003}). In general the canonical and $p$-filtrations do not coincide (e.g., if $a^p=0$ then the vector $(a,0)\in W_2(A)$ is $p$-torsion but not killed by $R$), and the goal of this section is note that they do coincide for formal schemes under our usual hypotheses.

We first show that the filtrations behave well under base change along $A\to A/I^\infty$, similarly to Lemma \ref{lemma_Cartier_filtration}:

\begin{lemma}\label{lemma_pro_filtrations}
Let $A$ be a Noetherian, F-finite $\bb F_p$-algebra, and $I\subseteq A$ an ideal. Then the canonical maps of pro $W_r(A)$-modules \[\{(\op{Fil}^iW_r\Omega_A^n)\otimes_{W_r(A)}W_r(A/I^s)\}_s\To\{\op{Fil}^iW_r\Omega_{A/I^s}^n\}_s\] \[\{(\op{Fil}_p^iW_r\Omega_A^n)\otimes_{W_r(A)}W_r(A/I^s)\}_s\To\{\op{Fil}_p^iW_r\Omega_{A/I^s}^n\}_s\] \[\{(\op{Fil}^i_VW_r\Omega_A^n)\otimes_{W_r(A)}W_r(A/I^s)\}_s\To\{\op{Fil}^i_VW_r\Omega_{A/I^s}^n\}_s\] are isomorphisms for all $i,n\ge0$ and $r\ge 1$.
\end{lemma}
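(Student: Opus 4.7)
The common strategy is to exhibit each filtration piece as a kernel or image of a morphism between finitely generated $W_r(A)$-modules, and then to invoke pro-flatness of the base change $-\otimes_{W_r(A)}W_r(A/I^s)$ on finitely generated modules. Concretely, the Hodge--Witt groups appearing are finitely generated over the Noetherian ring $W_r(A)$ by Lemma \ref{lemma_finite_generation_conditions}, and the functor $M\mapsto\{M\otimes_{W_r(A)}W_r(A/I^s)\}_s$ is exact on the subcategory of finitely generated $W_r(A)$-modules by Proposition \ref{proposition_AR_Witt}(i). The starting point is the pro-isomorphism $\{W_r\Omega_A^n\otimes_{W_r(A)}W_r(A/I^s)\}_s\isoto\{W_r\Omega_{A/I^s}^n\}_s$ already supplied by Lemma \ref{lemma_Witt_dg_ideal_gen_by_I}(ii).

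For the canonical filtration I would apply this exact pro-functor to the short exact sequence $0\to\op{Fil}^iW_r\Omega_A^n\to W_r\Omega_A^n\xto{R^{r-i}}W_i\Omega_A^n\to0$ of finitely generated $W_r(A)$-modules. After identifying the middle term with $\{W_r\Omega_{A/I^s}^n\}_s$, it remains to produce a pro-isomorphism $\{W_i\Omega_A^n\otimes_{W_r(A)}W_r(A/I^s)\}_s\isoto\{W_i\Omega_{A/I^s}^n\}_s$, which reduces via the ring-theoretic identity $W_i(A)\otimes_{W_r(A)}W_r(A/I^s)=W_i(A/I^s)$ to the same Lemma \ref{lemma_Witt_dg_ideal_gen_by_I}(ii) applied at truncation level $i$ in place of $r$. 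The $p$-filtration is handled in the same way, using $0\to\op{Fil}_p^iW_r\Omega_A^n\to W_r\Omega_A^n\xto{p^{r-i}}W_r\Omega_A^n$; the pro-kernel of multiplication by $p^{r-i}$ in $\{W_r\Omega_{A/I^s}^n\}_s$ is $\{\op{Fil}_p^iW_r\Omega_{A/I^s}^n\}_s$ by definition.

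For the $V$-filtration, which is an image rather than a kernel, I would apply the same exactness input to $0\to\op{Fil}_V^iW_r\Omega_A^n\to W_r\Omega_A^n\to W_r\Omega_A^n/\op{Fil}_V^iW_r\Omega_A^n\to 0$ to obtain an injection $\{\op{Fil}_V^iW_r\Omega_A^n\otimes_{W_r(A)}W_r(A/I^s)\}_s\hookrightarrow\{W_r\Omega_{A/I^s}^n\}_s$ in the pro category. Naturality of the operations $V^i$ and $dV^i$ in $A$ then ensures that the image of this injection is exactly $V^iW_{r-i}\Omega_{A/I^s}^n+dV^iW_{r-i}\Omega_{A/I^s}^{n-1}=\op{Fil}_V^iW_r\Omega_{A/I^s}^n$. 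The principal technical point, which I expect to be the only real obstacle, is verifying the ring-theoretic compatibility between base change and the Restriction/Verschiebung maps at different truncation levels, in particular the identity $W_i(A)\otimes_{W_r(A)}W_r(A/I^s)=W_i(A/I^s)$ used above; this should follow from the presentation $W_i(A)=W_r(A)/V^iW_{r-i}(A)$ together with the fact that $V^iW_{r-i}(A)$ surjects onto $V^iW_{r-i}(A/I^s)$ under the quotient $W_r(A)\onto W_r(A/I^s)$.
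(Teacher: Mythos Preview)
Your approach for the canonical and $p$-filtrations is essentially identical to the paper's: both apply the pro-exactness of $-\otimes_{W_r(A)}W_r(A/I^s)$ (Proposition \ref{proposition_AR_Witt}(i), Lemma \ref{lemma_finite_generation_conditions}) to the defining left-exact sequence and identify the outer terms via Lemma \ref{lemma_Witt_dg_ideal_gen_by_I}(ii); your observation that $W_i(A)\otimes_{W_r(A)}W_r(A/I^s)=W_i(A/I^s)$ is exactly what is needed to handle the right-hand term at truncation level $i$.

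The only genuine difference is in the $V$-filtration. You give a separate direct argument, obtaining a pro-injection and then checking that its image is $\op{Fil}_V^iW_r\Omega_{A/I^s}^n$ via naturality of $V^i,dV^i$; this is correct. The paper instead invokes the fact, recalled immediately after Definition \ref{definition_filtrations}, that $\op{Fil}^i=\op{Fil}_V^i$ for \emph{every} $\bb F_p$-algebra (Hesselholt's observation), so the $V$-case follows for free from the canonical case on both sides of the map. Your route is self-contained and avoids that input; the paper's is one line shorter.
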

\begin{proof}
For each $s\ge1$ there is a natural commutative diagram of $W_r(A/I^s)$-modules
\[\xymatrix@C=5mm{
0\ar[r] & \op{Fil}^iW_r\Omega_A^n\otimes_{W_r(A)}W_r(A/I^s) \ar[d]\ar[r] & W_r\Omega_A^n\otimes_{W_r(A)}W_r(A/I^s) \ar[r]^{R^{r-i}}\ar[d] & W_r\Omega_A^n\otimes_{W_r(A)}W_r(A/I^s)\ar[d]\\
0\ar[r] & \op{Fil}^iW_r\Omega_{A/I^s}^n \ar[r] & W_r\Omega_{A/I^s}^n\ar[r] ^{R^{r-i}}&W_r\Omega_{A/I^s}^n
}\]
The bottom row is exact by definition. As pro abelian groups indexed over $s\ge 1$, the top row is exact by Proposition \ref{proposition_AR_Witt}(i) and Lemma \ref{lemma_finite_generation_conditions}, and the central and right vertical arrows are isomorphisms by Lemma \ref{lemma_Witt_dg_ideal_gen_by_I}(ii). Hence the left vertical arrow becomes an isomorphism of pro abelian groups, proving the desired result for the canonical filtration, and hence also the $V$-filtration with which it coincides. The same argument works for the $p$-filtration.
\end{proof}

We will now show that the canonical and $p$-filtrations on $W_r\Omega_{A/I^\infty}^n$ coincide. In particular, this implies that the $p$-torsion in $W_r\Omega_{A/I^\infty}^n$ vanishes in $W_{r-1}\Omega_{A/I^\infty}^n$, whence the pro abelian group $\{W_s\Omega_{A/I^s}^n\}_s$ has no $p$-torsion, which will be used at several key moments later in the paper:

\begin{proposition}\label{proposition_filtrations_are_equal}
Let $A$ be a regular, F-finite $\bb F_p$-algebra, and $n\ge0$, $r\ge 1$. Then the inclusions of pro $W_r(A)$-modules \[\{\op{Fil}^iW_r\Omega_{A/I^s}^n\}_s\supseteq \{\op{Fil}^i_VW_r\Omega_{A/I^s}^n\}_s\subseteq \{\op{Fil}_p^iW_r\Omega_{A/I^s}^n\}_s\] are equalities, for all $i\ge0$.
\end{proposition}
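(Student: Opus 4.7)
The plan is to reduce the proposition to two ingredients already at our disposal: Illusie's equality of the three filtrations for regular $\bb F_p$-algebras, and the base-change compatibility of Lemma~\ref{lemma_pro_filtrations}. Since $A$ is regular and F-finite, the parenthetical remark following Definition~\ref{definition_filtrations} (via N\'eron--Popescu desingularisation applied to Illusie's smooth case) gives the equalities of $W_r(A)$-submodules of $W_r\Omega_A^n$
\[
\op{Fil}^iW_r\Omega_A^n \;=\; \op{Fil}^i_VW_r\Omega_A^n \;=\; \op{Fil}_p^iW_r\Omega_A^n.
\]
This is the key input, and all the work is already done: no new analysis of the de Rham--Witt complex is required at the level of $A$ itself.

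Next I would invoke Lemma~\ref{lemma_pro_filtrations}, which identifies each of the three pro $W_r(A)$-modules $\{\op{Fil}^\star W_r\Omega_{A/I^s}^n\}_s$ (for $\star\in\{\,i, (i,V), (i,p)\,\}$) with the base change $\{(\op{Fil}^\star W_r\Omega_A^n)\otimes_{W_r(A)}W_r(A/I^s)\}_s$ of the corresponding submodule of $W_r\Omega_A^n$. Because the three submodules we are base changing coincide at the level of $A$, the three resulting pro $W_r(A)$-modules coincide as well, which is precisely the statement of the proposition.

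In terms of obstacles, there really are none once Lemma~\ref{lemma_pro_filtrations} and Illusie's equality are in hand: the proof is a formal concatenation of the two. The only subtlety one might worry about is that the inclusions $\op{Fil}^i_VW_r\Omega_A^n\subseteq\op{Fil}^iW_r\Omega_A^n$ and $\op{Fil}^i_VW_r\Omega_A^n\subseteq\op{Fil}_p^iW_r\Omega_A^n$, which are always inclusions of submodules, remain inclusions after base change; but this is automatic since Lemma~\ref{lemma_pro_filtrations} is stated as an isomorphism of pro $W_r(A)$-modules between the base changes and the honest filtrations on $W_r\Omega_{A/I^s}^n$. No further manipulation is needed.
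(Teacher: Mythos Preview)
Your proof is correct and follows essentially the same approach as the paper: use Illusie's equality of the three filtrations on the regular ring $A$, then transfer to the pro system via Lemma~\ref{lemma_pro_filtrations}. The only minor difference is that the paper observes the left inclusion $\op{Fil}^i_V\subseteq\op{Fil}^i$ is already an equality for each fixed $s$ (by Hesselholt's result, valid for any $\bb F_p$-algebra, recalled after Definition~\ref{definition_filtrations}), so Lemma~\ref{lemma_pro_filtrations} is only needed for the right inclusion; your uniform treatment of both inclusions is equally valid.
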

\begin{proof}
As recalled immediately after Definition \ref{definition_filtrations}, the left inclusion is even an equality for each fixed $s\ge 1$, while the right inclusion would be an equality with $A$ in place of $A/I^s$; hence the equality of the right inclusion follows from Lemma \ref{lemma_pro_filtrations}.
\end{proof}

\begin{remark}
The reader only interesting in Section \ref{section_line_bundles} can skip the remainder of Section~\ref{section_drw}.
\end{remark}

If $B$ is any $\bb F_p$-algebra, then multiplication $p^i:W_r\Omega_B^n\to W_r\Omega_B^n$ has image in $\op{Fil}^iW_r\Omega_A^n$ and sends $\op{Fil}^1W_r\Omega_B^n$ to $\op{Fil}^{i+1}W_r\Omega_B^n$ (use $\op{Fil}=\op{Fil}_V$ and the identity $p=FV$), thus inducing \[\ul{p}^i:\Omega_A^n=W_r\Omega_B^n/\op{Fil}^1W_r\Omega_B^n\to\op{Fil}^iW_r\Omega_B^n/\op{Fil}^{i+1}W_r\Omega_B^n.\] Varying $n$ yields a map of complexes $\ul{p}^i:\Omega_B^\blob\to \op{Fil}^iW_r\Omega_B^\blob/\op{Fil}^{i+1}W_r\Omega_B^\blob$, which Illusie proved was a quasi-isomorphism if $B$ is a smooth algebra over a perfect field \cite[Corol.~I.3.14]{Illusie1979}; we will need the following formal version of his result in the proof of Theorem \ref{theorem_lefschetz}:

\begin{corollary}\label{corollary_graded_pieces_are_de_rham}
Let $A$ be a regular, F-finite $\bb F_p$-algebra, $I\subseteq A$ an ideal, and $i\ge0$. Then \[\ul{p}^i:\{\Omega_{A/I^s}^\blob\}_s\to\{\op{Fil}^iW_r\Omega_{A/I^s}^\blob/\op{Fil}^{i+1}W_r\Omega_{A/I^s}^\blob\}_s\] is a quasi-isomorphism of pro complexes (i.e., induces an isomorphism on all the pro cohomology groups).
\end{corollary}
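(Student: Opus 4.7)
The plan is to reduce to Illusie's classical quasi-isomorphism $\ul p^i : \Omega_B^\blob \isoto \op{Fil}^iW_r\Omega_B^\blob/\op{Fil}^{i+1}W_r\Omega_B^\blob$ for $B$ a smooth algebra over a perfect field of characteristic $p$ \cite[Corol.~I.3.14]{Illusie1979}, via two successive reductions: first extending the quasi-isomorphism from smooth to regular F-finite $\bb F_p$-algebras using N\'eron--Popescu desingularisation, and then descending from $A$ itself to the pro-system $\{A/I^s\}_s$ via an Artin--Rees argument.

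For the first reduction, I would note that $A$, being regular and defined over the perfect field $\bb F_p$, is a filtered colimit of smooth finite-type $\bb F_p$-algebras $A_j$ by N\'eron--Popescu (Section \ref{subsection_F-finite_etc}), and that both the de Rham and $p$-typical de Rham--Witt complexes commute with filtered colimits of rings; the same holds for the canonical filtration, so Illusie's smooth quasi-isomorphism passes to the colimit and yields the analogous quasi-isomorphism of complexes of $A$-modules \[\ul p^i : \Omega_A^\blob \isoto \op{Fil}^iW_r\Omega_A^\blob/\op{Fil}^{i+1}W_r\Omega_A^\blob.\] Here the target is genuinely a complex of $A$-modules because $\op{Fil}^1W_r(A)=VW_{r-1}(A)$ annihilates each graded piece (a standard de Rham--Witt identity using $\op{Fil}=\op{Fil}_V$ and $p=FV$), so that the $W_r(A)$-action factors through $W_r(A)/VW_{r-1}(A)=A$.

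For the second reduction, I would first observe that all the modules in sight are finitely generated over $A$: the $A$-module $\Omega_A^n$ is finitely generated by F-finiteness (Section \ref{subsection_F-finite_etc}), and the graded piece $\op{Fil}^iW_r\Omega_A^n/\op{Fil}^{i+1}W_r\Omega_A^n$ is a quotient of the finitely generated $W_r(A)$-module $W_r\Omega_A^n$ (Lemma \ref{lemma_finite_generation_conditions}), hence finitely generated over $A$. Applying Lemma \ref{lemma_pro_filtrations} to $\op{Fil}^i$ and $\op{Fil}^{i+1}$ and comparing, the pro-system $\{\op{Fil}^iW_r\Omega_{A/I^s}^n/\op{Fil}^{i+1}W_r\Omega_{A/I^s}^n\}_s$ is pro-isomorphic to $\{(\op{Fil}^iW_r\Omega_A^n/\op{Fil}^{i+1}W_r\Omega_A^n)\otimes_A A/I^s\}_s$, and similarly $\{\Omega_{A/I^s}^n\}_s\cong\{\Omega_A^n\otimes_A A/I^s\}_s$. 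The claim therefore reduces to showing that $\ul p^i\otimes_A\{A/I^s\}_s$ remains a quasi-isomorphism of pro complexes. Since both sides of Illusie's quasi-isomorphism are bounded complexes of finitely generated $A$-modules, this is an immediate consequence of the pro-Tor vanishing of Proposition \ref{proposition_AR_p}(i) via a standard hyper-Tor spectral sequence: the functor $-\otimes_A^\bb L\{A/I^s\}_s$ on the derived category of finitely generated $A$-modules lands, pro-systematically, in degree zero, so quasi-isomorphisms are preserved.

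The main obstacle is making precise the pro-isomorphism for the filtration quotients, which requires verifying that Lemma \ref{lemma_pro_filtrations} is compatible with the short exact sequence $0\to\op{Fil}^{i+1}\to\op{Fil}^i\to\op{Fil}^i/\op{Fil}^{i+1}\to 0$ after base change — this is a routine five-lemma diagram chase, but must be done carefully because the base-change functors are only exact on the level of pro-systems. Once this compatibility is established, the remainder of the proof is essentially automatic.
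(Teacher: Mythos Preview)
Your overall strategy coincides with the paper's: extend Illusie's quasi-isomorphism from smooth to regular $\bb F_p$-algebras by N\'eron--Popescu, then descend to the pro-system $\{A/I^s\}_s$ by an Artin--Rees base-change argument using Lemma~\ref{lemma_pro_filtrations} and Proposition~\ref{proposition_AR_p}(i). The module-level identifications you make are correct.

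There is, however, a genuine gap in the second reduction. You assert that both sides of Illusie's quasi-isomorphism are ``bounded complexes of finitely generated $A$-modules'' and then invoke a hyper-Tor argument to preserve the quasi-isomorphism under $-\otimes_A A/I^s$. The individual terms are indeed $A$-modules (your observation that $VW_{r-1}(A)$ annihilates the graded pieces is right), but the \emph{differentials} are not $A$-linear: on $\Omega_A^\blob$ the de~Rham differential satisfies the Leibniz rule, and the induced differential on $\op{Fil}^iW_r\Omega_A^\blob/\op{Fil}^{i+1}W_r\Omega_A^\blob$ inherits the same defect. Without $A$-linear differentials, $C^\blob\otimes_A A/I^s$ is not even a well-defined complex, so the hyper-Tor spectral sequence and pro-Tor vanishing do not apply as stated.

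The paper repairs exactly this point by linearising via a Frobenius pullback. It views Illusie's quasi-isomorphism as a map of complexes of $W_{2r}(A)$-modules,
\[
\ul{p}^i:\;F^r_*R^{r-1}_*\Omega_A^\blob\;\quis\; F^r_*\op{Fil}^iW_r\Omega_A^\blob/F^r_*\op{Fil}^{i+1}W_r\Omega_A^\blob,
\]
the key being that after restricting along $F^r$ the differentials become $W_{2r}(A)$-linear (since $dF^r=p^rF^rd$ and $p^r$ kills the targets). One then base-changes along $W_{2r}(A)\to W_{2r}(A/I^s)$, using Proposition~\ref{proposition_AR_Witt} in place of Proposition~\ref{proposition_AR_p}, together with Lemma~\ref{lemma_pro_filtrations}. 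An equivalent fix in the spirit of your write-up would be to work with $\phi^r_*\Omega_A^\blob$ over $A$ (as in the proof of Theorem~\ref{theorem_pro_Cartier}); either way, the Frobenius twist is the missing ingredient that makes the Artin--Rees step legitimate.
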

\begin{proof}
As usual Illusie's result remains valid for $A$, by N\'eron--Popescu desingularisation; the resulting quasi-isomorphism $\ul{p}^i:\Omega_A^\blob\quis \op{Fil}^iW_r\Omega_A^\blob/\op{Fil}^{i+1}W_r\Omega_A^\blob$ of abelian groups may be seen as one of $W_{2r}(A)$-modules $\ul{p}^i:F^r_*R^{r-1}_*\Omega_A^\blob\quis F^r_*\op{Fil}^iW_r\Omega_A^\blob/F^r_*\op{Fil}^{i+1}W_r\Omega_A^\blob$.

Base changing along $W_{2r}(A)\to W_{2r}(A/I^\infty)$, with Proposition \ref{proposition_AR_Witt} and Lemma \ref{lemma_pro_filtrations} in mind, yields the desired quasi-isomorphism of pro complexes.
\end{proof}

We finish this section by relating the Cartier filtration to the de Rham--Witt complex; the exact sequences in the following lemma will be used in the proof of Theorem~\ref{theorem_de_Rham--Witt_log}:

\begin{lemma}\label{lemma_Cartier_drw}
Let $A$ be a regular, F-finite $\bb F_p$-algebra, $I\subseteq A$ an ideal, and $n\ge0$, $i\ge1$. Then the following sequences become exact when assembled into pro abelian groups over $s\ge1$:
\begin{align*}
&0\To B_i\Omega_{A/I^s}^n\To \Omega_{A/I^s}^n\xto{V^r}W_{i+1}\Omega_{A/I^s}^n\tag{\ref{lemma_Cartier_drw}i}\\
&0\To B_{i+1}\Omega_{A/I^s}^n\To \Omega_{A/I^s}^n\xto{V^r} W_{i+1}\Omega_{A/I^s}^n/dV^r\Omega_{A/I^s}^{i-1}\tag{\ref{lemma_Cartier_drw}ii}\\
&0\To Z_{i+1}\Omega_{A/I^s}^{n-1}\To\Omega_{A/I^s}^{n-1}\xto{dV^r}W_{i+1}\Omega_{A/I^s}^n\tag{\ref{lemma_Cartier_drw}iii}\\
&W_{i+1}\Omega_{A/I^s}^{n-1}\xto{F^n}\Omega_{A/I^s}^{n-1}\xto{dV^r}W_{i+1}\Omega_{A/I^s}^n/V^r\Omega_{A/I^s}^n\tag{\ref{lemma_Cartier_drw}iv}
%&0\To Z_i\Omega_{A/I^s}^{n-1}\To\Omega_{A/I^s}^{n-1}\xto{dV^r}W_{i+1}\Omega_{A/I^s}^n/V^r\Omega_{A/I^s}^n\tag{\ref{lemma_Cartier_drw}iv}\\
%&W_r\Omega_{A/I^s}^n\xto{V}W_{i+1}\Omega_{A/I^s}^n\xto{F^r}Z_r\Omega_{A/I^s}^n\To 0\tag{\ref{lemma_Cartier_drw}v}
\end{align*}
\end{lemma}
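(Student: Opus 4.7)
The plan is to prove each of the four sequences first for $A$ itself (the case $I=0$), and then deduce the pro version over $\{A/I^s\}_s$ by combining the Artin--Rees Tor-vanishing of Section~2.6 with the base change lemmas \ref{lemma_Cartier_filtration} and \ref{lemma_pro_filtrations} of this section.

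\emph{Classical case.} For $A$ smooth over a perfect field of characteristic $p$, each of the four sequences is (a rearrangement of) the fundamental identities established by Illusie in \cite[\S I.3]{Illusie1979}: sequence (\ref{lemma_Cartier_drw}i) encodes Illusie's computation of $\ker(V^r: \Omega_A^n \to W_{i+1}\Omega_A^n)$ as $B_i\Omega_A^n$; sequence (\ref{lemma_Cartier_drw}iii) is the analogous computation for $dV^r$; and (\ref{lemma_Cartier_drw}ii), (\ref{lemma_Cartier_drw}iv) are their refinements obtained from the direct sum decomposition $\op{Fil}^iW_{i+1}\Omega_A^\bullet = V^i\Omega_A^\bullet + dV^i\Omega_A^{\bullet-1}$ of the canonical ($=V$-) filtration. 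Since the functors $\Omega^n$, $W_{i+1}\Omega^n$, $B_i\Omega^n$, $Z_i\Omega^n$ all commute with filtered colimits, exactness extends from smooth algebras to any regular F-finite $\bb F_p$-algebra $A$ via N\'eron--Popescu desingularisation, as was recalled in Section \ref{subsection_F-finite_etc}.

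\emph{Pro base change.} Each sequence is then a complex of finitely generated $W_{i+1}(A)$-modules (Lemma~\ref{lemma_finite_generation_conditions}), where $\Omega_A^n$ is regarded as a $W_{i+1}(A)$-module through the iterated restriction $R^i: W_{i+1}(A) \onto W_1(A) = A$. Tensoring each exact sequence over $W_{i+1}(A)$ with $W_{i+1}(A/I^s)$ and varying $s$, the pro Tor-vanishing of Proposition~\ref{proposition_AR_Witt}(i) keeps the sequence exact on the pro level. Finally the tensored pro-terms are identified with their $A/I^s$-analogues: the occurrences of $W_{i+1}\Omega^n$ by Lemma~\ref{lemma_Witt_dg_ideal_gen_by_I}(ii); the Cartier filtration pieces $B_i\Omega^n$ and $Z_i\Omega^n$ by Lemma~\ref{lemma_Cartier_filtration}; and the two subgroups $V^r\Omega^n$ and $dV^r\Omega^{n-1}$ of $W_{i+1}\Omega^n$ (which are precisely the generators of $\op{Fil}_V^r$) appearing in (\ref{lemma_Cartier_drw}ii) and (\ref{lemma_Cartier_drw}iv), by Lemma~\ref{lemma_pro_filtrations}.

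\emph{Main obstacle.} The principal technical care lies in the fact that the operators $V$, $dV$, $F$ are not $W_{i+1}(A)$-linear for the tautological module structure -- $V$ is $F$-semilinear and $F$ is $R$-semilinear -- so one cannot blindly ``tensor the sequence over $W_{i+1}(A)$.'' The way around this is to rewrite each sequence as a composition of genuinely $W_{i+1}(A)$-linear inclusions and quotient maps between subquotients of $W_{i+1}\Omega_A^n$ already identified with the Cartier and $V$-filtration pieces; for example, sequence (\ref{lemma_Cartier_drw}i) becomes the concatenation $0 \to B_i\Omega_A^n \to \Omega_A^n \twoheadrightarrow V^r\Omega_A^n \hookrightarrow W_{i+1}\Omega_A^n$ of $W_{i+1}(A)$-linear maps, and it is this assembled version to which the base change formalism of the previous paragraph is applied. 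The analogous rewriting for (\ref{lemma_Cartier_drw}ii)--(\ref{lemma_Cartier_drw}iv), using Lemma~\ref{lemma_pro_filtrations} to handle the quotients by $V^r\Omega^n$ and $dV^r\Omega^{n-1}$ in (\ref{lemma_Cartier_drw}ii) and (\ref{lemma_Cartier_drw}iv), is routine.
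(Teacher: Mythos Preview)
Your approach is essentially the paper's: establish the sequences for $A$ itself via Illusie plus N\'eron--Popescu, then push through an Artin--Rees base change along $W_{i+1}(A)\to W_{i+1}(A/I^s)$. The paper handles the semilinearity not by decomposing the maps into linear pieces, but by recording the correct twists explicitly---e.g.\ sequence~(i) is written as $0\to R^r_*B_i\Omega_A^n\to F^r_*\Omega_A^n\xto{V^r}W_{i+1}\Omega_A^n$ over $W_{i+1}(A)$ (and sequence~(iii) over $W_{i+2}(A)$)---and then invoking Proposition~\ref{proposition_AR_Witt}(ii), which says precisely that the $F_*$-twist commutes with the pro base change. Your decomposition does not quite avoid this: in your rewriting of~(i), the surjection $\Omega_A^n\twoheadrightarrow V^r\Omega_A^n$ is $W_{i+1}(A)$-linear only for the $F^r$-twisted structure on the source (not the $R^i$-twisted one you name in your second paragraph), and identifying $F^r_*\Omega_A^n\otimes_{W_{i+1}(A)}W_{i+1}(A/I^s)$ with $\{\Omega_{A/I^s}^n\}_s$ is exactly Proposition~\ref{proposition_AR_Witt}(ii). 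So you should cite part~(ii) there rather than only part~(i). Also, Lemma~\ref{lemma_pro_filtrations} as stated covers the full filtration groups $\op{Fil}_V^i$, not the individual summands $V^r\Omega^n$ and $dV^r\Omega^{n-1}$; the same argument works for them, but you should say so rather than cite the lemma directly.
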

\begin{proof}
By equations (3.8.1), (3.8.2), and (3.11.3) of \cite{Illusie1979}, there are exact sequences of abelian groups:
\begin{align*}
&0\To B_i\Omega_A^n\To \Omega_A^n\xto{V^r}W_{i+1}\Omega_A^n\\
&0\To B_{i+1}\Omega_A^n\To \Omega_A^n\xto{V^r} W_{i+1}\Omega_A^n/dV^r\Omega_A^{i-1}\\
&0\To Z_{i+1}\Omega_A^{n-1}\To\Omega_A^{n-1}\xto{dV^r}W_{i+1}\Omega_A^n\\
&0\To Z_i\Omega_A^{n-1}\To\Omega_A^{n-1}\xto{dV^r}W_{i+1}\Omega_A^n/V^r\Omega_A^n\\
&W_i\Omega_A^n\xto{V}W_{i+1}\Omega_A^n\xto{F^r}Z_i\Omega_A^n\To 0
\end{align*}
(Again, we are extending Illusie's result in the smooth case to $A$ using N\'eron--Popescu desingularisation.) The final two sequences may be assembled into an exact sequence $W_{i+1}\Omega_A^{n-1}\xto{F^r}\Omega_A^{n-1}\xto{dV^r}W_{i+1}\Omega_A^n/V^r\Omega_A^n$.

By pulling back along appropriate maps as indicated, these may be viewed as exact sequences of modules over the indicated rings:
\begin{align*}
&W_{i+1}(A)\qquad 0\To R^r_*B_i\Omega_A^n\To F_*^r\Omega_A^n\xto{V^r}W_{i+1}\Omega_A^n\\
&W_{i+1}(A)\qquad 0\To R_*^{r+1}B_{i+1}\Omega_A^n\To F_*^{r+1}\Omega_A^n\xto{V^r}F_*W_{i+1}\Omega_A^n/dV^r\Omega_A^{i-1}\\
&W_{i+2}(A)\qquad 0\To R_*^{r+1}Z_{i+1}\Omega_A^{n-1}\To F_*^{r+1}\Omega_A^{n-1}\xto{dV^r}F_*W_{i+1}\Omega_A^n\\
&W_{i+1}(A)\qquad W_{i+1}\Omega_A^{n-1}\xto{F^r}\Omega_A^{n-1}\xto{dV^r}W_{i+1}\Omega_A^n/V^r\Omega_A^n.
%&W_{i+1}(A)\qquad 0\To R_*^rZ_i\Omega_A^{n-1}\To F_*^r\Omega_A^{n-1}\xto{dV^r}W_{i+1}\Omega_A^n/V^r\Omega_A^n\\
%&W_{i+1}(A)\qquad F_*W_i\Omega_A^n\xto{V}W_{i+1}\Omega_A^n\xto{F^r}R_*^rZ_i\Omega_A^n\To 0
\end{align*}
By now base changing to $W_{i+1}(A/I^\infty)$ (resp.~$W_{i+2}(A/I^\infty)$ in the third case) using Proposition \ref{proposition_AR_Witt}(ii) and Lemma \ref{lemma_Cartier_filtration}, the proof is completed.
\end{proof}

\begin{remark}\label{remark_drw_etale_base_change}
A possible source of confusion, which we should already perhaps have mentioned in Section \ref{subsection_cartier}, is that the results in \cite{Illusie1979} are stated in terms of \'etale sheaves, whereas we prefer to take global sections in the affine case. For example, at the beginning of the previous proof when appealing to \cite{Illusie1979}, we implicitly used that the presheaves given by \[A\mapsto B_i\Omega_A^n,\;  Z_i\Omega_A^n,\; dV^r\Omega_A^{i-1}, V^r\Omega_A^n,\;\mbox{etc.} \] are in fact \'etale sheaves with vanishing higher cohomology on affines (otherwise, when taking global sections in Illusie's results, surjections might not be preserved and we might not get the claimed global sections). This of course is well-known but, since it will be implicitly used multiple times, we include a detailed explanation in one case, namely for $A\mapsto V^r\Omega_A$.

If $A\to A'$ is an \'etale morphism of $\bb F_p$-algebras then so is $W_r(A)\to W_r(A')$ \cite[Prop.~A.8]{LangerZink2004}, and the canonical base change map \begin{equation}W_r\Omega_A^n\otimes_{W_r(A)}W_r(A')\To W_r\Omega_{A'}^n\label{eqn_drw_etale}\end{equation} \cite[Prop.~1.7]{LangerZink2004} is an isomorphism. The sequence of $W_r(A)$-modules \[F_*^iW_{r-i}\Omega_A^n\xto{V^i} W_r\Omega_A^n\To W_r\Omega_A^n/V^iW_{r-i}\Omega_A^n\To 0\] is exact. After applying $-\otimes_{W_r(A)}W_r(A')$ this may be compared with the analogous sequence for $A'$, whence (\ref{eqn_drw_etale}) and the isomorphism $F_*^iW_{r-i}(A)\otimes_{W_r(A)}W_r(A')\isoto F^i_*W_{r-i}(A')$ of $W_r(A')$-modules \cite[Corol.~A.11]{LangerZink2004} show that $(W_r\Omega_A^n/V^iW_{r-i}\Omega_A^n)\otimes_{W_r(A)}W_r(A')\isoto W_r\Omega_{A'}^n/V^iW_{r-i}\Omega_{A'}^n$. It follows that \[V^iW_{r-i}\Omega_A^n\otimes_{W_r(A)}W_r(A')\isoto V^iW_{r-i}\Omega_{A'}^n,\] which suffices.
%Secondly, the sequence \[F_*^iW_{r-i}\Omega_A^{n-1}\xto{dV^i}W_r\Omega_A^n/V^iW_{r-i}\Omega_A^n\To W_r\Omega_A^n/\op{Fil}^i_VW_r\Omega_A^n\To 0,\] which is one of $W_r(A)$-modules (this follows from the identity $dV^i(F^i(a)\omega)=d(aV^i(\omega))=a\,dV^i(\omega)+da\,V^i(\omega)=a\,dV^i(\omega)+V^i(F^i(da)\omega)$) is also exact; applying $-\otimes_{W_r(A)}W_r(A')$ and arguing as for the first sequence shows that $(W_r\Omega_A^n/\op{Fil}_V^iW_r\Omega_A^n)\otimes_{W_r(A)}W_r(B)\isoto W_r\Omega_{A'}^n/\op{Fil}_V^iW_r\Omega_{A'}^n$. Using (\ref{eqn_drw_etale}) once more proves the claim.
\end{remark}

\subsection{Further preliminaries on Hodge--Witt groups: Frobenius-fixed points}\label{subsection_F_fixed}
Now we study the kernel and cokernel of the operator $R-F$ on the Hodge--Witt groups, particular in the presence of nilpotent elements. As in Section \ref{subsection_de_Rham_Witt} we work with more general rings than $\bb F_p$-algebras when it causes no additional difficulty.

Given a ring $A$, we write \[W_r\Omega_A^{n,F=1}:=\ker (W_r\Omega_A^n\xto{R-F}W_{r-1}\Omega_A^n)\] and similarly, if $J\subseteq A$ is an ideal, \[W_r\Omega_{(A,J)}^{n,F=1}:=\ker (W_r\Omega_{(A,J)}^n\xto{R-F}W_{r-1}\Omega_{(A,J)}^n).\]

\begin{lemma}\label{lemma_surj_of_1-F_in_nilp_case}
Let $B$ be a ring, and $I\subseteq J\subseteq B$ ideals such that $I$ is nilpotent; fix $n\ge0$ and $r\ge 1$. Then:
\begin{enumerate}
\item If $x\in W_r\Omega_B^n$, then $dx=(R-F)dx'$, where $x':=-\sum_{j=1}^rV^iR^{i-1}x\in W_{r+1}\Omega_B^n$.
\item The map $R-F:W_{r+1}\Omega_{(B,I)}^n\to W_r\Omega_{(B,I)}^n$ is surjective.
\item The canonical maps $W_r\Omega_B^{n,F=1}\to W_r\Omega_{B/I}^{n,F=1}$ and $W_r\Omega_{(B,J)}^{n,F=1}\to W_r\Omega_{(B/I,J/I)}^{n,F=1}$ are surjective.
\end{enumerate}
\end{lemma}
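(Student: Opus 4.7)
Part (i) is a direct computation. Setting $x' = -\sum_{i=1}^r V^i R^{i-1} x \in W_{r+1}\Omega^n_B$, one applies the standard de Rham--Witt identities $RV = VR$, $Rd = dR$, and $FdV = d$ (so $FdV^i = dV^{i-1}$ for $i \ge 1$), together with $R^r = 0$ on $W_r\Omega$ (since $W_0\Omega = 0$). A short calculation yields
\[
Rdx' = -\sum_{i=1}^{r-1} dV^i R^i x\qquad\text{and}\qquad Fdx' = -\sum_{i=1}^{r} dV^{i-1} R^{i-1} x,
\]
where the $i=r$ summand drops from $Rdx'$ because $R^r x \in W_0\Omega^n_B = 0$; subtracting and telescoping gives $(R-F)dx' = dx$.

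For part (ii), my approach is a telescoping iteration. Given $y \in W_r\Omega^n_{(B,I)}$, first lift it to some $w_0 \in W_{r+1}\Omega^n_{(B,I)}$: this is possible because $R$ is surjective on the relative parts, which follows from Lemma \ref{lemma_Witt_dg_ideal_gen_by_I}(i) together with the obvious surjectivity of $R \colon W_{r+1}(I) \to W_r(I)$. Inductively choose $w_{k+1} \in W_{r+1}\Omega^n_{(B,I)}$ lifting $Fw_k$; telescoping then gives
\[
(R-F)(w_0 + w_1 + \cdots + w_N) = y - Fw_N,
\]
so it suffices to arrange $Fw_N = 0$ for some $N$. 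The crucial input is nilpotence of $I$, exploited via careful choice of lifts. For generators of the form $\alpha \omega$ with $\alpha \in W_r(I)$ and $\omega \in W_r\Omega^n_B$, iterating canonical (append-zero) Witt-vector lifts on the $\alpha$-factor produces iterates whose components are iterated $p$-th powers $c_i^{p^k}$ of the original Witt components $c_i \in I$, vanishing once $p^k$ exceeds the nilpotence index of $I$. For generators of the form $d\beta \cdot \eta$ with $\beta \in W_r(I)$, part (i) supplies an explicit lift of $d\beta$ which enters the iteration. The main obstacle is organising this bookkeeping uniformly across the dg-ideal decomposition of $W_r\Omega^n_{(B,I)}$; an induction on the nilpotence index of $I$ via the short exact sequence $0 \to W_r\Omega^n_{(B,I^2)} \to W_r\Omega^n_{(B,I)} \to W_r\Omega^n_{(B/I^2, I/I^2)} \to 0$ reduces matters to the square-zero case, in which $F$ vanishes identically on $W_r(I)$ in characteristic $p$ and the termination becomes transparent.

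Part (iii) then follows from (ii) by a snake-lemma argument. Applying snake to the pair
\[
\begin{aligned}
0 \to W_r\Omega^n_{(B,I)} &\to W_r\Omega^n_B \to W_r\Omega^n_{B/I} \to 0,\\
0 \to W_{r-1}\Omega^n_{(B,I)} &\to W_{r-1}\Omega^n_B \to W_{r-1}\Omega^n_{B/I} \to 0
\end{aligned}
\]
with vertical maps $R - F$, the relevant portion of the resulting six-term exact sequence reads
\[
\cdots \to W_r\Omega_B^{n,F=1} \to W_r\Omega_{B/I}^{n,F=1} \to \op{coker}\bigl((R-F)\colon W_r\Omega^n_{(B,I)} \to W_{r-1}\Omega^n_{(B,I)}\bigr) \to \cdots,
\]
and the rightmost term vanishes by (ii) applied with $r$ replaced by $r-1$ (the case $r=1$ being trivial since $W_0=0$), yielding the desired surjectivity. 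For the second statement, the same argument applies to the exact sequence $0 \to W_r\Omega^n_{(B,I)} \to W_r\Omega^n_{(B,J)} \to W_r\Omega^n_{(B/I, J/I)} \to 0$, which holds because $I \subseteq J$ forces $W_r\Omega^n_{(B,I)} \subseteq W_r\Omega^n_{(B,J)}$ with the indicated quotient.
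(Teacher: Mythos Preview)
Your treatment of parts (i) and (iii) is correct and matches the paper's argument exactly.

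For part (ii), your telescoping strategy is the same underlying idea as the paper's---both produce a preimage under $R-F$ as a finite geometric-type sum whose last term vanishes by nilpotence---but your execution has real gaps. First, the lemma is stated for an arbitrary ring $B$ (and is applied in the paper to $\bb Z_{(p)}$-algebras in which $p$ is merely nilpotent, not zero), so your endgame ``in characteristic $p$'' is not sufficient. Second, your invocation of part (i) for generators $d\beta\cdot\eta$ is confused: part (i) exhibits $d\beta$ as an $(R-F)$-image, not as an $R$-lift, and it is $R$-lifts that your iteration requires. Third, even granting characteristic $p$ and square-zero $I$, the vanishing of $F$ on $W_r(I)$ does not by itself handle generators involving $d\beta$: for instance $Fd[c]=[c]^{p-1}d[c]$ need not be zero when $p=2$ (take $B=\bb F_2[c]/(c^2)$, where $c\,dc\ne 0$ in $\Omega^1_B$), so the ``termination becomes transparent'' claim fails as stated.

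The paper sidesteps all of this by lifting $\omega$ once to a high level $W_{2r+N}\Omega^n_{(B,I)}$ (with $N$ chosen so that $I^{p^N-1}=0$), choosing the lift $\tilde\omega$ so that $F^{r+N}(\tilde\omega)=0$---verified explicitly on generators $V^i[b]$ with $b\in I$ and their differentials, then extended over the dg-ideal---and then setting $z:=\sum_{j=0}^{r+N-1}F^{r+N-1-j}R^j(\tilde\omega)\in W_{r+1}\Omega^n_{(B,I)}$, which satisfies $(R-F)z=R^{r+N}\tilde\omega-F^{r+N}\tilde\omega=\omega$. Your iterative scheme can be made to work along the same lines, but you would need to replace the char-$p$ shortcut by an honest computation that iterated application of $F$ to suitably chosen lifts of both $V^i[b]$ and $dV^i[b]$ eventually produces a high power of $b$; that is exactly what the paper's one-shot high-level lift accomplishes cleanly.
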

\begin{proof}
(i): Applying the identity $d=FdV$ to $x,VRx,\dots, V^{r-1}R^{r-1}x\in W_r\Omega_B^n$, we see that
\begin{align*}
dx=&FdV(x)\\
&-dV(Rx)+FdV^2(Rx)\\
&-\cdots\\
&-dV^{r-1}(R^{r-1}x)+FdV^r(R^{r-1}x)\\
=&(F-R)d(Vx+V^2Rx+\cdots+V^rR^{r-1}x)+V^rR^rx\\
=&(F-R)d(Vx+V^2Rx+\cdots+V^rR^{r-1}x)
\end{align*}
since $R^rx=0$, as required.

(ii): Let $N\ge0$ be large enough so that $I^{p^N-1}=0$. We claim that any $\omega\in W_r\Omega_{(B,I)}^n$ may be lifted to an element $\tilde \omega\in W_{2r+N}\Omega_{(B,I)}^n$ satisfying $F^{r+N}(\tilde\omega)=0$. Once this claim has been proved we can set \[z:=\sum_{j=0}^{r+N-1}F^{r+N-1-j}R^j(\tilde\omega)\in W_{r+1}\Omega_{(B,I)}^n,\] which satisfies
\[(R-F)z=R^{r+N}(\tilde \omega)-F^{r+N}(\tilde\omega)=R^{r+N}(\tilde\omega)=\omega\] and so completes the proof.

We will prove the claim in two steps. The first step is to show that each $x\in W_r(I)$ may be lifted to some $\tilde x\in W_{2r+N}(I)$ such that both $F^{r+N}(\tilde x)$ and $F^{r+N}(d\tilde x)$ are zero. It is enough to suppose $x=V^i[b]_i$ for some $b\in I$ and $i\in\{0,\dots,r-1\}$, where we must clarify the Teichm\"uller lift $[\cdot]_i:A\to W_i(B)$ with the correct subscript. We will show that $\tilde x:=V_i[b]_{2r+N-i}\in W_{2r+N}(I)$, which is certainly a lift of $x$, has the desired property. Indeed, \[F^{N+r}(\tilde x)=F^{N+r}V^i[b]_{2r+N-i}=p^iF^{N+r-i}[b]_{2r+N-i}=p^i[b^{p^{N+r-i}}]_r=0\] by standard Witt vector identities and choice of $N$; similarly, \[F^{N+r}(d\tilde x)=F^{N+r}dV^i[b]_{2r+N-i}=F^{n+r-i}d[b]_{2r+N-i}=[a^{p^N-1}]d[b]_r=0,\] completing the first step of the claim.

Second step of the claim: By Lemma \ref{lemma_Witt_dg_ideal_gen_by_I}(i), $W_r\Omega_{(B,I)}^*$ is the differential graded ideal of $W_r\Omega_B^*$ generated by $W_r(I)\subseteq W_r\Omega_B^0$. In other words, each element of $W_r\Omega_{(B,I)}^n$ is a finite sum of terms of the form $\omega=x\,dx_1\cdots dx_n$ where $x,x_1,\dots,x_n$ are elements of $W_r(B)$, at least one of which belongs to $W_r(I)$. Let $\tilde x,\tilde x_1,\dots\tilde x_n\in W_{2r+N}(B)$ be lifts of these elements, chosen with the convention that if the element belongs to $W_r(I)$ then we choose a lift in $W_{2r+N}(I)$ with the property of the first step. Put $\tilde \omega:=\tilde x\,d\tilde x_1\cdots d\tilde x_n\in W_{2r+N}\Omega_{(B,I)}^n$, which is a lift of $\omega$. Then $F^{N+r}(\tilde\omega)=F^{N+r}(\tilde x)F^{N+r}(d\tilde x_1)\cdots F^{N+r}(d\tilde x_n)=0$ by choice of our lifts.

(iii): There is a commutative diagram with surjective vertical arrows:
\[\xymatrix{
W_r\Omega_B^n\ar[d]\ar[r]^{R-F}&W_{r-1}\Omega_B^n\ar[d]\\
W_r\Omega_{B/I}^n\ar[r]_{R-F}&W_{r-1}\Omega_{B/I}^n
}\]
Since the induced map on the kernels of the vertical arrows is surjective by (ii), it follows that the induced map on the kernels of the horizontal arrows is also surjective, as required. The more general case for an ideal $J$ is the same argument, using the obvious identity $\ker(W_r\Omega_{(B,J)}^n\to W_r\Omega_{(A/I,J/I)}^n)= W_r\Omega_{(B,I)}^n$.
\end{proof}

Under mild finiteness hypotheses, the results of the previous lemma may be extended to $I$-adically complete rings; this will provide a useful technique to lift problems to regular rings:

\begin{proposition}\label{proposition_limit_of_F-fixed_points}
Let $A$ be a Noetherian, F-finite $\bb Z_{(p)}$-algebra in which $p$ is nilpotent, and $I\subseteq J\subseteq A$ ideals such that $A$ is $I$-adically complete; fix $n\ge0$ and $r\ge1$. Then:
\begin{enumerate}
\item The canonical maps $W_r\Omega_A^{n,F=1}\to\projlim_sW_r\Omega_{A/I^s}^{n,F=1}$ and $W_r\Omega_{(A,J)}^{n,F=1}\to\projlim_sW_r\Omega_{(A/I^s,J/I^s)}^{n,F=1}$ are isomorphisms.
\item The canonical maps $W_r\Omega_{A}^{n,F=1}\to W_r\Omega_{A/I}^{n,F=1}$ and $W_r\Omega_{(A,J)}^{n,F=1}\to W_r\Omega_{(A/I,J/I)}^{n,F=1}$ are surjective.
\item The map $R-F:W_r\Omega_{(A,I)}^n\to W_{r-1}\Omega_{(A,I)}^n$ is surjective.
\end{enumerate}
\end{proposition}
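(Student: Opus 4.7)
My plan is to prove the three parts in the order (i), (ii), (iii), since each feeds into the next. Part (i) should be essentially formal: the isomorphism $W_r\Omega_A^n \isoto \projlim_s W_r\Omega_{A/I^s}^n$ of Lemma \ref{lemma_completion_of_dRW_complex} is compatible with $R - F$, so applying the left exact functor $\projlim_s$ to the defining kernel sequences of the $F{=}1$ subgroups gives the result when $J = A$. For the relative case I first observe that $I \subseteq J$ forces $(A/I^s)/(J/I^s) = A/J$ for every $s$, whence Lemma \ref{lemma_completion_of_dRW_complex}, combined with the kernel-defining sequences of the relative complexes, yields $W_r\Omega_{(A, J)}^n \isoto \projlim_s W_r\Omega_{(A/I^s, J/I^s)}^n$; the same left exact argument then applies.

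For (ii) I would argue by successive approximation, concluding with (i). Given $\bar\omega \in W_r\Omega_{A/I}^{n, F=1}$, I construct inductively compatible $F{=}1$ lifts $\bar\omega_s \in W_r\Omega_{A/I^s}^{n, F=1}$ as follows: at each step, lift $\bar\omega_s$ to an arbitrary $\tilde\omega_{s+1} \in W_r\Omega_{A/I^{s+1}}^n$; then $(R - F)\tilde\omega_{s+1}$ reduces to $0$ in $W_{r-1}\Omega_{A/I^s}^n$, so lies in $W_{r-1}\Omega_{(A/I^{s+1}, I^s/I^{s+1})}^n$. The ideal $I^s/I^{s+1}$ is nilpotent in $A/I^{s+1}$, so Lemma \ref{lemma_surj_of_1-F_in_nilp_case}(ii) furnishes $\eta \in W_r\Omega_{(A/I^{s+1}, I^s/I^{s+1})}^n$ with $(R - F)\eta = (R - F)\tilde\omega_{s+1}$, and $\bar\omega_{s+1} := \tilde\omega_{s+1} - \eta$ is the desired lift. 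The compatible system $(\bar\omega_s)_s$ corresponds by (i) to an element of $W_r\Omega_A^{n, F=1}$ lifting $\bar\omega$. The relative version runs identically: the initial lift $\tilde\omega_{s+1}$ can be chosen in the relative complex using a snake lemma comparison with the two ambient defining sequences, and the correction $\eta$ automatically lies in $W_r\Omega_{(A/I^{s+1}, J/I^{s+1})}^n$ because $I^s \subseteq I \subseteq J$.

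For (iii) I would deduce the result from (ii) via Mittag--Leffler. At each level $s \ge 1$, Lemma \ref{lemma_surj_of_1-F_in_nilp_case}(ii) applied to the nilpotent ideal $I/I^s \subseteq A/I^s$ yields a short exact sequence
$$0 \To W_r\Omega_{(A/I^s, I/I^s)}^{n, F=1} \To W_r\Omega_{(A/I^s, I/I^s)}^n \xto{R-F} W_{r-1}\Omega_{(A/I^s, I/I^s)}^n \To 0.$$
The transition maps in the kernel system $\{W_r\Omega_{(A/I^s, I/I^s)}^{n, F=1}\}_s$ are surjective: this is precisely (ii) applied to the ring $A/I^{s+1}$ with nilpotent ideal $I^s/I^{s+1}$ and ambient ideal $I/I^{s+1}$. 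Hence $\projlim^1$ of the kernel system vanishes and $\projlim_s$ preserves exactness on the right; combined with the identification $\projlim_s W_\bullet\Omega_{(A/I^s, I/I^s)}^n = W_\bullet\Omega_{(A, I)}^n$ coming from (i), this gives surjectivity of $R - F$ on $W_r\Omega_{(A, I)}^n$.

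The main obstacle is the apparently circular dependence of the three assertions: Lemma \ref{lemma_surj_of_1-F_in_nilp_case}(ii) only provides surjectivity of $R - F$ modulo a nilpotent ideal, whereas we need it on the $I$-adically complete $A$ where $I$ itself need not be nilpotent. The bootstrap (i) $\to$ (ii) $\to$ (iii), with (ii) supplying the Mittag--Leffler hypothesis needed in (iii), is what closes the gap, and one must remain vigilant that each lift and correction respects the relative ideal, which is automatic only because of the hypothesis $I \subseteq J$.
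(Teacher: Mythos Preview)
Your proof is correct and follows essentially the same route as the paper: (i) via left exactness of $\projlim$ together with Lemma~\ref{lemma_completion_of_dRW_complex}, then (ii), then (iii) via Mittag--Leffler on the kernel system. The only real difference is in (ii): you build the compatible $F{=}1$ lifts by an explicit inductive correction using Lemma~\ref{lemma_surj_of_1-F_in_nilp_case}(ii), whereas the paper simply observes that the transition maps in $\{W_r\Omega_{A/I^s}^{n,F=1}\}_s$ (and its relative version) are surjective by Lemma~\ref{lemma_surj_of_1-F_in_nilp_case}(iii), and then (i) immediately gives surjectivity onto the bottom term. Your hands-on construction is just an unrolling of that one-line argument, so you could shorten it considerably; similarly, in (iii) the surjectivity of the transition maps on the $F{=}1$ kernels is again Lemma~\ref{lemma_surj_of_1-F_in_nilp_case}(iii) directly, so invoking part~(ii) of the present proposition (with its completeness hypothesis) is more than you need.
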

\begin{proof}
(i): Taking the inverse limit over $s\ge1$ of the short exact sequences \[0\To W_r\Omega_{A/I^s}^{n,F=1}\To W_r\Omega_{A/I^s}^n\xto{R-F}W_{r-1}\Omega_{A/I^s}^n\] and using Lemma \ref{lemma_completion_of_dRW_complex} yields a short exact sequence \[0\To \projlim_sW_r\Omega_{A/I^s}^{n,F=1}\To W_r\Omega_A^n\xto{R-F}W_{r-1}\Omega_A^n, \] whence $W_r\Omega_A^{n,F=1}\isoto\projlim_sW_r\Omega_{A/I^s}^{n,F=1}$. Taking the kernel of the map to $W_r\Omega_{A/J}^{n,F=1}$ proves $W_r\Omega_{(A,J)}^{n,F=1}\isoto\projlim_sW_r\Omega_{(A/I^s,J/I^s)}^{n,F=1}$.

(ii): Since the transition maps in the systems $\{W_r\Omega_{A/I^s}^{n,F=1}\}_s$ and $\{W_r\Omega_{(A/I^s,J/I^s)}^{n,F=1}\}_s$ are surjective, by Lemma \ref{lemma_surj_of_1-F_in_nilp_case}(iii), the assertion follows from (i).

(iii): Lemmas \ref{lemma_surj_of_1-F_in_nilp_case} yields short exact sequences \[0\To W_r\Omega_{(A/I^s,I/I^s)}^{n,F=1}\To W_r\Omega_{(A/I^s,I/I^s)}^n\xto{R-F}W_{r-1}\Omega_{(A/I^s,I/I^s)}^n\To0,\] where the transition maps over $s\ge1$ are surjective on the left group. Taking the limit and using Lemma \ref{lemma_completion_of_dRW_complex} yields \[0\To \projlim_sW_r\Omega_{(A/I^s,I/I^s)}^{n,F=1}\To W_r\Omega_{A,I}^n\xto{R-F}W_{r-1}\Omega_{A,I}^n\To 0,\] as desired.
\end{proof}

\begin{remark}[The case of big de Rham--Witt complexes]
Some of our results remain true for big Hodge--Witt groups $\bb W_S\Omega_A^n$ associated to finite truncation sets $S$, although we do not need them. For general terminology surrounding big Witt vectors and truncation sets, see \cite{Hesselholt2010}. Given an inclusion of truncation sets  $S\supseteq T$, there are associated Restriction, Frobenius and Verschiebung maps \[R_T,\;F_T:\bb W_S\Omega_A^n\to\bb W_T\Omega_A^n,\quad V_T:\bb W_T\Omega_A^n\to\bb W_S\Omega_A^n.\] If $m\ge1$ is an integer then one defines a new truncation set by $S/m:=\{s\in S:sm\in S\}$ and writes $R_m$, $F_m$, and $V_m$ instead of $R_{S/m}$, $F_{S/m}$, and $V_{S/m}$ respectively. The $p$-typical case is recovered with the truncation set $S=\{1,p,\dots,p^{r-1}\}$.

If $S$ is any finite truncation set in place of $\{1,p,\dots,p^{r-1}\}$ then Lemmas \ref{lemma_witt_intertwined}, \ref{lemma_Witt_dg_ideal_gen_by_I}, and \ref{lemma_completion_of_dRW_complex} remain true. If we also fix $m\ge 1$, then the obvious analogues of Lemma \ref{lemma_surj_of_1-F_in_nilp_case} and Proposition \ref{proposition_limit_of_F-fixed_points} remains true for the morphism $R_m-F_m:\bb W_S\Omega_A^n\to\bb W_{S/m}\Omega_A^n$
\end{remark}

\section{Interlude: logarithmic Hodge--Witt sheaves ($n=1$) and line bundles}\label{section_line_bundles}
In this section we study $W_r\Omega_\sub{log}^1$, its relation to the fppf sheaf $\bm\mu_{p^r}$, and and its role in deforming line bundles on formal $\bb F_p$-schemes. Section \ref{section_hw_n>1} and onwards do not depend on this material, and so readers interested mainly in higher algebraic $K$-theory may skip this section.

\subsection{A formal Cartier sequence and relation to $\pmb\mu_{p^r}$}\label{subsection_dlog_n=1}
The following Cartier sequence extends to regular formal $\bb F_p$-schemes a well-known result for smooth varieties, namely \cite[Prop.~3.23.2]{Illusie1979}, and it will underly the deformation results for line bundles in Section \ref{subsection_deform_line}:

\begin{theorem}\label{theorem_pro_higher_Cartier}
Let $\cal Y$ be a regular, F-finite formal $\bb F_p$-scheme, $Y_1\into \cal Y$ a subscheme of definition, and $r\ge1$. Then the following sequence of pro \'etale sheaves on $Y_1$ is exact:
\[0\To\{\bb G_{m,Y_s}\}_s\xTo{p^r}\{\bb G_{m,Y_s}\}_s\xTo{\dlog[\cdot]}\{W_r\Omega_{Y_s,\sub{log}}^1\}_s\To 0.\]
%\[0\To \{W_r\Omega^1_{\cal Y_s,\sub{log}}\}_r\To\{W_r\Omega^1_{\cal Y_s}\}_r\xTo{1-F}\{W_r\Omega^1_{\cal Y_s}\}_r\To0\]
\end{theorem}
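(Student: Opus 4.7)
The assertion is \'etale-local on $Y_1$, so I would reduce it to pro-exactness on \'etale stalks: fixing a geometric point $\bar y$ and passing to the strict Henselisation $A^{\op{sh}}$ of the local ring of $\cal Y$ at $\bar y$ (which remains regular, F-finite, and strictly Henselian by Section~\ref{subsection_F-finite_etc}), one must prove that
\[0\To\{(A^{\op{sh}}/I^s)^\times\}_s\xTo{p^r}\{(A^{\op{sh}}/I^s)^\times\}_s\xTo{\dlog}\{W_r\Omega^1_{A^{\op{sh}}/I^s,\sub{log}}\}_s\To 0\]
is exact as pro-abelian groups. Surjectivity of $\dlog$ is immediate because the strictly Henselian local rings $A^{\op{sh}}/I^s$ have trivial \'etale site, so $W_r\Omega^1_\sub{log}$ coincides with the subgroup genuinely generated by $\dlog$ of units; pro-injectivity of $p^r$ follows by iteration from that of $p$, which is the content of the second exact sequence in Corollary~\ref{corollary_pro_Cartier}.

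The substantive content is exactness in the middle, which I plan to prove by induction on $r$. The base case $r=1$ is Corollary~\ref{corollary_pro_Cartier} combined with the inclusion $W_1\Omega^1_\sub{log}\subseteq\Omega^1$ (which does not change the kernel of $\dlog$). For the inductive step, suppose $u\in(A^{\op{sh}}/I^{s'})^\times$ represents an element of the pro-system with $\dlog[u]=0$ in $\{W_r\Omega^1\}_s$. Restricting via $R^{r-1}$ to $W_{r-1}\Omega^1$ and invoking the inductive hypothesis would yield $u\equiv u_1^{p^{r-1}}$ pro. I would then use the identities $p=VF$ (valid in the de Rham--Witt complex of any $\bb F_p$-algebra) and $F\dlog[u_1]=\dlog[u_1]$ to compute
\[\dlog[u]\;=\;p^{r-1}\dlog[u_1]\;=\;V^{r-1}F^{r-1}\dlog[u_1]\;=\;V^{r-1}(u_1^{-1}du_1)\]
in $W_r\Omega^1$. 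The vanishing hypothesis thus becomes $V^{r-1}(u_1^{-1}du_1)=0$ pro in $\{W_r\Omega^1_{A^{\op{sh}}/I^s}\}_s$, and Lemma~\ref{lemma_Cartier_drw}(i) then places $\dlog u_1$ in $\{B_{r-1}\Omega^1_{A^{\op{sh}}/I^s}\}_s$ pro.

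The crucial remaining step is to deduce from $\dlog u_1\in B_{r-1}\Omega^1$ (pro) that $\dlog u_1=0$ in $\Omega^1$ (pro); the $r=1$ case then forces $u_1\equiv v^p$ pro and therefore $u\equiv v^{p^r}$ pro, closing the induction. For this I would invoke the iterated pro formal Cartier isomorphism $C^{-(r-1)}:\{\Omega^1_{A^{\op{sh}}/I^s}\}_s\Isoto\{Z_{r-1}\Omega^1_{A^{\op{sh}}/I^s}/B_{r-1}\Omega^1_{A^{\op{sh}}/I^s}\}_s$ established in \S\ref{subsection_cartier}, together with the direct computation $C^{-1}(v^{-1}dv)=v^{-p}\cdot v^{p-1}dv=v^{-1}dv$. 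This latter shows that $\dlog$ of a unit is fixed by $C^{-1}$, so iteratively $\dlog u_1\in Z_i\Omega^1$ for every $i$ and its class in $Z_{r-1}/B_{r-1}$ corresponds under the iterated Cartier iso to $\dlog u_1\in\Omega^1$ itself; hence $\dlog u_1\in B_{r-1}$ pro is equivalent to $\dlog u_1=0$ in $\Omega^1$ pro.

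The principal obstacle is precisely this final identification of $B_{r-1}$-membership with vanishing in $\Omega^1$ via iterated Cartier; everything else (the strict-Henselian reduction, injectivity of $p^r$, the $r=1$ Cartier sequence, and the standard de Rham--Witt identities $p=VF$ and $F\dlog=\dlog$) is essentially bookkeeping, but the real challenge will be interleaving these correctly so that every intermediate statement remains pro-exact as one passes between $W_\bullet$, $\Omega^\bullet$, and the Cartier filtration.
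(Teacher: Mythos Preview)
Your argument is correct (modulo the slip that the restriction map you apply is $R:W_r\to W_{r-1}$, not $R^{r-1}$; the latter lands in $W_1$ and would only give $u\equiv u_1^p$). The key identities $p=VF$ in characteristic $p$, $F\dlog[u_1]=\dlog[u_1]$, the pro description of $\ker V^{r-1}$ as $B_{r-1}\Omega^1$ from Lemma~\ref{lemma_Cartier_drw}(i), and the fact that the iterated Cartier map fixes $\dlog$ of a unit are all valid, and they combine exactly as you say.

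The paper takes a different route for the middle exactness. Rather than passing to strict Henselian stalks and computing with $V^{r-1}$ and the Cartier filtration, it stays on the affine $\Spf A$ and compares, for $B=A/I^s$, the two short sequences
\[
B^\times/B^{\times p^{r-1}}\xto{p}B^\times/B^{\times p^r}\to B^\times/B^{\times p}\to 0
\quad\text{and}\quad
0\to\op{Fil}^1_pW_r\Omega_B^1\to W_r\Omega_B^1\xto{R^{r-1}}\Omega_B^1\to 0
\]
via the three vertical $\dlog$ maps. The bottom row becomes pro-exact by Proposition~\ref{proposition_filtrations_are_equal} (equality of the $p$- and canonical filtrations), the right vertical is pro-injective by Corollary~\ref{corollary_pro_Cartier}, and the left vertical is pro-injective by the inductive hypothesis at level $r-1$ after composing with $R$; hence the middle $\dlog$ is pro-injective. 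This approach uses only Sections~\ref{subsection_cartier}--\ref{subsection_filtrations} (in line with the dependency guide in the introduction), whereas your argument invokes Lemma~\ref{lemma_Cartier_drw}, which the paper explicitly flags as unnecessary for Section~\ref{section_line_bundles}. On the other hand, your proof is arguably more transparent about \emph{why} the obstruction vanishes: it identifies the obstruction concretely as a class in $B_{r-1}\Omega^1$ and kills it via the Cartier fixed-point property of $\dlog$, rather than appealing to a filtration comparison.
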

\begin{proof}
The necessary right exactness is tautological from the definition of $W_r\Omega_{Y_s,\sub{log}}^1$, and the left exactness follows from Corollary \ref{corollary_pro_Cartier} (which was stated in the affine case but remains true for sheaves). Now we suppose that $\cal Y=\Spf A$ is the formal spectrum of a regular, F-finite $\bb F_p$-algebra $A$ which is complete with respect to an ideal $I\subseteq A$.

For any $\bb F_p$-algebra $B$, there is a natural commutative diagram of abelian groups:
\begin{equation}\xymatrix{
& B^\times/B^{\times p^{r-1}}\ar[r]^p \ar[d]^{\dlog[\cdot]}& B^\times/B^{\times p^r}\ar[r] \ar[d]^{\dlog[\cdot]}& B^\times/B^{\times p}\ar[r] \ar[d]^{\dlog}& 0\\
0\ar[r] & \op{Fil}_p^1W_r\Omega_B^1\ar[r] & W_r\Omega_B^1\ar[r]^{R^{r-1}}&\Omega_B^1\ar[r]&0
}\label{eqn_dlog}\end{equation}
The top row is exact, and the bottom row is exact except possibly at the middle.

Now let $B=A/I^s$ and assemble diagram (\ref{eqn_dlog}) into one of pro abelian groups indexed over $s\ge1$. Then the bottom row becomes exact thanks to the equality $\{\op{Fil}^1W_r\Omega_{A/I^s}^n\}_s=\{\op{Fil}_p^1W_r\Omega_{A/I^s}^n\}_s$ of Proposition \ref{proposition_filtrations_are_equal}; the right vertical arrow becomes injective by Corollary \ref{corollary_pro_Cartier}; and the left vertical arrow becomes injective because the composition
\[\xymatrix{
\{(A/I^s\,^{\times})/(A/I^s\,^{\times p^{r-1}})\}_s\ar@/_1cm/[rr]_{\dlog[\cdot]_{r-1}}
\ar[r]^-{\dlog[\cdot]_r}&\{\op{Fil}_p^1W_r\Omega_{A/I^s}\}_s\ar[r]^R&\{W_{r-1}\Omega_{A/I^s}\}_s
}\]
may be assumed to be injective by induction on $r\ge1$. Hence the central vertical arrow of diagram (\ref{eqn_dlog}) becomes an injection of pro abelian groups, proving the necessary central exactness of the theorem.
\end{proof}

If $Y$ is any $\bb F_p$-scheme then $\dlog[\cdot]$ induces a natural homomorphism $H^*_\sub{fppf}(Y,\bm\mu_{p^r,Y})\to H^{*-1}_\sub{\'et}(Y,W_r\Omega_{Y,\sub{log}}^1)$ which is known to be an isomorphism is $Y$ is smooth over a perfect field (further details will be recalled in the following proof); the previous theorem allows us to prove an analogous result for formal schemes, which we include especially to unify the two existing approaches to deforming line bundles in characteristic $p$ (see Remark \ref{remark_relation_to_Tate_paper}); we denote by $\rho$ the projection map from the fttp topos to the \'etale topos:

\begin{corollary}\label{corollary_fppf_mu_n}
Let $\cal Y$ be a regular, F-finite formal $\bb F_p$-scheme, $Y_1\into \cal Y$ a subscheme of definition, and $r\ge1$. Then the canonical map $\{R\rho_*\bm\mu_{p^r,Y_s}[1]\}_s\to\{W_r\Omega^1_{Y_s,\sub{log}}\}_s$ of pro complexes of Zariski sheaves is a quasi-isomorphism; i.e., \[\{H^i_\sub{fppf}(Y_s,\bm\mu_{p^r,Y_s})\}_s\Isoto \{H^{i-1}_\sub{\'et}(Y_s,W_r\Omega_{Y_s,\sub{log}}^1)\}_s\] for all $i\ge0$.
\end{corollary}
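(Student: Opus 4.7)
My plan is to combine Theorem \ref{theorem_pro_higher_Cartier} with the classical comparison between fppf and \'etale cohomology of $\bb G_m$, by pushing the fppf Kummer sequence down to the \'etale topology. For each $s\ge1$, the fppf Kummer sequence
\[0 \To \bm\mu_{p^r, Y_s} \To \bb G_{m, Y_s} \xTo{p^r} \bb G_{m, Y_s} \To 0\]
is exact. Since $\bb G_m$ is smooth, Grothendieck's theorem that the flat and \'etale cohomology of $\bb G_m$ agree gives $R\rho_* \bb G_{m, Y_s} \simeq \bb G_{m, Y_s}$, so applying $R\rho_*$ to the Kummer triangle yields a distinguished triangle
\[R\rho_* \bm\mu_{p^r, Y_s} \To \bb G_{m, Y_s} \xTo{p^r} \bb G_{m, Y_s} \xTo{+1}.\]
Its long exact cohomology sequence shows that $R\rho_* \bm\mu_{p^r, Y_s}$ is concentrated in degrees $0$ and $1$, with $H^0=\ker(p^r)$ and $H^1=\coker(p^r)$. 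Shifting by $[1]$ puts these cohomology groups in degrees $-1$ and $0$.

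Next I invoke Theorem \ref{theorem_pro_higher_Cartier}, which asserts the exactness of the pro \'etale sequence
\[0\To\{\bb G_{m,Y_s}\}_s\xTo{p^r}\{\bb G_{m,Y_s}\}_s\xTo{\dlog[\cdot]}\{W_r\Omega_{Y_s,\sub{log}}^1\}_s\To 0.\]
The pro-injectivity of $p^r$ says precisely that $\{\rho_*\bm\mu_{p^r, Y_s}\}_s = 0$ in the pro category, while the pro cokernel of $p^r$ is identified via $\dlog$ with $\{W_r\Omega^1_{Y_s, \sub{log}}\}_s$. Consequently the pro complex $\{R\rho_* \bm\mu_{p^r, Y_s}[1]\}_s$ has $H^{-1}$ pro-zero and $H^0$ canonically isomorphic to $\{W_r\Omega^1_{Y_s, \sub{log}}\}_s$, so the canonical truncation map provides the required pro quasi-isomorphism
\[\{R\rho_* \bm\mu_{p^r, Y_s}[1]\}_s \Isoto \{W_r\Omega^1_{Y_s, \sub{log}}\}_s.\]

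I do not anticipate any serious obstacle beyond correctly arranging these inputs: once Theorem \ref{theorem_pro_higher_Cartier} is in hand, the rest is formal manipulation of the Kummer triangle. The main conceptual point worth emphasising is that the analogous statement fails for each individual $s$, since $\bm\mu_{p^r, Y_s}$ typically has nontrivial sections on the non-reduced scheme $Y_s$; it is precisely the pro framework, and specifically the injectivity built into Theorem \ref{theorem_pro_higher_Cartier}, that makes this obstruction vanish (reflecting the fact that an infinitesimal $p^r$-th root of unity in $Y_{s+t}$ restricts to the identity on $Y_s$ once $t$ is large enough).
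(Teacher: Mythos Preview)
Your proof is correct and follows essentially the same approach as the paper: both push down the fppf Kummer sequence via $R\rho_*$, invoke Grothendieck's comparison $R\rho_*\bb G_m\simeq\bb G_m$, and then use Theorem~\ref{theorem_pro_higher_Cartier} to identify the pro kernel and cokernel of $p^r$ on $\bb G_m$. Your packaging via the distinguished triangle is slightly cleaner than the paper's explicit unwinding into $\rho_*$ and $R^1\rho_*$, and your closing remark about the failure at each fixed $s$ is a nice addition, but the mathematics is the same.
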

\begin{proof}
On any $\bb F_p$-scheme $Y$, the sequence of fppf sheaves \[0\To\bm \mu_{p^r,Y}\To\bb G_{m,Y,\sub{fppf}}\xTo{p^r}\bb G_{m,Y,\sub{fppf}}\To 0\] is exact; applying the projection map $\rho$ obtains vanishings $R^i\rho_*\bm\mu_{p^r,Y}=0$ for $i>1$ and an exact sequence of \'etale sheaves \[0\to\rho_*\bm \mu_{p^r,Y}\to \bb G_{m,Y,\sub{\'et}}\xto{p^r}\bb G_{m,Y,\sub{\'et}}\to R^1\rho_*\bm\mu_{p^r,Y}\to 0,\] were we use the fact that $R\rho_*\bb G_{m,Y,\sub{fppf}}=\bb G_{m,Y,\sub{\'et}}$ \cite[Thm.~III.11.7]{Grothendieck1968}. Since the map $\dlog[\cdot]:\bb G_{m,Y,\sub{\'et}}\to W_r\Omega_{Y,\sub{log}}^1$ kills $p^{r\sub{-th}}$-powers, it therefore induces $\dlog[\cdot]:R^1\rho_*\bm\mu_{p^r,Y}\to W_r\Omega_{Y,\sub{log}}^1$ and we arrive at a diagram
\[\xymatrix{
0\ar[r]& H^n_\sub{\'et}(Y,\rho_*\bm\mu_{p^r,Y})\ar[r]& H^n_\sub{fppf}(Y,\bm\mu_{p^r,Y})\ar[r]^-\delta& H^{n-1}_\sub{\'et}(Y,R^1\rho_*\bm\mu_{p^r,Y})\ar[d]^{\dlog[\cdot]}\ar[r]& 0\\
&&&H^{n-1}_\sub{\'et}(Y,W_r\Omega_{Y,\sub{log}}^1)&
}\]
where the sequence associated to the change of topology is short exact.

Now adopt the hypotheses of the statement of the corollary and apply the previous paragraph to $Y=Y_s$ for all $s\ge 1$; in particular, there are exact sequences of pro \'etale sheaves on $Y_1$ \[0\to\{\rho_*\bm \mu_{p^r,Y_s}\}_s\to \{\bb G_{m,Y_s,\sub{\'et}}\}\xto{p^r}\{\bb G_{m,Y_s,\sub{\'et}}\}\to \{R^1\rho_*\bm\mu_{p^r,Y_s}\}\To 0.\] Theorem \ref{theorem_pro_higher_Cartier} now implies that $\{\rho_*\bm \mu_{p^r,Y_s}\}_s=0$ (which in any case is easy to see by a direct argument) and $\dlog[\cdot]:\{R^1\rho_*\bm\mu_{p^r,Y_s}\}\isoto \{W_r\Omega_{Y_s,\sub{log}}^1\}_s$ (on the other hand, this is essentially a reformation of the main content of Theorem \ref{theorem_pro_higher_Cartier}). Hence \[\dlog[\cdot]\circ\delta:\{H^i_\sub{fppf}(Y_s,\bm\mu_{p^r,Y_s})\}_s\To \{H^{i-1}_\sub{\'et}(Y_s,W_r\Omega_{Y_s,\sub{log}}^1)\}_s\] is an isomorphism for all $i\ge0$, as required.
\end{proof}

Although it is not required in the remainder of this section we now explicitly mention a consequence for the algebrisation of the fppf cohomology of $\bm\mu_{p^r}$. On an $\bb F_p$-scheme $Y$ we write \[H^*_\sub{fppf}(Y,\bm\mu_{p^\infty,Y}):=H^*_\sub{fppf}(Y,\{\bm\mu_{p^r,Y}\}_r)\]for the continuous fppf cohomology of the inverse system of sheaves $\cdots\xto{p}\bm\mu_{p^2,Y}\xto{p}\bm\mu_{p,Y}$:

\begin{corollary}
Let $A$ be a Noetherian, F-finite $\bb F_p$-algebra which is complete with respect to an ideal $I\subseteq A$, let $X$ be a proper scheme over $A$, and write $Y_s:=X\times_AA/I^s$ for $s\ge1$. Then the canonical map
\[H^i_\sub{fppf}(X,\bm\mu_{p^\infty,X})\To\projlim_sH^i_\sub{fppf}(Y_s,\bm\mu_{p^\infty,Y_s})\]
is surjective for all $i\ge0$.
\end{corollary}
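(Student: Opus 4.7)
The plan is to convert the fppf cohomology of $\bm\mu_{p^r}$ into \'etale cohomology of $W_r\Omega^1_\sub{log}$ via Corollary \ref{corollary_fppf_mu_n}, then reduce further to an algebrisation statement for coherent cohomology via Theorem \ref{theorem_main_6}(i), and finally invoke Grothendieck's formal function theorem.

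First, applying Corollary \ref{corollary_fppf_mu_n} (using it for $X$ viewed through its thickenings $\{Y_s\}_s$, and separately for $X$ itself) I would obtain identifications of $R\rho_* \bm\mu_{p^r}[1]$ with $W_r\Omega^1_\sub{log}$ on both sides. This reduces the question, at each fixed $r\ge 1$, to proving surjectivity of
\[H^{i-1}_\sub{\'et}(X, W_r\Omega^1_{\sub{log},X}) \To \projlim_s H^{i-1}_\sub{\'et}(Y_s, W_r\Omega^1_{\sub{log},Y_s})\]
compatibly with passage to $\projlim_r$. Next, the Cartier sequence of pro \'etale sheaves $0 \to \{W_r\Omega^1_\sub{log}\}_r \to \{W_r\Omega^1\}_r \xto{1-F} \{W_r\Omega^1\}_r \to 0$ from Theorem \ref{theorem_main_6}(i), applied on $X$ and on each $Y_s$, induces long exact sequences in \'etale cohomology. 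Comparing these via a commutative diagram reduces the surjectivity for $W_r\Omega^1_\sub{log}$ to the analogous surjectivity
\[H^i_\sub{\'et}(X, W_r\Omega^1_X) \To \projlim_s H^i_\sub{\'et}(Y_s, W_r\Omega^1_{Y_s})\]
for the coherent sheaf $W_r\Omega^1$ (together with the corresponding surjectivity one degree lower, which sandwich the $1-F$ map into the relevant commutative diagrams).

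Since $W_r\Omega^1_X$ is coherent over the Noetherian ring $W_r(\roi_X)$ by Lemma~\ref{lemma_finite_generation_conditions} and Theorem~\ref{theorem_Langer_Zink2}, and $W_r(A)$ is $W_r(I)$-adically complete (by Lemma~\ref{lemma_witt_intertwined}), the Witt-scheme $W_r(X)$ is proper over the complete Noetherian ring $W_r(A)$. Grothendieck's formal function theorem then yields the required isomorphism $H^i_\sub{\'et}(X, W_r\Omega^1_X) \isoto \projlim_s H^i_\sub{\'et}(Y_s, W_r\Omega^1_{Y_s})$, giving the surjectivity at each level $r$. Passing to $\projlim_r$ is a Mittag--Leffler argument: each cohomology group in sight is a finitely generated module over a Noetherian ring, so the transition systems are strict Mittag--Leffler and the $\projlim^1$-obstructions that might spoil surjectivity vanish.

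The main obstacle I anticipate is that Corollary \ref{corollary_fppf_mu_n} is stated under regularity and F-finiteness hypotheses on the formal scheme which are not imposed here on $X$; circumventing this likely requires either mildly generalising the corollary or giving a direct argument via the Kummer sequence $0 \to \bm\mu_{p^r} \to \bb G_m \xto{p^r} \bb G_m \to 0$ in the fppf topology combined with Grothendieck's existence theorem for the Picard (and Brauer) pieces. A secondary subtlety is coordinating the two inverse limits over $r$ and $s$, which should be handled uniformly by the finite generation inherited from the coherent setting.
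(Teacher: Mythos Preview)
Your approach coincides with the paper's. The paper's proof is a one-liner: it invokes the preceding Corollary~\ref{corollary_fppf_mu_n} to translate fppf cohomology of $\bm\mu_{p^r}$ into \'etale cohomology of $W_r\Omega^1_\sub{log}$, and then cites Lemma~\ref{lemma_algebrisation_of_WOmega_log} (the case $n=1$). That lemma's proof is exactly the argument you sketch: use the fibre sequence coming from Corollary~\ref{corollary_log_vs_R-F}(iii) (your Theorem~\ref{theorem_main_6}(i)) to reduce to showing that $\op{Rlim}_rR\Gamma_\sub{Zar}(X,W_r\Omega_X^1)\quis\op{Rlim}_rR\Gamma_\sub{Zar}(Y_r,W_r\Omega_{Y_r}^1)$, and then establish this via Grothendieck's formal function theorem on the proper Witt scheme $W_r(X)$ over the $W_r(I)$-adically complete Noetherian ring $W_r(A)$, together with Lemma~\ref{lemma_Witt_dg_ideal_gen_by_I}(ii) and a diagonal-limit argument.

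The regularity obstacle you flag is real and is not addressed by the paper either: Corollary~\ref{corollary_fppf_mu_n} is stated for regular formal schemes, while the present corollary does not assume $X$ regular. The paper's proof uses Corollary~\ref{corollary_fppf_mu_n} without comment, so either an implicit regularity hypothesis on $X$ is intended from the surrounding context, or the author has in mind that the only place regularity enters is the identification $R\rho_*\bm\mu_{p^r}[1]\simeq W_r\Omega^1_\sub{log}$ (Lemma~\ref{lemma_algebrisation_of_WOmega_log} itself needs no regularity). Your instinct to fall back on the Kummer sequence plus Grothendieck existence for $\Pic$ would give a regularity-free argument in low degrees, but would not obviously handle all $i$.
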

\begin{proof}
In light of the previous corollary, this is exactly Corollary \ref{lemma_algebrisation_of_WOmega_log} (which depends only on Sections \ref{section_drw} and Corollary \ref{corollary_log_vs_R-F}) in the case $n=1$.
\end{proof}

\subsection{The deformation of line bundles on formal schemes}\label{subsection_deform_line}
In this section we apply Theorem \ref{theorem_pro_higher_Cartier} to the deformation and variational theory of line bundles in characteristic $p$. The goal is to characterise whether a line bundle on an $\bb F_p$-scheme $Y$ can be deformed in terms of its Chern class inside the cohomology groups \[H_\sub{\'et}^2(Y,\bb Z/p^r\bb Z(1)):=H^1_\sub{\'et}(Y,W_r\Omega_{Y,\sub{log}}^1),\qquad H_\sub{\'et}^2(Y,\bb Z_p(1)):=H^1_\sub{\'et}(Y,\{W_r\Omega_{Y,\sub{log}}^1\}_r),\] where we use the notation of Section \ref{subsection_etale_motivic}. The maps $\dlog[\cdot]:\bb G_{m,Y}\to W_r\Omega^1_{Y,\sub{log}}$ induce, on \'etale cohomology, \'etale-motivic Chern classes \[c_1:\Pic(Y)\To H^2(Y,\bb Z/p^r\bb Z(1))\mbox{ or }H^2(Y,\bb Z_p(1))\] (the intended codomain will be clear from the context).

Our general result on deforming line bundles on formal schemes is as follows:

\begin{theorem}\label{theorem_deforming_line_bundles}
Let $\cal Y$ be a regular F-finite, formal $\bb F_p$-scheme whose reduced subscheme of definition $Y=Y_1$ is regular. Let $L\in\Pic(Y)$. Then:
\begin{enumerate}
\item Given $r\ge 1$ there exists $t\ge p^r$ (depending only on $\cal Y$, not $L$) such that, if $c_1(L)\in H_\sub{\'et}^2(Y,\bb Z/p^r\bb Z(1))$ lifts to $H_\sub{\'et}^2(Y_t,\bb Z/p^r\bb Z(1))$ then $L$ lifts to $\Pic(Y_{p^r})$.
\item If $c_1(L) \in H_\sub{\'et}^2(Y,\bb Z_p(1))$ lifts to some $c\in\projlim_sH_\sub{\'et}^2(Y_s,\bb Z_p(1))$, then there exists $\tilde L\in\projlim_s\Pic(Y_s)$ which lifts $L$ and satisfies $c_1(\tilde L)=c$. In other words, the sequence \[\projlim_s\Pic(Y_s)\To \Pic(Y)\oplus \projlim_sH_\sub{\'et}^2(Y_s,\bb Z_p(1))\To H_\sub{\'et}^2(Y,\bb Z_p(1)) \] is exact.
\end{enumerate}
\end{theorem}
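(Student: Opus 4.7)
The approach is to derive both parts from the exact sequence of pro \'etale sheaves of Theorem~\ref{theorem_pro_higher_Cartier}:
\[0 \to \{\bb G_{m,Y_s}\}_s \xTo{p^r} \{\bb G_{m,Y_s}\}_s \xTo{\dlog[\cdot]} \{W_r\Omega^1_{Y_s,\sub{log}}\}_s \to 0.\]
Taking \'etale cohomology on each $Y_s$ and reassembling yields a long exact sequence of pro abelian groups whose salient part is the short exact pro sequence
\[0 \to \{\Pic(Y_s)/p^r\}_s \xTo{c_1} \{H^1_\sub{\'et}(Y_s, W_r\Omega^1_{Y_s,\sub{log}})\}_s \to \{\op{Br}(Y_s)[p^r]\}_s \to 0,\]
together with the pro-exactness of $\{\Pic(Y_s)\}_s \xTo{p^r} \{\Pic(Y_s)\}_s$ at the target.

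For~(i), I would unpack pro-exactness concretely: for each exactness point and every $s \ge 1$, there is some $t \ge s$, depending only on $s$, $r$, and $\cal Y$, such that the ``image $=$ kernel'' identity holds effectively between $Y_s$ and $Y_t$. Choosing a single $t$ that simultaneously witnesses the relevant exactness spots for $s = p^r$, I would then chase the commutative diagram relating $Y_1$, $Y_{p^r}$, and $Y_t$. Given $L \in \Pic(Y)$ and a lift $\tilde\alpha \in H^1_\sub{\'et}(Y_t, W_r\Omega^1_{Y_t,\sub{log}})$ of $c_1(L)$, the Brauer obstruction $\partial(\tilde\alpha) \in \op{Br}(Y_t)[p^r]$ is $p^r$-torsion and has trivial image on $Y_1$ since $\tilde\alpha|_{Y_1}$ factors through $\Pic$; pro-exactness on the Brauer portion of the long exact sequence (enlarging $t$ if required) forces this obstruction to vanish already on $Y_{p^r}$, whence $\tilde\alpha|_{Y_{p^r}}$ lifts to a class $\tilde L_0 \in \Pic(Y_{p^r})/p^r$ whose restriction to $Y_1$ agrees with $L$ modulo $p^r\Pic(Y)$; the discrepancy $\tilde L_0|_{Y_1} - L \in p^r\Pic(Y)$ lies in the kernel of $c_1$ and is therefore $p^r$-divisible in $\Pic$ at level $Y_{p^r}$ by pro-exactness at the first $\Pic$, enabling a correction by a $p^r$-th power to the desired lift $\tilde L \in \Pic(Y_{p^r})$ of $L$.

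For~(ii), I would apply $\projlim_s$ to the short exact pro sequence above and then pass to $\projlim_r$. The required Mittag--Leffler and $\projlim^1$-vanishings follow from the F-finite, Noetherian hypotheses via Lemma~\ref{lemma_finite_generation_conditions} and the analysis of Hodge--Witt groups in Section~\ref{section_drw}. After using the identifications $H^2_\sub{\'et}(Y_s, \bb Z/p^r\bb Z(1)) = H^1_\sub{\'et}(Y_s, W_r\Omega^1_{Y_s,\sub{log}})$ and passing to $\bb Z_p$-coefficients as in Section~\ref{subsection_etale_motivic}, the claimed Mayer--Vietoris-type exactness of
\[\projlim_s \Pic(Y_s) \to \Pic(Y) \oplus \projlim_s H^2_\sub{\'et}(Y_s, \bb Z_p(1)) \to H^2_\sub{\'et}(Y, \bb Z_p(1))\]
follows by a formal diagram chase comparing the Picard and continuous \'etale cohomology of $\cal Y$ with their restrictions to $Y$.

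The principal obstacle is part~(i): extracting a single integer $t$ that simultaneously witnesses pro-exactness at several spots of the long exact sequence, and rigorously arguing that the Brauer obstruction $\partial(\tilde\alpha)$ vanishes already at level $Y_{p^r}$ rather than only at $Y_1$. The limit argument for~(ii) is comparatively formal once the requisite $\projlim^1$-vanishing conditions have been established.
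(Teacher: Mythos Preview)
Your overall strategy—deriving both parts from the pro Cartier sequence of Theorem~\ref{theorem_pro_higher_Cartier}—matches the paper's, but your execution of part~(i) has a genuine gap, and you miss the key observation that makes the argument work.

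The problem is your step ``pro-exactness on the Brauer portion \ldots\ forces this obstruction to vanish already on $Y_{p^r}$''. You know $\partial(\tilde\alpha)\in\op{Br}(Y_t)[p^r]$ restricts to zero on $Y_1$, hence lies in $\ker(\op{Br}(Y_t)\to\op{Br}(Y))$. But nothing you have established controls this relative Brauer group: pro-exactness of the long exact sequence on $\{Y_s\}_s$ does not tell you that elements of $\ker(\op{Br}(Y_t)\to\op{Br}(Y))$ die upon restriction to $Y_{p^r}$. Likewise, your correction step at the end (lifting the discrepancy $p^rM\in\Pic(Y)$ back to $Y_{p^r}$) assumes you can lift $M$ to $\Pic(Y_{p^r})$, which is exactly the sort of thing you are trying to prove.

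The paper avoids this tangle by working with the \emph{relative} sheaves $\bb G_{m,(Y_s,Y)}$ and $W_r\Omega^1_{(Y_s,Y),\sub{log}}$ from the start: comparing the Cartier sequence on $Y_s$ with that on $Y$ (which is genuinely exact, since $Y$ is regular) gives a third pro-exact sequence on the relative terms. The crucial elementary fact you are missing is that $\bb G_{m,(Y_s,Y)}$ is killed by $p^r$ whenever $p^r\ge s$ (since $(1+x)^{p^r}=1+x^{p^r}=1$ in characteristic $p$ once $x^s=0$). The obstruction to lifting $L$ to $\Pic(Y_s)$ is $\delta(L)\in H^2_\sub{\'et}(Y_s,\bb G_{m,(Y_s,Y)})$; pro-exactness of the relative Cartier sequence says that, for suitable $t\ge s=p^r$, if $c_1(L)$ lifts to $Y_t$ then $\delta(L)$ at level $t$ lies in $\ker c_{1,(Y_t,Y)}$, whose image in $H^2_\sub{\'et}(Y_{p^r},\bb G_{m,(Y_{p^r},Y)})$ lands in the multiples of $p^r$—hence is zero. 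This is a single clean step, with no Brauer groups and no correction term.

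For part~(ii), the paper again exploits the vanishing of $p^s$ on $\bb G_{m,(Y_s,Y)}$ to show that, after passing to the diagonal $r=s$, the relative pro sheaf $\{\bb G_{m,(Y_s,Y)}\}_s$ (with transition maps scaled by $p$) is zero, yielding directly a bicartesian square of pro \'etale sheaves. Your proposed route through $\projlim^1$-vanishing for $\{\Pic(Y_s)/p^r\}_s$ is not obviously valid: the transition maps $\Pic(Y_s)\to\Pic(Y_{s'})$ need not be surjective, and Lemma~\ref{lemma_finite_generation_conditions} (finite generation of Hodge--Witt groups) does not address this.
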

\begin{proof}
We compare the complex of \'etale sheaves $0\to \bb G_m\xto{p^r} \bb G_m\xto{\dlog[\cdot]} W_r\Omega^1_\sub{log}\to 0$ for $Y_s$ and $Y$, and take the kernel:
\[\xymatrix@R=5mm{
 & 0\ar[d] & 0\ar[d] & 0\ar[d] &\\
0\ar[r] & \bb G_{m,(Y_s,Y)} \ar[d]\ar[r]^{p^r} & \bb G_{m,(Y_s,Y)}\ar[r]^{\dlog[\cdot]}\ar[d]& W_r\Omega_{(Y_s,Y),\sub{log}}^1\ar[r]\ar[d] & 0\\
0\ar[r] & \bb G_{m,Y_s} \ar[d]\ar[r]^{p^r} & \bb G_{m,Y_s}\ar[r]^{\dlog[\cdot]}\ar[d]& W_r\Omega_{Y_s,\sub{log}}^1\ar[r]\ar[d] & 0\\
0\ar[r] & \bb G_{m,Y}\ar[d] \ar[r]^{p^r} & \bb G_{m,Y}\ar[d]\ar[r]^{\dlog[\cdot]}& W_r\Omega_{Y,\sub{log}}^1\ar[d]\ar[r] & 0\\
&0&0&0\\
}\]
Each vertical sequence is exact; the bottom row is exact by \cite[Prop.~3.32.2]{Illusie1979}; the middle row, hence also the top row, becomes exact when assembled into a sequence of pro \'etale sheaves over $s\ge1$ by Theorem \ref{theorem_pro_higher_Cartier}. Taking \'etale cohomology constructs a diagram
\[\xymatrix@R=5mm{
\Pic(Y_s)\ar[r]^{p^r}\ar[d]&\Pic(Y_s)\ar[r]^{c_1}\ar[d] & H_\sub{\'et}^2(Y_s,\bb Z/p^r\bb Z(1)) \ar[d]\\
\Pic(Y)\ar[r]^{p^r}\ar[d]&\Pic(Y)\ar[d]^\delta\ar[r]^{c_1}&H_\sub{\'et}^2(Y,\bb Z/p^r\bb Z(1))\ar[d]\\
H^2_\sub{\'et}(Y_s,\bb G_{m,(Y_s,Y)})\ar[r]^{p^r}&H^2_\sub{\'et}(Y_s,\bb G_{m,(Y_s,Y)})\ar[r]^{c_{1,(Y_s,Y)}}& H^1_\sub{\'et}(Y_s,W_r\Omega_{(Y_s,Y),\sub{log}}^1)
}\]
Since the bottom row becomes exact when assembled into a sequence of pro abelian groups over $s\ge1$, the following is true: for any $s\ge1$ there exists $t\ge s$ such that the image of $\ker c_{1,(Y_t,Y)}$ in $H^2_\sub{\'et}(Y_s,\bb G_{m,(Y_s,Y)})$ lands inside the multiples of $p^r$. However, if $p^r\ge s$ then $p^r$ kills $\bb G_{m,(Y_s,Y)}$, and so the image of $\ker c_{1,(Y_t,Y)}$ in $H^2_\sub{\'et}(Y_s,\bb G_{m,(Y_s,Y)})$ is zero.

The proof of (i) is now completed by an easy diagram chase: if $c_1(L)$ lifts to $H^2(Y_t,\bb Z/p^r\bb Z(1))$, then $c_{1,(Y_t,Y)}$ kills $\delta(L)$, so $\delta(L)$ vanishes at level $s$, and so $L$ lifts to $\Pic(Y_s)$.

(ii): We may assemble the first diagram into one of pro \'etale sheaves indexed over the diagonal $r=s$ with exact rows and columns; note that in order to do this the transition maps in the $r$-direction in the left column are scaled by a factor of $p$ compared to the central column. In particular, the top left entry of the diagram becomes the pro \'etale sheaf \[\cdots\xto{p} \bb G_{m,(Y_3,Y)}\xto{p}\bb G_{m,(Y_2,Y)}\xto{p}\bb G_{m,(Y_1,Y)},\] which is zero since $p^r$ kills $\bb G_{m,(Y_s,Y)}$ whenever $p^r\ge s$. In conclusion we obtain a bicartesian diagram of pro \'etale sheaves on $Y$,
\begin{equation}\xymatrix{
\{\bb G_{m,Y_s}\}_s\ar[r]^-{\dlog[\cdot]}\ar[d] & \{W_s\Omega_{Y_s,\sub{log}}^1\}_s\ar[d]\\
\bb G_{m,Y}\ar[r]_{\dlog[\cdot]} & \{W_s\Omega_{Y_s,\sub{log}}^1\}_s
}\label{equ_line_heart}\end{equation}
and so taking continuous cohomology yields the following diagram of abelian groups with exact columns:
\[\xymatrix@R=5mm{
\vdots\ar[d]&\vdots\ar[d]\\
H^1_\sub{\'et}(Y,\{\bb G_{m,(Y_s,Y)}\}_s)\ar[d]\ar[r]^-\cong&H^1_\sub{\'et}(Y,\{W_s\Omega^1_{(Y_s,Y),\sub{log}}\}_s)\ar[d]\\
 H^1_\sub{\'et}(Y,\{\bb G_{m,Y_s}\}_s)\ar[r]^-{c_1}\ar[d] & H^1_\sub{\'et}(Y,\{W_s\Omega_{Y_s,\sub{log}}^1\}_s)\ar[d]\\
\Pic(Y)\ar[r]^-{c_1}\ar[d] & H^1_\sub{\'et}(Y,\{W_s\Omega_{Y,\sub{log}}^1\}_s)\ar[d]\\
H^2_\sub{\'et}(Y,\bb G_{m,(Y_s,Y)})\ar[d]\ar[r]_-\cong&H^2_\sub{\'et}(Y,\{W_s\Omega^1_{(Y_s,Y),\sub{log}}\}_s)\ar[d]\\
\vdots&\vdots
}\]
The middle vertical arrows may be factored respectively as 
\[ H^1_\sub{\'et}(Y,\{\bb G_{m,Y_s}\}_s)\xto{(1)} \projlim_s\Pic(Y_s)\to \Pic(Y)\] and
\[H^1_\sub{\'et}(Y,\{W_s\Omega_{Y_s,\sub{log}}^1\}_s)\xto{(2)} \projlim_sH^1_\sub{\'et}(Y_s,\{W_r\Omega_{Y_s,\sub{log}}^1\}_r) \to H^1_\sub{\'et}(Y,\{W_r\Omega_{Y,\sub{log}}^1\}_r),\]
and an easy diagram chase will complete the proof as soon as it is shown that arrows (1) and (2) are surjective. But standard formalism of continuous cohomology and iterated inverse limits (see the next remark) indeed imply that there are short exact sequences
\[0\To{\projlim_s}^1H^0_\sub{\'et}(Y_s,\bb G_{m,Y_s})\To H^1_\sub{\'et}(Y,\{\bb G_{m,Y_s}\}_s)\xto{(1)} \projlim_s\Pic(Y_s)\To 0\] and
\[0\To{\projlim_s}^1\projlim_rH^0_\sub{\'et}(Y_s,W_r\Omega_{Y_s,\sub{log}}^1)\To H^1_\sub{\'et}(Y,\{W_s\Omega_{Y_s,\sub{log}}^1\}_s)\xto{(2)} \projlim_sH^1_\sub{\'et}(Y_s,\{W_r\Omega_{Y_s,\sub{log}}^1\}_r)\To 0.\]
\end{proof}

\begin{remark}\label{remark_ses_in_continuous_cohomology}
Since we will need to make a similar argument when treating higher codimension cycles, we now explain the second short exact sequence arising at the end of the previous proof. Let $\{\cal F_{r,s}\}_{r,s}$ be a $\bb N^2$-indexed inverse system of sheaves of abelian groups on a reasonable site, such as the \'etale site of a scheme. Then we claim that there is a short exact sequence in continuous cohomology for each $n\ge0$ \[\textstyle0\To \projlim_s^1\projlim_rH^{n-1}(\cal F_{r,s})\To H^n_\sub{cont}(\{\cal F_{s,s}\}_s)\To \projlim_sH^n_\sub{cont}(\{\cal F_{r,s}\}_r) \To 0.\]  This will follow from the commutative diagram of abelian groups
\[\xymatrix@=5mm{
&0\ar[d]&&&\\
&\projlim_s^1\projlim_rH^{n-1}(\cal F_{r,s})\ar[d]&&&\\
0\ar[r] &\projlim_s^1H^{n-1}(\cal F_{s,s})\ar[r]\ar[d]&H^n_\sub{cont}(\{\cal F_{s,s}\}_s)\ar[r]\ar[d]&\projlim_sH^n(\cal F_{s,s})\ar[r]\ar[d]&0\\
0\ar[r] &\projlim_s\projlim_r^1H^{n-1}(\cal F_{r,s})\ar[d]\ar[r]&\projlim_sH^n_\sub{cont}(\{\cal F_{r,s}\}_r)\ar[r]&\projlim_s\projlim_rH^n(\cal F_{r,s})\ar[r]&0\\
&0&&&
}\]
The left column is an exact sequence arising from a spectral sequence  $E_2^{ab}=\projlim_s^a\projlim_r^bH^{n-1}(\cal F_{r,s})\Rightarrow\projlim_s^{a+b}H^{n-1}(\cal F_{s,s})$ of Roos and the fact that the only non-zero derived inverse limit over a countable system of abelian groups is $\projlim^1$\cite{Roos1961}. The central row is the usual short exact sequence in continuous cohomology. The bottom row results from applying $\projlim_s$ to the short exact sequence in continuous cohomology associated to $\{\cal F_{r,s}\}_r$; in principle there is therefore an obstruction term $\projlim_s^1\projlim_r^1H^{n-1}(\cal F_{r,s})$ to right exactness, but this vanishes since it equals $\projlim_s^2H^{n-1}(\cal F_{s,s})=0$ by Roos. Since the right vertical arrow is an isomorphism, the desired short exact sequence now follows from a diagram chase.
\end{remark}

\begin{remark}[The fppf approach]\label{remark_flat_approach}
Under the hypotheses of Theorem \ref{theorem_deforming_line_bundles}, Corollary \ref{corollary_fppf_mu_n} says that \[\{H^*_\sub{fppf}(Y_s,\bm\mu_{p^r,Y_s})\}_s\isoto \{H^*_\sub{\'et}(Y_s,\bb Z/p^r(1))\}_s.\] Hence Theorem \ref{theorem_deforming_line_bundles} may be equivalently stated as follows:
\begin{enumerate}
\item Given $r\ge 1$ there exists $t\ge p^r$ (depending only on $\cal Y$, not $L$) such that, if the fppf class of $L$ inside $H^2_\sub{fppf}(Y,\bm\mu_{p^r,Y})$ lifts to $H^2_\sub{fppf}(Y_t,\bm\mu_{p^r,Y_t})$, then $L$ lifts to $\Pic(Y_{p^r})$.
\item If the fppf class of $L$ in $H^2_\sub{fppf}(Y,\bm\mu_{p^\infty,Y})$ lifts to some $c\in\projlim_sH^2_\sub{fppf}(Y_s,\bm\mu_{p^\infty,Y_s})$, then there exists $\tilde L\in\projlim_s\Pic(Y_s)$ which lifts $L$ and whose fppf class is $c$.
\end{enumerate}
\end{remark}

We finish the section with two standard consequences of Theorem \ref{theorem_deforming_line_bundles}:

\begin{corollary}\label{corollary_line_bundle_over_power_series}
Let $A$ be a Noetherian, F-finite $\bb F_p$-algebra which is complete with respect to an ideal $I\subseteq X$, and let $X$ be a proper scheme over $A$; assume that $X$ and the special fibre $Y:=X\times_AA/I$ are regular. Then a line bundle $L\in\Pic(Y)$ lifts to $\Pic(X)$ if and only if $c_1(L)\in H_\sub{\'et}^2(Y,\bb Z_p(1))$ lifts to $H_\sub{\'et}^2(X,\bb Z_p(1))$.
\end{corollary}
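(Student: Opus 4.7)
The proof splits into two implications. The forward direction is immediate: if $L$ is the restriction of some $\tilde L \in \Pic(X)$, then functoriality of $c_1$ with respect to the closed immersion $Y \hookrightarrow X$ shows that $c_1(\tilde L) \in H^2_\sub{\'et}(X,\bb Z_p(1))$ lifts $c_1(L)$. So the real content lies in the converse, which we establish by combining Theorem \ref{theorem_deforming_line_bundles}(ii) with Grothendieck's formal existence theorem.

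Suppose $c \in H^2_\sub{\'et}(X,\bb Z_p(1))$ lifts $c_1(L)$. Let $\cal Y$ denote the formal completion of $X$ along $Y$, so that $\cal Y$ is a regular, F-finite formal $\bb F_p$-scheme whose reduced subscheme of definition is the regular scheme $Y$; its $s^\sub{th}$ infinitesimal thickenings are precisely the $Y_s = X \times_A A/I^s$. Restricting $c$ along each of the closed immersions $Y_s \hookrightarrow X$ produces a compatible family of classes, which together define an element of $\projlim_s H^2_\sub{\'et}(Y_s,\bb Z_p(1))$ lifting $c_1(L)$. By Theorem \ref{theorem_deforming_line_bundles}(ii) applied to $\cal Y$, there exists a compatible system $\tilde L = (\tilde L_s)_s \in \projlim_s \Pic(Y_s)$ with $\tilde L_1 = L$.

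To complete the proof, we algebraise the formal line bundle $\tilde L$. Since $X \to \Spec A$ is proper and $A$ is $I$-adically complete, Grothendieck's formal existence theorem (EGA~III.5.1.4) provides an equivalence between coherent $\roi_X$-modules and compatible systems of coherent $\roi_{Y_s}$-modules; applying this equivalence to the invertible system $(\tilde L_s)_s$ yields a coherent $\roi_X$-module $\tilde L_X$ whose restriction to each $Y_s$ is $\tilde L_s$. Since invertibility can be checked on the special fibre via Nakayama, $\tilde L_X$ is a line bundle, and by construction $\tilde L_X|_Y = L$.

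There is no serious obstacle: the key input, namely the lifting of $c_1(L)$ to the formal Picard group, is delivered by Theorem \ref{theorem_deforming_line_bundles}(ii), and the passage from formal to algebraic is standard formal GAGA. The only minor point requiring a moment of care is that the restriction map $H^2_\sub{\'et}(X,\bb Z_p(1)) \to \projlim_s H^2_\sub{\'et}(Y_s,\bb Z_p(1))$ is well-defined and sends $c$ to the expected compatible family, which is automatic from the functoriality of Jannsen's continuous étale cohomology applied to the pro étale sheaf $\{W_r\Omega^1_{\sub{log}}\}_r$.
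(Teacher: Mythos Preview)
Your proof is correct and follows essentially the same route as the paper: functoriality for the easy direction, then restrict the lifted class to $\projlim_s H^2_\sub{\'et}(Y_s,\bb Z_p(1))$, apply Theorem~\ref{theorem_deforming_line_bundles}(ii) to the formal completion of $X$ along $Y$, and finish with Grothendieck's algebrisation. The paper is terser, simply citing that $\Pic(X)\to\projlim_s\Pic(Y_s)$ is an isomorphism rather than passing through coherent sheaves and Nakayama, but the substance is identical.
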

\begin{proof}
The implication ``only if'' is a consequence of functoriality. Conversely, if $c_1(L)$ lifts to $H_\sub{\'et}^2(X,\bb Z_p(1))$ then it certainly lifts to $\projlim_sH_\sub{\'et}^2(Y_s,\bb Z_p(1))$, whence Theorem \ref{theorem_deforming_line_bundles}(ii) (for the completion of $X$ along $Y$) implies that $L$ lifts to $\projlim_s\Pic(Y_s)$. But Grothendieck's algebrisation theorem for line bundles states that $\Pic(X)\to\projlim_s\Pic(Y_s)$ is an isomorphism.
\end{proof}

\begin{corollary}\label{corollary_line_bunde_on_hyperplane}
Let $k$ be a field of characteristic $p$ having a finite $p$-basis, $X$ a smooth projective $k$-variety, $Y\into X$ a smooth ample divisor, and $L\in\Pic(Y)$. Then $L$ lifts to $\Pic(X)$ if and only if $c_1(L)\in H_\sub{\'et}^2(Y,\bb Z_p(1))$ lifts to $H_\sub{\'et}^2(X,\bb Z_p(1))$.
\end{corollary}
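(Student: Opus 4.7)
The ``only if'' direction is immediate from the functoriality of the \'etale-motivic Chern class, so assume $c_1(L)\in H_\sub{\'et}^2(Y,\bb Z_p(1))$ lifts to some $c\in H_\sub{\'et}^2(X,\bb Z_p(1))$. My strategy is to first lift $L$ to the formal completion of $X$ along $Y$ using the theorem just proved, and then to algebrise using the ampleness of $Y$.

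First I would introduce $\hat X$, the formal completion of $X$ along $Y$. Since $k$ has a finite $p$-basis it is F-finite, and hence so is $X$, and therefore so is $\hat X$ (see Section \ref{subsection_F-finite_etc} and Section \ref{subsection_formal_schemes}). Combined with the smoothness of $X$ over $k$ and the smoothness of $Y\into X$, we see that $\hat X$ is a regular, F-finite formal $\bb F_p$-scheme whose reduced subscheme of definition $Y_1=Y$ is regular; hence the hypotheses of Theorem \ref{theorem_deforming_line_bundles} are satisfied by $\hat X$.

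Next I would observe that restriction of $c$ along the natural map
\[H_\sub{\'et}^2(X,\bb Z_p(1))\To \projlim_s H_\sub{\'et}^2(Y_s,\bb Z_p(1))\]
yields a compatible system of classes lifting $c_1(L)$. Applying Theorem \ref{theorem_deforming_line_bundles}(ii) to $\hat X$ then produces a formal line bundle $\tilde L\in\projlim_s\Pic(Y_s)=\Pic(\hat X)$ which lifts $L$. At this stage the theorem has done all the real work and the remaining question is purely classical: whether $\tilde L$ is algebraic.

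Finally I would algebrise $\tilde L$ to some $L_X\in\Pic(X)$ with $L_X|_Y=L$. Since $X$ is a smooth projective variety and $Y$ is a smooth ample divisor, the Grothendieck--Lefschetz theorem for formal Picard groups of ample divisors (SGA~2, Expos\'e XII) ensures the restriction map $\Pic(X)\to\Pic(\hat X)$ is surjective, which produces the desired $L_X$. The main obstacle in carrying out this plan is precisely this last step: one must cite (or verify) the appropriate effective Lefschetz statement in the generality required here -- in particular without a priori dimension hypotheses on $X$ -- and this is the only point at which the hypothesis that $Y$ is ample (rather than merely a regular closed subscheme) is used essentially; everything prior relies only on formal-scheme techniques already developed in the paper.
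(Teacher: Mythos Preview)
Your approach is essentially the same as the paper's: functoriality for ``only if'', then Theorem~\ref{theorem_deforming_line_bundles}(ii) to lift $L$ to $\projlim_s\Pic(Y_s)$, then Grothendieck--Lefschetz to algebrise. The only difference is in how the final step is packaged. You assert directly that $\Pic(X)\to\Pic(\hat X)$ is surjective and flag the dimension issue; the paper instead breaks this into two pieces, citing \cite[Exp.~XI]{SGA_II} to lift the formal line bundle to $\Pic(U)$ for some open $U\supseteq Y$, and then using normality of $X$ (together with the fact that $X\setminus U$ has codimension $\ge 2$, since $Y$ is ample) to get surjectivity of $\Pic(X)\to\Pic(U)$. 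This decomposition sidesteps the need to invoke the full effective Lefschetz condition $\mathrm{Leff}(X,Y)$, which would indeed require $\dim X\ge 3$, and makes clear that the argument goes through for $\dim X\ge 2$ (the case $\dim X\le 1$ being trivial).
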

\begin{proof}
As in the previous corollary, the implication ``only if'' is a consequence of functoriality. Conversely, if $c_1(L)$ lifts to $H_\sub{\'et}^2(X,\bb Z_p(1))$ then the same argument as the previous corollary shows that $L$ lifts to $\projlim_s\Pic(Y_s)$. But the Grothendieck--Lefschetz theorems \cite[Exp.~XI]{SGA_II} imply that any element of $\projlim_s\Pic(Y_s)$ lifts to $\Pic(U)$ for some open $U\subseteq X$ containing $Y$, and the normality of $X$ implies that $\Pic(X)\to\Pic(U)$ is surjective; hence $L$ lifts all the way to $\Pic(X)$.
\end{proof}

\begin{remark}[Relation to earlier results]\label{remark_relation_to_Tate_paper}
Weaker forms of the previous two corollaries were presented in \cite{Morrow_Variational_Tate}: in Corollary \ref{corollary_line_bundle_over_power_series} it was previously required that $A=k[[t_1,\dots,t_d]]$, with $k$ a finite or algebraically closed field, and that $X$ was smooth over $A$; moreover the Picard and cohomology groups were tensored by $\bb Z[\tfrac1p]$; and in Corollary \ref{corollary_line_bunde_on_hyperplane} it was previously required that $k$ be perfect, and the Picard and cohomology groups were tensored by $\bb Q$. Those proofs used higher algebraic $K$-theory and topological cyclic homology (essentially as the special case when $n=1$ of Section \ref{subsection_higher_codim}), which of course was very unsatisfactory, and a goal of this section has been to give self-contained and purely algebraic proofs.

Although it does seem that the previous two corollaries are known to some experts, the only reference we know containing similar results are two unpublished manuscripts of de Jong \cite{deJong20??, deJong2011}, both of which predate \cite{Morrow_Variational_Tate}. He essentially proved the fppf formulation of Theorem \ref{theorem_deforming_line_bundles} which we explained in Remark \ref{remark_flat_approach}, in the case that $Y$ is a smooth ample divisor on a smooth projective variety of dimension $\ge 3$ over a finite field. The major motivation for proving Corollary \ref{corollary_fppf_mu_n} was exactly to relate de Jong's fppf approach to deforming line bundles with the logarithmic Hodge--Witt approach implicit in \cite{Morrow_Variational_Tate}.
\end{remark}

\begin{remark}[Pro Gersten vanishing ($n=1$)]\label{remark_pro_Gersten_1}
Let $\cal Y$ be a regular, F-finite, formal $\bb F_p$-scheme whose reduced subscheme of definition $Y=Y_1$ is regular. Then we will show at the end of this remark that there is a long exact sequence of pro abelian groups
\[\hspace{-5mm}0\to \{H^2_\sub{Zar}(Y_s,\bb G_{m,Y_s,\sub{Zar}})\}_s
\to \{H^2_\sub{\'et}(Y_s,\bb G_{m,Y_s,\sub{\'et}})\}_s
\to H^2_\sub{\'et}(Y,\bb G_{m,Y,\sub{\'et}})
\to \{H^3_\sub{Zar}(Y_s,\bb G_{m,Y_s,\sub{Zar}})\}_s\to\cdots\] relating the Zariski and \'etale cohomologies of $\bb G_m$.

We explicitly state this consequence of our calculations because we have not seen it previously and for the sake of later reference concerning a pro Gersten conjecture (Remark \ref{remark_pro_Gersten}); this conjecture (or rather, question), in the case $n=1$, asks exactly whether the pro Zariski cohomologies appearing in the following sequence vanish, i.e., whether \[\{H^i_\sub{\'et}(Y_s,\bb G_{m,Y_s,\sub{\'et}})\}_s\Isoto H^i_\sub{\'et}(Y,\bb G_{m,Y,\sub{\'et}})\] for all $i\ge2$. We do not know whether this is true.

To prove the existence of the long exact sequence, we must show that the following square of pro complexes of Zariski sheaves on $Y$ is homotopy cartesian:
\[\xymatrix{
\{\bb G_{m,Y_s,\sub{Zar}}\}_s \ar[r]\ar[d] & \{R\ep_*\bb G_{m,Y_s,\sub{\'et}}\}_s\ar[d]\\
\bb G_{m,Y,\sub{Zar}}\ar[r] & R\ep_*\bb G_{m,Y,\sub{\'et}}
}\]
(The associated long exact sequence in Zariski cohomology is exactly what we seek, since $H^i_\sub{Zar}(Y,\bb G_{m,Y,\sub{Zar}})=0$ if $i>1$ and $H^1_\sub{Zar}(Y,\bb G_{m,Y,\sub{Zar}})\isoto H^1_\sub{\'et}(Y,\bb G_{m,Y,\sub{\'et}})$.) By repeating the proof of Theorem \ref{theorem_deforming_line_bundles} in the Zariski topology, one sees that the Zariski analogue of square (\ref{equ_line_heart}), i.e.,
\[\xymatrix{
\{\bb G_{m,Y_s,\sub{Zar}}\}_s\ar[r]^-{\dlog[\cdot]}\ar[d] & \{W_s\Omega_{Y_s,\sub{log},\sub{Zar}}^1\}_s\ar[d]\\
\bb G_{m,Y,\sub{Zar}}\ar[r]_{\dlog[\cdot]} & \{W_s\Omega_{Y_s,\sub{log},\sub{Zar}}^1\}_s
}\]
is a bicartesian square of pro Zariski sheaves on $Y$. Comparing with $R\ep_*$ of (\ref{equ_line_heart}), it is necessary and sufficient to show that the following square of pro complexes of Zariski sheaves on $Y$ is homotopy cartesian:
\[\xymatrix{
 \{W_s\Omega_{Y_s,\sub{log},\sub{Zar}}^1\}_s\ar[r]\ar[d] & \{R\ep_*W_s\Omega_{Y_s,\sub{log},\sub{\'et}}^1\}_s\ar[d]\\
\{W_s\Omega_{Y_s,\sub{log},\sub{Zar}}^1\}_s\ar[r] & \{R\ep_*W_s\Omega_{Y_s,\sub{log},\sub{\'et}}^1\}_s
}\]
This square is in fact homotopy cartesian for each fixed $s$; we will prove this in a moment in Corollary \ref{corollary_Zar_to_etale} in greater generality.
\end{remark}

\section{Logarithmic Hodge--Witt sheaves ($n\ge1$)}\label{section_hw_n>1}
This section is a systematic study of the logarithmic Hodge--Witt sheaves on general $\bb F_p$-schemes and regular formal $\bb F_p$-schemes. We refer the reader to Sections \ref{subsection_HW_sheaves} and \ref{subsection_formal_schemes} for relevant notation used in this section.

\subsection{Logarithmic Hodge--Witt sheaves on arbitrary $\bb F_p$-schemes}\label{subsection_logarithmic}
We now apply the theory developed in Section \ref{section_drw} to study the logarithmic Hodge--Witt groups of arbitrary $\bb F_p$-algebras, often by using Proposition \ref{proposition_limit_of_F-fixed_points} to lift to the regular case.

\begin{corollary}\label{corollary_log_vs_R-F}
Let $n\ge0$ and  $r\ge 1$.
\begin{enumerate}
\item If $A$ is an $\bb F_p$-algebra, then $R(W_r\Omega_A^{n,F=1})\subseteq W_{r-1}\Omega_{A,\sub{log},{Zar}}^n$.
\item If $Y$ is a scheme on which $p$ is locally nilpotent, then $R-F:W_r\Omega_Y^n\To W_{r-1}\Omega_Y^n$ is surjective in the \'etale topology.
\item If $Y$ is an $\bb F_p$-scheme, then the sequence of pro \'etale sheaves 
\[0\To\{W_r\Omega_{Y,\sub{log}}^n\}_r\To\{W_r\Omega_Y^n\}_r\xto{1-F} \{W_r\Omega_Y^n\}_r\To 0\]
on $Y$ is exact.
\end{enumerate}
\end{corollary}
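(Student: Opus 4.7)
The three parts are proved in sequence, each leveraging the deformation tools of Section \ref{subsection_F_fixed} combined with Illusie's classical results in the smooth case. For part (i), the plan is to reduce to the regular, F-finite setting via two steps. Since $W_r\Omega^n$ commutes with filtered colimits and the identity $R\omega = F\omega$ descends to a sufficiently fine subalgebra, one first replaces $A$ by a finitely generated $\bb F_p$-subalgebra (which is automatically F-finite), and then presents $A = B/I$ with $B = \bb F_p[x_1,\ldots,x_m]^{\wedge}_I$ the $I$-adic completion of a polynomial ring, so that $B$ is a regular, F-finite, $I$-adically complete $\bb F_p$-algebra. Proposition \ref{proposition_limit_of_F-fixed_points}(ii) then lifts any $\omega \in W_r\Omega_A^{n,F=1}$ to some $\tilde\omega \in W_r\Omega_B^{n,F=1}$. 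The classical result of Illusie for smooth varieties (\cite[\S I.5]{Illusie1979}), extended to the regular $B$ via N\'eron--Popescu desingularisation, shows that $R\tilde\omega$ lies in the \'etale log subgroup $W_{r-1}\Omega_{B,\sub{log}}^n$; by Theorem \ref{theorem_Zar=et}, this coincides with $W_{r-1}\Omega_{B,\sub{log},\sub{Zar}}^n$ for the regular $B$. Projecting along $B \onto A$, the image $R\omega$ is Zariski locally a sum of dlog forms on $A$, as required.

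Part (ii) is a snake lemma argument using the short exact sequence $0 \to W_r\Omega^n_{(Y,V(p))} \to W_r\Omega_Y^n \to W_r\Omega_{V(p)}^n \to 0$, where $V(p) \hookrightarrow Y$ denotes the (locally nilpotent) vanishing locus of $p$; the resulting ladder with $R-F$ to level $r-1$ has \'etale-locally surjective left column by Lemma \ref{lemma_surj_of_1-F_in_nilp_case}(ii) (since $V(p)$ is locally cut out by a nilpotent ideal), and right column surjective by the $\bb F_p$-subcase of (ii). The latter is established by exactly the same deformation strategy as in part (i): reduce to F-finite $\bb F_p$-algebras by filtered colimits, write $A = B/I$ with $B$ regular, F-finite, and $I$-adically complete, invoke Illusie's \'etale-local surjectivity of $R-F$ for the regular $B$, and descend along $B \onto A$. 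A snake-lemma chase then finishes the argument.

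For part (iii), the pro sequence is assembled from parts (i) and (ii); here $1-F$ is interpreted level-by-level as the morphism $R - F: W_{r+1}\Omega_Y^n \to W_r\Omega_Y^n$ of pro systems (using $R$ for the transition maps). Pro surjectivity is part (ii) applied at each level. The $F$-invariance of dlog forms yields a pro inclusion $\{W_r\Omega^n_{Y,\sub{log}}\}_r \hookrightarrow \{W_{r+1}\Omega_Y^{n,F=1}\}_r$; the reverse pro inclusion is provided by part (i), which gives the restriction map $R: \{W_{r+1}\Omega_Y^{n,F=1}\}_r \to \{W_r\Omega^n_{Y,\sub{log}}\}_r$ as a pro inverse. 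This identifies the pro kernel of $R-F$ with $\{W_r\Omega^n_{Y,\sub{log}}\}_r$ and completes the argument.

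The hardest step is part (i): it requires the subtle interplay between lifting $\omega$ to a regular, $I$-adically complete $B$ (through the surjectivity in Proposition \ref{proposition_limit_of_F-fixed_points}(ii), whose hypotheses crucially include completeness) and the transfer of the Zariski-local log representation from $B$ down to $A$ via Theorem \ref{theorem_Zar=et}. Without the deformation machinery of Section \ref{subsection_F_fixed}, there would be no bridge from Illusie's classical smooth result to arbitrary $\bb F_p$-algebras.
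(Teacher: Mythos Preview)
Your argument for (i) matches the paper's exactly, and (iii) is the same formal deduction from (i) and (ii).

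For (ii) you take a genuinely different route from the paper. The paper reduces to a strictly Henselian, local, F-finite $\bb Z_{(p)}$-algebra $A$ with $p$ nilpotent, completes at the maximal ideal to get $\hat A$, and then uses an Artin approximation trick (N\'eron--Popescu writes $\hat A$ as a filtered colimit of smooth $A$-algebras, each admitting a section since $A$ is Henselian) to show that the cokernel functor $X(-):=\op{Coker}(R-F)$ on the relative groups satisfies $X(A)\hookrightarrow X(\hat A)$; Proposition~\ref{proposition_limit_of_F-fixed_points}(iii) then gives $X(\hat A)=0$, and the residue field is handled by Illusie. Your approach instead first splits off the nilpotent $p$-part via the snake lemma (a clean reduction the paper does not make), and then for the $\bb F_p$-case writes $A=B/I$ with $B$ a regular, $I$-complete polynomial completion and descends \'etale-local surjectivity directly from $B$ to $A$. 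This works, but the phrase ``descend along $B\onto A$'' is doing real work that you should make explicit: since $B$ is $I$-adically complete it is $I$-Henselian, so the small \'etale sites of $\Spec B$ and $\Spec A$ are equivalent; in particular, for any geometric point of $\Spec A$ one has $A^{sh}=B^{sh}/IB^{sh}$ with $B^{sh}$ regular strictly Henselian, whence $R-F$ is surjective on $W_r\Omega_{B^{sh}}^n$ (Illusie plus N\'eron--Popescu) and therefore on its quotient $W_r\Omega_{A^{sh}}^n$. Without this identification, ``\'etale-local surjectivity on $B$'' does not obviously pass to $A$. Your route avoids the Artin approximation step entirely and is in that sense more elementary; the paper's route, on the other hand, works directly with the given $A$ and its completion, which fits the broader theme of lifting problems from $A$ to $\hat A$ used elsewhere in the paper. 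One small imprecision: in (ii) you should reduce to $A$ of \emph{finite type} over $\bb F_p$ (not just F-finite), since you need $A$ to be a quotient of a polynomial ring.
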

\begin{proof}
(i): If $A$ is smooth over a perfect field of characteristic $p$, then this is due to Illusie \cite[I.5.7.4]{Illusie1979} as long as $W_r\Omega^n_{A,\sub{log},\sub{Zar}}$ is replaced by $W_r\Omega^n_{A,\sub{log}}$ (take $m=0$ and use the identity $\op{Fil}^nW_r\Omega_A^n=\ker R$ from Prop.~I.3.4 of op.~cit.); but these two groups of dlog forms are actually the same, by Theorem \ref{theorem_Zar=et}. Hence (i) is true whenever $A$ is a regular $\bb F_p$-algebra, since $A$ is then a filtered inductive limit of smooth $\bb F_p$-algebras, by N\'eron--Popescu desingularisation.

To prove the claim for an arbitrary $A$, it is enough, by taking a filtered inductive limit, to consider the case that $A$ is a finite type $\bb F_p$-algebra. Hence we may write $A$ as a quotient, $A=B/I$, of a smooth $\bb F_p$-algebra $B$ (we could even let $B$ be a polynomial algebra). Let $\omega\in W_r\Omega_A^{n,F=1}$. By Theorem \ref{proposition_limit_of_F-fixed_points}(ii), $\omega$ may be lifted to some $\tilde\omega\in W_r\Omega_{\hat B}^{n,F=1}$, where $\hat B$ is the $I$-adic completion of $B$, and our observations in the first paragraph imply that $R(\tilde \omega)\in W_{r-1}\Omega_{\hat B, \sub{log},\sub{Zar}}^n$. Reducing to $A$, we deduce that $R(\omega)\in W_{r-1}\Omega_{A,\sub{log},\sub{Zar}}^n$, as required.

(ii): By first reducing to the case that $Y$ is of finite type over $\bb Z$ and then passing to a point in the \'etale topology, we may suppose that $Y=\Spec A$, where $A$ is a Noetherian, F-finite, local, strictly Henselian $\bb Z_{(p)}$-algebra in which $p$ is nilpotent. Let $\hat A$ be the completion of $A$ at its maximal ideal $\frak m$.

We now apply a standard Artin Approximation trick using N\'eron--Popescu desingularisation. For any $A$-algebra $B$, set $X(B):=\op{Coker}(W_r\Omega_{(B,\frak mB)}^n\xto{R-F}W_{r-1}\Omega_{(B,\frak mB)}^n)$; then the functor $X:A\op{-algs}\to Ab$ commutes with filtered inductive limits, which implies that $X(A)\to X(\hat A)$ is injective. (Proof: By excellence the map $A\to\hat A$ is geometrically regular, so N\'eron--Popescu desingularisation implies we may write $\hat A$ as a filtered inductive limit of smooth $A$-algebras $B$. For each such $B$, the map $A/\frak m\to B/\frak m B$ has a section induced by $B/\frak mB\to \hat A/\frak m\hat A=A/\frak m$; since $A$ is Henselian, the map $A\to B$ therefore also has a section, and so $X(A)\to X(B)$ has a section.) Proposition \ref{proposition_limit_of_F-fixed_points}(iii) implies that $X(\hat A)=0$, whence $X(A)=0$, i.e., $R-F:W_r\Omega_{(A,\frak m)}^n\to W_{r-1}\Omega_{(A,\frak m)}^n$ is surjective.

To complete the proof, it is now enough to check that $R-F:W_r\Omega_k^n\to W_{r-1}\Omega_k^n$ is surjective, where $k:=A/\frak m$ is a separably closed field of characteristic $p$ with finite $p$-basis (since $A$, hence $k$, is F-finite). But this surjectivity follows from Illusie's aforementioned result in the smooth case.

(iii): This is an immediate consequence of (i) and (ii).
\end{proof}

Using similar arguments, we can also establish the coincidence of Zariski and \'etale logarithmic forms in general, eliminating the regularity hypothesis in Theorem \ref{theorem_Zar=et}:

\begin{corollary}\label{corollary_Zar_vs_etale}
Let $A$ be an $\bb F_p$-algebra. Then
\begin{enumerate}
\item The inclusions $W_r\Omega_{A,\sub{log},\sub{Zar}}^n\subseteq W_r\Omega_{A,\sub{log},\sub{Nis}}\subseteq W_r\Omega_{A,\sub{log}}^n$ are equalities.
\item There exists a unique map $\res F:W_r\Omega_A^n\to W_r\Omega_A^n/dV^{r-1}\Omega_A^{n-1}$ making the following diagram commute, in which $\pi$ denotes the canonical quotient map:
\[\xymatrix@C=1.5cm{
W_{r+1}\Omega_A^n\ar[d]_R\ar[r]^F & W_r\Omega_A^n\ar[d]^\pi\\
W_r\Omega_A^n\ar[r]^{\res F} & W_r\Omega_A^n/dV^{r-1}\Omega_A^{n-1}
}\]
\item The sequence $0\To W_r\Omega_{A,\sub{log}}^n\To W_r\Omega_A^n\xto{\pi-\res F}W_r\Omega_A^n/dV^{r-1}\Omega_A^{n-1}$ is exact.
\end{enumerate}
\end{corollary}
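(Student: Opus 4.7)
The plan is to tackle (ii) directly via de Rham--Witt identities and then to deduce (i) and (iii) simultaneously from an explicit identification of $\ker(\pi-\res F)$.

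For (ii), I would first invoke the coincidence of the canonical and $V$-filtrations in an $\bb F_p$-algebra (due to Hesselholt), which gives $\ker R=\op{Fil}^rW_{r+1}\Omega_A^n=V^r\Omega_A^n+dV^r\Omega_A^{n-1}$, and then verify that $F$ sends this subgroup into $dV^{r-1}\Omega_A^{n-1}$. On $dV^r\omega'$ the identity $FdV=d$ produces $F(dV^r\omega')=dV^{r-1}\omega'$; on $V^r\omega$ the identity $FV=p$ gives $F(V^r\omega)=pV^{r-1}\omega$, which vanishes because $V^{r-1}$ is additive (hence $\bb Z$-linear) and $p$ acts trivially on $\Omega_A^n=W_1\Omega_A^n$, so $pV^{r-1}\omega=V^{r-1}(p\omega)=0$. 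Surjectivity of $R$ then produces the unique factorisation $\res F$.

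The heart of the argument will be the identification
\[\ker\bigl(\pi-\res F:W_r\Omega_A^n\to W_r\Omega_A^n/dV^{r-1}\Omega_A^{n-1}\bigr)=R\bigl(W_{r+1}\Omega_A^{n,F=1}\bigr).\]
The inclusion $\supseteq$ is immediate from the defining commutativity of $\res F$. For $\subseteq$, pick any lift $\tilde\omega\in W_{r+1}\Omega_A^n$ of $\omega$; then $(\pi-\res F)(\omega)=\pi((R-F)\tilde\omega)$, so the hypothesis yields $(R-F)\tilde\omega=dV^{r-1}\eta$ for some $\eta\in\Omega_A^{n-1}$. The crucial point is that this obstruction can be absorbed into the lift: replacing $\tilde\omega$ by $\tilde\omega+dV^r\eta$ leaves $R\tilde\omega$ unchanged (since $RdV^r=0$) and shifts $F\tilde\omega$ by precisely $F(dV^r\eta)=dV^{r-1}\eta$, cancelling the obstruction, so the adjusted lift lies in $W_{r+1}\Omega_A^{n,F=1}$.

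With this lemma in hand, both (i) and (iii) fall out of a circular chain of inclusions. Corollary~\ref{corollary_log_vs_R-F}(i), applied in weight $r+1$, gives $R(W_{r+1}\Omega_A^{n,F=1})\subseteq W_r\Omega_{A,\sub{log},\sub{Zar}}^n$. Conversely, since $F$ fixes every $\dlog$ symbol, every \'etale-local sum of $\dlog$ forms lies in $\ker(\pi-\res F)$, and \'etale descent for $W_r\Omega^n/dV^{r-1}\Omega^{n-1}$ (which is compatible with \'etale base change via Remark~\ref{remark_drw_etale_base_change}) propagates this local observation into the global inclusion $W_r\Omega_{A,\sub{log}}^n\subseteq\ker(\pi-\res F)$. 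Combined with the tautological chain $W_r\Omega_{A,\sub{log},\sub{Zar}}^n\subseteq W_r\Omega_{A,\sub{log},\sub{Nis}}^n\subseteq W_r\Omega_{A,\sub{log}}^n$, this closes the loop and forces every inclusion to be an equality, simultaneously proving (i) and the exactness in (iii). The main obstacle is the adjustment step in the kernel lemma; once one sees that modifications of the lift by $dV^r$-terms exactly cancel obstructions in $dV^{r-1}\Omega^{n-1}$, the corollary reduces cleanly to Corollary~\ref{corollary_log_vs_R-F}(i) without any fresh appeal to N\'eron--Popescu desingularisation.
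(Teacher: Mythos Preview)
Your proposal is correct and follows essentially the same approach as the paper. The paper establishes (ii) from the same identities, then proves $\ker(\pi-\res F)=R(W_{r+1}\Omega_A^{n,F=1})$ via a brief diagram argument (observing that $dV^{r-1}\eta=(R-F)(-dV^r\eta)$ with $-dV^r\eta\in\ker R$, hence the map on kernels of the vertical arrows is surjective), and concludes by the same chain of inclusions through Corollary~\ref{corollary_log_vs_R-F}(i); your explicit lift-adjustment $\tilde\omega\mapsto\tilde\omega+dV^r\eta$ is simply an unpacking of that same snake-lemma step.
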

\begin{proof}
(ii): The existence of $\res F$ is an immediate consequence of the fact that $\ker R=V^r\Omega_A^n+dV^{r-1}\Omega_A^{r-1}$ (see the paragraph after Definition \ref{definition_filtrations}) and the standard identities $FV=p$, $FdV=d$.

(i) \& (iii): We will show that the kernel of $\pi-\res F$ (which contains $W_r\Omega_{A,\sub{log}}^n$) is contained in $W_r\Omega^n_{A,\sub{log},\sub{Zar}}$, which will simultaneously establish (i) and (iii). 
%We may assume that $A$ is a finite-type $\bb F_p$-algebra and, arguing as in the previous proof, write it as a quotient $A=B/I$ of a regular, F-finite $\bb F_p$-algebra $B$.
We consider the following commutative diagram
\[\xymatrix@C=2cm{
W_{r+1}\Omega_A^n\ar[r]^{R-F}\ar[d]_R & W_r\Omega_A^n\ar[d]^\pi \\
W_r\Omega_A^n\ar[r]^{\pi-\res F} & W_r\Omega_A^n/dV^{r-1}\Omega_A^{n-1}
}\]
in which the induced map on the kernels of the verticals arrows is obviously surjective: indeed, if $\omega\in\Omega_A^{n-1}$ then $dV^{r-1}\omega=FdV^r\omega=(R-F)dV^r(-\omega)$ where $dV^r(-\omega)\in\ker R$.

%We claim that the induced map on the kernels of the vertical arrows is surjective. The kernel of the right vertical arrow is $dV^{r-1}\Omega_B^{n-1}+W_r\Omega_{B,I}^n$. If $\omega=dV^{r-1}\omega'+\omega''$ is a typical element of this kernel, then $\omega'\in(R-F)(W_{r+1}\Omega_{B,I}^n)$ by Proposition \ref{proposition_limit_of_F-fixed_points}(iii) and $dV^{r-1}=FdV^r\omega'=(R-F)dV^r(-\omega')$; since $dV^r(-\omega')$ belongs to the kernel of the left vertical arrow, this proves the claim.

Hence the induced map on the kernels of the left horizontal arrows is surjective; i.e., $\ker(\pi-\res F)=R(W_{r+1}\Omega_A^{n,F=1})$, which is contained in $W_r\Omega_{A,\sub{log},\sub{Zar}}^n$ by Corollary \ref{corollary_log_vs_R-F}(i).
\end{proof}

\begin{remark}\label{remark_log_forms_cartier_filtration}
When $r=1$, the map $\res F:\Omega_A^n\to\Omega_A^n/d\Omega_A^{n-1}$ of the Corollary \ref{corollary_Zar_vs_etale} is course the inverse Cartier map $C^{-1}$ from Section \ref{subsection_cartier} (this can be directly checked on a typical element of $\Omega_A^n$; see \cite[Prop.~I.3.3]{Illusie1979} for the argument in the smooth case, which works in general). Part (iii) of the corollary then admits a generalisation to the Cartier filtration; to state it note that, by definition of the Cartier filtration, the inverse Cartier map induces a map $C^{-1}:\Omega_A^n/B_i\Omega_A^n\to\Omega_A^n/B_{i+1}\Omega_A^n$; also, for the sake of precise notation, let $\pi_i:\Omega_A^n/B_i\Omega_A^n\to\Omega_A^n/B_{i+1}\Omega_A^n$ denote the canonical quotient map. Then we claim that the sequence \[\Omega^n_{A,\sub{log},\sub{Zar}}\To\Omega_A^n/B_i\Omega_A^n\xto{\pi_i-C^{-1}}\Omega_A^n/B_{i+1}\Omega_A^n\] is exact.

Indeed, there is a commutative diagram of pro abelian groups with exact columns
\[\xymatrix@R=5mm{
0&0\\
\Omega_A^n/B_{i+1}\Omega_A^n\ar[u]\ar[r]^{\pi_{i+1}-C^{-1}}&\Omega_A^n/B_{i+2}\Omega_A^n\ar[u]\\
\Omega_A^n/B_i\Omega_A^n\ar[u]\ar[r]^{\pi_i-C^{-1}}&\Omega_A^n/B_{i+1}\Omega_A^n\ar[u]\\
B_{i+1}\Omega_A^n/B_i\Omega_A^n\ar[u]\ar[r]^{-C^{-1}}&B_{i+2}\Omega_A^n/B_{i+1}\Omega_A^n\ar[u]\\
0\ar[u]&0\ar[u]
}\]
The bottom horizontal arrow is surjective by definition of the Cartier filtration, whence the kernel of the middle horizontal arrow surjects onto the kernel of the top horizontal arrow. So a trivial induction reduces our claim to the case $i=0$, which is exactly Corollary \ref{corollary_Zar_vs_etale}(iii).
\end{remark}

The next corollary, in which $\ep$ (resp.~$\ep_\sub{Nis}$) denotes the projection map from the \'etale (resp.~Nisnevich) topos to the Zariski topos, will play an important role later when passing between the Zariski, Nisnevich, and \'etale topologies:

\begin{corollary}\label{corollary_Zar_to_etale}
Let $Y\into Y'$ be a nilpotent thickening of $\bb F_p$-schemes and $n\ge0$. Then the canonical maps of pro complexes of Zariski sheaves \[\{W_r\Omega_{(Y',Y),\sub{log},\sub{Zar}}^n\}_r\To \{R\ep_{\sub{Nis}*}W_r\Omega_{(Y',Y),\sub{log},\sub{Nis}}^n\}_r\To \{R\ep_*W_r\Omega_{(Y',Y),\sub{log}}^n\}_r\] are quasi-isomorphisms.
\end{corollary}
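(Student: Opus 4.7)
The strategy is to deduce the corollary from the relative analogue of Corollary~\ref{corollary_log_vs_R-F}(iii), which becomes strictly (rather than merely \'etale-locally) exact at the $1-F$ step thanks to the nilpotence provided by the thickening $Y\into Y'$.

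First I would establish that the sequence of pro \'etale sheaves on $Y$
\[
0\to \{W_r\Omega^n_{(Y',Y),\sub{log}}\}_r\to \{W_r\Omega^n_{(Y',Y)}\}_r\xto{1-F}\{W_r\Omega^n_{(Y',Y)}\}_r\to 0
\]
is exact: left exactness and the identification of the kernel as the relative logarithmic subsheaf follow by restricting Corollary~\ref{corollary_log_vs_R-F}(iii) on $Y'$ to the $(Y',Y)$-relative complex, while surjectivity of $1-F$ even holds at each fixed $r$ by Lemma~\ref{lemma_surj_of_1-F_in_nilp_case}(ii), applied sectionwise on each affine open to the nilpotent ideal sheaf cutting out $Y$ in $Y'$.

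Second, I would apply $R\ep_*$ and $R\ep_{\sub{Nis}*}$ to the resulting triangle. The sheaf $W_r\Omega^n_{(Y',Y)}$ is a quasi-coherent module on the (affine-over-$Y'$) Witt scheme $W_r(Y')$, so its higher Zariski, Nisnevich, and \'etale cohomologies vanish on affines; hence $R^i\ep_*W_r\Omega^n_{(Y',Y)}=R^i\ep_{\sub{Nis}*}W_r\Omega^n_{(Y',Y)}=0$ for $i\ge 1$, and we obtain a quasi-isomorphism of pro complexes of Zariski sheaves
\[
R\ep_* \{W_r\Omega^n_{(Y',Y),\sub{log}}\}_r \simeq \bigl[\{W_r\Omega^n_{(Y',Y),\sub{Zar}}\}_r \xto{1-F} \{W_r\Omega^n_{(Y',Y),\sub{Zar}}\}_r\bigr],
\]
together with its Nisnevich analogue. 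Finally, Lemma~\ref{lemma_surj_of_1-F_in_nilp_case}(ii) applied to sections yields the surjectivity of $1-F$ as a Zariski and as a Nisnevich sheaf map, killing the $H^1$ of this two-term complex, while its $H^0$ is identified with $\{W_r\Omega^n_{(Y',Y),\sub{log},\sub{Zar}}\}_r$ (resp.\ the Nisnevich analogue) by applying the left-exact $\ep_*$ (resp.\ $\ep_{\sub{Nis}*}$) to the \'etale short exact sequence of the first step and invoking Corollary~\ref{corollary_Zar_vs_etale}(i).

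The main obstacle is really the Zariski and Nisnevich surjectivity of $1-F$ in the last step: for the absolute Hodge--Witt sheaves $W_r\Omega^n_{Y'}$ the map is only \'etale-locally surjective (cf.~Corollary~\ref{corollary_log_vs_R-F}(ii)), and the corresponding comparison of topologies would fail without further hypotheses. The nilpotence of the ideal sheaf defining $Y\into Y'$ is what unlocks Lemma~\ref{lemma_surj_of_1-F_in_nilp_case}(ii) and converts the derived pushforwards into underived ones, which is the mechanism that reconciles the three topologies in the relative setting.
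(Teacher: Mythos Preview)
Your proposal is correct and follows essentially the same route as the paper: establish the relative short exact sequence of pro \'etale sheaves from Corollary~\ref{corollary_log_vs_R-F}(iii) together with Lemma~\ref{lemma_surj_of_1-F_in_nilp_case}(ii), push forward to the Zariski site using the vanishing of higher cohomology of the quasi-coherent Hodge--Witt sheaves on affines, and then kill $R^1\ep_*$ using the Zariski-local surjectivity of $R-F$ in the nilpotent relative setting, with Corollary~\ref{corollary_Zar_vs_etale}(i) identifying $H^0$. The only cosmetic difference is that the paper handles the Nisnevich step by first replacing $\ep$ with the \'etale-to-Nisnevich projection $\ep'$ (obtaining $\{W_r\Omega^n_{(Y',Y),\sub{log},\sub{Nis}}\}_r\simeq R\ep'_*\{W_r\Omega^n_{(Y',Y),\sub{log}}\}_r$) and then applying $R\ep_{\sub{Nis}*}$, whereas your phrase ``together with its Nisnevich analogue'' leaves this slightly implicit; you should make explicit that the analogous argument is run for $\ep'$ rather than literally applying $R\ep_{\sub{Nis}*}$ to an \'etale sequence.
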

\begin{proof}
Firstly, $\ep_*W_r\Omega_{(Y',Y),\sub{log}}^n=W_r\Omega_{(Y',Y),\sub{log},\sub{Zar}}^n$ by Corollary \ref{corollary_Zar_vs_etale}(i), which forces the inclusions $W_r\Omega_{(Y',Y),\sub{log},\sub{Zar}}\subseteq \ep_{\sub{Nis}*}W_r\Omega_{(Y',Y),\sub{log},\sub{Nis}}\subseteq\ep_*W_r\Omega_{(Y',Y),\sub{log}}$ to all be equalities.

Next, Corollary \ref{corollary_log_vs_R-F}(iii) for both $Y$ and $Y'$ yields a short exact sequence of pro \'etale sheaves on $Y'$ \[0\To \{W_r\Omega_{(Y',Y),\sub{log}}^n\}_r\To \{W_r\Omega_{(Y',Y)}^n\}_r\xTo{1-F}\{W_r\Omega_{(Y',Y)}^n\}_r\To 0.\] Applying $R^i\ep_*$ (and recalling from Remark \ref{remark_drw_etale_base_change} the standard result that the Hodge--Witt sheaves have vanishing higher cohomology on affines), we see firstly that $\{R^i\ep_*W_r\Omega_{(Y',Y),\sub{log}}^n\}_r=0$ for $i>1$ and secondly that \[\{R^1\ep_*W_r\Omega_{(Y',Y),\sub{log}}^n\}_r=\{\op{Coker}(\ep_*W_r\Omega_{(Y',Y)}^n\xTo{R-F}\ep_*W_{r-1}\Omega_{(Y',Y)}^n)\}_r.\] But $R-F$ is Zariski locally surjective in the relative nilpotent case by Lemma \ref{lemma_surj_of_1-F_in_nilp_case}(ii), and so it follows that $\{R^1\ep_*W_r\Omega_{(Y',Y),\sub{log}}^n\}_r=0$. This proves that the composition of the two maps in the statement of the corollary is a quasi-isomorphism.

The previous paragraph works verbatim if $\ep$ is replace by the projection from the \'etale topos to the Nisnevich topology, which we momentarily denote by $\ep'$, thereby showing that $\{W_r\Omega_{(Y',Y),\sub{log},\sub{Nis}}^n\}_r\quis\{R\ep'_*W_r\Omega_{(Y',Y),\sub{log}}^n\}_r$. By now applying $R\ep_{\sub{Nis}*}$ and using the assertion about the composition which has already been proved, it follows that also $\{W_r\Omega_{(Y',Y),\sub{log}}^n\}_r\quis\{R\ep_{\sub{Nis}*}W_r\Omega_{(Y',Y),\sub{log},\sub{Nis}}^n\}_r$.
\end{proof}

The next result is needed in our later application to the weak Lefschetz conjecture for Chow groups, as it will allow us to related logarithmic Hodge--Witt cohomology to Frobenius-fixed points in crystalline cohomology:

\begin{corollary}\label{corollary_p_power_eigenspace}
Let $Y$ be an $\bb F_p$-scheme, and fix $n\ge0$; let $\phi:x\to x^p$ denote the absolute Frobenius. Then the sequence of complexes of \'etale sheaves \[0\To W_r\Omega_{Y,\sub{log}}^n[-n]\To W_r\Omega_Y^\blob\xTo{p^n-\phi}W_r\Omega_Y^\blob\To0\] is exact up to torsion killed by $p^{n+1}$ (i.e., the three cohomology sheaves of the sequence are killed by $p^{n+1}$).
\end{corollary}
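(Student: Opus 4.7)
The strategy is to analyze the two-term complex $W_r\Omega_Y^\blob \xto{p^n - \phi} W_r\Omega_Y^\blob$ degree-by-degree, by exploiting a factorization $\phi = p^i\tilde F_i$ of the absolute Frobenius on each $W_r\Omega^i_Y$. The key observation, verified on generators via $\phi([a]) = [a]^p$ and $\phi(d[a]) = d[a^p] = p[a]^{p-1}d[a]$, is that on $W_r\Omega^i_Y$ the absolute Frobenius factors as $\phi = p^i\tilde F_i$ for a natural operator $\tilde F_i: W_r\Omega^i_Y \to W_r\Omega^i_Y$ lifting the de Rham--Witt Frobenius $F$. Applied to logarithmic forms this gives $\phi(\dlog[a_1]\cdots\dlog[a_n]) = p^n\dlog[a_1]\cdots\dlog[a_n]$, so $W_r\Omega^n_{Y,\sub{log}}[-n] \subseteq \ker(p^n - \phi)$ in degree $n$, providing the left-hand map in the stated sequence.

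Next, I would put the Hodge (b\^ete) filtration $\sigma^{\ge j}W_r\Omega_Y^\blob$ on source and target of $p^n - \phi$, inducing a filtration on the mapping cone whose graded piece in degree $i$ is $W_r\Omega^i_Y \xto{p^n - p^i \tilde F_i} W_r\Omega^i_Y$. In the case $i = n$, this is $p^n(1 - \tilde F_n)$; combining Corollary \ref{corollary_log_vs_R-F}(iii) with Corollary \ref{corollary_Zar_vs_etale}(iii), which together characterise $W_r\Omega^n_{Y,\sub{log}}$ as the kernel of an appropriate Frobenius-minus-identity operator in the \'etale topology, one identifies the kernel with $W_r\Omega^n_{Y,\sub{log}}$ and bounds the cokernel by $p^n$-torsion. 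In the cases $i \ne n$, one factors $p^n - p^i\tilde F_i = p^{\min(i,n)}(\varepsilon\tilde F_i - p^{|n-i|})$ for an appropriate sign $\varepsilon$; since one summand in the parenthetical has strictly positive $p$-adic valuation, this operator is ``close to its dominant term modulo $p$'', and invoking the surjectivity of $R - F$ in the \'etale topology (Lemma \ref{lemma_surj_of_1-F_in_nilp_case}(ii) and Corollary \ref{corollary_log_vs_R-F}(ii)) both the kernel and cokernel are killed by $p^{\min(i,n)+1} \le p^{n+1}$.

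The graded pieces then assemble via the spectral sequence for the filtered mapping cone to yield that the three cohomology sheaves of the stated three-term sequence are killed by $p^{n+1}$; the extra factor of $p$ beyond $p^n$ accounts for possible extensions between successive graded pieces in the spectral sequence. The main obstacle will be making the factorization $\phi = p^i \tilde F_i$ precise on all of $W_r\Omega^i_Y$ (rather than merely on generators, where it is immediate, and where there is no obvious obstruction to extending by additivity and Leibniz but it must be checked that the resulting $\tilde F_i$ is well-defined), and verifying the requisite compatibility between $\tilde F_n$ and the operator $\res F$ of Corollary \ref{corollary_Zar_vs_etale}, so that the identification in degree $n$ can be invoked cleanly.
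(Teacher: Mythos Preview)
Your degree-by-degree strategy is the same as the paper's: one analyses $p^n-\phi$ on each $W_r\Omega^i_Y$ separately and bounds the kernel and cokernel by $p^{n+1}$-torsion. There is no spectral sequence or extension issue, since the b\^ete filtration on a two-term complex already gives the cohomology sheaves degree by degree; the extra factor of $p$ you budget for extensions is not needed for that reason (though it is needed in degree $n$, as in the paper).

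The substantive difference is that the paper never factors $\phi=p^i\tilde F_i$. For $i\ne n$ it simply writes down explicit operators $T_i$---built from iterates of $RV$ when $i<n$ and from a geometric series in $\phi$ when $i>n$---such that $(p^n-\phi)T_i$ and $T_i(p^n-\phi)$ equal $p^N$ for some $N\le n+1$; this instantly bounds both kernel and cokernel. For $i=n$ it uses only the standard identity $R\phi=p^nF$ on $W_r\Omega^n_Y$ (valid for any $\bb F_p$-scheme), rewriting $R(p^n-\phi)=p^n(R-F)$ and $(p^n-\phi)R=p^n(R-F)$, then appealing to Corollary~\ref{corollary_log_vs_R-F} for surjectivity of $R-F$ and for the inclusion $R(W_{r+1}\Omega^{n,F=1})\subseteq W_r\Omega^n_{\sub{log}}$.

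Your factorization $\phi=p^i\tilde F_i$ is a genuine obstacle, and not merely a technicality. One does have $\phi(W_r\Omega^i_A)\subseteq p^iW_r\Omega^i_A$ for any $\bb F_p$-algebra $A$ (check on generators: $d\phi(V^j[a])=dV^j[a^p]=p\,dV^{j-1}[a]$ using $p=VF$), but \emph{lifting} $\phi$ along multiplication by $p^i$ to a well-defined additive endomorphism $\tilde F_i$ is a nontrivial extension problem when $W_r\Omega^i_A$ has $p$-torsion, which it does in general for non-regular $A$. The only canonical identity available is $R\phi=p^iF$ with target $W_{r-1}\Omega^i$, and that is exactly what the paper uses. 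Moreover, even granting a $\tilde F_i$, your sketch for $i\ne n$ (``close to its dominant term modulo $p$'') would still need an argument that $\tilde F_i$ or $p^{|n-i|}-\tilde F_i$ is invertible modulo $p$ on $W_r\Omega^i_Y$, which does not follow from the surjectivity of $R-F$ you cite.

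In short: same architecture, but the paper's execution is more elementary and robust; I would recommend abandoning the $\tilde F_i$ factorization and instead constructing the explicit one-sided inverses using $RV$, $\phi$, and the identity $R\phi=p^iF$.
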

\begin{proof}
We will show that, in the \'etale topology:
\begin{enumerate}
\item If $i\neq n$, the kernel and cokernel of $p^n-\phi:W_r\Omega_Y^i\to W_r\Omega_Y^i$ are killed by $p^{n+1}$.
\item The cokernel of $p^n-\phi:W_r\Omega_Y^n\to W_r\Omega_Y^n$ is $W_r\Omega_Y^n/p^nW_r\Omega_Y^n$.
\item The inclusion $W_r\Omega_{Y,\sub{log}}^n\subseteq \ker(p^n-\phi:W_r\Omega_Y^n\to W_r\Omega_Y^n)$ has cokernel killed by $p^{n+1}$.
\end{enumerate}
To prove (i), note that the composition (left or right) of $p^n-\phi$ with $\sum_{j=0}^{r-1}p^{(n-i-1)j}(RV)^{j+1}$ (resp.~$\sum_{j=0}^r(p^{i-n}\phi)^j$) is $p^{n+1}$ (resp.~$p^n$) if $i<n$ (resp.~$i>n$).

(ii) is an immediate consequence of the identity $(p^n-\phi)R=p^n(R-F)$ and the fact that both maps $R, R-F:W_{r+1}\Omega_Y^n\to W_r\Omega_Y^n$ are surjective ($R-F$ by Corollary \ref{corollary_log_vs_R-F}(ii)).

(iii): If $\omega$ is a section of $W_r\Omega_Y^n$ killed by $p^n-\phi$ then it is also killed by $R(p^n-\phi)=(R-F)p^n$, whence $R(p^n\omega)\in W_{r-1}\Omega_{Y,\sub{log}}^n$ by Corollary \ref{corollary_log_vs_R-F}(i); since $R:W_r\Omega_{Y,\sub{log}}^n\to W_{r-1}\Omega_{Y,\sub{log}}^n$ is surjective, there therefore exists $\omega'\in W_r\Omega_{Y,\sub{log}}^n$ such that $\omega'-p^n\omega\in\op{Fil}^{r-1}W_r\Omega_Y^n$. But $\op{Fil}^{r-1}W_r\Omega_Y^n=\op{Fil}^{r-1}_VW_r\Omega_Y^n\subseteq \op{Fil}^{r-1}_pW_r\Omega_Y^n$ by the properties of the filtrations recalled after Definition \ref{definition_filtrations}, whence $p^{n+1}\omega=p\omega'$, as required.
\end{proof}

\subsection{The structure of the $p$-filtration for a regular, formal $\bb F_p$-scheme}
\label{subsection_dlog_n>1}
The following theorem extends to regular formal schemes, and to the Zariski/Nisnevich topology, Illusie's analysis of the $p$-filtration on logarithmic Hodge--Witt forms in the \'etale topology on a smooth variety (see Remark \ref{remark_de_Rham--Witt_log}). It is a key technical step in the paper, without which many more of our results would only be stated in terms of double-indxed pro abelian groups such as $\{W_s\Omega_{A/I^s,\sub{log}}^n\}$.

\begin{theorem}\label{theorem_de_Rham--Witt_log}
Let $\cal Y$ be a regular F-finite formal $\bb F_p$-scheme, and $Y_1\into\cal Y$ a subscheme of definition; fix $n\ge 0$ and $1\le i\le r$. Letting $\tau$ denote the Zariski, Nisnevich, or \'etale topology, the following sequence of pro $\tau$-sheaves on $Y_1$ is exact:
\[\{W_r\Omega_{Y_s,\sub{log},\tau}^n\}_s\xTo{p^i}\{W_r\Omega^n_{Y_s,\sub{log},\tau}\}_s\xTo{R^{r-i}}\{W_i\Omega^n_{Y_s,\sub{log},\tau}\}_s\To0\]
\end{theorem}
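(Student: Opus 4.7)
The plan is to adapt Illusie's proof of the analogous exact sequence for smooth $\bb F_p$-varieties \cite[\S I.5.7]{Illusie1979} to the pro-formal setting, using the formal Cartier isomorphism (Theorem~\ref{theorem_pro_Cartier}), the coincidence of the three filtrations on pro completions (Proposition~\ref{proposition_filtrations_are_equal}), and the $F$-fixed point results of \S\ref{subsection_F_fixed}. First I reduce to the affine case: one may assume $\cal Y=\Spf A$ with $A$ a regular, F-finite $\bb F_p$-algebra that is $I$-adically complete. Since Corollary~\ref{corollary_Zar_vs_etale}(i) identifies the sections of $W_r\Omega^n_{\sub{log},\tau}$ on any $\bb F_p$-algebra in the three topologies, and any tft $I$-formally \'etale $A$-algebra remains regular and F-finite (\S\ref{subsection_formal_schemes}), it suffices to establish the pro abelian group exactness of
\[\{W_r\Omega^n_{A/I^s,\sub{log}}\}_s \xto{p^i} \{W_r\Omega^n_{A/I^s,\sub{log}}\}_s \xto{R^{r-i}} \{W_i\Omega^n_{A/I^s,\sub{log}}\}_s \to 0\]
together with its analogues over such \'etale extensions.

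For the surjectivity of $R^{r-i}$, I induct on $r-i$, reducing to showing that the single restriction $R:\{W_{r+1}\Omega^n_{A/I^s,\sub{log}}\}_s\to\{W_r\Omega^n_{A/I^s,\sub{log}}\}_s$ is a pro-surjection. Given $\omega\in W_r\Omega^n_{A/I^s,\sub{log}}$ -- characterised by the identity $(\pi-\res F)\omega=0$ in $W_r\Omega^n_{A/I^s}/dV^{r-1}\Omega^{n-1}_{A/I^s}$ via Corollary~\ref{corollary_Zar_vs_etale}(iii) -- I lift $\omega$ to some $\tilde\omega\in W_{r+1}\Omega^n_{A/I^s}$ using surjectivity of the Restriction. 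A short diagram chase shows that the obstruction $(\pi-\res F)\tilde\omega$ lies in the kernel of the induced Restriction to level $r$; this kernel is controlled by the relative $R-F$ surjectivity of Proposition~\ref{proposition_limit_of_F-fixed_points}(iii), which after passing to a suitable $s'\ge s$ lets me correct $\tilde\omega$ by an element of $\ker R$ to render it $F$-fixed, hence logarithmic.

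For exactness at the middle, suppose $\omega\in W_r\Omega^n_{A/I^s,\sub{log}}$ satisfies $R^{r-i}\omega=0$, i.e., $\omega\in\op{Fil}^iW_r\Omega^n_{A/I^s}$. Proposition~\ref{proposition_filtrations_are_equal} identifies this pro-systematically with $\op{Fil}^i_VW_r\Omega^n_{A/I^s}=V^iW_{r-i}\Omega^n_{A/I^s}+dV^iW_{r-i}\Omega^{n-1}_{A/I^s}$ and with $\op{Fil}^i_pW_r\Omega^n_{A/I^s}$. Combining the identity $p=FV$ with the exact sequences of Lemma~\ref{lemma_Cartier_drw} lets us write $\omega=p^i\omega'$ for some $\omega'\in W_r\Omega^n_{A/I^s}$ pro-systematically in $s$; since $p^i\omega'$ is $F$-fixed, $\omega'$ is $F$-fixed modulo a $p^i$-torsion correction, which the $F$-fixed point lifting of Proposition~\ref{proposition_limit_of_F-fixed_points}(ii) lets us absorb. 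This produces the desired logarithmic preimage of $\omega$ under $p^i$.

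The main obstacle will be this last step: pro-systematically reconciling the $V$- and $p$-filtration descriptions of $\op{Fil}^iW_r\Omega^n_{A/I^s}$ and ensuring that the preimage of $\omega$ under $p^i$ can be taken logarithmic while tracking the transitions in $s$ correctly. Proposition~\ref{proposition_filtrations_are_equal} is the enabling tool, but the bookkeeping of how the $F$-fixed point adjustment interacts with passing up in the pro system -- noting that for fixed $s$ even the statement of the theorem may fail -- is the delicate part of the argument.
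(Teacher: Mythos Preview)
Your overall framework is right---reduce to the affine case, invoke Proposition~\ref{proposition_filtrations_are_equal} and Lemma~\ref{lemma_Cartier_drw}, and follow Illusie---and you correctly flag the middle-exactness step as the delicate one. But the sketch of that step has real gaps.

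First, a minor point: surjectivity of $R^{r-i}$ on log forms is immediate, since both source and target are by definition $\tau$-local images of $\bb G_m^{\otimes n}$ under $\dlog[\cdot]$; your lifting-and-correcting argument is unnecessary.

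The substantive gap is in the middle exactness. You write ``Combining the identity $p=FV$ with the exact sequences of Lemma~\ref{lemma_Cartier_drw} lets us write $\omega=p^i\omega'$'', but $\op{Fil}^i_pW_r\Omega^n$ is the \emph{kernel} of $p^{r-i}$, not the image of $p^i$, and these do not coincide for fixed $s$. Even pro-systematically, you have not explained why an element of $\op{Fil}^i\cap W_r\Omega^n_{\sub{log}}$ is $p^i$ times something, let alone $p^i$ times something logarithmic. The paper's argument does not proceed this way: instead, starting from $x\in W_r\Omega^{n,F=1}_{A/I^t}\cap\op{Fil}^iW_r\Omega^n_{A/I^t}$, it writes $x=V^iy+dV^iz$ via $\op{Fil}^i=\op{Fil}^i_V$, uses the exact sequences (\ref{lemma_Cartier_drw}i)--(\ref{lemma_Cartier_drw}iv) at carefully chosen levels $s_0\ge s_1\ge s_2\ge s$ to show that $R^{r-i-1}x$ lands in $V^i\Omega^n_{\sub{log},\sub{Zar}}$ after passing to level $s$, and then runs a \emph{descending} induction on $i$: from $R^{r-i-1}x\in V^i\Omega^n_{\sub{log}}$ one subtracts $p^i\dlog[f_\lambda]_r$ on a Zariski cover to push $x$ into $\op{Fil}^{i+1}$, where the inductive hypothesis applies. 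Your sketch does not capture this structure.

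Second, you underestimate the sheaf issue. Establishing pro abelian group exactness for $A/I^\infty$ and ``its analogues over \'etale extensions'' separately is not enough; one needs the bound $t=t(s)$ to be \emph{uniform} over all tft $I$-formally \'etale $A'$, otherwise one only gets exactness on stalks of the pro sheaf, which is strictly weaker. The paper handles this by observing that the choices of $s_0,s_1,s_2$ depend only on the Artin--Rees constants in Lemma~\ref{lemma_Cartier_drw}, which are preserved under \'etale base change (Remark~\ref{remark_drw_etale_base_change}); it then threads this uniformity through the descending induction by working Zariski-locally on $\Spf A'$ at each step.
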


\begin{remark}\label{remark_de_Rham--Witt_log}
If $Y$ is a smooth variety over a perfect field of characteristic $p$, then Illusie \cite[\S I.5.7]{Illusie1979} proved exactness of the sequence of \'etale sheaves \[W_r\Omega_{Y,\sub{log}}^n\xTo{p^i}W_r\Omega_{Y,\sub{log}}^n\xTo{R^{r-i}}W_i\Omega_{Y,\sub{log}}^n\To 0.\] To understand the proof of Theorem \ref{theorem_de_Rham--Witt_log}, which initially appears rather technical and unmotivated, we advise the reader to look at the proof of Illusie's result, which is largely a formal manipulation of the exact sequences which we quoted at the beginning of the proof of Lemma \ref{lemma_Cartier_drw}. Our proof is a modification of these manipulations, in which we must take care of certain pro systems, and which also works in the Zariski and Nisnevich topologies since we have Corollary \ref{corollary_Zar_vs_etale}(i) at our disposal.
\end{remark}

\begin{proof}[Proof of Theorem \ref{theorem_de_Rham--Witt_log}]
For any $\bb F_p$-scheme $Y$, the restriction map $R^{r-i}:W_r\Omega_{Y,\sub{log},\tau}^n\to W_i\Omega_{Y,\sub{log},\tau}^n$ clearly kills $p^iW_r\Omega_{Y,\sub{log},\tau}^n$ and is moreover a surjection of  $\tau$-sheaves since both sides are by definition quotients of $\bb G_{m,Y}^{\otimes n}$. This proves exactness except at the the middle, which is the following inclusion of pro  $\tau$-sheaves:
\begin{equation}\{\ker(W_r\Omega^n_{Y_s,\sub{log},\tau}\xto{R^{r-i}}W_i\Omega^n_{Y_s,\sub{log},\tau})\}_s\subseteq \{p^iW_r\Omega^n_{Y_s,\sub{log},\tau}\}_s.\label{theorem_middle_exactness}\end{equation} 
It is convenient to rewrite the left side as $\{R(\ker(W_{r+1}\Omega^n_{Y_s,\sub{log},\tau}\xto{R^{r+1-i}}W_i\Omega^n_{Y_s,\sub{log},\tau})\}_s$, by surjectivity of the restriction map.

To begin the proof of (\ref{theorem_middle_exactness}), let $\Spf A$ be an affine open of $\cal Y$, where $A$ is a regular F-finite $\bb F_p$-algebra which is complete with respect to some ideal $I\subseteq A$. Given integers $t\ge s$, we will denote by $\tau_{t,s}:A/I^t\to A/I^s$ the canonical map, as well as the canonical map it induces on the de Rham--Witt complex, etc. Then the inclusion (\ref{theorem_middle_exactness}), taking into account the rewriting of the left side and using the terminology of Section \ref{subsection_formal_schemes}, follows from (is even equivalent to, if $\tau=$ \'et) the next assertion:

\begin{quote}
For each $s\ge1$ there exists $t\ge s$ such that the following inclusion holds for any tft $I$-formally \'etale $A$-algebra $A'$:\end{quote} \[\tau_{t,s}(\Gamma(\Spec A'/I^tA',W_{r+1}\Omega_{Y_t,\sub{log},\tau}^n)\cap\op{Fil}^iW_{r+1}\Omega_{A'/I^tA'}^n))\subseteq \Gamma(\Spec(A'/I^sA'), p^iW_r\Omega^n_{Y_s,\sub{log},\tau})\]
Since dlog forms are contained in the kernel of $R-F$, and $\tau$ is Zariski, Nisnevich, or \'etale, the following inclusion is stronger:
\begin{equation}\tau_{t,s}(R(W_{r+1}\Omega_{A'/I^tA'}^{n,F=1}\cap\op{Fil}^iW_{r+1}\Omega_{A'/I^tA'}^n))\subseteq \Gamma(\Spec(A'/I^sA'), p^iW_r\Omega^n_{Y_s,\sub{log},Zar}),\label{equation_log_fil_rtp}\end{equation}
and it is this which we shall eventually show is true to complete the proof of the theorem. Because we closely follow Illusie's proof of \cite[(5.7.4)]{Illusie1979}, it is actually more convenient to prove (\ref{equation_log_fil_rtp}) with $r-1$ in place of $r$.

So we begin the proof properly now by picking integers $s_2,s_1,s_0=:t$ as follows:
\begin{itemize}\itemsep0pt
\item Pick $s_2\ge s$ such that $\tau_{s_2,s}(B_i\Omega^n_{A/I^{s_2}})$ is killed by $V^i$, by Lemma \ref{lemma_Cartier_drw}i.
\item Pick $s_1\ge s_2$ such that $\tau_{s_1,s_2}(\ker(\Omega_{A/I^{s_1}}^n\xto{V^r} W_{i+1}\Omega_{A/I^{s_1}}^n/dV^i\Omega_{A/I^{s_1}}^n))\subseteq B_{i+1}\Omega_{A/I^{s_2}}^n$, by Lemma \ref{lemma_Cartier_drw}ii.
\item Pick $t:=s_0\ge s_1$ such that $\tau_{s_0,s_1}(\ker(\Omega_{A/I^{s_0}}^n\xto{dV^{i-1}}W_i\Omega_{A/I^{s_0}}))\subseteq F^iW_{i+1}\Omega_{A/I^{s_1}}$, by Lemma \ref{lemma_Cartier_drw}iv.
\end{itemize}
We will first prove that \begin{equation}\tau_{t,s}R^{r-i-1}\left(W_r\Omega_{A/I^t}^{n,F=1}\cap\op{Fil}^iW_r\Omega_{A/I^t}^n\right)\subseteq V^i\Omega_{A/I^s,\sub{log},\sub{Zar}}^n\label{equ_log_fil}\end{equation} for $i<r-1$. In an attempt to keep the proof readable, we will omit explicitly writing the maps $\tau$ and simply ``pass (from level $t$) to level  $s$''.

Let $x\in W_r\Omega_{A/I^t}^{n,F=1}\cap\op{Fil}^iW_r\Omega_{A/I^t}^n$. Since the canonical and $V$-filtrations coincide (paragraph after Definition \ref{definition_filtrations}) we may write $x=V^iy+dV^iz$ for some $y\in W_{r-i}\Omega_{A/I^{s_0}}^n$, $z\in W_{r-i}\Omega_{A/I^{s_0}}^{n-1}$. Applying $R^{r-i-1}$ to the assumption that $Fx=Rx$ yields \[pV^{i-1}R^{r-i-1}y+dV^{i-1}R^{r-i-1}z=V^iR^{r-i}y+dV^iR^{r-i}z.\] Since $R^{r-i}$ kills $y$ and $z$, and $p$ kills $\Omega_{A/I^{s_0}}^n$, it follows that $dV^{i-1}R^{r-i-1}z=0$ in $\Omega_{A/I^{s_0}}^n$.

Passing to level $s_1$ and using surjectivity of the restriction map, it follows that there exists $u\in W_r\Omega_{A/I^{s_1}}^{n-1}$ such that $F^iR^{r-i}u=R^{r-i-1}z$; in other words, $z-F^iu\in\op{Fil}^1W_{r-i}\Omega_{A/I^{s_1}}^{n-1}$.

Again using the equality of the canonical and $V$-filtrations, we may write $z-F^iu=Vz'+dVv$ for some $z'\in W_{r-i-1}\Omega_{A/I^{s_1}}^{n-1}$ and $v\in W_{r-i-1}\Omega_{A/I^{s_1}}^{n-2}$; applying $V^i$ gives $V^iz-p^iu=V^{i+1}z'+p^idV^{i+1}v$. Returning to the element $x$, we have proved that
\begin{align}
x&=V^iy+d(V^{i+1}z'+p^idV^{i+1}v+p^iu)\notag\\
 &=V^iy'+dV^{i+1}z'\label{proposition_log_fil1},
\end{align}
where $y':=y+F^idu\in W_{r-i}\Omega_{A/I^{s_1}}^n$, and so the assumption that $Fx=Rx$ now reads \[V^iFy'+dV^iz'=V^iRy'+dV^{i+1}Rz'.\] Applying $R^{r-i-2}$ shows that $V^i(R-F)R^{r-i-2}y'\in dV^i\Omega_{A/I^{s_1}}^{n-1}$, whence passing to level $s_2$ implies that $(R-F)R^{r-i-2}y'\in B_{i+1}\Omega^n_{A/I^{s_2}}$.

Appealing to Remark \ref{remark_log_forms_cartier_filtration}, this means that $(1-C^{-1})R^{r-i-1}y'\in B_{i+1}\Omega^n_{A/I^{s_2}}/d\Omega_{A/I^{s_2}}^{n-1}$ and hence $R^{r-i-1}y'\in \Omega^n_{A/I^{s_2},\sub{log},\sub{Zar}}+B_i\Omega^n_{A/I^{s_4}}$. Finally passing to level $s$, equation (\ref{proposition_log_fil1}) now implies that $R^{r-i-1}x\in V^i\Omega^n_{A/I^s,\sub{log},\sub{Zar}}$.

This completes the proof of (\ref{equ_log_fil}). However, it is important to observe that we have actually proved a result which is preserved under \'etale base change: if $A'$ is any tft $I$-formally \'etale $A$-algebra, then the three properties which hold by choice of $s_2,s_1,s_0$ remain valid after replacing $A/I^{s_?}$ by $A'/I^{s_?}A'$ (and {\em without picking new values of $s_2,s_1,s_0$}), since the three properties behave well under \'etale base change (by the arguments of Remark \ref{remark_drw_etale_base_change}). Hence (\ref{equ_log_fil}) is also valid after replacing $A/I^t$ and $A/I^s$ by $A'/I^tA'$ and $A'/I^sA'$ respectively.

Fixing $r\ge 1$, we will now prove the following by descending induction on $i\le r$: for each $s\ge1$ there exists $t\ge s$ such that the following inclusion holds for any tft $I$-formally \'etale $A$-algebra $A'$:
\begin{equation}\tau_{t,s}(R(W_r\Omega_{A'/I^tA'}^{n,F=1}\cap\op{Fil}^iW_r\Omega_{A'/I^tA'}^n))\subseteq \Gamma(\Spec(A'/I^sA'), p^iW_{r-1}\Omega^n_{Y_s,\sub{log},\sub{Zar}}).\label{equation_log_fil_rtp_new}\end{equation} This is exactly assertion (\ref{equation_log_fil_rtp}) with $r-1$ in place of $r$, and so this proof by induction will complete the proof of the theorem.

The initial cases $i=r,r-1$ of (\ref{equation_log_fil_rtp_new}) are trivial since then the left side of (\ref{equation_log_fil_rtp_new}) is zero. So let $i<r-1$, and  by induction pick $s'\ge s$ such that (\ref{equation_log_fil_rtp_new}) holds for the index $i+1$, the integers $s'\ge s$ (instead of $t\ge s$), and any tft $I$-formally \'etale $A$-algebra $A'$; then finally pick $t\ge s'$ such that (\ref{equ_log_fil}) holds for the integers $t\ge s'$ (instead of $t\ge s$) and any tft $I$-formally \'etale $A$-algebra in place of $A$ (by the previous step of the proof). Now let $A'$ be any tft $I$-formally \'etale $A$-algebra; we must show that (\ref{equation_log_fil_rtp_new}) holds.

So let $x\in W_r\Omega_{A'/I^tA'}^{n,F=1}\cap\op{Fil}^iW_r\Omega_{A'/I^tA'}^n$. By choice of $t$, there exists an element $\omega\in\Omega_{A'/I^{s'}A',\sub{log},\sub{Zar}}^n$ such that $\tau_{t,s'}R^{r-i-1}x=V^i\omega$. Then there is a finite Zariski cover $\{\Spf A'_\lambda\}_{\lambda\in\Lambda}$ of $\Spf \hat{A'}$, and elements $f_\lambda\in (A_\lambda'/I^{s'}A_\lambda')^{\times\otimes n}$ such that $\omega|_{A'_\lambda/I^{s'}A'_\lambda}=\dlog f_\lambda$.

So, for any $\lambda\in\Lambda$, one has \[(\tau_{t,s'}R^{r-i-1}x)|_{A'_\lambda/I^{s'}A'_\lambda}=V^i\omega|_{A'_\lambda/I^{s'}A'_\lambda}=V^i\dlog f_\lambda= p^i\dlog[f_\lambda]_{i+1},\] where we have added a subscript to the Teichm\"uller lift to make the level clear. In other words, $(\tau_{t,s'}x)|_{A'_\lambda/I^{s'}A'_\lambda}-p^i\dlog[f_\lambda]_r\in\op{Fil}^{i+1}W_r\Omega_{A'_\lambda/I^{s'}A'_\lambda}^n$; but this element lies in $W_r\Omega_{A'_\lambda/I^{s'}A'_\lambda}^{n,F=1}$ (since each term does), whence the inductive hypothesis and choice of $s'$ imply that \[(\tau_{t,s}Rx)|_{A'_\lambda/I^sA'_\lambda}-p^i\dlog[\tau_{s',s}f_\lambda]_{r-1}\in \Gamma(\Spec A'_\lambda/I^sA'_\lambda, p^{i+1}W_{r-1}\Omega^n_{A'_\lambda/I^sA'_\lambda,\sub{log},\sub{Zar}}).\] Hence $\tau_{t',s}Rx\in \Gamma(\Spec(A'/I^sA'),p^iW_{r-1}\Omega^n_{Y_s,\sub{log},\sub{Zar}})$, as required to complete the inductive step proving (\ref{equation_log_fil_rtp_new}), and so finish the proof.
\end{proof}

\begin{corollary}\label{corollary_de_Rham--Witt_log_diagonal}
Let $\cal Y$ be a regular, F-finite formal $\bb F_p$-scheme, and $Y_1\into\cal Y$ a subscheme of definition; fix $n\ge 0$ and $i\ge1$. Letting $\tau$ denote the Zariski, Nisnevich, or \'etale topology, then the following sequence of pro $\tau$-sheaves on $Y_1$ is exact:
\[0\To\{W_s\Omega_{Y_s,\sub{log},\tau}^n\}_s\xTo{p^i}\{W_s\Omega_{Y_s,\sub{log},\tau}^n\}_s\xTo{\{R^{s-i}\}_s}\{W_i\Omega_{Y_s,\sub{log},\tau}^n\}_s\To 0.\]
That is, the canonical map $\{W_s\Omega_{Y_s,\sub{log},\tau}^n\dotimes_\bb Z\bb Z/p^i\}_s\to \{W_i\Omega_{Y_s,\sub{log},\tau}^n\}_s$ is an isomorphism.
\end{corollary}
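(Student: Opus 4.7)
The plan is to verify separately the three stages of exactness of the displayed sequence -- surjectivity of $\{R^{s-i}\}_s$, middle exactness, and injectivity of $p^i$; the equivalent statement on $\dotimes_{\bb Z} \bb Z/p^i$ then follows tautologically. Surjectivity is immediate at each level $s \ge i$, as the sheaves $W_s\Omega^n_{Y_s,\sub{log},\tau}$ and $W_i\Omega^n_{Y_s,\sub{log},\tau}$ are by construction generated $\tau$-locally by dlog-forms of units from $\cal O_{Y_s}^\times$ and the restriction $R^{s-i}$ preserves such forms, so the pro-morphism is even levelwise surjective.

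For middle exactness I fix a target subscheme level $s_0$ and apply Theorem \ref{theorem_de_Rham--Witt_log} with Witt length $r = s_0$ and subscheme target $s_0$. This yields some $t_0 \ge s_0$ such that $\ker(R^{s_0-i} : W_{s_0}\Omega^n_{Y_{t_0},\sub{log},\tau} \to W_i\Omega^n_{Y_{t_0},\sub{log},\tau})$ is carried into $p^i W_{s_0}\Omega^n_{Y_{s_0},\sub{log},\tau}$ by the subscheme restriction $Y_{t_0} \to Y_{s_0}$. Given a diagonal element $\tilde\omega \in W_t\Omega^n_{Y_t,\sub{log},\tau}$ (for some $t \ge t_0$) with $R^{t-i}\tilde\omega = 0$, the factorisation $R^{t-i} = R^{s_0-i} \circ R^{t-s_0}$ shows that $R^{t-s_0}\tilde\omega \in W_{s_0}\Omega^n_{Y_t,\sub{log},\tau}$ lies in $\ker(R^{s_0-i})$. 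Its further restriction to $Y_{t_0}$ then lies in the range to which the theorem applies, and its image in $W_{s_0}\Omega^n_{Y_{s_0},\sub{log},\tau}$ coincides with the image of $\tilde\omega$ under the diagonal transition map, hence lies in $p^i W_{s_0}\Omega^n_{Y_{s_0},\sub{log},\tau}$, as required.

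Injectivity of $p^i$ will follow from the remark immediately after Proposition \ref{proposition_filtrations_are_equal}, according to which the diagonal pro-abelian group $\{W_s\Omega^n_{Y_s}\}_s$ has no $p$-torsion. Since $W_s\Omega^n_{Y_s,\sub{log},\tau}$ is a sub-sheaf of the Hodge--Witt sheaf $W_s\Omega^n_{Y_s}$ in each of the topologies under consideration, this $p$-torsion-freeness is inherited by the log sub-pro-sheaf; and $p^i$-torsion-freeness then follows formally, as $p^i$ factors as the $i$-fold composition of $p$ and a composition of pro-monomorphisms is itself a pro-monomorphism.

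The main obstacle will be the careful bookkeeping of indices in the middle exactness step, where Theorem \ref{theorem_de_Rham--Witt_log}'s statement at a single fixed Witt length must be reconciled with the diagonal structure of the corollary's sequence. This is resolved by the choice of taking the Theorem's Witt length equal to the target subscheme level $s_0$, at which point the factorisation $R^{t-i} = R^{s_0-i} \circ R^{t-s_0}$ produces exactly the diagonal compatibility required.
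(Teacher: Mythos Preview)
Your proof is correct and follows essentially the same approach as the paper's: the paper's terse ``taking the diagonal over the indices $r$ and $s$ in Theorem \ref{theorem_de_Rham--Witt_log}'' is precisely what you unpack in your middle-exactness paragraph (fixing the target level $s_0$, applying the theorem with Witt length $r=s_0$, and using the factorisation $R^{t-i}=R^{s_0-i}\circ R^{t-s_0}$), and both you and the paper obtain left exactness from Proposition \ref{proposition_filtrations_are_equal}. One small note: the remark about $p$-torsion-freeness of $\{W_s\Omega^n_{A/I^s}\}_s$ appears immediately \emph{before} Proposition \ref{proposition_filtrations_are_equal}, not after.
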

\begin{proof}
Taking the diagonal over the indices $r$ and $s$ in Theorem \ref{theorem_de_Rham--Witt_log} proves exactness except at the left, which follows from the equality of the $p$- and canonical filtrations in Proposition \ref{proposition_filtrations_are_equal}.
\end{proof}

\begin{corollary}
Let $\cal Y$ be a regular, F-finite formal $\bb F_p$-scheme, and $Y_1\into\cal Y$ a subscheme of definition; fix $n\ge0$ and $0\le i\le r$. Then the inclusions \[\{p^iW_r\Omega_{Y_s,\sub{log},\sub{Zar}}^n\}_s\subseteq\{\ep_{\sub{Nis}*}(p^iW_r\Omega_{Y_s,\sub{log},\sub{Zar},\sub{Nis}}^n)\}_s\subseteq \{\ep_*(p^iW_r\Omega_{Y_s,\sub{log}}^n)\}_s\] (inside $\{W_r\Omega_{Y_s,\sub{log},\sub{Zar}}^n\}_s$) of pro Zariski sheaves on $Y_1$ are equalities.
\end{corollary}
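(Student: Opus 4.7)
The plan is to recognise each of the three pro sheaves in the statement as the pro-kernel of the iterated restriction map $R^{r-i}$ into $W_i\Omega_{Y_s,\sub{log}}^n$ computed in the appropriate topology, and then to transport the kernels across the topologies via the identifications $\ep_*W_r\Omega_{Y_s,\sub{log}}^n = \ep_{\sub{Nis}*}W_r\Omega_{Y_s,\sub{log},\sub{Nis}}^n = W_r\Omega_{Y_s,\sub{log},\sub{Zar}}^n$.

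More precisely, first I would invoke Theorem~\ref{theorem_de_Rham--Witt_log} three times, once for each of $\tau\in\{\sub{Zar},\sub{Nis},\sub{\'et}\}$, obtaining an exact sequence of pro $\tau$-sheaves
\[
\{W_r\Omega_{Y_s,\sub{log},\tau}^n\}_s\xTo{p^i}\{W_r\Omega_{Y_s,\sub{log},\tau}^n\}_s\xTo{R^{r-i}}\{W_i\Omega_{Y_s,\sub{log},\tau}^n\}_s\To 0.
\]
Middle exactness identifies the pro subsheaf $\{p^iW_r\Omega_{Y_s,\sub{log},\tau}^n\}_s$ with the pro-kernel of $R^{r-i}$ in the topology $\tau$.

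Second, I would apply $\ep_*$ (resp.\ $\ep_{\sub{Nis}*}$) levelwise to the \'etale (resp.\ Nisnevich) identification. Since these pushforwards are left exact functors on ordinary sheaves, they commute with levelwise kernels; hence applied to a pro-system they commute with pro-kernels (the defining pro-equivalence between $\{p^iW_r\Omega_{Y_s,\sub{log},\tau}^n\}_s$ and $\{\ker R^{r-i}\}_s$ is represented by levelwise maps, which survive pushforward). This gives, as pro Zariski sheaves,
\[
\{\ep_*(p^iW_r\Omega_{Y_s,\sub{log}}^n)\}_s=\ker\bigl(\{\ep_*W_r\Omega_{Y_s,\sub{log}}^n\}_s\xTo{R^{r-i}}\{\ep_*W_i\Omega_{Y_s,\sub{log}}^n\}_s\bigr),
\]
and the analogous statement for $\ep_{\sub{Nis}*}$. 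Now Corollary~\ref{corollary_Zar_vs_etale}(i) (which asserts exactly the equality of the three flavours of logarithmic Hodge--Witt sheaves after pushforward to the Zariski topos, as already noted at the beginning of the proof of Corollary~\ref{corollary_Zar_to_etale}) rewrites both of these kernels as
\[
\ker\bigl(\{W_r\Omega_{Y_s,\sub{log},\sub{Zar}}^n\}_s\xTo{R^{r-i}}\{W_i\Omega_{Y_s,\sub{log},\sub{Zar}}^n\}_s\bigr),
\]
which by the Zariski incarnation of Theorem~\ref{theorem_de_Rham--Witt_log} is exactly $\{p^iW_r\Omega_{Y_s,\sub{log},\sub{Zar}}^n\}_s$. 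Chaining the three identifications gives the required equalities.

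The main obstacle has in fact already been dispatched: all of the real content sits in Theorem~\ref{theorem_de_Rham--Witt_log}, whose proof was simultaneously carried out in the three topologies using Corollary~\ref{corollary_Zar_vs_etale}(i). The only conceptual point one must check here is the compatibility of the pushforwards with pro-kernels, but this is immediate because the pro-equivalence produced by Theorem~\ref{theorem_de_Rham--Witt_log} is realised by levelwise maps, to which the left-exact functors $\ep_*$ and $\ep_{\sub{Nis}*}$ may be applied term by term without losing exactness.
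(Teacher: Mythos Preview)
Your argument is correct and is essentially the same as the paper's, just spelled out more explicitly: both rest on Theorem~\ref{theorem_de_Rham--Witt_log} (to identify $\{p^iW_r\Omega_{Y_s,\sub{log},\tau}^n\}_s$ with the pro-kernel of $R^{r-i}$ in each topology) together with Corollary~\ref{corollary_Zar_vs_etale}(i) (the case $i=0$) to transport the kernels across topologies. The paper phrases this as ``a straightforward induction'' and cites Corollary~\ref{corollary_de_Rham--Witt_log_diagonal}, but the substance is the same exact-sequence comparison you wrote out, and your explicit verification that the left-exact pushforwards $\ep_*$, $\ep_{\sub{Nis}*}$ preserve the relevant pro-isomorphisms of sub-pro-sheaves is exactly the point one needs.
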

\begin{proof}
If $i=0$ then the inclusions are equalities for any fixed $s\ge 1$ by Corollary \ref{corollary_Zar_vs_etale}(i). The case $i>0$ then follows by a straightforward induction from Corollary \ref{corollary_de_Rham--Witt_log_diagonal}.
\end{proof}

\subsection{Consequences of Theorem \ref{theorem_de_Rham--Witt_log}}
Now we present some applications of Theorem \ref{theorem_de_Rham--Witt_log} to logarithmic Hodge--Witt sheaves. First we prove that they satisfy a pro excision property (more general pro cdh descent properties also exist, but we do not discuss them in this paper); this will be used in Section \ref{section_K_theory}:

\begin{corollary}\label{corollary_pro_excision}
Let $X$ be a regular, F-finite $\bb F_p$-scheme, and $Z,Z'\into X$ closed subschemes; let $\tau$ be the Zariski, Nisnevich, or \'etale topology, and fix $r\ge1$, $n\ge0$. Then the square of pro $\tau$-sheaves on $X$
\[\xymatrix{
\{W_r\Omega_{(Z\cup Z')_s,\sub{log},\tau}^n\}_s\ar[r]\ar[d]&\{W_r\Omega_{Z_s,\sub{log},\tau}^n\}_s\ar[d]\\
\{W_r\Omega_{Z'_s,\sub{log},\tau}^n\}_s\ar[r]&\{W_r\Omega_{(Z\cap Z')_s,\sub{log},\tau}^n\}_s
}\]
is bicartesian.
\end{corollary}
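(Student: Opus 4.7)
The plan is to reduce to the \'etale topology, prove the analogous bicartesian property for the non-logarithmic Hodge--Witt pro sheaves $\{W_r\Omega^n_{Y_s}\}_s$ by commutative algebra, and then descend to the logarithmic case via a Frobenius-fixed-point argument. By Corollary \ref{corollary_Zar_vs_etale}(i), the Zariski, Nisnevich, and \'etale versions of $W_r\Omega^n_{\sub{log},\tau}$ coincide, so it suffices to verify the claim in the \'etale topology; working locally, we may take $X = \Spec A$ with $A$ regular F-finite, and $Z, Z'$ cut out by ideals $I, I' \subseteq A$.

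For the non-logarithmic version, we begin with the Mayer--Vietoris short exact sequence of $W_r(A)$-modules
\[0 \to W_r(A)/W_r(I^s\cap I'^s) \to W_r(A)/W_r(I^s) \oplus W_r(A)/W_r(I'^s) \to W_r(A)/\bigl(W_r(I^s) + W_r(I'^s)\bigr) \to 0,\]
using that $W_r(I^s\cap I'^s) = W_r(I^s) \cap W_r(I'^s)$ (immediate from Witt coordinates). By Lemma \ref{lemma_witt_intertwined} and standard Artin--Rees, the pro ideals on the left and right are cofinal with $\{W_r((I \cap I')^s)\}_s$ and $\{W_r((I+I')^s)\}_s$ respectively. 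Tensoring this sequence over $W_r(A)$ with the finitely generated $W_r(A)$-module $W_r\Omega^n_A$ (Lemma \ref{lemma_finite_generation_conditions}), the pro-$\Tor_1$ vanishing of Proposition \ref{proposition_AR_Witt}(i) preserves exactness in the pro category, and Lemma \ref{lemma_Witt_dg_ideal_gen_by_I}(ii) identifies each resulting term with $\{W_r\Omega^n_{Y_s}\}_s$ at the corresponding corner. This establishes the non-logarithmic bicartesian property.

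For the logarithmic descent, Corollary \ref{corollary_de_Rham--Witt_log_diagonal} provides isomorphisms $\{W_r\Omega^n_{Y_s,\sub{log}}\}_s \cong \{W_s\Omega^n_{Y_s,\sub{log}}\}_s \dotimes_\bb Z \bb Z/p^r$ at each corner, and the diagonal pro systems are $p$-torsion-free by the injectivity of $p^i$ in that corollary. Hence it suffices to prove the bicartesian property for the diagonal $\{W_s\Omega^n_{Y_s,\sub{log}}\}_s$, from which the fixed-$r$ claim follows by tensoring with $\bb Z/p^r$ (which preserves short exactness thanks to the torsion-freeness). For the diagonal, Corollary \ref{corollary_log_vs_R-F}(iii) applied at each $Y_s$ and specialised to the diagonal produces a short exact sequence of pro \'etale sheaves
\[0 \to \{W_s\Omega^n_{Y_s,\sub{log}}\}_s \to \{W_s\Omega^n_{Y_s}\}_s \xto{1-F} \{W_s\Omega^n_{Y_s}\}_s \to 0,\]
which, combined with the diagonal variant of the non-logarithmic bicartesian property (from an identical commutative-algebra argument with $r=s$), delivers the bicartesian property for $\{W_s\Omega^n_{Y_s,\sub{log}}\}_s$ by a Mayer--Vietoris diagram chase.

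The main obstacle will be rigorously justifying the passage from the pro-$r$ Frobenius-fixed-point sequence of Corollary \ref{corollary_log_vs_R-F}(iii) (which is formulated at each fixed $\bb F_p$-scheme) to a genuinely diagonal pro-$s$ short exact sequence, since the defining maps live naturally in a bi-indexed pro system and one must ensure that diagonal extraction preserves exactness. This should follow from functoriality of the relevant maps in the scheme $Y$ and cofinality of the two pro-indexings, but requires careful bookkeeping.
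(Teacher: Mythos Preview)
Your proposal is correct and follows essentially the same approach as the paper's proof: establish the non-logarithmic bicartesian square by an Artin--Rees/Mayer--Vietoris argument, pass to the diagonal and take $R-F$ fixed points via Corollary~\ref{corollary_log_vs_R-F}, then descend to fixed $r$ via Corollary~\ref{corollary_de_Rham--Witt_log_diagonal} (the paper merely orders the reduction to the \'etale topology last rather than first). Your flagged concern about diagonal extraction is not a genuine obstacle: the diagonal $\bb N\hookrightarrow\bb N\times\bb N$ is cofinal, so restriction along it is an equivalence of pro-categories and in particular preserves short exact sequences.
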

\begin{proof}
Since all arrows in the diagram are surjective in the $\tau$-topology, it is sufficient to prove that the square is cartesian.

If $B$ is a Noetherian ring, $I_1,I_2\subseteq B$ are ideals, and $M$ is a finitely generated $B$-module, then elementary Artin--Rees arguments (using Proposition \ref{proposition_AR_p}(i)) show that the square of pro $B$-modules
\[\xymatrix{
\{B/(I_1\cap I_2)^s\}_s\ar[r]\ar[d] & \{B/I_2^s\}_s\ar[d]\\
\{B/I_1^s\}_s\ar[r] & \{B/(I_1+ I_2)^s\}_s
}\]
is bicartesian. Hence, if $A$ is a Noetherian, F-finite $\bb F_p$-algebra, and $I,J\subseteq A$ are ideals, then we may set $B=W_r(A)$, $I_1=W_r(I)$, $I_2=W_r(J)$, and $M=W_r\Omega_A^n$, to obtain a bicartesian square which, by Lemmas \ref{lemma_witt_intertwined}, \ref{lemma_finite_generation_conditions}, and \ref{lemma_Witt_dg_ideal_gen_by_I}(ii), will be exactly
\[\xymatrix{
\{W_r\Omega_{A/(I\cap J)^s}^n\}_s\ar[r]\ar[d]&\{W_r\Omega_{A/J^s}^n\}_s\ar[d]\\
\{W_r\Omega_{A/I^s}^n\}_s\ar[r]&\{W_r\Omega_{A/(I+ J)^s}^n\}_s
}\]
This argument works verbatim for coherent sheaves, yielding a bicartesian square of pro \'etale sheaves on $X$:
\[\xymatrix{
\{W_r\Omega_{(Z\cup Z')_s}^n\}_s\ar[r]\ar[d]&\{W_r\Omega_{Z_s}^n\}_s\ar[d]\\
\{W_r\Omega_{Z'_s}^n\}_s\ar[r]&\{W_r\Omega_{(Z\cap Z')_s}^n\}_s
}\]
Taking the diagonal over $r=s$ and then the kernel of $R-F$ using Corollary \ref{corollary_log_vs_R-F}, it follows that 
\[\xymatrix{
\{W_s\Omega_{(Z\cup Z')_s,\sub{log}}^n\}_s\ar[r]\ar[d]&\{W_s\Omega_{Z_s,\sub{log}}^n\}_s\ar[d]\\
\{W_s\Omega_{Z'_s,\sub{log}}^n\}_s\ar[r]&\{W_s\Omega_{(Z\cap Z')_s,\sub{log}}^n\}_s
}\]
is also bicartesian. Finally, applying $-\dotimes_\bb Z\bb Z/p^r\bb Z$ yields yet another homotopy cartesian square, which according to Corollary \ref{corollary_de_Rham--Witt_log_diagonal} is exactly the desired square in the \'etale topology:
\[\xymatrix{
\{W_r\Omega_{(Z\cup Z')_s,\sub{log}}^n\}_s\ar[r]\ar[d]&\{W_r\Omega_{Z_s,\sub{log}}^n\}_s\ar[d]\\
\{W_r\Omega_{Z'_s,\sub{log}}^n\}_s\ar[r]&\{W_r\Omega_{(Z\cap Z')_s,\sub{log}}^n\}_s
}\]
Pushing to the Zariski or Nisnevich topology yields the desired square in the $\tau$-topology, by Corollary \ref{corollary_Zar_vs_etale}(i), and hence proves it is cartesian; but we have already observed that cartnesianess is sufficient.
\end{proof}

Secondly we show that logarithmic Hodge--Witt groups are continuous; this will be used to prove continuity of $K$-theory in Section \ref{subsection_continuity_K}:

\begin{corollary}\label{lemma_continuity_of_log_HW}
Let $A$ be a regular, local, F-finite $\bb F_p$-algebra, and $I\subseteq A$ an ideal such that $A$ is $I$-adically complete. Then the canonical map
\[W_r\Omega_{A,\sub{log},\sub{Zar}}^n\To \projlim_sW_r\Omega_{A/I^s,\sub{log},\sub{Zar}}^n\] is an isomorphism for all $n\ge0$, $r\ge1$.
\end{corollary}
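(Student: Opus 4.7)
The strategy is to reduce the statement to the inverse-limit isomorphism $W_r\Omega_A^n\isoto\projlim_s W_r\Omega_{A/I^s}^n$ of Lemma \ref{lemma_completion_of_dRW_complex} by applying $\projlim_s$ to the left-exact sequence of Corollary \ref{corollary_Zar_vs_etale}(iii), and then to unwind the remaining injectivity through the Cartier filtration.

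\emph{Step 1.} By Corollary \ref{corollary_Zar_vs_etale}(i) and (iii), each $B\in\{A,A/I^s\}$ fits into a left-exact sequence
\[0\To W_r\Omega_{B,\sub{log},\sub{Zar}}^n\To W_r\Omega_B^n\xto{\pi-\res F}W_r\Omega_B^n/dV^{r-1}\Omega_B^{n-1}.\]
Applying $\projlim_s$ to the case $B=A/I^s$, using Lemma \ref{lemma_completion_of_dRW_complex} to identify $\projlim_s W_r\Omega_{A/I^s}^n$ with $W_r\Omega_A^n$, and comparing with the sequence for $B=A$, the problem reduces to establishing the injectivity of
\[W_r\Omega_A^n/dV^{r-1}\Omega_A^{n-1}\To \projlim_s\bigl(W_r\Omega_{A/I^s}^n/dV^{r-1}\Omega_{A/I^s}^{n-1}\bigr).\]

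\emph{Step 2.} Since $\{dV^{r-1}\Omega_{A/I^s}^{n-1}\}_s$ is a quotient of the surjective-transition system $\{\Omega_{A/I^s}^{n-1}\}_s$, it too has surjective transitions; hence $\projlim_s$ preserves the exactness of the short exact sequence $0\to dV^{r-1}\Omega_{A/I^s}^{n-1}\to W_r\Omega_{A/I^s}^n\to W_r\Omega_{A/I^s}^n/dV^{r-1}\Omega_{A/I^s}^{n-1}\to 0$. A five-lemma comparison with the analogous short exact sequence for $A$ (together with Lemma \ref{lemma_completion_of_dRW_complex}) then shows that the desired injectivity is equivalent to the \emph{surjectivity} of the canonical map $dV^{r-1}\Omega_A^{n-1}\To \projlim_s dV^{r-1}\Omega_{A/I^s}^{n-1}$.

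\emph{Step 3.} To establish this surjectivity, Lemma \ref{lemma_Cartier_drw}(iii) (taking $i=r-1$) identifies the pro-kernel of $dV^{r-1}:\{\Omega_{A/I^s}^{n-1}\}_s\to\{W_r\Omega_{A/I^s}^n\}_s$ with $\{Z_r\Omega_{A/I^s}^{n-1}\}_s$, and Lemma \ref{lemma_Cartier_filtration} gives a pro-isomorphism with $\{Z_r\Omega_A^{n-1}\otimes_A A/I^s\}_s$. Since $A$ is F-finite, $\Omega_A^{n-1}$ is finitely generated, hence so is the submodule $Z_r\Omega_A^{n-1}$ by Noetherianness; as $A$ is $I$-adically complete, so is $Z_r\Omega_A^{n-1}$, yielding $\projlim_s Z_r\Omega_{A/I^s}^{n-1}\cong Z_r\Omega_A^{n-1}$. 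Combining this with the classical short exact sequence $0\to Z_r\Omega_A^{n-1}\to\Omega_A^{n-1}\xto{dV^{r-1}}dV^{r-1}\Omega_A^{n-1}\to 0$ for $A$ itself (by Illusie together with N\'eron--Popescu desingularisation, exactly as in the proof of Lemma \ref{lemma_Cartier_drw}), Lemma \ref{lemma_completion_of_dRW_complex} at $r=1$ for $\Omega_A^{n-1}$, and the $\projlim_s$ of the pro-short-exact sequence of Lemma \ref{lemma_Cartier_drw}(iii), a final five-lemma delivers the required isomorphism $dV^{r-1}\Omega_A^{n-1}\isoto\projlim_s dV^{r-1}\Omega_{A/I^s}^{n-1}$.

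The main obstacle is the bookkeeping in Step 3 around the pro-system $\{Z_r\Omega_{A/I^s}^{n-1}\}_s$: the F-finiteness of $A$ is needed to ensure finite generation of $Z_r\Omega_A^{n-1}$, and the $I$-adic completeness of $A$ is needed so that this module, together with $\Omega_A^{n-1}$, coincides with its $I$-adic completion; the remainder of the argument is a careful sequence of five-lemma comparisons.
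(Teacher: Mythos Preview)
Your argument is correct, and it takes a genuinely different route from the paper's.

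The paper proves injectivity the same way you do (via Lemma~\ref{lemma_completion_of_dRW_complex}), but for surjectivity it passes through the description $W_r\Omega^n_{\sub{log},\sub{Zar}}=R(W_{r+1}\Omega^{n,F=1})$: it uses Proposition~\ref{proposition_limit_of_F-fixed_points}(i) to identify $W_{r+1}\Omega_A^{n,F=1}\isoto\projlim_sW_{r+1}\Omega_{A/I^s}^{n,F=1}$, and then must show that $R:\projlim_sW_{r+1}\Omega_{A/I^s,\sub{log}}^n\to\projlim_sW_r\Omega_{A/I^s,\sub{log}}^n$ is surjective, which it does by taking $\projlim_s$ in Corollary~\ref{corollary_de_Rham--Witt_log_diagonal}. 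Your approach instead uses the characterisation of log forms as $\ker(\pi-\res F)$ from Corollary~\ref{corollary_Zar_vs_etale}(iii), reducing everything to a completeness statement for the single finitely generated $A$-module $dV^{r-1}\Omega_A^{n-1}\cong\Omega_A^{n-1}/Z_r\Omega_A^{n-1}$, via the Cartier-filtration base-change results (Lemmas~\ref{lemma_Cartier_filtration} and~\ref{lemma_Cartier_drw}).

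The main advantage of your route is that it entirely bypasses Theorem~\ref{theorem_de_Rham--Witt_log} (and hence Corollary~\ref{corollary_de_Rham--Witt_log_diagonal}), which is the most technical result in the logarithmic Hodge--Witt part of the paper; your proof would in fact allow Corollary~\ref{lemma_continuity_of_log_HW} to be placed immediately after Corollary~\ref{corollary_Zar_vs_etale}. The paper's route, on the other hand, fits naturally into its broader narrative of controlling the $p$-filtration on $\{W_s\Omega_{A/I^s,\sub{log}}^n\}_s$. One small point worth making explicit in your write-up: when you take $\projlim_s$ of the pro-exact sequence from Lemma~\ref{lemma_Cartier_drw}(iii), you are implicitly using that a pro-isomorphism of countable inverse systems induces isomorphisms on both $\projlim$ and $\projlim^1$ (so that the levelwise kernel of $dV^{r-1}$, which need not equal $Z_r\Omega_{A/I^s}^{n-1}$ on the nose, nevertheless has the same derived limits); this is standard, but it is the only place where your ``five-lemma'' has to be handled with a little care.
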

\begin{proof}
Injectivity is clear from the isomorphism $W_r\Omega_{A}^n\isoto\projlim_sW_r\Omega_{A/I^s}^n$ of Lemma \ref{lemma_completion_of_dRW_complex}. To prove surjectivity, we consider the commutative diagram
\[\xymatrix{
W_{r+1}\Omega_{A}^{n,F=1}\ar[r]^\cong\ar[d]^R & \projlim_sW_{r+1}\Omega_{A/I^s}^{n,F=1}\ar[d]^R\\
W_{r}\Omega_{A,\sub{log},\sub{Zar}}^{n}\ar[r] & \projlim_sW_{r}\Omega_{A/I^s,\sub{log},\sub{Zar}}^{n}\\
}\]
in which the horizontal arrow is Proposition \ref{proposition_limit_of_F-fixed_points}(i) and the vertical arrows are well-defined by Corollary \ref{corollary_log_vs_R-F}. If we can show that the right vertical arrow is surjective, the proof will be complete; we will prove the stronger result that $R:\projlim_sW_{r+1}\Omega_{A/I^s,\sub{log}}^n\to \projlim_sW_r\Omega_{A/I^s,\sub{log}}^n$ is surjective.

Indeed, taking $\projlim_s$ in Corollary \ref{corollary_de_Rham--Witt_log_diagonal} obtains an exact sequence \[0\To\projlim_sW_s\Omega_{A/I^s,\sub{log},\sub{Zar}}^n\xTo{p^r}\projlim_sW_s\Omega_{A/I^s,\sub{log},\sub{Zar}}^n\xTo{(\dag)} \projlim_sW_r\Omega_{A/I^s,\sub{log},\sub{Zar}}^n\To 0,\] where these is no $\projlim^1$ obstruction to right exactness since the transition maps in the left system are surjective. Since arrow (\dag) factors through $\projlim_sW_{r+1}\Omega_{A/I^s,\sub{log}}^n$, the proof is complete.
\end{proof}

Finally we prove a modification of Corollary \ref{corollary_Zar_to_etale} which we will need in Section \ref{subsection_class_group}:

\begin{corollary}\label{corollary_relative_nilpotent_hw}
Let $X$ be a regular, F-finite $\bb F_p$-scheme, $Y\into$ X a closed subscheme, fix $r\ge1$, $n\ge 0$. If $Y$ is regular, then the canonical maps of pro complexes of Zariski sheaves on $Y$ \[\{W_r\Omega_{(Y_s,Y),\sub{log},\sub{Zar}}^n\}_s\To \{R\ep_{\sub{Nis}*}W_r\Omega_{(Y_s,Y),\sub{log},\sub{Nis}}^n\}_s\To \{R\ep_*W_r\Omega_{(Y_s,Y),\sub{log}}^n\}_s\] are quasi-isomorphisms. If $Y$ is gnc, then the canonical maps of pro complexes of Zariski sheaves on $Y$ (resp.~X) \[\{W_r\Omega_{Y_s,\sub{log},\sub{Zar}}^n\}_s\To \{R\ep_{\sub{Nis}*}W_r\Omega_{Y_s,\sub{log},\sub{Nis}}^n\}_s\] \[\{W_r\Omega_{(X,Y_s),\sub{log},\sub{Zar}}^n\}_s\To \{R\ep_{\sub{Nis}*}W_r\Omega_{(X,Y_s),\sub{log},\sub{Nis}}^n\}_s\] are quasi-isomorphisms.
\end{corollary}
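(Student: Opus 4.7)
\medskip

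The corollary has three assertions: the first concerns the relative sheaves $W_r\Omega^n_{(Y_s,Y),\sub{log},\tau}$ for $Y$ regular, while the second and third concern the absolute $W_r\Omega^n_{Y_s,\sub{log},\tau}$ and $(X,Y_s)$-relative sheaves for $Y$ gnc. My plan is to prove the first assertion as the substantive content, then deduce the other two from it by induction on the complexity of $Y$ via the pro excision of Corollary~\ref{corollary_pro_excision}.

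For the first assertion I would reduce the fixed-$r$ quasi-iso to a \emph{diagonal} quasi-iso by derived-tensoring with $\bb Z/p^r$. In each topology $\tau\in\{\sub{Zar},\sub{Nis},\sub{\'et}\}$ the first step is to establish
\[
\{W_r\Omega^n_{(Y_s,Y),\sub{log},\tau}\}_s \;\simeq\; \{W_s\Omega^n_{(Y_s,Y),\sub{log},\tau}\}_s \dotimes \bb Z/p^r
\]
by a $3\times 3$-lemma whose rows are the short exact sequences $0\to W_?\Omega^n_{(Y_s,Y),\sub{log},\tau}\to W_?\Omega^n_{Y_s,\sub{log},\tau}\to W_?\Omega^n_{Y,\sub{log},\tau}\to 0$ (short exact in all three topologies because dlog forms lift through the nilpotent thickening $Y\into Y_s$) and whose columns are the $p^r$-short-exact-sequences of Corollary~\ref{corollary_de_Rham--Witt_log_diagonal} applied to the formal completion of $X$ along $Y$ (middle column) and to $Y$ alone (right column, using $Y$ regular). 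The second step is the diagonal comparison $\{W_s\Omega^n_{(Y_s,Y),\sub{log},\sub{Zar}}\}_s \quis \{R\ep_* W_s\Omega^n_{(Y_s,Y),\sub{log}}\}_s$ (and similarly for Nisnevich): Corollary~\ref{corollary_Zar_vs_etale}(i) matches the $H^0$-parts, while the vanishing $\{R^i\ep_* W_s\Omega^n_{(Y_s,Y),\sub{log}}\}_s=0$ for $i\ge 1$ follows by applying $R\ep_*$ to the pro étale short exact sequence
\[
0\to \{W_s\Omega^n_{(Y_s,Y),\sub{log}}\}_s \to \{W_s\Omega^n_{(Y_s,Y)}\}_s \xto{R-F} \{W_{s-1}\Omega^n_{(Y_s,Y)}\}_s \to 0,
\]
which one extracts from Corollary~\ref{corollary_log_vs_R-F}(iii) applied to $Y_s$ and $Y$ by taking kernels of the restriction-to-$Y$ maps, combined with Corollary~\ref{corollary_log_vs_R-F}(i) to identify the diagonal kernel of $R-F$ with the logarithmic part in the pro category. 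One uses here that the non-logarithmic Hodge--Witt sheaves $W_s\Omega^n_{(Y_s,Y)}$ have vanishing higher Zariski cohomology on affines (Remark~\ref{remark_drw_etale_base_change}) and that $R-F$ is Zariski-surjective on the relative part for each $s$ by Lemma~\ref{lemma_surj_of_1-F_in_nilp_case}(ii) applied to the nilpotent ideal of $Y$ in $Y_s$. Combining the two steps via the commutation of $R\ep_*$ with $\dotimes \bb Z/p^r$ yields the first assertion.

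For the gnc assertions, I would induct on the complexity $c$. The base case $c=1$ ($Y$ regular) uses the short exact sequence
\[
0 \to \{W_r\Omega^n_{(Y_s,Y),\sub{log},\tau}\}_s \to \{W_r\Omega^n_{Y_s,\sub{log},\tau}\}_s \to W_r\Omega^n_{Y,\sub{log},\tau} \to 0
\]
(constant pro on the right): the relative term is handled by the first assertion, while the constant term $W_r\Omega^n_{Y,\sub{log}}$ satisfies the absolute Zariski--Nisnevich comparison on regular $Y$ by the Gersten conjecture for logarithmic Hodge--Witt sheaves (a consequence of Kerz's Gersten conjecture for Milnor $K$-theory combined with Theorem~\ref{theorem_GL_BKG}). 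The inductive step uses Lemma~\ref{lemma_complexity} to decompose $Y=Z\cup Z'$ with $Z$ regular and $Z',Z\cap Z'$ of strictly smaller complexity, and then invokes the bicartesian pro excision square of Corollary~\ref{corollary_pro_excision} to reduce to the three smaller cases. The third assertion on $(X,Y_s)$ follows from the second by the analogous short exact sequence $0 \to W_r\Omega^n_{(X,Y_s),\sub{log},\tau} \to W_r\Omega^n_{X,\sub{log},\tau} \to W_r\Omega^n_{Y_s,\sub{log},\tau} \to 0$ on $X$, using the same absolute Gersten input for the regular scheme $X$.

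The main obstacle will be extracting the diagonal pro-over-$s$ short exact sequence in the second step above: Corollary~\ref{corollary_log_vs_R-F}(iii) delivers this exactness only as pro-over-$r$ for each fixed thickening, so a careful diagram chase using Corollary~\ref{corollary_log_vs_R-F}(i) and the compatibility of the restriction $Y_s\to Y$ with $R$ and $F$ is needed to pin down the diagonal kernel of $R-F$ on the relative Hodge--Witt sheaves up to pro-isomorphism.
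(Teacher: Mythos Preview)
Your proposal is correct and follows essentially the same route as the paper. The paper's proof is more compact in two places: (1) your ``second step'' (the diagonal Zariski--\'etale comparison for the relative sheaves) is exactly the content of Corollary~\ref{corollary_Zar_to_etale}, which the paper simply cites rather than re-deriving; and (2) for the Gersten input on the constant term $W_r\Omega^n_{Y,\sub{log}}$ (and on $X$), the paper invokes the Gersten resolution for logarithmic Hodge--Witt sheaves directly (Gros--Suwa in the smooth case, Shiho in general) rather than going through Milnor $K$-theory and Theorem~\ref{theorem_GL_BKG}. Your identification of the diagonalisation issue as the ``main obstacle'' is apt---the paper finesses it by appealing to Corollary~\ref{corollary_Zar_to_etale}, whose proof shows that in the relative nilpotent setting the surjectivity of $R-F$ and the vanishing of higher $\ep_*$ on Hodge--Witt sheaves hold level by level, so taking the diagonal causes no trouble.
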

\begin{proof}
Suppose first that $Y$ is regular. Corollary \ref{corollary_de_Rham--Witt_log_diagonal} and its analogue for $Y$ due to Illusie yield a short exact sequence of pro $\tau$-sheaves
\[0\To\{W_s\Omega_{(Y_s,Y),\sub{log},\tau}^n\}_s\xTo{p^r}\{W_s\Omega_{(Y_s,Y),\sub{log},\tau}^n\}_s\xTo{\{R^{s-r}\}_s}\{W_r\Omega_{(Y_s,Y),\sub{log},\tau}^n\}_s\To 0,\] where $\tau$ is the Zariski, Nisnevich, or \'etale topology. Comparing the pushforwards of the Nisnevich and \'etale versions to the Zariski version, we see it is sufficient to show that that the canonical maps
\[\{W_s\Omega_{(Y_s,Y),\sub{log},\sub{Zar}}^n\}_r\To \{R\ep_{\sub{Nis}*}W_s\Omega_{(Y_s,Y),\sub{log},\sub{Nis}}^n\}_s\To \{R\ep_*W_s\Omega_{(Y_s,Y),\sub{log}}^n\}_s\] are quasi-isomorphisms; but this is a consequence of Corollary \ref{corollary_Zar_to_etale}.

Still assuming that $Y$ is regular, the Gersten resolution for logarithmic Hodge--Witt theory (Gros and Suwa for smooth varieties \cite{GrosSuwa1988}; Shiho in general \cite{Shiho2007}) implies that $W_r\Omega_{Y,\sub{Zar}}^n\quis R\ep_*W_r\Omega_{Y,\sub{Nis}}^n$. Combining this with the previous paragraph, we deduce that $\{W_r\Omega_{Y_s,\sub{log},\sub{Zar}}^n\}_s\quis \{R\ep_{\sub{Nis}*}W_r\Omega_{Y_s,\sub{log},\sub{Nis}}^n\}_s$.

Now suppose that $Y$ is merely gnc, and proceed by induction on its complexity; let $Z,Z'$ be a closed cover of $Y$ such that $Z$, $Z'$, and $Z'':=Z\cap Z'$ are gnc schemes of complexity less than that of $Y$ (these exists by Lemma \ref{lemma_complexity}). Comparing the Nisnevich pushforward of the bicartesian square from Corollary \ref{corollary_pro_excision} with the analogous Zariski square, the inductive hypothesis shows $\{W_r\Omega_{Y_s,\sub{log},\sub{Zar}}^n\}_s\quis \{R\ep_{\sub{Nis}*}W_r\Omega_{Y_s,\sub{log},\sub{Nis}}^n\}_s$.

Finally, still assuming that $Y$ is gnc, the Gersten resolution on $X$ implies that $W_r\Omega_{Y,\sub{Zar}}^n\quis R\ep_*W_r\Omega_{Y,\sub{Nis}}^n$, whence $\{W_r\Omega_{(X,Y_s),\sub{log},\sub{Zar}}^n\}_s\quis \{R\ep_{\sub{Nis}*}W_r\Omega_{(X,Y_s),\sub{log},\sub{Nis}}^n\}_s$.
\end{proof}

\section{Formal Geisser--Levine and Bloch--Kato--Gabber theorems}\label{section_K_theory}
In this section we present our main new construction in higher algebraic $K$-theory, namely a formal $\dlog$ map for regular formal $\bb F_p$-schemes.

\subsection{Preliminaries on $K$-theory}\label{subsection_prelim_K}
The algebraic $K$-groups $K_n$ of a ring or scheme are understood in the sense of Thomason--Trobaugh \cite{Thomason1990}.

To avoid unnecessarily restricting to rings with infinite residue fields, the notation $K_n^M(A)$ will be used to denote the {\em improved} Milnor $K$-theory of a ring $A$, as defined by Gabber and Kerz \cite{Kerz2009}. Assuming that $A$ is local, recall that this is a certain quotient of the usual Milnor $K$-group, and that the two coincide if $A$ is field or if the residue field of $A$ has $>M_n$ elements, where $M_n$ is some constant depending only on $n$ (e.g., $M_2=5$). If $I\subseteq A$ is an ideal, then $K_n^M(A,I):=\ker(K_n^M(A)\to K_n^M(A/I))$.

The following version of the results of Geisser--Levine, Bloch--Kato--Gabber, and Izhboldin, is in principle well-known to experts, though as with Theorem \ref{theorem_Zar=et} seems not to be in the literature\footnote{The history of the two theorems seems rather convoluted. Geisser--Levine proved the statement of Theorem \ref{theorem_GL_BKG}, but their definition of $K_n^M(A)$ was as $\ker(K_n^M(F)\to\oplus_{y\in Y^1}K_{n-1}^M(k(y))$, where we use the notation of the proof. It does not seem that this was known to be generated by symbols before the work of Elbaz-Vincent and M\"uller-Stach (and Kerz in the finite residue field case), and so it seems that Theorem \ref{theorem_Zar=et} cannot have been known before 2002, even in the case of a smooth variety over a perfect field.}:

\begin{theorem}[Geisser--Levine, Bloch--Kato--Gabber, Izhboldin, Kato, et al.]\label{theorem_GL_BKG}
Let $A$ be a regular, local $\bb F_p$-algebra, and $n\ge 0$. Then $K_n(A)$ and $K_n^M(A)$ are $p$-torsion-free, and the maps \[K_n(A)/p^r\longleftarrow K_n^M(A)/p^r\xTo{\dlog[\cdot]}W_r\Omega_{A,\sub{log}}^n\] are isomorphisms.
\end{theorem}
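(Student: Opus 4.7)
The plan is to reduce each of the three assertions to the corresponding statements at the level of the fraction field $F := \Frac(A)$, by means of Gersten-type resolutions for all three theories in play. Writing $Y := \Spec A$ and stratifying by codimension $Y^i$, I would invoke: Quillen's proof of the Gersten conjecture for algebraic $K$-theory of regular local equicharacteristic rings; Kerz's Gersten resolution for improved Milnor $K$-theory (whose ``improved'' version was introduced precisely to handle rings with small residue fields); and the Gros--Suwa/Shiho Gersten resolution for logarithmic Hodge--Witt sheaves, whose leftmost term is $W_r\Omega^n_{A,\sub{log}}$ in the étale topology and is identified with the Zariski-sheafified object of Section \ref{subsection_HW_sheaves} via Theorem \ref{theorem_Zar=et}. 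The three resolutions are compared via the obvious comparison maps $K_n^M \to K_n$ and $\dlog[\cdot] : K_n^M \to W_r\Omega^n_{\sub{log}}$, both of which commute with the residue maps in each resolution.

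To handle $p$-torsion-freeness first: the leftmost terms of the $K$-theoretic resolutions give injections $K_n(A) \hookrightarrow K_n(F)$ and $K_n^M(A) \hookrightarrow K_n^M(F)$, whose targets are $p$-torsion-free by Geisser--Levine and Izhboldin respectively, and this property descends to $A$. Consequently every term $K_m(k(y))$ and $K_m^M(k(y))$ appearing in the three resolutions is $p$-torsion-free, and all three resolutions remain exact after tensoring with $\bb Z/p^r$. A termwise comparison, using the known field-level isomorphisms $K_m(k)/p^r \cong K_m^M(k)/p^r$ of Geisser--Levine and $K_m^M(k)/p^r \cong W_r\Omega^m_{k,\sub{log}}$ of Bloch--Kato--Gabber (the latter via $\dlog$), identifies the three sequences on every positive-codimension term as well as on the $F$-term; their kernels on $A$ must therefore agree, yielding the required isomorphism $K_n(A)/p^r \cong K_n^M(A)/p^r \cong W_r\Omega^n_{A,\sub{log}}$.

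The main obstacle is simply assembling the cited Gersten conjectures in sufficient generality: using Kerz's improved Milnor $K$-theory is essential to avoid any restriction on residue field cardinality, and one must verify that the Gros--Suwa/Shiho resolution holds in our setting with leftmost term identified as $W_r\Omega^n_{A,\sub{log}}$ (where Theorem \ref{theorem_Zar=et} bridges Zariski and étale formulations). Once these ingredients are in place, the proof is a termwise diagram chase and imports no new conceptual content beyond the field-level theorems of Geisser--Levine, Bloch--Kato--Gabber, and Izhboldin, which are not reproved here.
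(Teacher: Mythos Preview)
Your approach is essentially the paper's: assemble Gersten resolutions for all three theories, use the field-level theorems (Geisser--Levine, Bloch--Kato--Gabber, Izhboldin) to match them termwise, and read off the statement for $A$ from the leftmost kernels. One caution: your appeal to Theorem~\ref{theorem_Zar=et} is circular, since that result is \emph{deduced from} the surjectivity of $\dlog[\cdot]$ proved here; happily the reference is also unnecessary, because the target $W_r\Omega^n_{A,\sub{log}}$ in the statement is by convention the \'etale version, which is what the Gros--Suwa/Shiho resolution already computes.
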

\begin{proof}
By N\'eron--Popescu desingularisation we may assume that $A$ is essentially of finite type over a perfect field $k$ of characteristic $p$ (although that is not necessary for many of the steps). Let $F$ denote the field of fractions of $A$, and put $Y=\Spec A$. Then there are Gersten sequences in logarithmic Hodge--Witt theory (Gros and Suwa \cite{GrosSuwa1988}) \begin{equation}0\To W_r\Omega^n_{A,\sub{log}}\To W_r\Omega^n_{F,\sub{log}}\To\bigoplus_{y\in Y^1}W_r\Omega^{n-1}_{k(y),\sub{log}}\To\cdots,\label{equ_de_R_W_Gersten}\end{equation} and in algebraic $K$-theory \[0\To K_n(A)\To K_n(F)\To\bigoplus_{y\in Y^1}K_{n-1}(k(y))\To\cdots\]
(Quillen \cite{Quillen1973a}). The latter also holds with $\bb Z/p^r$-coefficients (by examination of Quillen's proof) and hence, using the absence of $p$-torsion in the algebraic $K$-theory of characteristic $p$ fields (Geisser--Levine \cite{GeisserLevine2000}), we deduce that $K_n(A)$ is $p$-torsion-free and that \begin{equation}0\To K_n(A)/p^r\To K_n(F)/p^r\To\bigoplus_{y\in Y^1}K_{n-1}(k(y))/p^r\To \cdots\label{equ_K_Gersten}\end{equation} is exact.

By comparing (\ref{equ_de_R_W_Gersten}) and (\ref{equ_K_Gersten}) to the Gersten complex for Milnor $K$-theory
\begin{equation}0\To K_n^M(A)\To K_n^M(F)\To\bigoplus_{y\in Y^1}K_{n-1}^M(k(y))\To\cdots,\label{equ_K^M_Gersten}\end{equation}
(constructed by Kato \cite{Kato1986}), and using the validity of the isomorphisms of the theorem in the case of a characteristic $p$ field (Geisser--Levine again, and Bloch--Kato--Gabber \cite[Corol.~2.8]{Bloch1986}), it remains only to show that (\ref{equ_K^M_Gersten}) is universally exact (note also that $K_n^M(F)$ is $p$-torsion-free by Izhboldin \cite{Izhboldin1991}, whence we will deduce the same for $K_n^M(A)$).

Ignoring ``$0\to K_n^M(A)\to$'', the universal exactness of (\ref{equ_K^M_Gersten}) was proved by Rost \cite{Rost1996} (c.f.,~\cite[7.3(5)]{ColliotThelene-Hoobler-Kahn1997}). The universal exactness of \[K_n^M(A)\To K_n^M(F)\To \oplus_{y\in Y^1}K_{n-1}^M(k(y))\] is then due to Elbaz-Vincent and M\"uller-Stach \cite[Prop.~4.3]{Elbaz-Vincent-Muller-Stach2002} if $k$ is infinite, and in general it is due to Kerz \cite[Prop.~10(8) \& Thm.~13]{Kerz2010}. Finally, the universal injectivity of $K_n^M(A)\to K_n^M(F)$ is due to Kerz: this follows from his analogous result for usual Milnor $K$-theory \cite[Thm.~6.1]{Kerz2009}, together with his norm trick in the proof of \cite[Prop.~10(8)]{Kerz2010}.
\end{proof}

\begin{remark}
We used the surjectivity of $\dlog[\cdot]:K_n^M(A)\to W_r\Omega_{A,\sub{log}}^n$ from the previous theorem to prove Theorem \ref{theorem_Zar=et}, namely that $W_r\Omega_{A,\sub{log},\sub{Zar}}^n=W_r\Omega_{A,\sub{log}}^n$. However, it is more natural to view $\dlog[\cdot]$ as a map in the Zariski topology and so we henceforth write $W_r\Omega_{A,\sub{log},\sub{Zar}}^n$ in place of $W_r\Omega_{A,\sub{log}}^n$.
\end{remark}

For any regular $\bb F_p$-algebra $A$ (not necessary local), we define the homomorphism \[\dlog^n_{A,\bb Z/p^r}:K_n(A;\bb Z/p^r)\To W_r\Omega_{A,\sub{log},\sub{Zar}}^n,\] for each, $n\ge0$, $r\ge1$, as the composition
\begin{align*}\hspace{-5mm}
K_n(A;\bb Z/p^r)\to H_\sub{Zar}^0(Y,\cal K_{n,Y,\bb Z/p^r})&=H_\sub{Zar}^0(Y,\cal K_{n,Y}/p^r)\\
&\cong H_\sub{Zar}^0(Y,\cal K_{n,Y}^M/p^r)\stackrel{\dlog[\cdot]}{\isoto}H_\sub{Zar}^0(Y,W_r\Omega_{Y,\sub{log},\sub{Zar}}^n)= W_r\Omega_{A,\sub{log},\sub{Zar}}^n
\end{align*}
where $Y=\Spec A$. Here $\cal K_{n,Y}$ is a Zariski sheaf of Quillen $K$-groups on $X$ (and similarly for Milnor $K$-theory, or with $\bb Z/p^r$-coefficients), and the central equality and isomorphisms were recalled in the previous theorem. These maps are easily seen to be uniquely determined by the following properties:
\begin{enumerate}
\item (Naturality) $\dlog_{-,\bb Z/p^r}^n$ is natural for morphisms of regular $\bb F_p$-algebras.

\item (Symbols) If $\al_1,\dots,\al_n\in A^\times$, then $\dlog_{r,A}^n(\{\al_1,\dots,\al_n\})=\dlog[\al_1]\cdots\dlog[\al_n]$, where we use the composition \[\dlog_{r,A}^n:K_n(A)\To K_n(A;\bb Z/p^r)\xto{\dlog^n_{A,\bb Z/p^r}}W_r\Omega_{A,\sub{log},\sub{Zar}}^n\]

\item (Compatibility as $r\to\infty$) The following diagram commutes:
\[\xymatrix@C=2cm{
K_n(A;\bb Z/p^{r+1})\ar[d]\ar[r]^-{\dlog_{A,\bb Z/p^{r+1}}^n} & W_{r+1}\Omega_{A,\sub{log},\sub{Zar}}^n\ar[d]^R\\
K_n(A;\bb Z/p^r) \ar[r]_{\dlog_{A,\bb Z/p^r}^n}& W_r\Omega_{A,\sub{log},\sub{Zar}}^n
}\]
\item (Multiplicativity) $\dlog_{A\bb Z/p^r}^*:K_*(A;\bb Z/p^r)\to W_r\Omega_{A,\sub{log},\sub{Zar}}^*$ is a homomorphism of graded rings.
\end{enumerate}

\subsection{The formal dlog map: statements of main properties}\label{subsection_main_results}
The goal of this section is to state the existence of our pro/formal analogue of $\dlog^n_{A,\bb Z/p^r}$, whose basic properties are summarised in the following theorem, and then to present various analogues of Theorem \ref{theorem_GL_BKG}; proofs are contained in the next section.

\begin{theorem}\label{theorem_existence}
For any regular, F-finite $\bb F_p$-algebra $A$ and ideal $I\subseteq A$, there are homomorphisms of pro abelian groups \[\dlog_{A/I^\infty,\bb Z/p^r}^n:\{K_n(A/I^s;\bb Z/p^r)\}_s\To \{W_r\Omega_{A/I^s,\sub{log},\sub{Zar}}^n\}_s\] for all $n\ge0$, $r\ge1$ such that the following properties are satisfied:
\begin{enumerate}
\item (Naturality) $\dlog_{-,\bb Z/p^r}^n$ is natural for morphisms of pairs $A,I$ (in the obvious sense).
\item (Symbols) The composition \[\{K_n^M(A/I^s)\}_s\To \{K_n(A/I^s)\}_s\xto{\dlog_{r,A/I^\sinfty}^n} \{W_r\Omega_{A/I^s,\sub{log},\sub{Zar}}^n\}_s\] is induced by the homomorphisms $\dlog[\cdot]:K_n^M(A/I^s)\To W_r\Omega_{A/I^s,\sub{log},\sub{Zar}}^n$, where we use the composition \[\dlog_{r,A/I^\sinfty}^n:\{K_n(A/I^s)\}_s\To\{K_n(A/I^s;\bb Z/p^r)\}_s\xTo{\dlog^n_{A/I^\infty,\bb Z/p^r}}\{W_r\Omega_{A/I^s,\sub{log},\sub{Zar}}^n\}_s.\]
\item (Compatibility as $r\to\infty$) The following diagram commutes:
\[\xymatrix@C=2cm{
\{K_n(A/I^s;\bb Z/p^{r+1})\}_s\ar[d]\ar[r]^-{\dlog_{A/I^\sinfty,\bb Z/p^{r+1}}^n} & \{W_{r+1}\Omega_{A/I^s,\sub{log},\sub{Zar}}^n\}_s\ar[d]^R\\
\{K_n(A/I^s;\bb Z/p^r)\}_s \ar[r]_{\dlog_{A/I^\sinfty,\bb Z/p^r}^n}& \{W_r\Omega_{A/I^s,\sub{log},\sub{Zar}}^n\}_s
}\]
\item (Multiplicativity) $\dlog_{A/I^\infty,\bb Z/p^r}^*:\{K_*(A/I^s;\bb Z/p^r)\}_s\to \{W_r\Omega_{A/I^s,\sub{log},\sub{Zar}}^*\}_s$ is a homomorphism of graded pro rings.
\item (Discrete case, i.e., $I=0$) $\dlog^n_{A/0^\infty,\bb Z/p^r}=\dlog^n_{A,\bb Z/p^r}$.
\item (Compatibility with completion) The following diagram commutes:
\[\xymatrix@C=2cm{
K_n(\hat A;\bb Z/p^r)\ar[r]^{\dlog^n_{\hat A,\bb Z/p^r}}\ar[d] & W_r\Omega_{\hat A,\sub{log},\sub{Zar}}^n\ar[d]\\
\{K_n(A/I^s;\bb Z/p^r)\}_s\ar[r]_{\dlog^n_{A/I^\infty,\bb Z/p^r}} & \{W_r\Omega_{A/I^s,\sub{log},\sub{Zar}}^n\}_s
}\]
where $\hat A:=\projlim_sA/I^s$ is the $I$-adic completion of $A$.
\end{enumerate}
\end{theorem}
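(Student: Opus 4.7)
My plan is to build $\dlog^n_{A/I^\infty,\bb Z/p^r}$ as the composition of the cyclotomic trace with Hesselholt's canonical projection from $p$-typical topological restriction homology $TR^r$ onto its top-degree de Rham--Witt quotient, applied level by level to the tower $\{A/I^s\}_s$; and then to verify the six listed properties either by inspection from the trace or by bootstrapping from the discrete Theorem \ref{theorem_GL_BKG} together with the pro Hochschild--Kostant--Rosenberg theorem in topological cyclic homology of \cite{Morrow_Dundas}. This has the virtue of producing the map directly, \emph{without} using Milnor $K$-theory of the non-regular rings $A/I^s$, which is the principal reason the argument can proceed for arbitrary ideals $I$.

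\textbf{Construction and easy properties.} For any $\bb F_p$-algebra $B$, composing the cyclotomic trace $K_n(B;\bb Z/p^r)\to TR^r_n(B;p,\bb Z/p^r)$ with the natural projection $TR^r_n(B;p,\bb Z/p^r)\to W_r\Omega_B^n$ coming from the Witt-complex structure on $TR^\blob$ gives a functorial map $\sigma^n_{r,B}$. Setting $B=A/I^s$ and assembling over $s$ yields a pro map into $\{W_r\Omega_{A/I^s}^n\}_s$; granting the factorisation through the logarithmic subsheaf (see the next paragraph), this will be the desired $\dlog^n_{A/I^\infty,\bb Z/p^r}$. Properties (i), (iii), (iv) follow essentially by inspection from the corresponding properties of the trace and of the top-degree Witt projection; (v) follows because $\sigma^n_{r,A}$ agrees with the discrete $\dlog$ map of \S\ref{subsection_prelim_K} by Theorem \ref{theorem_GL_BKG}; (vi) follows by passing to $\projlim_s$ and invoking Lemma \ref{lemma_completion_of_dRW_complex}. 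Property (ii), the symbol formula, reduces to the classical computation that the cyclotomic trace sends a Milnor symbol $\{b_1,\dots,b_n\}$ to $\dlog[b_1]\cdots\dlog[b_n]$ in $W_r\Omega^n$.

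\textbf{Main obstacle: the logarithmic factorisation.} The genuine difficulty is showing that the image of the candidate pro map actually lies in $\{W_r\Omega^n_{A/I^s,\sub{log},\sub{Zar}}\}_s$. This cannot be verified at each fixed level $s$, since $A/I^s$ is typically non-regular and its $K$-theory is not generated by Milnor symbols. My strategy is to establish the factorisation first at the \emph{diagonal}, where the machinery from the introduction applies: combining the pro HKR theorem of \cite{Morrow_Dundas} with McCarthy's theorem and Theorem \ref{theorem_GL_BKG} for the regular ring $A$ itself, one obtains a natural isomorphism $\{K_n(A/I^s;\bb Z/p^s)\}_s\isoto\{W_s\Omega^n_{A/I^s,\sub{log}}\}_s$ (this is precisely the diagonal isomorphism previewed after Theorem \ref{theorem_main_2a}). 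Thus the diagonal incarnation of $\sigma^n_\blob$ factors through the diagonal logarithmic subsheaf. To transport this factorisation from the diagonal to arbitrary fixed $r$, I would use Corollary \ref{corollary_de_Rham--Witt_log_diagonal}, which identifies $\{W_r\Omega^n_{A/I^s,\sub{log},\sub{Zar}}\}_s$ as the mod $p^r$ reduction of $\{W_s\Omega^n_{A/I^s,\sub{log},\sub{Zar}}\}_s$ under the Restriction maps, combined with compatibility of the cyclotomic trace and of $\sigma$ with Restriction on both sides; this forces the pro image at level $r$ into the logarithmic subsheaf and simultaneously pins down property (iii).
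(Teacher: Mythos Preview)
Your overall plan is close in spirit to the paper's, but there are two genuine gaps.

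\textbf{Direction of the Witt-complex map.} You write ``the natural projection $TR^r_n(B;p,\bb Z/p^r)\to W_r\Omega_B^n$ coming from the Witt-complex structure on $TR^\blob$'', applied level by level. No such projection exists for a general $\bb F_p$-algebra $B$: the universal property of the de Rham--Witt complex gives a map $\lambda_{r,B}:W_r\Omega_B^*\to TR^r_*(B;p)$ in the \emph{opposite} direction, and it is only the pro-HKR theorem of \cite{Morrow_Dundas} that lets one invert this, and only as a map of pro groups along the \emph{diagonal} $\{W_s\Omega^n_{A/I^s}\}_s\isoto\{TR^s_n(A/I^s;p)\}_s$. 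The paper's construction therefore goes $K_n\xto{tr}TC^s_n\to TR^s_n\xto{\lambda^{-1}}W_s\Omega^n/p^r\xto{R^{s-r}}W_r\Omega^n$, not level by level at fixed $r$. Your ``$\sigma^n_{r,B}$'' as written is not defined.

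\textbf{The logarithmic factorisation.} You propose to prove the factorisation by first establishing the diagonal isomorphism $\{K_n(A/I^s;\bb Z/p^s)\}_s\isoto\{W_s\Omega^n_{A/I^s,\sub{log}}\}_s$ and then descending via Corollary~\ref{corollary_de_Rham--Witt_log_diagonal}. But the diagonal isomorphism you cite from the introduction is stated there as a consequence of ``McCarthy's theorem and the isomorphisms~(\ref{eqn:1}) \emph{for $A/I$}'', not for $A$; it therefore presupposes $A/I$ regular (or at least gnc local), which Theorem~\ref{theorem_existence} does not assume. Invoking McCarthy and Theorem~\ref{theorem_GL_BKG} for $A$ alone does not yield this: McCarthy controls relative groups along nilpotent ideals, and there is no map from $K_n(A/I^s)$ back to $K_n(\hat A)$ to transport the logarithmic factorisation from the regular ring. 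The paper instead obtains the factorisation directly and for arbitrary $I$: since the trace factors through $TC^s_n$, its image in $TR^s_n$ lies in $\ker(R-F)$, hence (after $\lambda^{-1}$ and $R^{s-r}$, factoring the last restriction through $W_{r+1}\Omega^n$) inside $\{R(W_{r+1}\Omega_{A/I^s}^{n,F=1})\}_s$, which is contained in $\{W_r\Omega^n_{A/I^s,\sub{log},\sub{Zar}}\}_s$ by Corollary~\ref{corollary_log_vs_R-F}(i). That corollary holds for \emph{any} $\bb F_p$-algebra, with no hypothesis on $A/I$, and is the key input you are missing.

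Once the construction is fixed in this way, your verification of (i)--(vi) is essentially the paper's.
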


The central goal of the paper is the following analogue of the theorems of Geisser--Levine, Bloch--Kato--Gabber, et al.~recalled in Theorem \ref{theorem_GL_BKG}:

\begin{theorem}\label{theorem_pro_GL}
Let $A$ be a regular, F-finite $\bb F_p$-algebra, and $I\subseteq A$ an ideal such that $A/I$ is gnc and local. Then the pro abelian group $\{K_n(A/I^s)\}_s$ is $p$-torsion-free, and \[\dlog^n_{A/I^\infty,\bb Z/p^r}:\{K_n(A/I^s;\bb Z/p^r)\}_s\To\{W_r\Omega_{A/I^s,\sub{log},\sub{Zar}}^n\}_s\] is an isomorphism for all $n\ge 0$, $r\ge1$.
\end{theorem}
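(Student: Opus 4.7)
The plan has three steps. Step 1: prove the theorem when $A/I$ is \emph{regular} (still local), in the form of a diagonal isomorphism $\{K_n(A/I^s;\bb Z/p^s)\}_s\cong\{W_s\Omega^n_{A/I^s,\sub{log}}\}_s$. Step 2: upgrade from this diagonal statement to the rectangular statement of the theorem using Corollary \ref{corollary_de_Rham--Witt_log_diagonal} (the consequence of Theorem \ref{theorem_de_Rham--Witt_log}). Step 3: reduce the gnc-local case to the regular-local case by induction on the gnc complexity, using the pro excision of Corollary \ref{corollary_pro_excision} on both sides.

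For Step 1, the strategy is to circumvent Milnor $K$-theory entirely. Combine the cyclotomic trace $K\to TC$ with McCarthy's theorem on the nilpotent surjections $A/I^s\onto A/I$, giving a pro equivalence between relative $K$-theory and relative $TC$ after $p$-completion. The pro Hochschild--Kostant--Rosenberg theorem of \cite{Morrow_Dundas} then computes the relative $TC_n$ of $A/I^\sinfty\to A/I$ in terms of the pro Hodge--Witt modules $\{W_s\Omega^n_{A/I^s}\}_s$; the analysis of Frobenius-fixed points from Section \ref{subsection_F_fixed} (specifically Proposition \ref{proposition_limit_of_F-fixed_points} and Corollary \ref{corollary_log_vs_R-F}) identifies the relevant $F$-fixed subquotient with the logarithmic piece $\{W_s\Omega^n_{A/I^s,\sub{log}}\}_s$. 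Feeding in Theorem \ref{theorem_GL_BKG} at the base $A/I$ and comparing with the map $\dlog^n_{A/I^\sinfty,\bb Z/p^s}$ constructed in Theorem \ref{theorem_existence} yields the diagonal isomorphism together with the $p$-torsion-freeness of $\{K_n(A/I^s)\}_s$.

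Step 2 is then essentially formal. The $p$-torsion-freeness gives $\{K_n(A/I^s;\bb Z/p^s)\}_s\dotimes\bb Z/p^r\simeq\{K_n(A/I^s;\bb Z/p^r)\}_s$ for $r\le s$, while Corollary \ref{corollary_de_Rham--Witt_log_diagonal} gives $\{W_s\Omega^n_{A/I^s,\sub{log}}\dotimes\bb Z/p^r\}_s\isoto\{W_r\Omega^n_{A/I^s,\sub{log}}\}_s$; applying $-\dotimes\bb Z/p^r$ to the diagonal isomorphism then produces the required rectangular isomorphism in the regular-local case. For Step 3, given gnc-local $A/I$ of complexity $c\ge 2$, use Lemma \ref{lemma_complexity} to realise $\Spec A/I$ as $Z\cup Z'$ with $Z$ regular and $Z',Z\cap Z'$ gnc of complexity $<c$. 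Corollary \ref{corollary_pro_excision} furnishes a bicartesian square of pro Zariski sheaves $\{W_r\Omega^n_{(-)_s,\sub{log}}\}_s$ for the square $Z\cup Z'\supseteq Z,Z'\supseteq Z\cap Z'$; a matching pro Mayer--Vietoris for $K_n(-;\bb Z/p^r)$ on the infinitesimal thickenings (available from pro excision/pro cdh descent for $K$-theory on nilpotent immersions) gives a matching square in $K$-theory. The inductive hypothesis at $Z$, $Z'$, $Z\cap Z'$ and the five lemma propagate the isomorphism to $A/I$.

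The principal obstacle is Step 1: the pro cyclotomic-trace/McCarthy and pro HKR inputs, together with the correct identification of the Frobenius-fixed subquotient of $TC$ via Section \ref{subsection_F_fixed}, carry the substantive content. Step 2 rests on the technically delicate Theorem \ref{theorem_de_Rham--Witt_log} but is otherwise a clean manipulation with $\dotimes\bb Z/p^r$, and Step 3 is a straightforward Mayer--Vietoris induction once pro excision for $K$-theory on nilpotent thickenings is available.
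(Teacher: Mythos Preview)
Your proposal is correct and follows essentially the same route as the paper: the paper first isolates the relative regular case as a separate statement (Theorem \ref{theorem_mccarthy}), proving it via the trace map, McCarthy/Geisser--Hesselholt, pro HKR, and the analysis of $R-F$ together with Corollary \ref{corollary_de_Rham--Witt_log_diagonal}, and then deduces Theorem \ref{theorem_pro_GL} in the regular case by combining with Theorem \ref{theorem_GL_BKG} at the base and finally runs the gnc induction via pro excision on both sides (Corollary \ref{corollary_pro_excision} and \cite{Morrow_pro_H_unitality}); your organization (absolute diagonal $\Rightarrow$ rectangular $\Rightarrow$ gnc) is a minor repackaging of the same ingredients. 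The only point to make explicit is that the $p$-torsion-freeness you invoke in Step 2 is itself a consequence of the diagonal isomorphism together with Theorem \ref{theorem_existence}(ii) (surjectivity of Milnor $K$ onto the target forces $\{K_{n-1}(A/I^s)[p^s]\}_s=0$), exactly as the paper argues at the end of its proof.
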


\begin{corollary}\label{corollary_thm_from_intro}
Let $A,I$ be as in Theorem \ref{theorem_pro_GL}. Then the natural homomorphisms of pro abelian groups \[\{K_n(A/I^s)/p^r\}_s\longleftarrow \{K_n^M(A/I^s)/p^r\}_s\xTo{\dlog[\cdot]} \{W_r\Omega_{A/I^s,\sub{log}}^n\}_s\] are surjective and have the same kernel.
\end{corollary}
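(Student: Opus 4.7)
My plan is to deduce the corollary essentially formally from Theorem~\ref{theorem_pro_GL} together with the factorisation property of the dlog map (Theorem~\ref{theorem_existence}(ii)) and a local-ring description of the logarithmic Hodge--Witt sheaf.

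First I would pass between mod-$p^r$ reduction and coefficients in $\bb Z/p^r$. Since $A/I$ is local, so is each $A/I^s$ (the ideal $I/I^s$ being nilpotent). Applying Theorem~\ref{theorem_pro_GL} to both $n$ and $n-1$ (with $K_{-1}=0$ covering the case $n=0$) shows that the pro abelian groups $\{K_n(A/I^s)\}_s$ and $\{K_{n-1}(A/I^s)\}_s$ are $p$-torsion-free, so the universal coefficient short exact sequence collapses to give a pro isomorphism $\{K_n(A/I^s)/p^r\}_s\isoto \{K_n(A/I^s;\bb Z/p^r)\}_s$. Composing with the isomorphism $\dlog^n_{A/I^\infty,\bb Z/p^r}$ of Theorem~\ref{theorem_pro_GL} then identifies the right arrow of the corollary, viewed at the level of pro groups, with an isomorphism $\{K_n(A/I^s)/p^r\}_s\isoto\{W_r\Omega_{A/I^s,\sub{log}}^n\}_s$.

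Next I would check surjectivity of the Milnor-to-log-Hodge--Witt map. By Corollary~\ref{corollary_Zar_vs_etale}(i) the Zariski, Nisnevich, and \'etale versions of $W_r\Omega^n_{\log}$ agree as Zariski sheaves; since $A/I^s$ is local, every Zariski cover of $\Spec A/I^s$ is trivial, so its global sections coincide with the subgroup of $W_r\Omega_{A/I^s}^n$ generated by dlog of units of $A/I^s$. Hence the Milnor dlog map $K_n^M(A/I^s)/p^r\to W_r\Omega_{A/I^s,\sub{log}}^n$ is surjective for each $s$, and a fortiori so is the induced map of pro groups.

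Finally, Theorem~\ref{theorem_existence}(ii) says the composition $\{K_n^M(A/I^s)/p^r\}_s\to\{K_n(A/I^s)/p^r\}_s\to\{W_r\Omega_{A/I^s,\sub{log}}^n\}_s$ is exactly the Milnor dlog. Combined with the previous step, this composition is surjective; since its second factor is an isomorphism by the first step, it follows at once that the first factor $\{K_n^M(A/I^s)/p^r\}_s\to\{K_n(A/I^s)/p^r\}_s$ is surjective and that both maps in the corollary have the same kernel (namely the kernel of the composition, which equals the kernel of its first factor). There is no main obstacle: the substance of the corollary lies entirely in Theorem~\ref{theorem_pro_GL} and Corollary~\ref{corollary_Zar_vs_etale}, and the remaining work is a short formal manipulation.
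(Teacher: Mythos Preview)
Your proposal is correct and follows essentially the same route as the paper: both arguments combine Theorem~\ref{theorem_existence}(ii), the isomorphism and $p$-torsion-freeness from Theorem~\ref{theorem_pro_GL}, and the identification $W_r\Omega_{A/I^s,\sub{log},\sub{Zar}}^n=W_r\Omega_{A/I^s,\sub{log}}^n$ from Corollary~\ref{corollary_Zar_vs_etale}(i). The paper is simply terser, packaging your three steps into a single commutative diagram and the remark that the vertical and diagonal arrows are isomorphisms; your explicit invocation of $p$-torsion-freeness for $K_{n-1}$ to collapse the universal coefficient sequence is exactly what the paper leaves implicit.
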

\begin{proof}
The following diagram commutes \[\xymatrix{\{K_n(A/I^s)/p^r\}_s\ar[d] & \{K_n^M(A/I^s)/p^r\}_s\ar[l]\ar@{->>}[r]^-{\dlog[\cdot]}&\{W_r\Omega_{A/I^s,\sub{log},\sub{Zar}}^n\}_s=\{W_r\Omega_{A/I^s,\sub{log}}^n\}_s\\ \{K_n(A/I^s;\bb Z/p^r)\}_s\ar@/_5mm/[rru]_{\dlog^n_{A/I^\infty,\bb Z/p^r}}&&&}\]
by Theorem \ref{theorem_existence}(ii); the equality on the right is Corollary \ref{corollary_Zar_vs_etale}(i)). The previous theorem implies that the vertical and diagonal arrows are isomorphisms, which obviously completes the proof.
\end{proof}

Using a pro excision argument, Theorem \ref{theorem_pro_GL} will be reduced to the case that $A/I$ is regular. Then $A/I$ is formally smooth over $\bb F_p$ by Lemma~\ref{lemma_F-finite_formal_smooth} (this does not require $A/I$ to be local), and so the quotient maps $A/I^s\to A/I$ have compatible splittings for all $s\ge 1$; hence we may take kernels in the commutative diagram
\[\xymatrix{
\{K_n(A/I^s;\bb Z/p^r)\}_s\ar[d]_{\dlog^n_{A/I^\infty,\bb Z/p^r}}\ar[r] & K_n(A/I,\bb Z/p^r)\ar[d]^{\dlog^n_{A,\bb Z/p^r}}\\ 
\{W_r\Omega_{A/I^s,\sub{log},\sub{Zar}}^n\}_s \ar[r]& W_r\Omega_{A/I,\sub{log},\sub{Zar}}^n
}\]
to induce a relative morphism \[\dlog^n_{(A/I^\infty,I/I^\infty),\bb Z/p^r}:\{K_n(A/I^s,I/I^s;\bb Z/p^r)\}_s\to \{W_r\Omega_{(A/I^s,I/I^s),\sub{log},\sub{Zar}}^n\}_s.\] To prove Theorem \ref{theorem_pro_GL}, we will first show:

\begin{theorem}\label{theorem_mccarthy}
Let $A$ be a regular, F-finite $\bb F_p$-algebra, and $I\subseteq A$ an ideal such that $A/I$ is also regular. Then $\dlog^n_{(A/I^\infty,I/I^\infty),\bb Z/p^r}$ is an isomorphism for all $n\ge 0$, $r\ge1$.
\end{theorem}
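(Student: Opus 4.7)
The plan is to combine McCarthy's theorem with the pro Hochschild--Kostant--Rosenberg theorem of \cite{Morrow_Dundas}, applied in the relative setting. Since the ideal $I/I^s \subseteq A/I^s$ is nilpotent for every $s\ge 1$, McCarthy's theorem implies that the cyclotomic trace induces a $\bb Z/p^r$-equivalence on relative spectra $K(A/I^s,I/I^s;\bb Z/p^r)\simeq TC(A/I^s,I/I^s;\bb Z/p^r)$ for each $s$, hence an isomorphism on pro homotopy groups
\[
\{K_n(A/I^s,I/I^s;\bb Z/p^r)\}_s \;\xto{\simeq}\; \{TC_n(A/I^s,I/I^s;\bb Z/p^r)\}_s.
\]
The hypothesis that $A/I$ is also regular will be used via Lemma~\ref{lemma_F-finite_formal_smooth}, which supplies compatible splittings of the surjections $A/I^s\onto A/I$ and thereby splits off the relative parts functorially in $s$.

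Next I would apply the pro HKR theorem. Under the regular, F-finite hypothesis, pro HKR identifies the pro topological Restriction homology groups $\{TR^r_n(A/I^s;\bb Z/p)\}_s$ with the pro Hodge--Witt groups $\{W_r\Omega^n_{A/I^s}\}_s$, compatibly with the operators $R$ and $F$ on both sides. The defining fiber sequence $TC\to TR\xto{R-F} TR$ of topological cyclic homology then produces a long exact sequence of pro abelian groups relating $\{TC_n\}_s$ to the action of $R-F$ on $\{W_r\Omega^n_{A/I^s}\}_s$; passing to the relative part using the splittings mentioned above gives the analogous long exact sequence for the relative pro Hodge--Witt groups $\{W_r\Omega^n_{(A/I^s,I/I^s)}\}_s$.

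The crucial step is the surjectivity of $R-F$ in the relative nilpotent setting: Lemma~\ref{lemma_surj_of_1-F_in_nilp_case}(ii), applied level-wise to the nilpotent ideal $I/I^s$ in $A/I^s$, shows that $R-F:W_{r+1}\Omega^n_{(A/I^s,I/I^s)}\to W_r\Omega^n_{(A/I^s,I/I^s)}$ is surjective for each $s$, and hence surjective as a map of pro abelian groups. The long exact sequence therefore decomposes into short exact sequences, yielding
\[
\{TC_n(A/I^s,I/I^s;\bb Z/p^r)\}_s \;\cong\; \{W_r\Omega^{n,F=1}_{(A/I^s,I/I^s)}\}_s \;=\; \{W_r\Omega^n_{(A/I^s,I/I^s),\sub{log},\sub{Zar}}\}_s,
\]
where the final equality is the relative, pro version of Corollary~\ref{corollary_log_vs_R-F} (parts (i) and (iii)) combined with the equality of Zariski and \'etale log forms from Corollary~\ref{corollary_Zar_vs_etale}(i).

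The hard part will be verifying that this composite isomorphism coincides, on symbols, with the map $\dlog^n_{(A/I^\infty,I/I^\infty),\bb Z/p^r}$ of Theorem~\ref{theorem_existence}. Since the definition of that dlog map already factors through the cyclotomic trace followed by pro HKR, this reduces to the computation that the trace sends Teichm\"uller lifts $[a]$ to $\dlog[a]$ at the Hodge--Witt level, which is classical; multiplicativity of both the trace and of $\dlog$ then reduces the general symbolic case to this degree-one calculation.
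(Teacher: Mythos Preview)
Your overall strategy matches the paper's, but there is a genuine gap in the indexing. The pro HKR theorem of \cite{Morrow_Dundas} does \emph{not} identify $\{TR^r_n(A/I^s;p)\}_s$ with $\{W_r\Omega^n_{A/I^s}\}_s$ for fixed $r$; it identifies the \emph{diagonal} pro groups $\{TR^s_n(A/I^s;p)\}_s \cong \{W_s\Omega^n_{A/I^s}\}_s$. For fixed $r$ and singular $A/I^s$ the groups $TR^r_n(A/I^s;p)$ are not simply Hodge--Witt groups. Following the paper, one first uses $p$-torsion-freeness of the diagonal pro system (Proposition~\ref{proposition_filtrations_are_equal}) to reduce mod $p^r$, then surjectivity of $R-F$ in the relative nilpotent case, arriving at
\[
\{TC^s_n(A/I^s,I/I^s;p,\bb Z/p^r)\}_s\;\cong\;\{W_s\Omega^n_{(A/I^s,I/I^s),\sub{log},\sub{Zar}}/p^r\}_s,
\]
still diagonally indexed in $s$.

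The step you are missing is the descent from this diagonal to fixed level $r$: one must show that
\[
\{R^{s-r}\}_s : \{W_s\Omega^n_{(A/I^s,I/I^s),\sub{log},\sub{Zar}}/p^r\}_s \To \{W_r\Omega^n_{(A/I^s,I/I^s),\sub{log},\sub{Zar}}\}_s
\]
is an isomorphism. This is precisely Corollary~\ref{corollary_de_Rham--Witt_log_diagonal} (applied to both $\Spf A$ and $\Spec A/I$), which rests on Theorem~\ref{theorem_de_Rham--Witt_log}, the main technical result of \S\ref{subsection_dlog_n>1}; the introduction singles this out as the key step. Relatedly, your claimed equality $W_r\Omega^{n,F=1}_{(A/I^s,I/I^s)} = W_r\Omega^n_{(A/I^s,I/I^s),\sub{log},\sub{Zar}}$ at fixed $r$ is not what Corollary~\ref{corollary_log_vs_R-F} provides: part (i) only gives an inclusion $R(W_{r+1}\Omega^{n,F=1})\subseteq W_r\Omega^n_{\sub{log},\sub{Zar}}$, and part (iii) is a statement about the pro system over $r$, not level-wise.
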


Taking the diagonal over $r=s$, we then obtain the composition \[\dlog^n_{(A/I^\infty,I/I^\infty)}:\{K_n(A/I^s,I/I^s)\}_s\To \{K_n(A/I^s,I/I^s;\bb Z/p^s)\}_s\Isoto\{W_s\Omega_{(A/I^s,I/I^s),\sub{log},\sub{Zar}}^n\}_s,\] where the isomorphism is a consequence of Theorem \ref{theorem_mccarthy}.

\begin{corollary}\label{corollary_mccarthy}
Let $A$ be a regular, F-finite $\bb F_p$-algebra, and $I\subseteq A$ an ideal such that $A/I$ is also regular. Then $\dlog^n_{(A/I^\infty,I/I^\infty)}$ is an isomorphism for all $n\ge 0$.
\end{corollary}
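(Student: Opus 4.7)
The plan is as follows. Since the second arrow in the composition defining $\dlog^n_{(A/I^\infty,I/I^\infty)}$ is an isomorphism by Theorem~\ref{theorem_mccarthy} applied diagonally (using that the isomorphism of that theorem is natural in $r$ and compatible with coefficient reduction on the $K$-theory side and restriction $R$ on the log Hodge--Witt side), it suffices to show that the first arrow
\[\beta : \{K_n(A/I^s,I/I^s)\}_s \To \{K_n(A/I^s,I/I^s;\bb Z/p^s)\}_s\]
is an isomorphism of pro abelian groups.

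From the cofibre sequence $K\xTo{p^s}K\to K(\bb Z/p^s)$ applied at each level $s$ to the relative pair $(A/I^s,I/I^s)$ and passing to pro abelian groups, the kernel of $\beta$ is $\{p^s\cdot K_n(A/I^s,I/I^s)\}_s$ and its cokernel is $\{K_{n-1}(A/I^s,I/I^s)[p^s]\}_s$. So I must show, as pro abelian groups,
\[\{p^s\cdot K_n(A/I^s,I/I^s)\}_s = 0 \qquad\text{and}\qquad \{K_{n-1}(A/I^s,I/I^s)[p^s]\}_s = 0.\]

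The key input is that, for \emph{every} $r\ge 1$, Theorem~\ref{theorem_mccarthy} identifies the pro abelian group $\{K_m(A/I^s,I/I^s;\bb Z/p^r)\}_s$ with the pro Hodge--Witt group $\{W_r\Omega^m_{(A/I^s,I/I^s),\log,\Zar}\}_s$, which is annihilated by $p^r$ at every level (since $p^rW_r\Omega^m=0$). From the long exact sequence
\[K_{m+1}(A/I^s,I/I^s;\bb Z/p^r)\xTo{\delta_r}K_m(A/I^s,I/I^s)[p^r]\To 0,\]
the pro $p^r$-torsion $\{K_m(A/I^s,I/I^s)[p^r]\}_s$ is a quotient of a $p^r$-torsion pro group, hence is itself $p^r$-torsion; dually, the image of $p^r$-multiplication on $\{K_m(A/I^s,I/I^s)\}_s$ is controlled, in the pro sense, by the $p^r$-torsion pro group $\{K_m(A/I^s,I/I^s;\bb Z/p^r)\}_s$ via the connecting homomorphism. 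By letting $r$ vary together with the pro index $s$ in a compatible way, exploiting that the transition maps $R : W_{r+1}\Omega^m_\log\to W_r\Omega^m_\log$ and the coefficient reductions $\bb Z/p^{r+1}\to \bb Z/p^r$ match up through Theorem~\ref{theorem_mccarthy}, the two diagonal vanishings above can be extracted.

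The main obstacle is the bookkeeping with the pro-system transitions in this diagonal passage: one must let $r$ vary \emph{independently} of $s$, track the Bockstein maps through the natural isomorphisms, and assemble the resulting mod-$p^r$ information into a single diagonal statement. The crucial structural fact enabling this is the compatibility between the different $r$-truncations of the log Hodge--Witt pro system established in \S\ref{subsection_dlog_n>1}---particularly Theorem~\ref{theorem_de_Rham--Witt_log} and Corollary~\ref{corollary_de_Rham--Witt_log_diagonal}, which say that reducing the $W_s$-level Hodge--Witt pro sheaf modulo $p^r$ recovers the $W_r$-level pro sheaf. Transported across Theorem~\ref{theorem_mccarthy}, this is exactly the $p$-completeness property of $\{K_n(A/I^s,I/I^s)\}_s$ that forces $\beta$ to be a pro isomorphism.
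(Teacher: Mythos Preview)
Your reduction to showing that $\beta$ is a pro isomorphism is correct, as is your identification of its kernel $\{p^s K_n(A/I^s,I/I^s)\}_s$ and cokernel $\{K_{n-1}(A/I^s,I/I^s)[p^s]\}_s$. The gap is in the final step: you never actually establish either diagonal vanishing, and the tools you invoke are insufficient.

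The difficulty is that Theorem~\ref{theorem_mccarthy} and Corollary~\ref{corollary_de_Rham--Witt_log_diagonal} only give information about the pro groups $\{K_m(A/I^s,I/I^s;\bb Z/p^r)\}_s$ and their compatibilities as $r$ varies; they say nothing about the integral relative $K$-groups themselves. In particular, nothing you have cited rules out the presence of a $p$-torsion-free summand in $K_n(A/I^s,I/I^s)$: if such a summand existed then $\{p^sK_n(A/I^s,I/I^s)\}_s$ would not vanish, yet every $\{K_n(A/I^s,I/I^s;\bb Z/p^r)\}_s$ could still be perfectly well-behaved. Your sentence ``this is exactly the $p$-completeness property of $\{K_n(A/I^s,I/I^s)\}_s$'' is precisely where the argument breaks: $p$-completeness of a pro abelian group is an integral statement and cannot be read off from its reductions mod $p^r$.

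The paper closes this gap with a single external input you omit: Geisser--Hesselholt \cite[Thm.~A]{GeisserHesselholt2011}, which says that for a nilpotent ideal in a ring where $p$ is nilpotent, each relative $K$-group $K_m(A/I^s,I/I^s)$ is $p$-primary of bounded exponent, say $p^{N(s,m)}$. Given this, both vanishings are immediate: for the kernel, choose $t\ge N(s,n)$ so that the image of $p^tK_n(A/I^t,I/I^t)$ in $K_n(A/I^s,I/I^s)$ is $p^t\cdot(\text{image})=0$; for the cokernel, recall that the transition map on $\{K_{n-1}(A/I^s,I/I^s)[p^s]\}_s$ from level $t$ to level $s$ includes multiplication by $p^{t-s}$, so taking $t-s\ge N(s,n-1)$ kills it. Your outline becomes a valid proof once you insert this result; without it, the argument does not go through.
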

\begin{proof}
Because of Theorem \ref{theorem_mccarthy}, it remains only to show that the canonical map \[\{K_n(A/I^s,I/I^s)\}_s\To \{K_n(A/I^s,I/I^s;\bb Z/p^s)\}_s\] is an isomorphism. But this is a consequence of the fact that relative $K$-groups of infinitesimal thickenings in characteristic $p$ are known to be $p$-torsion of bounded exponent \cite[Thm.~A]{GeisserHesselholt2011}.
\end{proof}

\begin{remark}\label{remark_dlog_formal_schemes}
We stated, and will prove, Theorem \ref{theorem_existence}--\ref{theorem_mccarthy} in the affine case only to simplify notation: there is no difficulty sheafifying the construction in the Zariski, Nisnevich, or \'etale topology. This sheafification does not strictly follow from the given statements (since a map of pro sheaves which is an isomorphism on all opens is not necessarily an isomorphism, because the necessary bounds in the pro systems may be uncontrollable), but rather from the proofs, which can be repeated verbatim in terms of sheaves on a formal scheme (particularly since we were careful to state Theorem \ref{theorem_de_Rham--Witt_log} in terms of pro \'etale sheaves, by controlling the bounds in the proof\footnote{Being careful, we should note that the pro-HKR theorem \cite{Morrow_Dundas} also holds as sheaves, by controlling bounds on the pro systems involved. This follows easily from the compatibility of both sides of the upcoming isomorphism (pro-HKR) with \'etale base change.}). Here we briefly state, for the sake of completeness and our applications, the main results in the generality of formal schemes. Let $\tau$ denote the Zariski, Nisnevich, or \'etale topology, and let $\cal K_{n,Y,\tau}$ denote the $\tau$-sheafification of the $n^\sub{th}$ $K$-group presheaf on $Y$, and similarly for $\bb Z/p^r$-coefficients (we omit $\tau$ when it denotes the Zariski topology).

Given a regular, F-finite formal $\bb F_p$-scheme $\cal Y$, and a subscheme of definition $Y:=Y_1\into\cal Y$, Theorem \ref{theorem_existence} generalises to the existence of a homomorphism of pro $\tau$-sheaves on $Y_1$ \[\dlog^n_{\cal Y,\bb Z/p^r}:\{\cal K_{n,Y_s,\bb Z/p^r,\tau}\}_s\To\{W_r\Omega^n_{Y_s,\sub{log},\tau}\}_s\] which satisfies the following properties:
\begin{enumerate}
\item (Naturality) $\dlog_{-,\bb Z/p^r}^n$ is natural for morphisms of regular, F-finite $\bb F_p$-schemes.
\item (Symbols) The composition \[\{\cal K_{n,Y_s,\tau}^M\}_s\To \{\cal K_{n,Y_s,\bb Z/p^r,\tau}\}_s\xto{\dlog_{\cal Y,\bb Z/p^r}^n} \{W_r\Omega_{Y_s,\sub{log},\tau}^n\}_s\] is induced $\dlog[\cdot]$.
\item (Compatibility as $r\to\infty$) The obvious generalisation of Theorem \ref{theorem_existence}(iii).
\item (Multiplicativity) $\dlog_{\cal Y,\bb Z/p^r}^*:\{\cal K_{*,\cal Y,\bb Z/p^r,\tau}\}_s\to \{W_r\Omega_{\cal Y,\sub{log},\tau}^*\}_s$ is a homomorphism of pro sheaves of graded rings.
\item (Discrete case) If $\cal Y=Y$, then $\dlog_{\cal Y,\bb Z/p^r}^n$ equals the already defined $\dlog_{Y,\bb Z/p^r}^n$. 
\item (Compatibility with affine case) If $\cal Y=\Spf A$, where $A$ is a regular, F-finite $\bb F_p$-algebra, and $I\subseteq A$ is the defining ideal of $Y$, then (the homomorphism induced on global section by) $\dlog^n_{\cal Y,\bb Z/p^r}$ is $\dlog^n_{A/I^\sinfty,\bb Z/p^r}$.
\end{enumerate}

If the subscheme of definition $Y$ is gnc, then Theorem \ref{theorem_pro_GL} generalises to an isomorphism $\dlog_{\cal Y,\bb Z/p^r}^n:\{\cal K_{n,Y_s,\bb Z/p^r,\tau}\}_s\isoto\{W_r\Omega^n_{Y_s,\sub{log},\tau}\}_s$ of pro $\tau$-sheaves on $Y$, and states that $\{\cal K_{n,Y_s,\tau}\}_s$ is $p$-torsion-free.

If $Y$ is actually regular, then Theorem \ref{theorem_mccarthy} and Corollary \ref{corollary_mccarthy} generalise to \[\dlog^n_{(\cal Y,Y),\bb Z/p^r}:\{\cal K_{n,(Y_s,Y),\bb Z/p^r,\tau}\}_s\isoto \{W_r\Omega^n_{(Y_s,Y),\sub{log},\tau}\}_s\] and \[\dlog^n_{(\cal Y,Y)}:\{\cal K_{n,(Y_s,Y),\tau}\}_s\isoto \{W_r\Omega^n_{(Y_s,Y),\sub{log},\tau}\}_s.\]
\end{remark}

\subsection{Proofs of Theorem \ref{theorem_existence}--\ref{theorem_pro_GL}}
The proofs use not only the earlier results concerning logarithmic Hodge--Witt sheaves on formal schemes, but also topological cyclic homology via McCarthy's theorem and the pro Hochschild--Kostant--Rosenberg theorem. Hence some familiarity with topological cyclic homology and its notation is required, for which we refer the reader to, e.g., \cite{Geisser2005} or \cite{Morrow_Dundas}.

\begin{proof}[Proof of Theorem \ref{theorem_existence}: construction of $\dlog_{r,A/I^\infty}^n$]
The construction begins with the pro version of the Hochschild--Kostant--Rosenberg (HKR) theorem in topological cyclic homology for the fixed point spectra $\TR^r$. If $A$ is an $\bb F_p$-algebra, then the pro graded ring $\{\TR^r_*(A;p)\}_r$ is a $p$-typical Witt complex with respect to its operators $F,V,R$; by universality of the de Rham--Witt complex, there are therefore natural maps of graded $W_r(A)$-algebras \cite[Prop.~1.5.8]{Hesselholt1996} $\lambda_{r,A}:W_r\Omega_A^*\to \TR_*^r(A;p)$ for $r\ge0$, which are compatible with the Frobenius,  Verschiebung, and Restriction maps (in other words, a morphism of $p$-typical Witt complexes).

From now on in the proof assume that $A$ is regular and F-finite, and let $I\subseteq A$ be any ideal. Hesselholt's HKR theorem \cite[Thm.~B]{Hesselholt1996} implies that the resulting map of pro abelian groups \[\lambda_A:\{W_r\Omega_A^n\}_r\To \{\TR_n^r(A;p)\}_r\] is an isomorphism for each $n\ge 1$; similarly, the pro HKR theorem of the author and Dundas \cite{Morrow_Dundas} implies that \[\lambda_{A/I^\infty}: \{W_s\Omega_{A/I^s}^n\}_s\To \{\TR_n^s(A/I^s;p)\}_s \tag{pro-HKR}\] is an isomorphism. (We remark that both the HKR and pro HKR theorem give more precise statements about $\TR_n^r$ for fixed $r$, but we do not need them here.)

Since the pro abelian group $\{W_s\Omega_{A/I^s}^n\}_s$ has no $p$-torsion (Proposition \ref{proposition_filtrations_are_equal}), the pro HKR isomorphism also induces an isomorphism \[\lambda_{A/I^\infty,\bb Z/p^r}:\{W_s\Omega_{A/I^s}^n/p^r\}_s\Isoto \{\TR_n^s(A/I^s;p,\bb Z/p^r)\}_s\] which is compatible with the Frobenius and Verschiebung on each side.

We now consider the following diagram, in which the squares commute and the top row is exact:
\[\hspace{-5mm}\xymatrix@C=4mm{
& \{\TC^s_n(A/I^s;p,\bb Z/p^r)\}_s\ar[r] & \{\TR^s_n(A/I^s;p,\bb Z/p^s)\}_s\ar[r]^{R-F}\ar[d]^\cong_{\lambda_{A/I^\infty,\bb Z/p^r}^{-1}}& \{\TR_n^{s-1}(A/I^s;p,\bb Z/p^r)\}_s\ar[d]^\cong_{\lambda_{A/I^\infty,\bb Z/p^r}^{-1}}\\
\{K_n(A/I^s;\bb Z/p^r)\}_s\ar@{-->}[dr]_{\exists\;\dlog^n_{A/I^\infty,\bb Z/p^r}}
\ar[ur]^-{tr} &&\{W_s\Omega_{A/I^s}^n/p^r\}_s\ar[d]_{\{R^{s-r}\}_s}\ar[r]^{R-F}&\{W_{s-1}\Omega_{A/I^s}^n/p^r\}_s\ar[d]_{\{R^{s-1-r}\}_s}\\
&\{W_r\Omega_{A/I^s,\sub{log},\sub{Zar}}^n\}_s\ar@{^(->}[r]&\{W_r\Omega_{A/I^s}^n\}_s\ar[r]^{R-F}&\{W_{r-1}\Omega_{A/I^s}^n\}_s
}\]
The composition of the four maps from $\{K_n(A/I^s;\bb Z/p^r)\}_s$ to $\{W_r\Omega_{A/I^s}^n\}_s$ has image inside \[\{R(\ker(W_{r+1}\Omega_{A/I^s}^n/p^r\xto{R-F}W_r\Omega_{A/I^s}^n))\}_s\] (because the projection $\{R^{s-r}\}_s$ factors through $\{W_{r+1}\Omega_{A/I^s}^n/p^r\}_s$), which is contained inside $\{W_r\Omega_{A/I^s,\sub{log},\sub{Zar}}^n\}_s$ by Corollary \ref{corollary_log_vs_R-F}(i). Therefore there exists a unique dashed arrow making the diagram commute.

Now we must show that $\dlog^n_{A/I^\infty,\bb Z/p^r}$ has all the properties in the statement of Theorem~\ref{theorem_existence}:

(i), (iii), \& (iv): $\dlog_{A/I^\infty,\bb Z/p^r}^n$ is natural in the pair $(A,I)$, compatible as $r\to\infty$, and multiplicative because the same is true of all the maps $K_n\xto{tr}\TC_n^s\to\TR_n^s\xto{R}\TR_n^{s-1}$ and $\lambda:W_r\Omega^n\to\TR_n^r$. The only one of these assertions which is not completely standard is that the trace map is multiplicative, but this was proved by Geisser and Hesselholt \cite[Corol.~6.4.1]{GeisserHesselholt1999}.

(ii): By (iv) it is enough to consider the case $n=1$, which is a consequence of \cite[Corol.~6.4.1]{GeisserHesselholt1999} and its proof (note that their $B$ corresponds to the differential on the de Rham--Witt complex and that their $\underbar{\;\;}$ is the Teichm\"uller map).

(v): It follows from (ii) that the maps $\dlog^n_{A,\bb Z/p^r}, \dlog^n_{A/0^\infty,\bb Z/p^r}:K_n(A)\to W_r\Omega^n_{A,\sub{log},\sub{Zar}}$ agree on symbols; but $K_n(A;\bb Z/p^r)=K_n(A)/p^r$ is generated by symbols, by Geisser--Levine (see Theorem \ref{theorem_GL_BKG}), and so the maps agree in general.

(vi): It follows from naturality that the given square commutes if $\dlog_{\hat A,\bb Z/p^r}^n$ is replaced by $\dlog_{\hat A/0^\infty,\b\ Z/p^r}^n$; but we have just shown in (v) that these maps are equal.
\end{proof}

\begin{proof}[Proof of Theorem \ref{theorem_mccarthy}]
Let $A$ be a regular, F-finite $\bb F_p$-algebra and $I\subseteq A$ an ideal such that $A/I$ is regular. Recall that $A/I$ is formally smooth over $\bb F_p$, by Lemma \ref{lemma_F-finite_formal_smooth}, and so the quotient maps $A/I^s\to A/I$ have compatible splittings for all $s\ge 1$. By comparing the main diagram in the previous proof to the analogous diagram for the regular $\bb F_p$-algebra $A/I$ itself (which is standard; otherwise just apply our construction to $A/I$ with the zero ideal), one obtains an analogous commutative diagram of relative theories:
\[\hspace{-2.3cm}\xymatrix@C=4mm{
& \{\TC^s_n(A/I^s,A/I;p,\bb Z/p^r)\}_s\ar[r]^{(\dag)} & \{\TR^s_n(A/I^s,A/I;p,\bb Z/p^s)\}_s\ar[r]^{R-F}\ar[d]^\cong& \{\TR_n^{s-1}(A/I^s,A/I;p,\bb Z/p^r)\}_s\ar[d]^\cong\\
\{K_n(A/I^s,A/I;\bb Z/p^r)\}_s\ar[dr]_{\dlog^n_{(A/I^\infty,A/I),\bb Z/p^r}}
\ar[ur]^-{tr}_\cong &&\{W_s\Omega_{(A/I^s,A/I)}^n/p^r\}_s\ar[d]_{\{R^{s-r}\}_s}\ar[r]^{R-F}&\{W_{s-1}\Omega_{(A/I^s,A/I)}^n/p^r\}_s\ar[d]_{\{R^{s-1-r}\}_s}\\
&\{W_r\Omega_{(A/I^s,A/I),\sub{log},\sub{Zar}}^n\}_s\ar@{^(->}[r]&\{W_r\Omega_{(A/I^s,A/I)}^n\}_s\ar[r]^{R-F}&\{W_{r-1}\Omega_{(A/I^s,A/I)}^n\}_s
}\]
The trace map is an isomorphism by Geisser--Hesselholt's strengthening of the McCarthy theorem \cite[Thm.~B]{GeisserHesselholt2011}. 

The relative form of Corollary \ref{corollary_log_vs_R-F}(iii), together with the surjectivity of $R-F$ in the relative nilpotent setting (Lemma \ref{lemma_surj_of_1-F_in_nilp_case}) and the coincidence of Zariski and \'etale logarithmic forms (Corollary \ref{corollary_Zar_vs_etale}(i)), implies that the sequence \[0\To \{W_r\Omega_{(A/I^sA/I),\sub{log},\sub{Zar}}^n\}_r\To\{W_r\Omega_{(A/I^s,A/I)}^n\}_r\xTo{R-F}\{W_{r-1}\Omega_{(A/I^s,A/I)}^n\}_r\To 0\] is exact for any $s\ge1$. By taking the diagonal over $r=s$ and noting that the resulting pro abelian groups have no $p$-torsion by Proposition \ref{proposition_filtrations_are_equal}, there is a similar short exact sequence mod $p^r$: \[0\To \{W_s\Omega_{(A/I^sA/I),\sub{log},\sub{Zar}}^n/p^r\}_r\To\{W_s\Omega_{(A/I^s,A/I)}^n/p^r\}_r\xTo{R-F}\{W_{s-1}\Omega_{(A/I^s,A/I)}^n/p^r\}_s\To 0\] 

In particular, the central $R-F$ in the diagram is surjective, which implies the same for the top $R-F$; since this holds for all $n\ge0$, we deduce that the long exact sequence which is implicit in the top row of the diagram breaks into short exact sequences and so arrow (\dag) is injective. Therefore we may add
\[\xymatrix{
\{\TC^s_n(A/I^s,A/I;p,\bb Z/p^r)\}_s\ar[d]^\cong\\
\{W_s\Omega_{(A/I^sA/I),\sub{log},\sub{Zar}}^n/p^r\}_r\ar[d]^{\{R^{s-r}\}}\\
\{W_r\Omega_{(A/I^s,A/I),\sub{log},\sub{Zar}}^n\}_s
}\]
to the diagram in such a way that it still commutes. But the lower vertical arrow $\{R^{s-r}\}$ occurring here is also an isomorphism, by applying Corollary \ref{corollary_de_Rham--Witt_log_diagonal} in the Zariski topology to both $\op{Spf}A$ and $\Spec A/I$.

It follows that $\dlog^n_{(A/I^\infty,A/I),\bb Z/p^r}$ is an isomorphism, as required to complete the proof.
\end{proof}

\begin{proof}[Proof of Theorem \ref{theorem_pro_GL}]
We will first prove the theorem in the case that $A/I$ is regular: so $A$ is still a regular F-finite $\bb F_p$-algebra, but $I\subseteq A$ is now an ideal such that $A/I$ is both local and regular. Then there is a commutative diagram of pro abelian groups in which the rows are short exact
\[\xymatrix{
0\ar[r] & \{K_n(A/I^s,I/I^s;\bb Z/p^r)\}_s \ar[r]\ar[d]^{\dlog^n_{(A/I^\infty,I/I^\infty),\bb Z/p^r}} & \{K_n(A/I^s;\bb Z/p^r)\}_s \ar[r]\ar[d]^{\dlog^n_{A/I^\infty,\bb Z/p^r}} & K_n(A/I;\bb Z/p^r)\ar[r]\ar[d]^{\dlog^n_{A,\bb Z/p^r}}&0\\
0\ar[r] & \{W_r\Omega_{(A/I^s,I/I^s),\sub{log},\sub{Zar}}^n\}_s \ar[r] & \{W_r\Omega_{A/I^s,\sub{log},\sub{Zar}}^n\}_s \ar[r] & \{W_r\Omega_{A/I,\sub{log},\sub{Zar}}^n\}_s\ar[r]&0
}\]
By Theorem \ref{theorem_mccarthy}, which we just proved, the left vertical arrow is an isomorphism. But the right vertical arrow is also an isomorphism, by the results recalled in Theorem \ref{theorem_GL_BKG}. Hence the central arrow $\dlog^n_{A/I^\infty,\bb Z/p^r}$ is an isomorphism.

Now suppose only that $A/I$ is merely gnc (but still local). We proceed by induction on its complexity, using Lemma \ref{lemma_complexity} to find ideals $J,J'\subseteq A$ such that $I$ has the same radical as $J\cap J'$, and such that $A/J$ is regular and that $A/J'$ and $A/J+J'$ have complexity strictly less than that of $A/I$. Then there is a diagram of pro abelian groups in which the two visible squares are commutative by naturality of our pro $\dlog$ map:
\[\hspace{-1cm}\xymatrix@C=3mm{
\cdots\ar[r] & \{K_n(A/I^s;\bb Z/p^r)\}_s\ar[r]\ar[d]^{\dlog^n_{A/I^\infty,\bb Z/p^r}} & \{K_n(A/J^s;\bb Z/p^r)\}_s\ar[d]^{\dlog^n_{A/J^\infty,\bb Z/p^r}\oplus \dlog^n_{A/{J'}^\infty,\bb Z/p^r}}\oplus \{K_n(A/{J'}^s;\bb Z/p^r)\}_s\ar[r] & \{K_n(A/(J+J')^s;\bb Z/p^r)\}_s\ar[r]\ar[d]^{\dlog^n_{A/(J+J')^\infty,\bb Z/p^r}}&\cdots\\
0\ar[r] & \{W_r\Omega_{A/I^s,\sub{log},\sub{Zar}}^n\}_s\ar[r] &\{W_r\Omega_{A/J^s,\sub{log},\sub{Zar}}^n\}_s\oplus \{W_r\Omega_{A/{J'}^s,\sub{log},\sub{Zar}}^n\}_s\ar[r]&\{W_r\Omega_{A/(J+J')^s,\sub{log},\sub{Zar}}^n\}_s\ar[r]&0
}\]
The long exact top row is a consequence of pro excision for algebraic $K$-theory of Noetherian rings \cite[\S2]{Morrow_pro_H_unitality}, while the bottom short exact sequence is Corollary \ref{corollary_pro_excision}.

By induction on the complexity and the already established regular case, the central and right vertical arrows are isomorphisms. Since this holds for all $n$, the long exact top row breaks into short exact sequences, and so the left vertical arrow is also an isomorphism, as desired.

Finally, since the composition \[\{K_n^M(A/I^s)\}_s\to\{K_n(A/I^s)\}_s\to \{K_n(A/I^s;\bb Z/p^r)\}_s\xto{\dlog^n_{A/I^\infty,\bb Z/p^r}}\{W_r\Omega_{A/I^s,\sub{log},\sub{Zar}}^n\}_s\] is given by $\dlog[\cdot]$, by Theorem \ref{theorem_existence}(ii), and hence is surjective, the fact that $\dlog^n_{A/I^\infty,\bb Z/p^r}$ is an isomorphism implies that the middle arrow is surjective. In light of the usual short exact sequence \[0\To K_n(A/I^s)/p^r\To K_n(A/I^s;\bb Z/p^r)\To K_{n-1}(A/I^s)[p^r]\To 0,\] this means that $\{K_{n-1}(A/I^s)\}_s$ has no $p$-torsion.
\end{proof}

\section{Further applications to $K$-theory}\label{section_applications}
In the remainder of the paper we apply the main theorems of Secton \ref{section_K_theory} to study a variety of questions in $K$-theory. The next four sections are largely independent of one another.

\subsection{Milnor vs Quillen $K$-theory of infinitesimal thickenings}\label{subsection_milnor_vs_quillen}
A conjecture of Beilinson predicts that the Milnor and Quillen $K$-theories of a field of characteristic $p$ (hence of any regular $\bb F_p$-algebra by a Gersten argument) agree rationally, i.e., that the indecomposable $K$-groups $K_n^\sub{ind}:=\op{coker}(K_n^M\to K_n)$ are torsion for such rings. In this section we will prove an infinitesimal form of this conjecture, by showing that the difference between Milnor and Quillen $K$-theory does not grow without bound, in the following sense:

\begin{theorem}\label{theorem_indecomposable}
Let $A$ be a regular, F-finite $\bb F_p$-algebra, and $I\subseteq A$ an ideal such that $A/I$ is gnc and local. Then, for all $n\ge0$, the square of pro abelian groups
\[\xymatrix{
\{K_n^M(A/I^s)\}_s\ar[r]\ar[d] & \{K_n(A/I^s)\}_s\ar[d]\\
K_n^M(A/I)\ar[r] & K_n(A/I)
}\]
is bicartesian up to obstructions killed by a power of $p$; in other words, the three pro cohomology groups of the sequence \[0\To\{K_n^M(A/I^s,I/I^s)\}_s\To \{K_n(A/I^s)\}_s\To K_n(A/I)\To 0\] are killed by a power of $p$.

The square is actually bicartesian (equivalently, the sequence is short exact) if $I$ is principal and $A/I$ is regular.
\end{theorem}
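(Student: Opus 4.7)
The proof plan combines a $p$-primary argument (inverting $p$) with a mod-$p^r$ analysis via the tools developed earlier in the paper, then combines the two.

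\textbf{Step 1 ($p$-primary nature).} By Geisser--Hesselholt~\cite{GeisserHesselholt2011}, the relative $K$-groups $K_m(A/I^s, I/I^s)$ are $p$-primary torsion of bounded exponent for each $m,s$; the long exact sequence gives $K_n(A/I^s)[\tfrac1p]\isoto K_n(A/I)[\tfrac1p]$ at each level. Likewise, $K_n^M(A/I^s, I/I^s)$ is $p$-primary because its generators are symbols containing an entry in the pro-$p$ group $1+I/I^s\subseteq(A/I^s)^\times$ (using $\op{char} A=p$). Hence the displayed complex is short exact after inverting $p$, and each of its three pro cohomology groups is pro-$p$-primary.

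\textbf{Step 2 (Mod $p^r$ control).} Applying Corollary~\ref{corollary_thm_from_intro} to $(A,I)$ and to the gnc local pair $(A/I,0)$, together with Theorem~\ref{theorem_pro_GL}, one identifies $\{K_n(A/I^s)/p^r\}_s\cong\{W_r\Omega_{A/I^s,\sub{log}}^n\}_s$ and $K_n(A/I)/p^r\cong W_r\Omega_{A/I,\sub{log}}^n$, and obtains pro-surjections from the corresponding Milnor groups. Combining with the levelwise surjection $K_n^M(A/I^s)\twoheadrightarrow K_n^M(A/I)$ (by symbol lifting), the stated complex mod $p^r$ gets replaced (up to the controllable kernels of dlog on Milnor $K$-theory) by the log Hodge--Witt sequence
\[
0\To\{W_r\Omega^n_{(A/I^s,A/I),\sub{log}}\}_s\To\{W_r\Omega^n_{A/I^s,\sub{log}}\}_s\To W_r\Omega^n_{A/I,\sub{log}}\To 0,
\]
which is exact as pro abelian groups by Corollary~\ref{corollary_relative_nilpotent_hw} together with the trivial surjectivity at $s=1$. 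A diagram chase, using the structural results of Section~\ref{section_hw_n>1} (notably Theorem~\ref{theorem_de_Rham--Witt_log}) to bound the relative dlog kernel, then shows that each of the three mod-$p^r$ pro cohomology groups is killed by a power $p^N$ that is uniform in $r$.

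\textbf{Step 3 (Combining the two).} Any pro-$p$-primary pro abelian group $\{X_s\}_s$ with levelwise bounded exponent $p^{e(s)}$ and with $\{X_s/p^r\}_s$ killed by $p^N$ uniformly in $r$ is itself killed by $p^N$: for each $s$, taking $r=e(s)$ gives some $t\ge s$ such that $p^N X_t\to X_s$ factors through $p^{e(s)}X_s=0$, so the image is zero. Applying this to each of the three pro cohomology groups proves the ``killed by a power of $p$'' assertion.

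\textbf{Step 4 (Principal $I$, regular $A/I$).} Under these stronger hypotheses, $A/I$ is formally smooth over $\bb F_p$ (Lemma~\ref{lemma_F-finite_formal_smooth}), so the quotient $A/I^s\twoheadrightarrow A/I$ has compatible sections; both the Milnor and Quillen relative sequences then split at each level, and the cokernel of $\{K_n(A/I^s)\}_s\to K_n(A/I)$ vanishes. The remaining task is to show that the relative comparison $\{K_n^M(A/I^s,I/I^s)\}_s\to\{K_n(A/I^s,I/I^s)\}_s$ is an integral isomorphism of pro abelian groups. On the Milnor side, the Bloch--Deligne--Illusie map $\gamma_n$ (cf.\ the introduction corollary and Section~\ref{subsection_Milnor}) gives an integral pro isomorphism $\{\bb W_{s-1}\Omega^{n-1}_{A/I}\}_s\isoto\{K_n^{\sub{sym}}(A/I^s,I/I^s)\}_s$. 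On the Quillen side, Corollary~\ref{corollary_mccarthy} and the pro-HKR theorem give a matching integral identification; checking that the two agree via the comparison map yields the desired integral short exactness.

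\textbf{Main obstacle.} The technical crux is the diagram chase in Step~2: extracting a bound on the mod-$p^r$ pro cohomology that is uniform in $r$ requires the full log Hodge--Witt machinery of Section~\ref{section_hw_n>1}, in particular Theorem~\ref{theorem_de_Rham--Witt_log} (structure of the $p$-filtration on $W_r\Omega^n_{\sub{log}}$ for formal schemes) and its consequence Corollary~\ref{corollary_relative_nilpotent_hw}. The other steps are essentially formal manipulations once these structural inputs are available.
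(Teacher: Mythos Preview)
Your overall strategy (Steps~1 and~3) is sound, and the reduction ``pro-$p$-primary $+$ mod-$p^r$ bounded uniformly in~$r$ $\Rightarrow$ bounded'' is a clean way to package things. However, Step~2 as written has a genuine gap, and the paper's proof proceeds quite differently.

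First, you invoke Theorem~\ref{theorem_pro_GL} (via Corollary~\ref{corollary_thm_from_intro}) on the pair $(A/I,0)$ to identify $K_n(A/I)/p^r\cong W_r\Omega^n_{A/I,\sub{log}}$. But that theorem requires the ambient ring to be regular; here $A/I$ is only gnc, and no such identification is proved in the paper for gnc rings. Second, even granting this, the phrase ``up to the controllable kernels of dlog on Milnor $K$-theory'' hides the entire difficulty: you need the cokernel of $\{K_n^M(A/I^s,I/I^s)/p^r\}_s\to\{\overline K_n(A/I^s,I/I^s)/p^r\}_s$ to be killed by $p^N$ \emph{uniformly in $r$}, and neither Theorem~\ref{theorem_de_Rham--Witt_log} nor Corollary~\ref{corollary_relative_nilpotent_hw} (which concerns Zariski--Nisnevich comparison, not exactness of the sequence you wrote) provides such a bound. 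The log Hodge--Witt sequence you display is trivially exact by definition of the relative term, so it does not yield information about the Milnor-to-Quillen comparison.

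The paper's argument supplies the missing idea: it uses the Chern class $K_n\to K_n^M$, whose composite with the canonical map is $(n-1)!$, to show directly that $\ker i_{A/I}$ and $\ker i_{(A/I^\infty,A/I)}$ are killed by $(n-1)!$, hence by $p^N$ with $N=v_p((n-1)!)$. This gives the uniform bound essentially for free, with no mod-$p^r$ analysis required for the left two cohomology groups. The right cohomology (cokernel of $K_n(A/I^\infty)\to K_n(A/I)$) is then handled separately by induction on gnc complexity, using pro excision for $K$-theory together with a surjectivity statement for relative Milnor $K$-theory under a closed cover (proved in the remark following the theorem). Your Step~4 is also vaguer than the paper's: rather than matching $\gamma_n$ directly against pro-HKR, the paper uses the R\"ulling--Saito isomorphism only to see that $\{K_n^M(A/I^s,I/I^s)\}_s$ is $p$-torsion-free (via the big de Rham--Witt complex of the regular ring $A/I$), which combined with the $(n-1)!$-bound forces the kernel to vanish.
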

\begin{proof}
Note that the assertions really are the same, since they both state that the vertical arrows in the diagram are surjective (up to a power of $p$) and have isomorphic kernels (up to a power of $p$). Write $\res K_n(A/I^s,I/I^s):=\ker(K_n(A/I^s)\to K_n(A/I))$ and $\res K_n(A/I^s):=\Im(K_n(A/I^s)\to K_n(A/I))$.

We begin by proving left exactness of the short sequence, i.e., that the canonical map \[i_{(A/I^\infty,A/I)}:\{K_n^M(A/I^s,I/I^s)\}_s\to \{\res K_n(A/I^s,I/I^s)\}_s\] has kernel and cokernel killed by a power of $p$; in fact, we will show that they are killed by $p^N$, where $N$ is the $p$-adic valuation of $(n-1)!$. To do this we consider the canonical map of short exact sequences
\[\xymatrix{
0\ar[r] & K_n^M(A/I^\infty,I/I^\infty)\ar[d]^{i_{(A/I^\infty,A/I)}}\ar[r] & K_n^M(A/I^\infty)\ar[d]^{i_{A/I^\infty}}\ar[r] & K_n^M(A/I)\ar[d]^{i_{A/I}}\ar[r]&0\\
0\ar[r] & \res K_n(A/I^\infty,I/I^\infty)\ar[r] & K_n(A/I^\infty)\ar[r] & \res K_n(A/I^\infty)\ar[r]&0
}\]
and the associated long exact sequence of pro abelian groups
\[\hspace{-5mm}0\to \ker i_{(A/I^\infty,A/I)}\to \ker i_{A/I^\infty}\to \ker i_{A/I}\to\op{coker} i_{(A/I^\infty,A/I)}\to \op{coker} i_{A/I^\infty}\to \op{coker} i_{A/I}\to0\]

It is now convenient to introduce an endofunctor $-\otimes_{\bb Z}\bb Z/p^\infty$ of the category of pro abelian groups, defined by sending $\{M_s\}$ to $\{M_s/p^s\}_s$. This is exact (since the transition map $\Tor^\bb Z_1(M_{2s},\bb Z/p^{2s})\to\Tor^\bb Z_1(M_{s},\bb Z/p^{s})$ is zero), and has the additional property that if each abelian group $M_s$ is killed by a power of $p$ (possibly depending on $s$) then the natural map $\{M_n\}_s\to\{M_s/p^s\}_s$ is an isomorphism.

Since $i_{A/I^\infty}\otimes_\bb Z\bb Z/p^\infty$ is easily seen to be surjective by Corollary \ref{corollary_mccarthy}, it follows that $\op{coker} i_{A/I^\infty}$, hence also $\op{coker} i_{A/I}$, are zero after applying $-\otimes_\bb Z\bb Z/p^\infty$; therefore the map \begin{equation}(\ker i_{A/I})\otimes_\bb Z\bb Z/p^\infty\to(\op{coker} i_{(A/I^\infty,A/I)})\otimes_\bb Z\bb Z/p^\infty\label{eqn_p_infty}\end{equation} is surjective. But $\ker i_{A/I}$ is killed by $(n-1)!$, thanks to the existence of the usual Chern class from Quillen to Milnor $K$-theory; so both the left and right side of the previous line are killed by $p^N$.

However, $\op{coker} i_{(A/I^\infty,A/I)}$ was unchanged by applying $-\otimes_\bb Z\bb Z/p^\infty$ since each group $\res K_n(A/^s,A/I^s)$, hence also $\op{coker} i_{(A/I^s,A/I)}$, is killed by a power of $p$ \cite[Thm.~A]{GeisserHesselholt2011}. In conclusion, $\op{coker} i_{(A/I^\infty,A/I)}$ is killed by $p^N$.

The same observations show that $\ker i_{(A/I^\infty,A/I)}=(\ker i_{(A/I^\infty,A/I)})\otimes_\bb Z\bb Z/p^\infty$ is also killed by $(n-1)!$, hence by $p^N$. This completes the proof that the kernel and cokernel of $i_{(A/I^\infty,A/I)}$ are killed by $p^N$.

(Aside: In this parenthetical paragraph we treat the case that $I$ is principal and $A/I$ is regular. Then $i_{A/I}\otimes_\bb Z\bb Z/p^\infty$ is an isomorphism by Theorem \ref{theorem_GL_BKG}, and so its kernel is zero; by the surjection in line (\ref{eqn_p_infty}), we deduce that $\op{coker} i_{(A/I^\infty,A/I)}=0$. Next, after $I$-adically completing $A$, we may assume that $A=R[[t]]$ where $R:=A/I$; we will recall a result of R\"ulling--Saito in Theorem \ref{theorem_BDI_RS} which provides a (non-canonical) isomorphism $\{\bb W_{s-1}\Omega_{R}^{n-1}\}_s\isoto \{K^M_n(A/I^s,I/I^s)\}_s$, and this is $p$-torsion-free by the coincidence of the canonical and $p$-filtrations for the Hodge--Witt groups of the regular $\bb F_p$-algebra $R$ (see the paragraph after Definition \ref{definition_filtrations}). It follows that $\ker i_{(A/I^\infty,A/I)}$, which we already know is killed by $p^N$, must be zero. In conclusion, $i_{(A/I^\infty,A/I)}$ is an isomorphism; finally, $K_n(A/I^s)\to K_n(A/I)$ is surjective since $A/I^s\to A/I$ has a section.)

It remains to show that the map $K_n(A/I^\infty)\to K_n(A/I)$ has cokernel killed by a power of $p$. This is clear if $A/I$ is regular, since then the quotient maps $A/I^s\to A/I$ have sections from the formal smoothness of $A/I$ over $\bb F_p$  (Lemma~\ref{lemma_F-finite_formal_smooth}). We now proceed by induction on the complexity of $A/I$. Using Lemma \ref{lemma_complexity}, let $J,J'\subseteq A$ be ideals such that $I$ has the same radical as $J\cap J'$, and such that $A/J$ is regular and that $A/J'$ and $A/J+J'$ have complexity strictly less than that of $A/I$. We consider the following two excision squares:
\[\xymatrix{
K(A/I^\infty)\ar[r]\ar[d]&K(A/J^\infty)\ar[d]\\
K(A/J'^\infty)\ar[r]&K(A/(J+J')^\infty)
}\qquad
\xymatrix{
K(A/I)\ar[r]\ar[d]&K(A/J)\ar[d]\\
K(A/J)\ar[r]&K(A/(J+J'))
}
\]
The left square of pro spectra is homotopy cartesian by pro excision for algebraic $K$-theory, while the right square is homotopy cartesian up to power of $p$ (i.e., the birelative $K$-groups describing the obstruction to being homotopy cartesian are killed by a power of $p$) by \cite[Thm.~C]{GeisserHesselholt2011}. Considering the Mayer--Vietoris sequences associated to these diagrams, as well to as to the analogous square of relative groups, and henceforth working in the category of pro abelian groups modulo those killed by a power of $p$, we arrive at a commutative diagram of pro abelian groups
\[\hspace{-15mm}\xymatrix@C=2mm@R=5mm{
&\vdots\ar[d]&\vdots\ar[d]&\vdots\ar[d]&\\
\cdots\ar[r] &K_n(A/I^\infty,I/I^\infty)\ar[r]\ar[d] & K_n(A/J^\infty,J/J^\infty)\oplus K_n(A/J'^\infty,J'/J'^\infty)\ar[r]\ar[d] & K_n(A/(J+J')^\infty,(J+J')/(J+J')^\infty)\ar[d]\ar[r]&\cdots\\
\cdots\ar[r]& K_n(A/I^\infty)\ar[r]\ar[d] & K_n(A/J^\infty)\oplus K_n(A/J'^\infty)\ar[r]\ar[d] & K_n(A/(J+J')^\infty)\ar[d]\ar[r]&\cdots\\
\cdots\ar[r] &K_n(A/I)\ar[r] \ar[d]& K_n(A/J)\oplus K_n(A/J')\ar[r]\ar[d] & K_n(A/(J+J'))\ar[d]\ar[r]&\cdots\\
&\vdots&\vdots&\vdots\\
}\]
with exact rows and columns. The central and right columns break into short exact sequences by the inductive hypothesis. We claim that also the top row breaks into short exact sequences, whence a diagram chase will show that the left column also breaks into short exact sequences, thereby completing the inductive step and the proof.

It remains to prove the claim about the top row. Using the main result of the first half of the proof, it is necessary and sufficient to show that the map \[K_n^M(A/J^\infty,J/J^\infty)\oplus K_n^M(A/J'^\infty,J'/J'^\infty)\To K_n^M(A/(J+J')^\infty,A/(J+J')) \] is surjective for all $n\ge 0$. We prove this separately in the next remark since it holds in greater generality.
\end{proof}

\begin{remark}
If $A$ is a local ring, and $J,J'\subseteq A$ are ideals, then we show here that the canonical map \[K_n^M(A/J^s,J/J^s)\oplus K_n^M(A/J'^s,J'/J'^s)\To K_n^M(A/(J^s+J'^s),(J+J')/(J^s+J'^s))\] is surjective for all $n,s\ge0$.

Indeed, it is well-known that the relative Milnor $K$-group on the right is generated by symbols where at least one term is a unit of $A/(J'^s+J^s)$ which is congruent to $1$ modulo $(J+J')/(J^s+J'^s)$. Hence it is sufficient to prove the surjectivity assertion when $n=1$. Direct verification shows that the sequence \[0\To A/(J\cap J')^\times\To A/J^\times\oplus A/J'^\times\To A/(J+J')^\times\To 0\] is exact; replacing $J,J'$ by $J^s,J'^s$ gives a second short exact sequence, and the kernel  of the canonical surjection between the two sequences is therefore also short exact:
\begin{align*}
\hspace{-2.3cm}0\To K_1^M(A/(J^s\cap J'^s),(J\cap J')/(J^s\cap J'^s))\To &K_1^M(A/J^s,J/J^s)\\&\oplus K_1^M(A/J'^s,J'/J'^s)\To K_1^M(A/(J^s+J'^s),(J+J')/(J^s+J'^s))\To 0\end{align*}
This completes the proof.
\end{remark}

\begin{corollary}\label{corollary_principal_case}
Let $A$ be a regular, F-finite $\bb F_p$-algebra, and $I\subseteq A$ a principal ideal such that $A/I$ is regular and local. Then the canonical map \[\{K_n^M(A/I^s)/p^r\}_s\To \{K_n(A/I^s)/p^r\}_s\] is an isomorphism for all $n,r\ge0$.
\end{corollary}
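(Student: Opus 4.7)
The plan is to combine the principal-case assertion of Theorem \ref{theorem_indecomposable} with the classical Geisser--Levine theorem for the regular local ring $A/I$, via a five-lemma argument applied to Milnor and Quillen exact sequences reduced modulo $p^r$.

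First I would set up two short exact sequences. Since $A/I$ is regular and F-finite, it is formally smooth over $\bb F_p$ by Lemma \ref{lemma_F-finite_formal_smooth}, so each quotient $A/I^s\to A/I$ admits a ring-theoretic section, yielding a split short exact sequence of Milnor $K$-groups
\[0\to K_n^M(A/I^s,I/I^s)\to K_n^M(A/I^s)\to K_n^M(A/I)\to 0.\]
Meanwhile, the second part of Theorem \ref{theorem_indecomposable} (the genuinely short exact case, valid because $I$ is principal and $A/I$ is regular) provides a short exact sequence of pro abelian groups
\[0\to \{K_n^M(A/I^s,I/I^s)\}_s\to \{K_n(A/I^s)\}_s\to K_n(A/I)\to 0,\]
and in particular identifies $\{K_n^M(A/I^s,I/I^s)\}_s$ with $\{K_n(A/I^s,I/I^s)\}_s$.

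Next I would reduce modulo $p^r$ while preserving short exactness. On the Milnor side this is automatic from the splitting. On the Quillen side, Theorem \ref{theorem_GL_BKG} implies that $K_n(A/I)$ is $p$-torsion-free (since $A/I$ is regular local), so $\Tor^{\bb Z}_1(K_n(A/I),\bb Z/p^r)=0$ and we obtain the short exact sequence of pro abelian groups
\[0\to \{K_n^M(A/I^s,I/I^s)/p^r\}_s\to \{K_n(A/I^s)/p^r\}_s\to K_n(A/I)/p^r\to 0.\]
This assembles into a commutative diagram with exact rows
\[\xymatrix{
0\ar[r]& \{K_n^M(A/I^s,I/I^s)/p^r\}_s\ar[r]\ar@{=}[d] & \{K_n^M(A/I^s)/p^r\}_s\ar[r]\ar[d] & K_n^M(A/I)/p^r\ar[r]\ar[d]^{\cong} & 0\\
0\ar[r]& \{K_n^M(A/I^s,I/I^s)/p^r\}_s\ar[r] & \{K_n(A/I^s)/p^r\}_s\ar[r] & K_n(A/I)/p^r\ar[r] & 0
}\]
in which the right-hand vertical arrow is an isomorphism by Theorem \ref{theorem_GL_BKG}. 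The five-lemma then forces the central vertical arrow to be an isomorphism, which is the desired statement.

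I do not anticipate any serious obstacle: the substantive content has been packaged into Theorems \ref{theorem_GL_BKG} and \ref{theorem_indecomposable}. The only point requiring mild care is the preservation of exactness when passing to mod $p^r$ in the category of pro abelian groups, but this is immediate because $K_n(A/I)$ is a constant (non-pro) $p$-torsion-free abelian group.
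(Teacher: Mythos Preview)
Your proof is correct and follows essentially the same approach as the paper. The paper phrases it slightly more tersely---applying $-\otimes_{\bb Z}\bb Z/p^r$ directly to the bicartesian square of Theorem \ref{theorem_indecomposable} (using $p$-torsion-freeness of $K_n(A/I)$ implicitly to preserve bicartesianness) and then noting that the bottom arrow becomes an isomorphism by Theorem \ref{theorem_GL_BKG}---but your unpacking into two short exact sequences and a five-lemma argument is the same content.
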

\begin{proof}
Applying $-\otimes_\bb Z\bb Z/p^r$ to the bicartesian square of the previous theorem obtains a bicartesian diagram in which the bottom horizontal arrow is an isomorphism, by Theorem \ref{theorem_GL_BKG}; it follows that the top row is also an isomorphism.
\end{proof}

\begin{corollary}
Let $A$ be a regular, F-finite $\bb F_p$-algebra, and $I\subseteq A$ an ideal such that $A/I$ is gnc and local. Then the map of pro abelian groups $\{K_n^\sub{ind}(A/I^s)\}_s\to K_n^\sub{ind}(A/I)$ has kernel and cokernel killed by a power of $p$.
\end{corollary}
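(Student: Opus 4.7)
The plan is a short diagram chase built on Theorem~\ref{theorem_indecomposable}. I would work in the Serre quotient of pro abelian groups by those killed by a power of $p$ (the localisation functor is exact, and an object vanishes in the quotient precisely when it is killed by a power of $p$), and consider the commutative ladder with exact rows
\[\xymatrix{
\{K_n^M(A/I^s)\}_s \ar[r]^-{f}\ar[d]^\alpha & \{K_n(A/I^s)\}_s \ar[r]^-{\pi}\ar[d]^\beta & \{K_n^\sub{ind}(A/I^s)\}_s \ar[r]\ar[d]^\gamma & 0 \\
K_n^M(A/I) \ar[r]_-{g} & K_n(A/I) \ar[r]_-{\pi'} & K_n^\sub{ind}(A/I) \ar[r] & 0
}\]
Translated into this quotient, Theorem~\ref{theorem_indecomposable} asserts that $0\to\{K_n^M(A/I^s,I/I^s)\}_s\to\{K_n(A/I^s)\}_s\xto{\beta} K_n(A/I)\to 0$ becomes exact; in particular $\beta$ is surjective and $\ker\beta$ coincides with the image of $\{K_n^M(A/I^s,I/I^s)\}_s$ under $f$. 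On the other hand, $\alpha$ is already pro-surjective in the unlocalised category, because $A/I$ is local and each surjection $A/I^s\onto A/I$ has nilpotent kernel, so units of $A/I$ lift to units of $A/I^s$ and hence the symbol generators of $K_n^M(A/I)$ lift to $K_n^M(A/I^s)$.

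With both $\alpha$ and $\beta$ surjective in the quotient, the standard diagram lemma for a ladder with exact rows yields simultaneously that $\gamma$ is surjective there and that $\ker\gamma=\pi(\ker\beta)$. Since $\pi(\ker\beta)=\pi(f(\{K_n^M(A/I^s,I/I^s)\}_s))$ and $\pi\circ f=0$ by the very definition $K_n^\sub{ind}=\op{coker}(K_n^M\to K_n)$, the kernel also vanishes in the quotient. Both $\ker\gamma$ and $\op{coker}\gamma$ are therefore killed by a power of $p$ in the pro category. There is no real obstacle in this argument: the corollary is a formal consequence of Theorem~\ref{theorem_indecomposable} via the definition of indecomposable $K$-theory and an elementary diagram chase.
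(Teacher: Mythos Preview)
Your argument is correct and is essentially the paper's proof unpacked: the paper simply says ``take cokernels of the horizontal maps of the bicartesian (up to a power of $p$) square of Theorem~\ref{theorem_indecomposable}'', using the categorical fact that in a cocartesian square the induced map on horizontal cokernels is an isomorphism. Your explicit diagram chase reproves exactly this fact by hand; the separate verification that $\alpha$ is surjective (via lifting of units along a nilpotent surjection of local rings) is correct and is what makes the chase go through, though it becomes unnecessary once one invokes the cocartesian property directly.
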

\begin{proof}
This follows by taking cokernels of the horizontal maps of the bicartesian (up to a power of $p$) square of the previous theorem.
\end{proof}

\begin{remark}
It is plausible that the previous corollary holds without the assumption that $I$ is principal. By imitating the proof in the principal case, it is sufficient to prove either of the following equivalent (by the proof of Theorem \ref{theorem_indecomposable}) statements, in which $R$ is a regular, local, F-finite $\bb F_p$-algebra, $A:=R[t_1,\dots,t_c]$, and $I:=(t_1,\dots,t_c)$:
\begin{enumerate}
\item the pro abelian group $\{K_n^M(A/I^s,I/I^s)\}_s$ is $p$-torsion free;
\item the map of pro abelian groups $\{K_n^M(A/I^s,I/I^s)\}_s\to \{K_n(A/I^s,I/I^s)\}_s$ is injective (hence an isomorphism by Corollary \ref{corollary_mccarthy})
\end{enumerate}
It is likely that condition (i) can be directly verified, but we have not seriously considered the problem.
\end{remark}

\subsection{Curves on $K$-theory}\label{subsection_Milnor}
In this section we consider Bloch's curves on $K$-theory and his original description of the de Rham--Witt complex in terms of $K$-groups.

For a moment, let $R$ be any (commutative) ring. We must recall the sense in which multiplication by the symbol $\{t\}$ is interpreted on the $K$-groups of $R[[t]]$ modulo powers of $t$; a useful reference may be \cite{Morrow_K2}. The Dennis--Stein--Suslin--Yarosh map $\rho_t:1+tR[[t]]\to K_2(R[[t]])$ is defined by $\rho_t(1+ft):=\langle f,t\rangle$, where the latter element is a Dennis--Stein symbol; standard properties of Dennis--Stein symbols show that $\rho_t$ is a homomorphism. If $R$ is local (as it will be in our cases of interest), so that either $f$ or $1+f$ is a unit, then $\rho_t$ is described in terms of Steinberg symbols as follows:
\[\rho_t(1+ft)=\begin{cases}\{-f,1+ft\}&f\in R[[t]]^\times\\ \left\{-\frac{1+f}{1-t},\frac{1+ft}{1-t}\right\}& 1+f\in R[[t]]^\times\end{cases}\]
Moreover $\rho_t$ fits into a commutative diagram
\[\xymatrix{
(1+tR[[t]])\ar@{^(->}[r]\ar[d]_{\rho_t}&R[[t]]^\times\ar[d]^{\{\cdot,t\}}\\
K_2(R[[t]],(t))\ar[r] &K_2(R((t)))
}\]
(the bottom horizontal arrow is injective if $R$ is local, regular and contains a field by the Gersten conjecture; or if $R$ is local and its residue field has $>5$ elements by \cite[Eg.~4.4(2)]{Morrow_K2}; probably it is always injective), and hence it may be thought of as ``right multiplication by $\{t\}$''. Finally, it is clear from the definition in terms of Dennis--Stein symbols that $\rho_t(1+t^sR[[t]])$ vanishes in $K_2(R[[t]]/t^s)$, and hence $\rho_t$ induces \[\res\rho_t:1+tR[t]/1+t^rR[t]\To K_2(R[t]/t^s,(t))\subseteq K_2(R[t]/t^s).\] Denoting by \[\gamma:\bb W_{s-1}(R)\isoto 1+tR[t]/1+t^sR[t],\quad\sum_{i=1}^{s-1}V_i[r_i]\mapsto\prod_{i=1}^{s-1}(1+r_it^i) \] the usual (up to normalisation) isomorphism of groups, where the left side denote a big Witt group, we now recall the role played by $\rho_t$ in relating curves on $K$-theory and the de Rham--Witt complex:

\begin{theorem}[Bloch--Deligne--Illusie, R\"ulling--Saito]\label{theorem_BDI_RS}
Let $R$ be a regular, local ring containing a field, and fix $n\ge 1$. Then there is an isomorphism of abelian groups $\gamma_n:\bb W_{s-1}\Omega_R^{n-1}\isoto K^M_n(R[t]/t^s,(t))$ for each $s\ge 1$ satisfying \[a\dlog[b_1]\cdots\dlog[b_{n-1}]\mapsto \{\gamma(a),b_1,\dots,b_{n-1}\}\] and \[da\dlog[b_1]\cdots\dlog[b_{n-2}]\mapsto -\res\rho_t(\gamma(a))\{b_1,\dots,b_{n-2}\}.\] If $R$ has characteristic $p$ then the resulting morphism of pro abelian groups \[\{\bb W_{s-1}\Omega_R^{n-1}\}_s\isoto \{K^M_n(R[t]/t^s,(t))\}_s\to \{K^\sub{sym}_n(R[t]/t^s,(t))\}_s\] is an isomorphism. 
\end{theorem}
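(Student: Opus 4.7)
The plan is to treat the two assertions separately and to leverage the work already done earlier in the paper. For the existence of $\gamma_n$ at each fixed level $s$ together with the stated formulas, I would follow R\"ulling--Saito: via N\'eron--Popescu desingularisation one reduces to $R$ essentially smooth over its prime field, at which point the original Bloch--Deligne--Illusie identification of big de Rham--Witt forms with relative Milnor K-theory of truncated polynomial rings applies, adapted to Gabber--Kerz's improved Milnor K-theory in order to accommodate possibly finite residue fields. The two displayed formulas pin $\gamma_n$ down uniquely, and the full isomorphism statement is the content of R\"ulling--Saito.

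For the characteristic $p$ pro-isomorphism, the key observation is that the explicit formulas for $\gamma_n$ produce elements which already live, after projection to Quillen K-theory, in the subgroup $K^\sub{sym}_n(R[t]/t^s,(t))$ generated by symbols. Indeed, $\{\gamma(a),b_1,\dots,b_{n-1}\}$ is manifestly such a symbol, and, in view of the description of $\rho_t$ recalled immediately before the statement in terms of Dennis--Stein (equivalently Steinberg) symbols, so is $\res\rho_t(\gamma(a))\{b_1,\dots,b_{n-2}\}$. Since typical elements of the forms $a\dlog[b_1]\cdots\dlog[b_{n-1}]$ and $da\dlog[b_1]\cdots\dlog[b_{n-2}]$ generate $\bb W_{s-1}\Omega_R^{n-1}$ as an abelian group, the composite $\{\bb W_{s-1}\Omega_R^{n-1}\}_s\xto{\gamma_n}\{K^M_n(R[t]/t^s,(t))\}_s\to\{K^\sub{sym}_n(R[t]/t^s,(t))\}_s$ is well-defined and surjective (the surjection being immediate from surjectivity of $\gamma_n$ and of $K^M_n\twoheadrightarrow K^\sub{sym}_n$).

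For pro-injectivity, I would apply the second (``principal'') assertion of Theorem~\ref{theorem_indecomposable} to $A=R[[t]]$ with $I=(t)$; this requires $R$ to be F-finite so that $A$ is regular and F-finite, which is implicit in the remainder of the paper's applications, and then $A/I=R$ is regular local as required. The resulting bicartesian square has surjective vertical arrows, and taking vertical kernels produces an isomorphism of pro abelian groups $\{K^M_n(R[t]/t^s,(t))\}_s\isoto\{K_n(R[t]/t^s,(t))\}_s$. Since the Milnor-to-Quillen map factors canonically as $\{K^M_n\}_s\twoheadrightarrow\{K^\sub{sym}_n\}_s\hookrightarrow\{K_n\}_s$, the outer composition being a pro-iso forces both factors to be pro-isomorphisms, giving exactly what we want.

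The only genuine obstacle is constructing $\gamma_n$ in the first place for a general regular local $R$, which is a delicate analysis at the level of improved Milnor K-theory carried out by R\"ulling--Saito; I would cite this rather than reproduce it. By contrast, the upgrade to a pro-isomorphism in characteristic $p$ is essentially formal once Theorem~\ref{theorem_indecomposable} is in hand, so this part of the proof requires little beyond unwinding definitions.
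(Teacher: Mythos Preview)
Your treatment of the first assertion matches the paper exactly: both simply cite R\"ulling--Saito.

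For the second assertion your route is genuinely different. The paper does not deduce the pro-isomorphism from anything in this article; it observes that the composite is precisely the classical Bloch--Deligne--Illusie comparison map between the de Rham--Witt complex and curves on $K$-theory, and cites Illusie \cite[II.\S5]{Illusie1979} for the fact that this was already proved to be a pro-isomorphism in 1979. In other words, the theorem is a pure attribution result (as its name suggests).

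Your approach via Theorem~\ref{theorem_indecomposable} is logically sound and not circular: the principal case of that theorem uses only the first (R\"ulling--Saito) part of the present statement, so you may invoke it to deduce the second. However, two things are worth noting. First, you have correctly flagged that your argument needs $R$ to be F-finite, whereas the theorem as stated does not assume this; the paper's citation-based proof avoids this restriction (the classical BDI isomorphism, being compatible with filtered colimits, passes from smooth algebras over perfect fields to arbitrary regular local $\bb F_p$-algebras via N\'eron--Popescu). Second, your route is much heavier: it drags in the entire apparatus of Section~\ref{section_K_theory} (trace maps, topological cyclic homology, pro HKR, McCarthy's theorem) to recover a result that predates all of it. The paper's approach buys a cleaner logical structure and full generality; yours buys a self-contained deduction within the paper's framework, at the cost of an extra hypothesis and a large dependency tree.
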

\begin{proof}
The first assertion is a recent result of R\"ulling and Saito \cite[Thm.~4.13]{RullingSaito2015}. In characteristic $p$ the resulting composition $\{\bb W_{s-1}\Omega_R^{n-1}\}_s\to \{K^\sub{sym}_n(R[t]/t^s,(t))\}_s$ is the original comparison map of Bloch--Deligne--Illusie from the de Rham--Witt complex to the curves on $K$-theory, which was shown to be an isomorphism of pro abelian groups at the time \cite[II.\S5]{Illusie1979} (for some further discussion, including references for the case $p=2$, see the proof of \cite[Prop.~2.1]{Morrow_Birelative_dim1}).
\end{proof}

As well as needing the result of R\"ulling--Saito to treat the principal, regular case of Theorem \ref{theorem_indecomposable}, we have recalled the comparison map of Bloch--Deligne--Illusie to state the following curious consequence our main results; it is an inverse log/exp isomorphism between big Hodge--Witt groups and $p$-typical log Hodge--Witt groups:

\begin{corollary}\label{corollary_log_exp}
Let $R$ be a regular, local, F-finite $\bb F_p$-algebra, and fix $n,r\ge 1$. Then the composition \[\{\bb W_s\Omega_R^{n-1}\}_s\xto{\gamma_n}\{K_n^\sub{sym}(R[t]/t^s,(t))\}_s\xto{\dlog^n_{(R[[t]]/t^\infty,(t))}}\{W_r\Omega^n_{(R[t]/t^s,(t)),\sub{log}}\}_s\] is an isomorphism of pro abelian groups, and induces \[\{\bb W_s\Omega_R^{n-1}/p^r\}_s\isoto \{W_r\Omega^n_{(R[t]/t^s,(t)),\sub{log},\sub{Zar}}\}_s.\]
\end{corollary}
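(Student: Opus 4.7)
The plan is to realise the map $\dlog^n_{(R[[t]]/t^\infty,(t))}\circ\gamma_n$ as the composite of four pro isomorphisms from earlier in the paper, and then to identify this composite with the map described in the statement using that the formal $\dlog$ agrees with the classical $\dlog$ on symbols. Throughout, let $A:=R[[t]]$ and $I:=(t)$, so that $A$ is regular, local, F-finite, with $A/I=R$ regular and local, and with $I$ principal.

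The first isomorphism is Theorem~\ref{theorem_BDI_RS}: $\gamma_n:\{\bb W_{s-1}\Omega_R^{n-1}\}_s\isoto\{K_n^\sub{sym}(R[t]/t^s,(t))\}_s$, which is equivalent up to a harmless reindexing to $\{\bb W_s\Omega_R^{n-1}\}_s\isoto\{K_n^\sub{sym}(R[t]/t^s,(t))\}_s$; reducing modulo $p^r$ (and identifying the symbolic subgroup in the pro sense with the improved Milnor group $K_n^M$) yields the first arrow $\{\bb W_s\Omega_R^{n-1}/p^r\}_s\isoto\{K_n^M(R[t]/t^s,(t))/p^r\}_s$. The second isomorphism is the relative form of Corollary~\ref{corollary_principal_case}: since $I$ is principal and $R$ is regular, the bicartesian square of Theorem~\ref{theorem_indecomposable} combined with the isomorphism $K_n^M(R)/p^r\isoto K_n(R)/p^r$ of Theorem~\ref{theorem_GL_BKG} produces $\{K_n^M(R[t]/t^s,(t))/p^r\}_s\isoto\{K_n(R[t]/t^s,(t))/p^r\}_s$. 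The third isomorphism is the collapse of the universal coefficient sequence: Theorem~\ref{theorem_pro_GL} shows $\{K_n(R[t]/t^s)\}_s$ is $p$-torsion-free, and since Lemma~\ref{lemma_F-finite_formal_smooth} yields compatible splittings of $R[t]/t^s\to R$ and $K_n(R)$ is $p$-torsion-free by Theorem~\ref{theorem_GL_BKG}, the relative pro group $\{K_n(R[t]/t^s,(t))\}_s$ is also $p$-torsion-free; hence $\{K_n(R[t]/t^s,(t))/p^r\}_s\isoto\{K_n(R[t]/t^s,(t);\bb Z/p^r)\}_s$. The fourth is the relative form of Theorem~\ref{theorem_pro_GL}, obtained by taking kernels along the split projection to $R$ and applying Theorem~\ref{theorem_GL_BKG} to $R$:
\[
\dlog^n_{(A/I^\infty,I/I^\infty),\bb Z/p^r}:\{K_n(R[t]/t^s,(t);\bb Z/p^r)\}_s\isoto\{W_r\Omega^n_{(R[t]/t^s,(t)),\sub{log},\sub{Zar}}\}_s,
\]
with the coincidence of Zariski and \'etale logarithmic forms provided by Corollary~\ref{corollary_Zar_vs_etale}.

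Composing these four isomorphisms yields the claimed identification $\{\bb W_s\Omega_R^{n-1}/p^r\}_s\isoto\{W_r\Omega^n_{(R[t]/t^s,(t)),\sub{log},\sub{Zar}}\}_s$. To verify that this composite genuinely agrees with $\dlog[\cdot]\circ\gamma_n$, one appeals to Theorem~\ref{theorem_existence}(ii), which says the formal $\dlog$ acts on symbols by the classical $\dlog$, together with the explicit formula for $\gamma_n$ recorded in Theorem~\ref{theorem_BDI_RS}: on a typical generator $a\,\dlog[b_1]\cdots\dlog[b_{n-1}]$ the composite is visibly $\dlog[\gamma(a)]\,\dlog[b_1]\cdots\dlog[b_{n-1}]$, matching the description in the statement. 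The main obstacle is not any one step individually but rather the bookkeeping required to pass to the relative versions of Corollary~\ref{corollary_principal_case} and Theorem~\ref{theorem_pro_GL}, which rests throughout on the compatible splittings of $R[t]/t^s\to R$ coming from formal smoothness of $R$ over $\bb F_p$; once this splitting is in hand, each step is essentially automatic.
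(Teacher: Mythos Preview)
Your chain of four isomorphisms for the mod-$p^r$ identification is correct, and the bookkeeping with the compatible splittings of $R[t]/t^s\to R$ is handled cleanly. However, two points deserve comment.

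First, the corollary makes two assertions, and you have addressed only the second. The first displayed composition (whose target should read $\{W_s\Omega^n_{(R[t]/t^s,(t)),\sub{log}}\}_s$, matching the codomain of $\dlog^n_{(A/I^\infty,I/I^\infty)}$ defined just before Corollary~\ref{corollary_mccarthy}) is claimed to be an isomorphism \emph{integrally}, and the mod-$p^r$ statement is then said to be induced from it. Your argument never establishes this integral isomorphism; it would follow by the same splitting argument, but using the principal case of Theorem~\ref{theorem_indecomposable} (which gives $\{K_n^M(A/I^s,I/I^s)\}_s\isoto\{K_n(A/I^s,I/I^s)\}_s$ before reducing mod $p^r$) together with Corollary~\ref{corollary_mccarthy}, in place of your steps (2)--(4).

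Second, the paper's route is shorter and organised differently. It first obtains the integral isomorphism as the composite of $\gamma_n$ (Theorem~\ref{theorem_BDI_RS}) with Corollary~\ref{corollary_mccarthy}, implicitly using that $\{K_n^\sub{sym}(A/I^s,I/I^s)\}_s\isoto\{K_n(A/I^s,I/I^s)\}_s$ in the principal/regular case (a consequence of the bicartesian square in Theorem~\ref{theorem_indecomposable} just proved). It then deduces the mod-$p^r$ statement purely on the Hodge--Witt side, via Theorem~\ref{theorem_de_Rham--Witt_log} (or rather Corollary~\ref{corollary_de_Rham--Witt_log_diagonal}), which identifies $\{W_s\Omega^n_{(A/I^s,I/I^s),\sub{log}}/p^r\}_s$ with $\{W_r\Omega^n_{(A/I^s,I/I^s),\sub{log}}\}_s$. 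Your approach instead routes the mod-$p^r$ passage through the $K$-theory side (Corollary~\ref{corollary_principal_case} and the collapse of the universal-coefficient sequence). This is valid and has the merit of making the role of the principal hypothesis on $I$ explicit, but it reproves in $K$-theoretic language what Corollary~\ref{corollary_mccarthy} and Theorem~\ref{theorem_de_Rham--Witt_log} already package more directly.
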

\begin{proof}
The first isomorphism is an immediate consequence of Corollary \ref{corollary_mccarthy} together with the isomorphism of Bloch--Deligne--Illusie recalled in Theorem \ref{theorem_BDI_RS}. The isomorphism modulo $p^r$ then follows from Theorem \ref{theorem_de_Rham--Witt_log}.
\end{proof}

\subsection{Continuity in characteristic $p$}\label{subsection_continuity_K}
The continuity problem in algebraic $K$-theory asks if the map $K(A)\to \op{holim_s}K(A/I^s)$ is a weak-equivalence, at least with finite coefficients, when $A$ is an $I$-adically complete ring. Omitting the early history (i.e., discrete valuation rings, and Gabber's rigidity theorem away from the residue characteristic) and the mixed characteristic results, this is know to be true if $A$ is a regular, local, F-finite $\bb F_p$-algebra and $I\subseteq A$ is an ideal such that $A/I$ is also regular by Geisser--Hesselholt \cite{GeisserHesselholt2006b} (note that these hypotheses imply that $A$ is a power series algebra over $A/I$, which is how Geisser--Hesselholt state their result); we improve this by allowing $A/I$ to be gnc:

\begin{theorem}\label{theorem_continuity}
Let $A$ be a regular, local, F-finite $\bb F_p$-algebra, and $I\subseteq A$ an ideal such that $A$ is $I$-adically complete and $A/I$ is gnc. Then the canonical maps \[K_n(A;\bb Z/p^r)\To \pi_n\op{holim}_sK_n(A/I^s;\bb Z/p^r)\To \projlim_sK_n(A/I^s;\bb Z/p^r)\] are isomorphisms for all $n\ge0$, $r\ge1$. 
\end{theorem}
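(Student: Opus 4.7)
The strategy is to transfer the statement across the $\dlog$ isomorphisms from Theorems~\ref{theorem_GL_BKG} and \ref{theorem_pro_GL}, reducing it to the continuity of logarithmic Hodge--Witt groups already established in Corollary~\ref{lemma_continuity_of_log_HW}. Since $A$ is regular, local, and F-finite, Theorem~\ref{theorem_GL_BKG} provides an isomorphism $K_n(A)/p^r\cong W_r\Omega^n_{A,\sub{log}}$ and shows that $K_n(A)$ and $K_{n-1}(A)$ are $p$-torsion-free; the universal coefficient sequence then gives $K_n(A;\bb Z/p^r)\cong W_r\Omega^n_{A,\sub{log}}$. Similarly, since $A/I$ is gnc and local, Theorem~\ref{theorem_pro_GL} produces a pro-isomorphism $\{K_n(A/I^s;\bb Z/p^r)\}_s\cong\{W_r\Omega^n_{A/I^s,\sub{log}}\}_s$, using that the pro abelian group $\{K_{n-1}(A/I^s)\}_s$ is $p$-torsion-free.

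Next, the Milnor short exact sequence
\[0\to \projlim\nolimits_s^1 K_{n+1}(A/I^s;\bb Z/p^r)\to \pi_n\holim_s K(A/I^s;\bb Z/p^r)\to \projlim_s K_n(A/I^s;\bb Z/p^r)\to 0\]
reduces the right-hand arrow to showing $\projlim^1_s K_{n+1}(A/I^s;\bb Z/p^r)=0$. By Corollary~\ref{corollary_thm_from_intro} this pro abelian group is the target of a pro surjection from $\{K^M_{n+1}(A/I^s)/p^r\}_s$. As $A$ is local, so is every $A/I^s$, whence $(A/I^{s+1})^\times\twoheadrightarrow(A/I^s)^\times$ (any lift of a unit is a unit, as $I^s/I^{s+1}$ is nilpotent); and using that the improved Milnor $K$-theory of a local ring is generated by symbols of units, this forces the levelwise transitions $K^M_{n+1}(A/I^{s+1})\twoheadrightarrow K^M_{n+1}(A/I^s)$ to be surjective. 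Hence $\{K^M_{n+1}(A/I^s)/p^r\}_s$ is Mittag-Leffler, so its $\projlim^1$ vanishes; since $\projlim^1$ factors through the category of pro abelian groups, the same holds for the pro-isomorphic system $\{K_{n+1}(A/I^s;\bb Z/p^r)\}_s$.

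To finish, combining the previous paragraphs with Corollary~\ref{lemma_continuity_of_log_HW} yields a commutative square
\[\xymatrix{
K_n(A;\bb Z/p^r)\ar[r]\ar[d]_{\cong} & \projlim_s K_n(A/I^s;\bb Z/p^r)\ar[d]^{\cong} \\
W_r\Omega_{A,\sub{log}}^n\ar[r]^-{\cong} & \projlim_s W_r\Omega_{A/I^s,\sub{log}}^n
}\]
whose commutativity is the compatibility-with-completion statement of Theorem~\ref{theorem_existence}(vi) (recall that $A=\hat A$). This shows the top arrow is an isomorphism, and combined with the $\projlim^1$-vanishing above it gives that $K_n(A;\bb Z/p^r)\to\pi_n\holim_s K(A/I^s;\bb Z/p^r)$ is an isomorphism too.

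The main obstacle is the $\projlim^1$-vanishing: although lifting units in a local ring is elementary, transferring the resulting Mittag-Leffler property across the pro-isomorphism with Quillen $K$-theory uses the full strength of Theorem~\ref{theorem_pro_GL}. Everything else is a formal consequence of the material already developed in Sections~\ref{section_drw}, \ref{section_hw_n>1}, and \ref{section_K_theory}.
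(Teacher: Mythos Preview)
Your approach is essentially the paper's: reduce to continuity of logarithmic Hodge--Witt groups via Theorems~\ref{theorem_GL_BKG} and \ref{theorem_pro_GL}, then invoke Corollary~\ref{lemma_continuity_of_log_HW}. The paper's treatment of the $\projlim^1$-vanishing is slightly cleaner than yours: it observes directly that $\{K_{n+1}(A/I^s;\bb Z/p^r)\}_s$ is pro-isomorphic to $\{W_r\Omega^{n+1}_{A/I^s,\sub{log},\sub{Zar}}\}_s$ (by Theorem~\ref{theorem_pro_GL}), and the latter has surjective transition maps for the same reason your Milnor $K$-theory system does.

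There is one slip in your write-up. You assert that $\{K^M_{n+1}(A/I^s)/p^r\}_s$ and $\{K_{n+1}(A/I^s;\bb Z/p^r)\}_s$ are ``pro-isomorphic'', but Corollary~\ref{corollary_thm_from_intro} only gives a pro \emph{surjection}; the kernel need not vanish. Your conclusion is nevertheless correct: if $f_s:M_s\to N_s$ is a strict levelwise map which is a pro-surjection, then $\{f_s(M_s)\}_s\hookrightarrow\{N_s\}_s$ has pro-zero cokernel and is therefore a pro-isomorphism, so $\projlim^1 N_s\cong\projlim^1 f_s(M_s)$, and the latter is a quotient of $\projlim^1 M_s=0$. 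Alternatively, and more in the spirit of the paper, you could bypass Milnor $K$-theory entirely and use the genuine pro-isomorphism with $\{W_r\Omega^{n+1}_{A/I^s,\sub{log}}\}_s$ from Theorem~\ref{theorem_pro_GL}; that system has surjective transitions (units lift, hence dlog forms surject) and is pro-isomorphic to the one whose $\projlim^1$ you need.
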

\begin{proof}
Thanks to Theorem \ref{theorem_pro_GL} we know that $\projlim_s^1K_n(A/I^s;\bb Z/p^r)\cong\projlim_s^1W_r\Omega_{A/I^s,\sub{log},\sub{Zar}}^n$, which vanishes for all $n$ since the transition maps in the latter system are surjective; therefore $\pi_n\op{holim}_sK_n(A/I^s;\bb Z/p^r)\isoto \projlim_sK_n(A/I^s;\bb Z/p^r)$.

Combining Theorems \ref{theorem_GL_BKG} and \ref{theorem_pro_GL}, the remaining assertion to show is that the map \[W_r\Omega_{A,\sub{log},\sub{Zar}}^n\To \projlim_sW_r\Omega_{A/I^s,\sub{log},\sub{Zar}}^n\] is an isomorphism. We already proved this in Corollary \ref{lemma_continuity_of_log_HW}.
\end{proof}

Since we have never previously seen a continuity result for Milnor $K$-theory, we explicitly state the following consequence:

\begin{corollary}\label{corollary_continuity_Milnor}
Let $A$ be a regular, local, F-finite $\bb F_p$-algebra, and $I\subseteq A$ an ideal such that $A$ is $I$-adically complete and $A/I$ is gnc. Then, for each $n\ge 0$, $r\ge 1$, the canonical map \[K_n^M(A)/p^r\To\projlim_s(K_n^M(A/I^s)/p^r)\] is split injective with cokernel isomorphic to $\projlim_s\ker(K_n^M(A/I^s)/p^r\to K_n(A/I^s)/p^r)$. This cokernel vanishes either
\begin{enumerate}\itemsep0pt
\item if $n\le p$,
\item or if $I$ is principal and $A/I$ is regular.
\end{enumerate}
\end{corollary}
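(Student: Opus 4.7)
The plan is to combine Theorem \ref{theorem_GL_BKG} applied to $A$ itself, the continuity result of Theorem \ref{theorem_continuity}, and the $p$-torsion-freeness from Theorem \ref{theorem_pro_GL} to produce a natural splitting of the map in the statement. First I would use Theorem \ref{theorem_GL_BKG} (since $A$ is regular, local, and F-finite) to identify the isomorphism $K_n^M(A)/p^r \isoto K_n(A)/p^r$. Next, since $\{K_{n-1}(A/I^s)\}_s$ is $p$-torsion-free by Theorem \ref{theorem_pro_GL}, the pro abelian groups $\{K_n(A/I^s)/p^r\}_s$ and $\{K_n(A/I^s;\bb Z/p^r)\}_s$ coincide, so Theorem \ref{theorem_continuity} yields an isomorphism $K_n(A)/p^r \isoto \projlim_s K_n(A/I^s)/p^r$. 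Writing $\alpha$ for the canonical map $K_n^M(A)/p^r \to \projlim_s K_n^M(A/I^s)/p^r$ and $\beta$ for the $\projlim$ of the symbol maps $K_n^M(A/I^s)/p^r \to K_n(A/I^s)/p^r$, the composition $\beta\circ\alpha$ is then an isomorphism, so $\alpha$ is split injective with retraction $(\beta\circ\alpha)^{-1}\circ\beta$.

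To identify the cokernel I would set $F_s := \ker(K_n^M(A/I^s)/p^r \to K_n(A/I^s)/p^r)$. By Corollary \ref{corollary_thm_from_intro} the symbol maps are surjective on the pro systems, so there is a short exact sequence of pro abelian groups
\[0 \To \{F_s\}_s \To \{K_n^M(A/I^s)/p^r\}_s \To \{K_n(A/I^s)/p^r\}_s \To 0,\]
and applying $\projlim_s$ identifies $\ker(\beta) = \projlim_s F_s$. Since $\alpha$ is split injective by the above retraction whose kernel equals $\ker(\beta)$, its cokernel is $\projlim_s F_s$, as required.

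For the vanishing statements, case (ii) is immediate from Corollary \ref{corollary_principal_case}, which asserts that $\{F_s\}_s$ is the zero pro abelian group and hence $\projlim_s F_s = 0$. Case (i) is the main point of mild interest: I would invoke the standard Chern class from Quillen to Milnor $K$-theory, as already used in the proof of Theorem \ref{theorem_indecomposable}, which asserts that the composition $K_n^M(B) \xto{\op{sym}} K_n(B) \xto{c_n} K_n^M(B)$ equals multiplication by $\pm(n-1)!$ for any suitable ring $B$. This forces $\ker(\op{sym})$ to be annihilated by $(n-1)!$. When $n \le p$, one has $v_p((n-1)!) = 0$, so $(n-1)!$ is a unit modulo $p^r$, and therefore $F_s$, being simultaneously killed by $p^r$ and by a unit modulo $p^r$, vanishes. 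Hence $\projlim_s F_s = 0$, and the only piece of careful bookkeeping is keeping the $\bb Z/p^r$-coefficient conventions aligned when invoking Theorem \ref{theorem_continuity}; once that is done the argument is essentially formal.
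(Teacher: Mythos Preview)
Your proof is correct and follows essentially the same route as the paper: both identify $K_n^M(A)/p^r\cong K_n(A)/p^r$ via Theorem \ref{theorem_GL_BKG}, pass to $\projlim_s K_n(A/I^s)/p^r$ via Theorem \ref{theorem_continuity} (using the $p$-torsion-freeness of Theorem \ref{theorem_pro_GL} to replace $\bb Z/p^r$-coefficients by $/p^r$), and read off the splitting and cokernel from the resulting diagram; the two vanishing cases are handled identically via the Chern class and Corollary \ref{corollary_principal_case}. Your write-up is simply a more explicit version of the paper's diagram chase.
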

\begin{proof}
There is a commutative diagram with exact top row:
\[\xymatrix@C=4mm{
0\ar[r]& \ar[r]\projlim_s\ker(K_n^M(A/I^s)/p^r\to K_n(A/I^s)/p^r)\ar[r] & \projlim_s(K_n^M(A/I^s)/p^r)\ar[r] & \projlim_s(K_n(A/I^s)/p^r)\\
&&K_n^M(A)/p^r\ar[r]\ar[u]&K_n(A)/p^r\ar[u]
}\]
The bottom horizontal arrow is an isomorphism by Theorem \ref{theorem_GL_BKG} and the right vertical arrow is isomorphism by Theorem \ref{theorem_continuity} (recall from Theorem \ref{theorem_GL_BKG} that we know $K_n(A;\bb Z/p^r)=K_n(A)/p^r$ and $\{K_n(A/I^s;\bb Z/p^r)\}_s=\{K_n(A/I^s)/p^r\}_s$). A trivial diagram chase shows that the central vertical arrow is therefore split injective with the claimed cokernel. This cokernel vanishes if $n\le p$ by the existence of the Chern class from Quillen to Milnor $K$-theory, and if $I$ is principal and $A/I$ is regular by Corollary~\ref{corollary_principal_case}.
\end{proof}

\subsection{Class groups in Zariski and Nisnevich toplogies}\label{subsection_class_group}
If $X$ is a smooth, $d$-dimensional variety over a perfect field of characteristic $p$, then a standard consequence of Gersten's conjecture (or of the structure of $\cal K_n$ as a homotopy invariant presheaf with transfer) is that the canonical maps $H^i_\sub{Zar}(X,\cal K_n)\to H^i_\sub{Nis}(X,\cal K_{n,\sub{Nis}})$ are isomorphisms for all $i,n\ge 0$, and similarly for Milnor $K$-theory.

If now $Y\into X$ is a normal crossing divisor, then it was conjectured by Kato and Saito \cite[pg.~256]{KatoSaito1986}, as part of their higher dimensional class field theory, that the analogous maps \[\projlim_sH^i_\sub{Zar}(X,\cal K^M_{n,(X,Y_s)})\To \projlim_sH^i_\sub{Nis}(X,\cal K^M_{n,(X,Y_s),\sub{Nis}})\] would also be isomorphisms if the base field were finite and $i=n=d$, in which case the left and right side play the role of certain Zariski/Nisnevich class groups in their theory. A similar conjecture over general base fields was then raised in \cite[Qu.~IV]{KerzSaito2013}. The new theory of reciprocity sheaves \cite{KahnSaitoYamazaki2014} even predicts that $H^i_\sub{Zar}(X,\cal K^M_{n,(X,Y_s)})\isoto H^i_\sub{Nis}(X,\cal K^M_{n,(X,Y_s),\sub{Nis}})$ for each fixed $s\ge1$; this was established in the case that $Y$ is a smooth divisor by R\"ulling and Saito \cite[Corol.~2.29]{RullingSaito2015} (it follows easily from their result recalled in Theorem \ref{theorem_BDI_RS}, since the big de Rham--Witt sheaf has no higher Nisnevich cohomology on affines).

The goal of this section is to show that the following mod $p$-power version of Kato--Saito's conjecture is true:

\begin{theorem}\label{theorem_class_groups}
Let $X$ be a regular, F-finite $\bb F_p$-scheme, and $Y\into X$ a gnc closed subscheme; let $i,n,r\ge0$. Then the maps of pro abelian groups
\[\{H^i_\sub{Zar}(Y_s,\cal K_{n,Y_s}/p^r)\}_s\To \{H^i_\sub{Nis}(Y_s,\cal K_{n,Y_s,\sub{Nis}}/p^r)\}_s\]
\[\{H^i_\sub{Zar}(X,\cal K_{n,(X,Y_s)}/p^r)\}_s\To \{H^i_\sub{Nis}(X,\cal K_{n,(X,Y_s),\sub{Nis}}/p^r)\}_s\]
are isomorphisms.
\end{theorem}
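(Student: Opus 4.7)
The plan is to deduce both isomorphisms from the sheafified form of Theorem \ref{theorem_pro_GL} (as spelled out in Remark \ref{remark_dlog_formal_schemes}) together with the comparison of topologies in Corollary \ref{corollary_relative_nilpotent_hw}. The strategy is to translate each statement about pro $K$-cohomology sheaves mod $p^r$ into a statement about pro logarithmic Hodge--Witt cohomology, where the Zariski--Nisnevich comparison has already been established.

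First I would handle the absolute case. By Theorem \ref{theorem_pro_GL} applied to every stalk, together with the identification $\{\cal K_{n,Y_s,\bb Z/p^r,\tau}\}_s = \{\cal K_{n,Y_s,\tau}/p^r\}_s$ coming from the $p$-torsion-freeness of $\{\cal K_{n,Y_s,\tau}\}_s$, the map $\dlog^n_{\cal Y,\bb Z/p^r}$ of Remark \ref{remark_dlog_formal_schemes} gives isomorphisms of pro $\tau$-sheaves on $Y$
\[\{\cal K_{n,Y_s,\tau}/p^r\}_s\Isoto \{W_r\Omega_{Y_s,\sub{log},\tau}^n\}_s\]
for $\tau \in \{\sub{Zar}, \sub{Nis}\}$. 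Taking $\tau$-cohomology transports the desired comparison into
\[\{H^i_\sub{Zar}(Y_s,W_r\Omega_{Y_s,\sub{log},\sub{Zar}}^n)\}_s\To\{H^i_\sub{Nis}(Y_s,W_r\Omega_{Y_s,\sub{log},\sub{Nis}}^n)\}_s,\]
which is precisely what the second quasi-isomorphism of Corollary \ref{corollary_relative_nilpotent_hw} (the one for gnc $Y$ on $Y$ itself) provides upon taking Zariski hypercohomology.

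Next I would treat the relative case on $X$. By Theorem \ref{theorem_GL_BKG} applied to the regular $\bb F_p$-scheme $X$, one has $\cal K_{n,X,\tau}/p^r\cong W_r\Omega_{X,\sub{log},\tau}^n$, and by the absolute case just handled (sheafified on $X$) one has $\{\cal K_{n,Y_s,\tau}/p^r\}_s\cong\{W_r\Omega^n_{Y_s,\sub{log},\tau}\}_s$; the canonical restriction $W_r\Omega_{X,\sub{log},\tau}^n \to W_r\Omega^n_{Y_s,\sub{log},\tau}$ is surjective in each of the three topologies (since both sides are generated by dlog forms \'etale-locally and since the three topologies agree for log Hodge--Witt by Corollary \ref{corollary_Zar_vs_etale}(i) and the argument of its proof). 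Therefore I obtain a short exact sequence of pro $\tau$-sheaves on $X$,
\[0\To\{\cal K_{n,(X,Y_s),\tau}/p^r\}_s\To \cal K_{n,X,\tau}/p^r\To\{\cal K_{n,Y_s,\tau}/p^r\}_s\To 0,\]
whose kernel is identified with $\{W_r\Omega_{(X,Y_s),\sub{log},\tau}^n\}_s$. Thus the desired comparison on $X$ reduces to showing
\[\{H^i_\sub{Zar}(X,W_r\Omega_{(X,Y_s),\sub{log},\sub{Zar}}^n)\}_s\To\{H^i_\sub{Nis}(X,W_r\Omega_{(X,Y_s),\sub{log},\sub{Nis}}^n)\}_s\]
is an isomorphism, which is the third quasi-isomorphism of Corollary \ref{corollary_relative_nilpotent_hw} (the one for the relative complex on $X$ under the gnc hypothesis).

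The bulk of the work is really packaged into the inputs: the sheafified Geisser--Levine/Bloch--Kato--Gabber type isomorphism $\dlog$ at pro level (Theorem \ref{theorem_pro_GL} and Remark \ref{remark_dlog_formal_schemes}) and the coincidence of the Zariski, Nisnevich, and \'etale cohomologies of $W_r\Omega_{\sub{log}}^n$ on pro systems in the gnc setting (Corollary \ref{corollary_relative_nilpotent_hw}, itself relying on the Gros--Suwa--Shiho Gersten resolution and on the filtration calculations of \S\ref{subsection_dlog_n>1}). Given these, the remaining content is an assembly; the only slightly delicate point will be verifying surjectivity of $\cal K_{n,X,\tau}/p^r \onto \{\cal K_{n,Y_s,\tau}/p^r\}_s$ in both the Zariski and Nisnevich topologies so as to legitimately extract the relative pro sheaf as a kernel, and this is where the identification of the three topologies for logarithmic Hodge--Witt forms is essential.
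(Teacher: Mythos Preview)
Your proposal is correct and follows essentially the same route as the paper: identify $\{\cal K_{n,Y_s,\tau}/p^r\}_s$ and $\{\cal K_{n,(X,Y_s),\tau}/p^r\}_s$ with the corresponding pro logarithmic Hodge--Witt sheaves via the sheafified form of Theorems~\ref{theorem_GL_BKG} and~\ref{theorem_pro_GL} (Remark~\ref{remark_dlog_formal_schemes}), and then invoke Corollary~\ref{corollary_relative_nilpotent_hw}. The paper's proof is a two-line version of exactly this; your extra care in the relative case (extracting $\{\cal K_{n,(X,Y_s),\tau}/p^r\}_s$ as a kernel via surjectivity and $p$-torsion-freeness) is a legitimate unpacking of what the paper leaves implicit, and your caveat about needing the sheaf-level statement rather than just stalkwise input is well taken and is precisely why Remark~\ref{remark_dlog_formal_schemes} is there.
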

\begin{proof}
It follows from the sheaf versions of Theorems \ref{theorem_GL_BKG} and \ref{theorem_pro_GL} that $\{\cal K_{n,Y_s}/p^r\}_s\isoto\{W_r\Omega_{(X,Y_s),\sub{log},\sub{Zar}}^n\}_s$ and $\{\cal K_{n,(X,Y_s)}/p^r\}_s\isoto\{W_r\Omega_{(X,Y_s),\sub{log},\sub{Zar}}^n\}_s$, and similarly in the Nisnevich topology. So the desired isomorphisms are exactly Corollary \ref{corollary_relative_nilpotent_hw}.\qedhere
\end{proof}

\begin{remark}[Pro Gersten vanishing]\label{remark_pro_Gersten}
Under the hypotheses of the previous theorem, it seems possible that $\{H^i_\sub{Zar}(Y_s,\cal K_{n,Y_s})\}_s$ vanishes whenever $i>n$, as an analogue of usual Gersten vanishing in the regular case. See Remark \ref{remark_pro_Gersten_1} concerning the $n=1$ case.
\end{remark}

\begin{remark}[Variations]
If $Y$ is actually regular in the previous theorem, then a modification of the proof shows that the maps of pro abelian groups
\begin{align}
\{H^i_\sub{Zar}(X,\cal K_{n,Y_s})\}_s&\To \{H^i_\sub{Nis}(X,\cal K_{n,Y_s,\sub{Nis}})\}_s \label{eqn_c1}\\
\{H^i_\sub{Zar}(X,\cal K_{n,(Y_s,Y)})\}_s&\To \{H^i_\sub{Nis}(X,\cal K_{n,(Y_s,Y),\sub{Nis}})\}_s \label{eqn_c2}
\end{align}
are isomorphisms for all $i,n\ge0$, and (using the proof of Theorem \ref{theorem_indecomposable}) that therefore the kernel and cokernel of
\begin{align}
\{H^i_\sub{Zar}(X,\cal K^M_{n,(Y_s,Y)})\}_s\To \{H^i_\sub{Nis}(X,\cal K^M_{n,(Y_s,Y),\sub{Nis}})\}_s \label{eqn_c3}
\end{align}
are killed by $p^N$, where $N$ is the $p$-adic valuation of $(n-1)!$.

If $Y$ is once again gnc, similar excision arguments to those used in the proof of Theorem \ref{theorem_indecomposable} then show that the kernel and cokernel of (\ref{eqn_c1})--(\ref{eqn_c3}) are still killed by a power of $p$. It also easily follows from the sheaf versions of Theorems \ref{theorem_GL_BKG} and \ref{theorem_pro_GL} that the canonical map $\{K^M_{n,(X,Y_s)}/p^r\}_s\to \{K_{n,(X,Y_s)}/p^r\}_s$ is injective and has kernel killed by $p^N$, and similarly in the Nisnevich topology; hence we deduce from the previous theorem that the kernel and cokernel of \[\{H^i_\sub{Zar}(X,\cal K^M_{n,(X,Y_s)}/p^r)\}_s\To \{H^i_\sub{Nis}(X,\cal K^M_{n,(X,Y_s),\sub{Nis}}/p^r)\}_s\] are killed by $p^N$.
\end{remark}

\subsection{The deformation of higher codimension cycles}\label{subsection_higher_codim}
Now we extend the results of Section \ref{subsection_deform_line} concerning line bundles to higher codimension cycles, as well as proving our infinitesimal Lefschetz theorem for Chow groups. We consider, for any $\bb F_p$-scheme $Y$, its {\em cohomological Chow group} \[\CH^n(Y):=H^n_\sub{Zar}(Y,\cal K_{n,Y}),\] where $\cal K_{n,Y}$ denotes as usual the Zariski sheafification of the $n^\sub{th}$ $K$-group presheaf on $Y$. If $Y$ is regular then Gersten's conjecture implies that $\CH^n(Y)$ is the usual Chow group of codimension-$n$ cycles modulo rational equivalence, but this is not true for general $Y$: in particular, $\CH^n(Y)$ is sensitive to infinitesimally thickening $Y$.

Assuming that $Y$ is regular, the composition \[\dlog^n_{r,Y}:\cal K_{n,Y}\To\cal K_{n,Y}/p^r\cong\cal K_{n,Y}^M/p^r\xTo{\dlog[\cdot]}W_r\Omega_{Y,\sub{log},\sub{Zar}}^n\] induces $c_{n,\sub{Zar}}:\CH^n(Y)\to H^n_\sub{Zar}(X,W_r\Omega_{Y,\sub{log},\sub{Zar}}^n)$, and composing with a change of topology map $H^n_\sub{Zar}(X,W_r\Omega_{Y,\sub{log},\sub{Zar}}^n)\to H^n_\sub{\'et}(X,W_r\Omega_{Y,\sub{log}}^n)$ defines the {\em \'etale-motivic cycle class map} \[c_n:\CH^n(Y)\To H^n_\sub{\'et}(X,W_r\Omega_{Y,\sub{log}}^n)=H^{2n}_\sub{\'et}(Y,\bb Z/p^r\bb Z(n))\] Letting $r\to\infty$ similarly defines \[c_n:\CH^n(Y)\To H^{2n}_\sub{\'et}(Y,\bb Z_p(n))\] (to be precise, this is the map on continuous Zariski hypercohomology induced by $\cal K_{n,Y}\xto{\dlog_{r,Y}^n} \{W_r\Omega^n_{Y,\sub{log},\sub{Zar}}\}_r\to \{R\ep_*W_r\Omega^n_{Y,\sub{log}}\}_r$).

Our main result in this section generalises Theorem \ref{theorem_deforming_line_bundles} by characterising whether a cycle deforms in terms of its \'etale-motivic cycle class:

\begin{theorem}\label{theorem_higher_codim}
Let $\cal Y$ be a regular, F-finite, formal $\bb F_p$-scheme whose reduced subscheme of definition $Y=Y_1$ is regular. Let $z\in \CH^n(Y)$. Then:
\begin{enumerate}
\item Given $r\ge 1$ there exist $t\ge p^r$ (depending only on $\cal Y$, not $z$) such that, if $c_n(z)\in H^{2n}_\sub{\'et}(Y,\bb Z/p^r\bb Z(n))$ lifts to $H^{2n}_\sub{\'et}(Y_t,\bb Z/p^r\bb Z(n))$ then $L$ lifts to $\CH^n(Y_{p^r})$.
\item $z$ lifts to $\projlim_s\CH^n(Y_s)$ if and only if $c_n(z)\in H^{2n}_\sub{\'et}(Y,\bb Z_p(n))$ lifts to $\projlim_sH^{2n}_\sub{\'et}(Y_s,\bb Z_p(n))$.
\end{enumerate}
\end{theorem}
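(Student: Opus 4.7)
The plan is to mimic the proof of Theorem \ref{theorem_deforming_line_bundles}, with the Cartier sequence $0\to\bb G_m\xto{p^r}\bb G_m\to W_r\Omega^1_\sub{log}\to 0$ replaced by a higher-codimensional analogue for the sheaves $\cal K_{n,Y_s}$. Since $Y$ is regular, the sheafified forms of Theorem~\ref{theorem_pro_GL} and Corollary~\ref{corollary_mccarthy} collected in Remark~\ref{remark_dlog_formal_schemes} yield isomorphisms of pro Zariski sheaves on $Y$
\[ \{\cal K_{n,Y_s}/p^r\}_s\isoto \{W_r\Omega^n_{Y_s,\sub{log},\sub{Zar}}\}_s,\quad \{\cal K_{n,(Y_s,Y)}/p^r\}_s\isoto \{W_r\Omega^n_{(Y_s,Y),\sub{log},\sub{Zar}}\}_s, \]
together with the $p$-torsion-freeness of the pro sheaves $\{\cal K_{n,Y_s}\}_s$ and $\{\cal K_{n,(Y_s,Y)}\}_s$ (the relative case follows from the absolute one since $Y\into Y_s$ is split by the formal smoothness of $Y$ over $\bb F_p$, Lemma~\ref{lemma_F-finite_formal_smooth}). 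Consequently there are short exact sequences of pro Zariski sheaves on $Y$
\[ 0\to \{\cal K_{n,(Y_s,Y)}\}_s\xto{p^r} \{\cal K_{n,(Y_s,Y)}\}_s\to \{W_r\Omega^n_{(Y_s,Y),\sub{log},\sub{Zar}}\}_s\to 0, \]
and analogously for $\{\cal K_{n,Y_s}\}_s$ and for $\cal K_{n,Y}$, the last via Theorem~\ref{theorem_GL_BKG}.

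The first step is to assemble these into a $3\times 3$ diagram of short exact sequences with exact columns, take Zariski hypercohomology in degree $n$, and invoke Corollary~\ref{corollary_relative_nilpotent_hw} to identify the right column with \'etale-motivic cohomology $H^{2n}_\sub{\'et}(\cdot,\bb Z/p^r(n))$ and its relative variants. This produces a diagram of pro abelian groups structurally identical to the one in the proof of Theorem~\ref{theorem_deforming_line_bundles}, with $\Pic(\cdot)$ replaced by $\CH^n(\cdot)$ and $H^2_\sub{\'et}(Y_s,\bb G_{m,(Y_s,Y)})$ replaced by $H^{n+1}_\sub{Zar}(Y,\cal K_{n,(Y_s,Y)})$.

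Part~(i) then follows by the diagram chase in the proof of Theorem~\ref{theorem_deforming_line_bundles}, once one verifies that $p^r$ annihilates the group $H^{n+1}_\sub{Zar}(Y,\cal K_{n,(Y_{p^r},Y)})$: the pro exactness of the bottom row furnishes $t$ such that the image of $\ker c_{n,(Y_t,Y)}$ in that group consists of $p^r$-multiples, which then vanish, forcing $\delta(z)=0$ at level $p^r$ and so yielding the desired lift. For part~(ii) one reindexes along the diagonal $r=s$: the pro-nilpotence of multiplication by $p$ on the system $\{\cal K_{n,(Y_s,Y)}\}_s$ (each term is $p$-power torsion by \cite[Thm.~A]{GeisserHesselholt2011}) kills the top-left entry of the diagram, collapsing it to a bicartesian square of pro Zariski sheaves on $Y$; taking Jannsen continuous cohomology and unpacking it via the Roos-style short exact sequences of Remark~\ref{remark_ses_in_continuous_cohomology} then recovers the six-term exact sequence
\[ \projlim_s \CH^n(Y_s)\to \CH^n(Y)\oplus\projlim_s H^{2n}_\sub{\'et}(Y_s,\bb Z_p(n))\to H^{2n}_\sub{\'et}(Y,\bb Z_p(n)). \]

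The main obstacle is precisely the annihilation statement needed for part~(i). In the line-bundle case this was an elementary consequence of $(1+f)^{p^r}=1+f^{p^r}$; for $n\ge 2$ it must be extracted from the pro isomorphism of Corollary~\ref{corollary_mccarthy} combined with the identity $p^s=V^sF^s=0$ on $W_s\Omega^n$, controlling the exponent of $p$-torsion of $\cal K_{n,(Y_{p^r},Y)}$ level by level and absorbing any bounded shift arising from the pro comparison into the choice of $t\ge p^r$.
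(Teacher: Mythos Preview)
Your overall strategy matches the paper's: set up the $3\times 3$ diagram of pro Zariski sheaves
\[0\to\{\cal K_{n,(Y_s,Y)}\}_s\xto{p^r}\{\cal K_{n,(Y_s,Y)}\}_s\to\{W_r\Omega^n_{(Y_s,Y),\sub{log},\sub{Zar}}\}_s\to 0\]
(and similarly for $Y_s$ and $Y$), then take cohomology and argue as in Theorem~\ref{theorem_deforming_line_bundles}. Two points need correction.

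\textbf{Zariski versus \'etale.} Your claim that Corollary~\ref{corollary_relative_nilpotent_hw} identifies the right column with \'etale-motivic cohomology is not justified: that corollary only gives $\{W_r\Omega^n_{Y_s,\sub{log},\sub{Zar}}\}_s\quis\{R\ep_{\sub{Nis}*}W_r\Omega^n_{Y_s,\sub{log},\sub{Nis}}\}_s$ for the absolute terms, \emph{not} the comparison with $R\ep_*$. In general $H^n_\sub{Zar}(Y,W_r\Omega^n_{Y,\sub{log},\sub{Zar}})\neq H^{2n}_\sub{\'et}(Y,\bb Z/p^r(n))$, so your diagram chase, carried out in the Zariski topology, does not directly yield the \'etale statement. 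The paper resolves this by observing that only the \emph{relative} term has Zariski equal to \'etale cohomology (Corollary~\ref{corollary_Zar_to_etale}, or the first clause of Corollary~\ref{corollary_relative_nilpotent_hw}), and that the obstruction to lifting is detected by the boundary map into exactly this relative group. Concretely, one shows the change-of-topology square
\[\xymatrix{\{W_s\Omega^n_{Y_s,\sub{log},\sub{Zar}}\}_s\ar[r]\ar[d]&\{R\ep_*W_s\Omega^n_{Y_s,\sub{log}}\}_s\ar[d]\\ \{W_s\Omega^n_{Y,\sub{log},\sub{Zar}}\}_s\ar[r]&\{R\ep_*W_s\Omega^n_{Y,\sub{log}}\}_s}\]
is homotopy cartesian, and concatenates it with your Zariski bicartesian square. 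For part~(i) at finite level, the analogous argument is that $c_{n,\sub{Zar}}(z)$ lifts in the Zariski groups if and only if $c_n(z)$ lifts in the \'etale groups, because the boundary maps land in isomorphic relative groups.

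\textbf{The $p^r$-annihilation step.} Your initial formulation, that $p^r$ kills $H^{n+1}_\sub{Zar}(Y,\cal K_{n,(Y_{p^r},Y)})$, is too strong: unlike $\bb G_{m,(Y_{p^r},Y)}$, the sheaf $\cal K_{n,(Y_{p^r},Y)}$ need not itself be $p^r$-torsion. What one actually uses (and what your final paragraph gestures at) is the paper's argument: by the sheaf form of Corollary~\ref{corollary_mccarthy}, for $t\gg p^r$ the map $\cal K_{n,(Y_t,Y)}\to\cal K_{n,(Y_{p^r},Y)}$ has image in the symbolic part $\cal K^\sub{sym}_{n,(Y_{p^r},Y)}$, which \emph{is} killed by $p^r$. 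So it is the map from level $t$ to level $p^r$ that kills $p^r$-multiples, not the group itself; the bound $t\ge p^r$ must be enlarged accordingly.
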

\begin{proof}
The argument is similar to the proof of Theorem \ref{theorem_deforming_line_bundles}, using the results on $K$-theory from the previous section instead of Theorem \ref{theorem_pro_higher_Cartier}, and using Corollary \ref{corollary_Zar_to_etale} to overcome a new problem in passing between the Zariski and \'etale topologies. 

Thanks to the existence of the $\dlog$ map for formal schemes from Remark \ref{remark_dlog_formal_schemes}, the map $\dlog_{r,Y}^n:\cal K_{n,Y}^n\to W_r\Omega_{Y,\sub{log},\sub{Zar}}^n$ fits into a commutative diagram of pro Zariski sheaves on $Y$:

\[\xymatrix{
&0\ar[d]&0\ar[d]&0\ar[d]\\
0\ar[r] &\{\cal K_{n,(Y_s,Y)}\}_s\ar[r]^{p^r}\ar[d] & \{\cal K_{n,(Y_s,Y)}\}_s\ar[d] \ar[r] & \{W_r\Omega^n_{(Y_s,Y),\sub{log},\sub{Zar}}\}_s\ar[r]\ar[d] & 0\\
0\ar[r] &\{\cal K_{n,Y_s}\}_s\ar[r]^{p^r}\ar[d] & \{\cal K_{n,(Y_s,Y)}\}_s\ar[d] \ar[r]^{\dlog^n_{r,\cal Y}} & \{W_r\Omega^n_{\cal Y_s,\sub{log},\sub{Zar}}\}_s\ar[r]\ar[d] & 0\\
0\ar[r] &\cal K_{n,Y}\ar[r]^{p^r}\ar[d] & \cal K_{n,Y}\ar[d] \ar[r]^{\dlog^n_{r,Y}} & W_i\Omega^n_{Y,\sub{log},\sub{Zar}}\ar[r]\ar[d] & 0\\
&0&0&0
}\]

Each vertical sequence is exact; the bottom row is exact by the results recalled in Theorem \ref{theorem_GL_BKG}; the middle row, hence the top row, is exact by the formal scheme version of Theorem \ref{theorem_pro_GL} explained in Remark \ref{remark_dlog_formal_schemes}.

Taking Zariski cohomology and repeating the proof of Theorem \ref{theorem_deforming_line_bundles}(i) proves the following: given $r\ge 1$ there exist $t\ge p^r$ such that the boundary map $\delta:\CH^n(Y)=H^n_\sub{Zar}(Y,\cal K_{n,Y})\to H^{n+1}_\sub{Zar}(Y_{p^r},\cal K_{n,(Y_{p^r},Y)})$ kills all $z\in\CH^n(Y)$ with the property that $c_{n,\sub{Zar}}(z)$ lifts to $H^n_\sub{Zar}(Y_t,W_r\Omega_{Y_t,\sub{log},\sub{Zar}}^n)$. (In fact, to repeat the argument in the proof of Theorem \ref{theorem_deforming_line_bundles}(i), one further observation is required: possibly after increasing $t$, we can arrange that the map $H^{n+1}_\sub{Zar}(Y_t,\cal K_{n,(Y_t,Y)})\to H^{n+1}_\sub{Zar}(Y_{p^r},\cal K_{n,(Y_{p^r},Y)})$ vanishes on multiples of $p^r$; this is true since we can pick $t\gg p^r$ such that the map $\cal K_{n,(Y_t,Y)}\to \cal K_{n,(Y_{p^r},Y)}$ has image in $\cal K_{n,(Y_{p^r},Y)}^\sub{sym}$, by the formal scheme version of Corollary \ref{corollary_mccarthy}, and the latter sheaf is killed by $p^r$.)

This implies (by the same diagram chase as in the proof of Theorem \ref{theorem_deforming_line_bundles}(i)), that if $c_{n,\sub{Zar}}(z)$ lifts to $H^n_\sub{Zar}(Y_t,W_r\Omega_{Y_t,\sub{log},\sub{Zar}}^n)$ then $z$ lifts to $\CH^n(Y_{p^r})$. To pass to the \'etale topology we consider the diagram with exact rows
\[\xymatrix{
H^n_\sub{Zar}(Y_t,W_r\Omega_{Y_t,\sub{log},\sub{Zar}}^n)\ar[r]\ar[d] & H^n_\sub{Zar}(Y,W_r\Omega_{Y,\sub{log},\sub{Zar}}^n) \ar[r]\ar[d] & H^{n+1}_\sub{Zar}(Y_t,W_r\Omega_{(Y_t,Y),\sub{log},\sub{Zar}}^n)\ar[d]^\cong\\
H^n_\sub{\'et}(Y_t,W_r\Omega_{Y_t,\sub{log}}^n)\ar[r] & H^n_\sub{\'et}(Y,W_r\Omega_{Y,\sub{log}}^n) \ar[r] & H^{n+1}_\sub{\'et}(Y_t,W_r\Omega_{(Y_t,Y),\sub{log}}^n)
}\]
in which the indicated isomorphism is Corollary~\ref{corollary_Zar_to_etale}. It follows at once that an element in the top middle of the diagram (e.g., $c_{n,\sub{Zar}}(z)$) lifts to the top left if and only if its image in the bottom middle (i.e., $c_n(z)$) lifts to the bottom left. This completes the proof of part (i).

To prove (ii), we again proceed as in Theorem \ref{theorem_deforming_line_bundles} by assembling the first diagram into one of pro sheaves indexed over the diagonal $r=s$ and taking continuous cohomology, to obtain a bicartesian diagram of pro Zariski sheaves on $Y$
\[\xymatrix@C=1cm{
\{\cal K_{n,Y_s}\}\ar[d]\ar[r]^{\dlog_{r,\cal Y}^n} & \{W_s\Omega_{Y_s,\sub{log},\sub{Zar}}^n\}_s\ar[d]\\
\cal K_{n,Y}\ar[r]_{\dlog_{r,Y}^n}&\{W_s\Omega_{Y,\sub{log},\sub{Zar}}^n\}_s
}
\]
But Corollary \ref{corollary_Zar_to_etale} implies that the change of topology square 
\[\xymatrix@C=1cm{
\{W_s\Omega_{Y_s,\sub{log},\sub{Zar}}^n\}_s\ar[d]\ar[r] & \{R\ep_*W_s\Omega_{Y_s,\sub{log}}^n\}_s\ar[d]\\
\{W_s\Omega_{Y,\sub{log},\sub{Zar}}^n\}_s\ar[r]&\{R\ep_*W_s\Omega_{Y,\sub{log}}^n\}_s
}
\]
is a homotopy cartesian square of pro Zariski sheaves on $Y$, and so concatenation shows that
\begin{equation}\xymatrix@C=1cm{
\{\cal K_{n,Y_s}\}\ar[d]\ar[r] & \{R\ep_*W_s\Omega_{Y_s,\sub{log}}^n\}_s\ar[d]\\
\cal K_{n,Y}\ar[r]&\{R\ep_*W_s\Omega_{Y,\sub{log}}^n\}_s
}
\label{eqn_heart}\end{equation}
is also homotopy cartesian (this final square is the heart of the proof and future applications).

We now continue just as in the proof of Theorem \ref{theorem_deforming_line_bundles}(ii): taking continuous cohomology yields the following diagram of abelian groups with exact columns:

\[\xymatrix{
\vdots\ar[d]&\vdots\ar[d]\\
H^n_\sub{Zar}(Y,\{\cal K_{n,(Y_s,Y)}\}_s)\ar[d]\ar[r]^-\cong& H^n_\sub{\'et}(Y,\{W_r\Omega_{(Y_s,Y),\sub{log}}^n\}_s)\ar[d]\\
H^n_\sub{Zar}(Y,\{\cal K_{n,Y_s}\}_s)\ar[r]\ar[d] & H^n_\sub{\'et}(Y,\{W_s\Omega_{Y_s,\sub{log}}^n\}_s)\ar[d]\\
\CH^n(Y)\ar[r]^-{c_{n}}\ar[d] & H^n_\sub{\'et}(Y,\{W_s\Omega_{Y,\sub{log}}^n\}_s)\ar[d]\\
H^{n+1}_\sub{Zar}(Y,\cal K_{n,(Y_s,Y)})\ar[d]\ar[r]_-\cong& H^{n+1}_\sub{\'et}(Y,\{W_r\Omega_{(Y_s,Y),\sub{log}}^n\}_s)\ar[d]\\
\vdots&\vdots
}\]

As at the end of the proof of Theorem \ref{theorem_deforming_line_bundles}(ii), the middle vertical arrows on the left and right factor surjectively through $\projlim_s\CH^n(Y_s)$ and $\projlim_s H^n_\sub{\'et}(Y_s,\{W_r\Omega_{Y_s,\sub{log}}^n\}_r)$ (see also Remark \ref{remark_ses_in_continuous_cohomology}); then a diagram chase completes the proof.
\end{proof}

\begin{corollary}\label{corollary_variational_in_families}
Let $A$ be a Noetherian, F-finite $\bb F_p$-algebra which is complete with respect to an ideal $I\subseteq X$, and let $X$ be a proper scheme over $A$; assume that $X$ and the special fibre $Y:=X\times_AA/I$ are regular. For any $z\in\CH^n(Y)$, the following are equivalent:
\begin{enumerate}
\item $z$ lifts to $\projlim_s\CH^n(Y_s)$;
\item $c_n(z)\in H^{2n}_\sub{\'et}(Y,\bb Z_p(n))$ lifts to $\projlim_sH^{2n}_\sub{\'et}(Y_s,\bb Z_p(n))$;
\item $c_n(z)$ lifts to $H^{2n}_\sub{\'et}(X,\bb Z_p(n))$.
\end{enumerate}
\end{corollary}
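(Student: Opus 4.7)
The plan is to combine Theorem \ref{theorem_higher_codim}(ii), applied to the formal completion of $X$ along $Y$, with an algebrisation statement for the continuous \'etale cohomology $H^*_\sub{\'et}(-,\bb Z_p(n))$ which is promised in the introduction. The implication (iii)$\Rightarrow$(ii) is immediate from the functoriality of $c_n$ with respect to the closed immersions $Y_s\into X$: the image of a lift in $H^{2n}_\sub{\'et}(X,\bb Z_p(n))$ in the various $H^{2n}_\sub{\'et}(Y_s,\bb Z_p(n))$ assembles to an element of the inverse limit restricting to $c_n(z)$.

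For (i)$\Leftrightarrow$(ii), I would apply Theorem \ref{theorem_higher_codim}(ii) to the formal scheme $\cal X$ given by the $I$-adic completion of $X$ along $Y$. This $\cal X$ is a regular, F-finite formal $\bb F_p$-scheme: F-finiteness passes from $A$ to the finite-type $A$-scheme $X$ and is preserved by adic completion (Section \ref{subsection_F-finite_etc}), while regularity of each affine completion follows from regularity of $X$. Its reduced subscheme of definition is the regular scheme $Y$, and the $s^\sub{th}$ infinitesimal thickening inside $\cal X$ is precisely $Y_s=X\times_AA/I^s$. Theorem \ref{theorem_higher_codim}(ii) therefore yields the stated equivalence directly, without further work.

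The substantive content is (ii)$\Rightarrow$(iii), which I plan to deduce from an algebrisation lemma asserting that the canonical map
\[H^{2n}_\sub{\'et}(X,\bb Z_p(n))\To \projlim_s H^{2n}_\sub{\'et}(Y_s,\bb Z_p(n))\]
is surjective --- an analogue for the pro \'etale sheaf $\{W_r\Omega^n_\sub{log}\}_r$ of Grothendieck's theorem on formal functions. My strategy to prove this lemma is to combine the pro exact sequences $0\to\{W_r\Omega^n_\sub{log}\}_r\to\{W_r\Omega^n\}_r\xto{1-F}\{W_r\Omega^n\}_r\to 0$ on $X$ and on each $Y_s$ (Corollary \ref{corollary_log_vs_R-F}) with the classical formal functions theorem applied to the coherent $W_r(\roi_X)$-modules $W_r\Omega^n_X$ on the scheme $W_r(X)$, which is proper over the Noetherian ring $W_r(A)$; the latter is complete for $W_r(I)$ by Lemma \ref{lemma_witt_intertwined} and Noetherian by Theorem \ref{theorem_Langer_Zink2}. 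The hard part will be passing from these fixed-$r$ isomorphisms to the level of continuous cohomology: one must control $\projlim^1$-terms both in the $s$-direction (inverting the projection from cohomology of the formal scheme to the inverse limit over $s$) and in the $r$-direction (computing continuous cohomology of the pro-system over $r$). These can be arranged as in Remark \ref{remark_ses_in_continuous_cohomology} using Roos's spectral sequence on iterated inverse limits; the final output may be only surjectivity (not an isomorphism) of the displayed map, but surjectivity is precisely what is needed. Once the algebrisation is in hand, (ii)$\Rightarrow$(iii) follows by a straightforward diagram chase using the naturality of $c_n$.
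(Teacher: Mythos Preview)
Your proposal is correct and follows essentially the same route as the paper: the equivalence (i)$\Leftrightarrow$(ii) comes from Theorem~\ref{theorem_higher_codim}(ii) applied to the formal completion of $X$ along $Y$, (iii)$\Rightarrow$(ii) is functoriality, and (ii)$\Rightarrow$(iii) is deduced from an algebrisation lemma (Lemma~\ref{lemma_algebrisation_of_WOmega_log}) proved exactly as you outline---Corollary~\ref{corollary_log_vs_R-F} reduces to the coherent case, Grothendieck's formal functions theorem on $W_r(X)$ over $W_r(A)$ handles each fixed $r$, and the passage to continuous cohomology uses the diagonal indexing together with the short exact sequence of Remark~\ref{remark_ses_in_continuous_cohomology}. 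The paper in fact establishes an isomorphism with the diagonal continuous cohomology $H^{i-n}_\sub{\'et}(Y,\{W_r\Omega^n_{Y_r,\sub{log}}\}_r)$ and then extracts the required surjectivity from that, which is a slightly cleaner way of organising the $\projlim^1$ bookkeeping you describe.
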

\begin{proof}
The previous theorem immediately implies the equivalence of (i) and (ii), while the implication (iii)$\Rightarrow$(ii) is obvious. To prove (ii)$\Rightarrow$(iii), we will show that the map $H^{2n}_\sub{\'et}(X,\bb Z_p(n))\to \projlim_sH^{2n}_\sub{\'et}(Y_s,\bb Z_p(n))$ is surjective. This ``algebrisation'' result is the content of the next lemma, where we consider the problem in greater generality.
\end{proof}

\begin{lemma}\label{lemma_algebrisation_of_WOmega_log}
Let $A$ be a Noetherian, F-finite $\bb F_p$-algebra which is complete with respect to an ideal $I\subseteq A$ and let $X$ be a proper scheme over $A$. Then the canonical map \[H^i_\sub{\'et}(X,\bb Z_p(n))\To \projlim_sH^i_\sub{\'et}(X\times_AA/I^s,\bb Z_p(n))\] is surjective for all $i\ge0$.
\end{lemma}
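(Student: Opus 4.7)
The plan is to reduce the assertion to an algebrisation statement for the coherent Hodge--Witt sheaves $W_r\Omega^n$, via the Frobenius-fixed-point exact sequence. Write $Y_s:=X\times_A A/I^s$. By definition, $H^i_\sub{\'et}(Z,\bb Z_p(n))$ is Jannsen's continuous \'etale cohomology of $\{W_r\Omega^n_{Z,\sub{log}}\}_r$. For $Z=X$ and for each $Z=Y_s$, Corollary \ref{corollary_log_vs_R-F}(iii) provides a short exact sequence of pro \'etale sheaves
\[0\to\{W_r\Omega^n_{Z,\sub{log}}\}_r\to\{W_r\Omega^n_Z\}_r\xto{R-F}\{W_r\Omega^n_Z\}_r\to 0.\]
Taking continuous \'etale cohomology, then applying $\projlim_s$ to the sequences for the $Y_s$, produces a commutative ladder of long exact sequences. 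A five-lemma reduces the surjectivity claim to showing that the canonical map
\[H^j_\sub{cont,\'et}(X,\{W_r\Omega^n_X\}_r)\;\To\;\projlim_s H^j_\sub{cont,\'et}(Y_s,\{W_r\Omega^n_{Y_s}\}_r)\]
is an isomorphism for every $j,n\ge 0$.

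I would verify this at each fixed $r$ by applying Grothendieck's formal function theorem at the Witt level. By Lemma \ref{lemma_finite_generation_conditions}, $W_r\Omega^n_Z$ is coherent as a $W_r(\cal O_Z)$-module, and hence its \'etale and Zariski cohomologies coincide; by Theorem \ref{theorem_Langer_Zink2}(ii), the ringed space $(X,W_r(\cal O_X))$ is proper over $\Spec W_r(A)$; and by \cite[Lem.~2.3]{Morrow_Dundas}, the ring $W_r(A)$ is $W_r(I)$-adically complete. The formal function theorem therefore yields
\[H^j_\sub{Zar}(X,W_r\Omega^n_X)\;\cong\;\projlim_s H^j_\sub{Zar}\bigl(X,W_r\Omega^n_X\otimes_{W_r(A)}W_r(A)/W_r(I)^s\bigr),\]
and Lemma \ref{lemma_witt_intertwined} together with the sheafified Lemma \ref{lemma_Witt_dg_ideal_gen_by_I}(ii) identifies the right-hand pro system with $\{W_r\Omega^n_{Y_s}\}_s$. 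Hence $H^j_\sub{\'et}(X,W_r\Omega^n_X)\Isoto\projlim_s H^j_\sub{\'et}(Y_s,W_r\Omega^n_{Y_s})$ for every $r,j,n$.

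It remains to upgrade these finite-$r$ isomorphisms to the continuous-cohomology statement. Using Remark \ref{remark_ses_in_continuous_cohomology} applied to the doubly-indexed pro system $\{W_r\Omega^n_{Y_s}\}_{r,s}$, this amounts to verifying the vanishing of certain $\projlim^1$-terms when the $r$- and $s$-limits are interchanged. This Mittag--Leffler bookkeeping is the main technical obstacle; it is tractable because each $H^j_\sub{Zar}(X,W_r\Omega^n_X)$ is finitely generated over the Noetherian ring $W_r(A)$ (by properness of $(X,W_r(\cal O_X))$ over $W_r(A)$), and because the restriction maps $R$ are surjective at the sheaf level, so that the relevant inverse systems of finitely generated modules over Noetherian rings satisfy Mittag--Leffler and the obstructions vanish.
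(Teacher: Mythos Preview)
Your overall strategy---reduce via the $R-F$ sequence of Corollary~\ref{corollary_log_vs_R-F} to an algebrisation statement for the coherent sheaves $W_r\Omega^n$, then invoke the formal function theorem at each Witt level using Lemmas~\ref{lemma_witt_intertwined}, \ref{lemma_finite_generation_conditions}, and~\ref{lemma_Witt_dg_ideal_gen_by_I}---is exactly the paper's. The gap is in how you pass to limits.

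First, after applying $\projlim_s$ to the long exact continuous-cohomology sequences for the $Y_s$, the resulting sequence is not in general exact: there are $\projlim_s^1$ obstructions. So you do not obtain a ``commutative ladder of long exact sequences'' and the five-lemma is not available as stated. Second, the Mittag--Leffler bookkeeping you sketch is not justified: surjectivity of $R$ at the sheaf level does not imply surjectivity on cohomology, and the groups $H^j(X,W_r\Omega^n_X)$ are finitely generated over the \emph{varying} rings $W_r(A)$, so the slogan ``finitely generated over Noetherian $\Rightarrow$ Mittag--Leffler'' does not apply directly.

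The paper sidesteps both issues by staying derived throughout. It compares $\op{Rlim}_rR\Gamma_\sub{\'et}(X,W_r\Omega^n_{X,\sub{log}})$ with the \emph{diagonal} object $\op{Rlim}_rR\Gamma_\sub{\'et}(Y_r,W_r\Omega^n_{Y_r,\sub{log}})$, rather than with $\projlim_s$ of the individual $Y_s$-cohomologies. Under $\op{Rlim}$ the fibre sequences coming from Corollary~\ref{corollary_log_vs_R-F} remain genuine fibre sequences, reducing the claim to showing that
\[\op{Rlim}_rR\Gamma_\sub{Zar}(X,W_r\Omega^n_X)\;\simeq\;\op{Rlim}_r\op{Rlim}_sR\Gamma_\sub{Zar}(Y_s,W_r\Omega^n_{Y_s}),\]
which is exactly the formal function theorem at each fixed $r$ (your middle step). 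Finally, Remark~\ref{remark_ses_in_continuous_cohomology} shows that the continuous cohomology of the diagonal system $\{W_r\Omega^n_{Y_r,\sub{log}}\}_r$ surjects onto $\projlim_sH^i_\sub{\'et}(Y_s,\bb Z_p(n))$, yielding the desired surjectivity without any five-lemma or Mittag--Leffler analysis.
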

\begin{proof}
Let $Y=X\times_AA/I$ be the special fibre of $X$, so that $Y_s=X\times_AA/I^s$. We claim that the canonical map of continuous cohomologies \[H^i_\sub{\'et}(X,\bb Z_p(n))=H^{i-n}_\sub{\'et}(X,\{W_r\Omega_{X,\sub{log}}^n\}_r)\To H^{i-n}_\sub{\'et}(Y,\{W_r\Omega_{Y_r,\sub{log}}^n\}_r)\] is an isomorphism for all $i\ge 0$; this is sufficient to complete the proof since the right group fits into a short exact sequence by Remark \ref{remark_ses_in_continuous_cohomology}:
\begin{align*}0\to {\projlim_s}^1\projlim_r H^{i-n-1}_\sub{\'et}(Y,W_r\Omega_{Y_s,\sub{log}}^n)\to H^{i-n}_\sub{\'et}(Y,\{W_r\Omega_{Y_r,\sub{log}}^n\}_r)\to &\projlim_sH^{i-n}_\sub{\'et}(Y,\{W_r\Omega_{Y_s,\sub{log}}^n\}_r)\to 0\\
&=\projlim_sH^{i}_\sub{\'et}(Y_s,\bb Z_p(n))
\end{align*}

To prove the claim in the shortest space, we note that it can be rewritten (using the definition of continuous cohomology) as a quasi-isomorphism \[\op{Rlim}_rR\Gamma_\sub{\'et}(X,W_r\Omega_{X,\sub{log}}^n)\quis \op{Rlim}_rR\Gamma_\sub{\'et}(Y_r,W_r\Omega_{Y_r,\sub{log}}^n)\] Since there is a fibre sequence \[\op{Rlim}_rR\Gamma_\sub{\'et}(X,W_r\Omega_{X,\sub{log}}^n)\To \op{Rlim}_rR\Gamma_\sub{Zar}(X,W_r\Omega_{X}^n)\xTo{1-F}\op{Rlim}_rR\Gamma_\sub{Zar}(X,W_r\Omega_{X}^n)\] and compatibly for $W_r\Omega_{Y_r,\sub{log}}^n$, by Corollary \ref{corollary_log_vs_R-F}, it is therefore sufficient to prove that \begin{equation}\op{Rlim}_rR\Gamma_\sub{Zar}(X,W_r\Omega_{X}^n)\To \op{Rlim}_rR\Gamma_\sub{Zar}(Y_r,W_r\Omega_{Y_r}^n)\label{equ_alg}\end{equation} is a quasi-isomorphism.

For each fixed $r\ge 1$, the scheme $W_r(X)$ (i.e., the topological space $X$ with structure sheaf $W_r(\roi_X)$) is proper over $W_r(A)$ \cite[App.]{LangerZink2004}, and $W_r\Omega_X^n$ is a coherent sheaf on it (finite generation was treated in Lemma \ref{lemma_finite_generation_conditions}, while behaviour under localisation is well-known); moreover, $W_r(A)$ is a Noetherian ring which is $W_r(I)$-adicially complete by \cite[Lem.~2.3]{Morrow_Dundas}. So Grothendieck's formal function implies that $R\Gamma_\sub{Zar}(X,W_r\Omega_X^n)\quis\op{Rlim}_sR\Gamma_\sub{Zar}(X,W_r\Omega_X^n\otimes_{W_r(\roi_X)}W_r(\roi_X)/W_r(I)^s)$. But the target of this map may be rewritten as $\op{Rlim}_sR\Gamma_\sub{Zar}(Y_s,W_r\Omega_{Y_s}^n)$ by Lemma \ref{lemma_Witt_dg_ideal_gen_by_I}(ii).

In conclusion, the left side of (\ref{equ_alg}) is quasi-isomorphic to $\op{Rlim}_r\op{Rlim}_sR\Gamma_\sub{Zar}(Y_s,W_r\Omega_{Y_s}^n)$, which is (quasi-isomorphic to) the right side of (\ref{equ_alg}). This completes the proof.
\end{proof}

\begin{remark}
If $Y$ is assumed to have codimension $1$ in $\cal Y$ (resp.~in $X$), then Theorem \ref{theorem_higher_codim} and Corollary \ref{corollary_variational_in_families} remain true if we replace Quillen by Milnor $K$-theory, by the bicartesian square of Theorem \ref{theorem_indecomposable}. In general the theorem and corollary remain true for Milnor $K$-theory up to an obstruction killed by a power of $p$, again by Theorem \ref{theorem_indecomposable}.
\end{remark}

\subsection{Infinitesimal weak Lefschetz for Chow groups}
We finish the paper by establishing the infinitesimal part of the weak Lefschetz theorem for Chow groups in characteristic $p$. Recall that if $Y$ is a smooth ample divisor on a smooth variety $X$ over a field, the weak Lefschetz conjecture for Chow groups predicts that the canonical map $\CH^n(X)_\bb Q\to\CH^n(Y)_\bb Q$ is an isomorphism if $2n<\dim X-1$. We will prove the following infinitesimal form of this conjecture, which over an algebraically closed field of characteristic zero is due to Patel--Ravindra \cite{PatelRavindra2014}:

\begin{theorem}\label{theorem_lefschetz}
Let $X$ be a smooth, projective, $d$-dimensional variety over a perfect field $k$ of characteristic $p$, and $Y\into X$ a smooth ample divisor. Then the canonical map \[\projlim_sH^i_\sub{Zar}(Y_s,\cal K_{n,Y_s})\To H^i_\sub{Zar}(Y,\cal K_{n,Y})\] has kernel and cokernel killed by a power of $p$ if $i+n<d-1$. In particular, if $2n<d-1$ then \[(\projlim_s\CH^n(Y_s))\otimes_{\bb Z}\bb Z[\tfrac1p]\Isoto\CH^n(Y)\otimes_{\bb Z}\bb Z[\tfrac1p]\]
\end{theorem}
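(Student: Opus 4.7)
The plan is to translate the question about $\cal K_n$-cohomology into a question about (logarithmic) Hodge--Witt cohomology via the main results of Section~\ref{section_K_theory}, then to reduce it to crystalline/de Rham--Witt cohomology using the Frobenius eigenspace description of $W_r\Omega_{\sub{log}}^n$, and finally to invoke the classical weak Lefschetz theorem in crystalline cohomology.

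\textbf{Step 1 (long exact sequence; passage to the relative log Hodge--Witt).} The long exact sequence for the pair $(Y_s,Y)$ in $\cal K_n$-cohomology, combined with the short exact sequence for $\projlim_s$ and its derived functor, reduces the theorem to showing that, in the range $i+n<d-1$, the pro abelian groups $\{H^j_\sub{Zar}(Y,\cal K_{n,(Y_s,Y)})\}_s$ are killed by a uniform power of $p$ for $j\in\{i,i+1\}$. By the sheafified form of Corollary~\ref{corollary_mccarthy} (as explained in Remark~\ref{remark_dlog_formal_schemes}, applied to the formal scheme obtained by completing $X$ along $Y$), we have $\{\cal K_{n,(Y_s,Y)}\}_s\isoto\{W_s\Omega^n_{(Y_s,Y),\sub{log},\sub{Zar}}\}_s$, and Corollary~\ref{corollary_relative_nilpotent_hw} allows us to pass freely to the \'etale topology. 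So it suffices to show that $\{H^j_\sub{\'et}(Y,W_s\Omega^n_{(Y_s,Y),\sub{log}})\}_s$ is killed by a power of $p$ for $j\le i+1\le d-n-1$.

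\textbf{Step 2 (from log Hodge--Witt to Hodge--Witt via Frobenius).} The relative analogue of Corollary~\ref{corollary_p_power_eigenspace} (obtained by applying its proof to the kernel of $W_r\Omega_{Y_s}^\blob\to W_r\Omega_Y^\blob$) produces, up to torsion killed by $p^{n+1}$, a fibre sequence of \'etale complexes
\[
W_s\Omega^n_{(Y_s,Y),\sub{log}}[-n]\To W_s\Omega^\blob_{(Y_s,Y)}\xTo{p^n-\phi}W_s\Omega^\blob_{(Y_s,Y)}.
\]
Taking hypercohomology and $\projlim_s$, the problem is reduced (up to uniformly bounded $p$-torsion) to showing that $\{H^{j+n}_\sub{\'et}(Y,W_s\Omega^\blob_{(Y_s,Y)})\}_s$ is killed by a uniform power of $p$ for $j+n\le d-1$.

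\textbf{Step 3 (canonical filtration to reduce to de Rham).} By Corollary~\ref{corollary_graded_pieces_are_de_rham}, the canonical filtration on $W_r\Omega^\blob_{(Y_s,Y)}$ has pro graded pieces which are (shifts of) the relative de Rham complex $\Omega^\blob_{(Y_s,Y)}$. The resulting spectral sequence (modulo a bounded filtration argument, made uniform in $s$ by applying a finite number of extensions) reduces the required bound to showing that $\{H^j_\sub{Zar}(Y,\Omega^\blob_{(Y_s,Y)})\}_s$ is killed by a uniform power of $p$ in the same range $j\le d-1$.

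\textbf{Step 4 (formal functions and weak Lefschetz).} Grothendieck's formal function theorem applied to the coherent sheaves $\Omega^q_X$ on the projective scheme $X$ gives $\projlim_sH^p(Y,\Omega^q_{Y_s})\cong H^p(X,\Omega^q_X)^\wedge_I$; since the latter is a finite-dimensional $k$-vector space the completion is superfluous. A spectral sequence argument then identifies
\[
\projlim_sH^j_\sub{Zar}(Y,\Omega^\blob_{Y_s})\cong H^j_\sub{dR}(X/k),\qquad \projlim_sH^j_\sub{Zar}(Y,\Omega^\blob_{(Y_s,Y)})\cong H^j_\sub{dR}(X,Y),
\]
where the right-hand side is relative de Rham cohomology. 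Weak Lefschetz in crystalline cohomology (which gives, after $\otimes\bb Q$, an isomorphism $H^j_\sub{cris}(X/W)\to H^j_\sub{cris}(Y/W)$ for $j\le d-2$ and an injection for $j=d-1$), combined with the Berthelot--Ogus comparison between crystalline and de Rham cohomology, implies that $H^j_\sub{dR}(X,Y)$ is killed by a power of $p$ for $j\le d-1$. Running this back through Steps 1--3 completes the proof of the first assertion. The second assertion follows by taking $i=n$ and observing that the hypothesis $2n<d-1$ is precisely what is needed.

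\textbf{Main obstacle.} The principal technical subtlety is ensuring that the various torsion bounds remain uniform in the pro index $s$. The most delicate point is Step 3, where one must track the bound on $p$-power torsion through a spectral sequence whose $E_2$-page involves the pro system $\{H^*(Y,\Omega^\blob_{(Y_s,Y)})\}_s$; the fact that the Hodge-to-de~Rham spectral sequence has only finitely many non-zero columns, together with the finite-dimensionality coming from Step 4, is what makes the uniformity possible.
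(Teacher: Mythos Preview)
Your overall architecture is reasonable and close in spirit to the paper's, but there is a genuine gap in Steps~3--4 which breaks the argument.

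\textbf{The problem.} In Step~4 you conclude that $H^j_{\mathrm{dR}}(X,Y)$ is ``killed by a power of $p$'' for $j\le d-1$. But this group is a $k$-vector space, so the statement is vacuous: it is killed by $p$ regardless of any Lefschetz theorem. What you actually need, to run the argument back through Step~3, is either (a) that the pro system $\{H^j(Y,\Omega^\blob_{(Y_s,Y)})\}_s$ \emph{vanishes}, or (b) a uniform $p$-power bound on $\{H^j(Y,W_s\Omega^\blob_{(Y_s,Y)})\}_s$ that does not pass through de~Rham. Neither follows from what you have written. Option~(a) is false in general: de~Rham weak Lefschetz fails in characteristic $p$ (Kodaira-type vanishing can fail), so the relative Hodge groups $H^j(X,Y;\Omega^q)$ need not vanish, and crystalline weak Lefschetz plus Berthelot--Ogus only controls them up to $p$-torsion, which is no control at all over $k$. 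Option~(b) is exactly where your ``main obstacle'' bites: the canonical filtration on $W_s\Omega^\blob$ has $s$ steps (you are on the diagonal $r=s$), so even knowing each graded piece is killed by $p$ only yields a bound of $p^s$, which is not uniform in $s$. Your remark that the Hodge--de~Rham spectral sequence has finitely many columns does not help here; it is the length-$s$ de~Rham--Witt filtration, not the Hodge filtration, that causes the blowup.

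\textbf{How the paper fixes this.} The paper does \emph{not} reduce to de~Rham cohomology and does \emph{not} try to show the relative de~Rham--Witt groups are $p$-power torsion. Instead it first proves an honest \emph{vanishing}: for each fixed $r$, the canonical map $\bb H^i_{\mathrm{Zar}}(X,W_r\Omega_X^\blob)\to\{\bb H^i_{\mathrm{Zar}}(Y_s,W_r\Omega_{Y_s}^\blob)\}_s$ is an isomorphism for $i<d-1$ and injective for $i=d-1$. This is obtained by filtering $W_r\Omega^\blob$ (now $r$ is fixed, so finitely many steps) and reducing to the vanishing $\{H^i(X,\Omega^q_{(X,Y_s)})\}_s=0$ for $i\le d-1$, which comes from coherent duality and Serre vanishing for the ample line bundle $\cal I^{-1}$. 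Only after this identification of the formal de~Rham--Witt cohomology with that of $X$ does the paper invoke crystalline weak Lefschetz, and it does so at the level of continuous $W_s\Omega^\blob$-hypercohomology (i.e., crystalline cohomology), where ``kernel and cokernel killed by a power of $p$'' is a meaningful statement about finitely generated $W(k)$-modules. From there the passage to $W_s\Omega^n_{\sub{log}}$ and then to $\cal K_n$ proceeds as in your Steps~1--2.

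In short: replace your Steps~3--4 by the Serre-vanishing argument showing $\{H^i(X,\Omega^q_X\otimes\cal I^s)\}_s=0$ for $i\le d-1$, and apply weak Lefschetz at the crystalline level rather than the de~Rham level.
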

\begin{proof}
We begin by claiming that, for each $r\ge 1$, the canonical map of pro hypercohomology groups \[\bb H^i_\sub{Zar}(X,W_r\Omega_X^\blob)\To\{\bb H^i_\sub{Zar}(Y_s,W_r\Omega_{Y_s}^\blob)\}_s\] is an isomorphism if $i<d-1$ and is injective if $i=d-1$. By filtering $W_r\Omega_X^\blob$ and $W_r\Omega_{Y_s}^\blob$ with the canonical filtration from Section \ref{subsection_filtrations}, and arguing inductively using the $5$-lemma, it is sufficient to prove, for each $j\ge0$, that \[\bb H^i_\sub{Zar}(X,\op{Fil}^jW_r\Omega_X^\blob/\op{Fil}^{j+1}W_r\Omega_X^\blob)\To\{\bb H^i_\sub{Zar}(Y_s,\op{Fil}^jW_r\Omega_{Y_s}^\blob/\op{Fil}^{j+1}W_r\Omega_{Y_s}^\blob)\}_s\] is an isomorphism if $i<d-1$ and is injective if $i=d-1$. By Corollary \ref{corollary_graded_pieces_are_de_rham} this previous map can be rewritten simply as \[\bb H^i_\sub{Zar}(X,\Omega_X^\blob)\To\{\bb H^i_\sub{Zar}(Y_s,\Omega_{Y_s}^\blob)\}_s.\] By na\"ively filtering the de Rham complex and making the same  $5$-lemma argument as before, this reduces our claim to proving that $\{H^i_\sub{Zar}(X,\Omega_{(X,Y_s)}^q)\}_s=0$ for $i\le d-1$ and all $q\ge0$.

Let $I\subseteq\roi_X$ be the ideal sheaf defining $Y$. The usual Leibnitz rule argument shows that $\{\Omega_{(X,Y_s)}^q\}_s\cong\{\Omega_X^q\otimes_{\roi_X}\cal I^s\}$, while coherent duality states that $H^i_\sub{Zar}(X,\Omega_X^q\otimes_{\roi_X}\cal I^s)$ is isomorphic to the dual of $H^{d-i}_\sub{Zar}(X,\ul{Hom}(\Omega_X^q,\roi_X)\otimes_{\roi_X}\Omega_X^d\otimes_{\roi_X}\cal I^{-s})$, which vanishes for $s\gg0$ since $\cal I^{-1}$ is ample by assumption. This completes the proof of the claim.

Passing to continuous cohomology over the diagonal $r=s$, the claim shows that the canonical map \[H^i_\sub{crys}(X/W(k))=\bb H^i_\sub{Zar}(X,\{W_s\Omega_X^\blob\}_s)\To \bb H^i_\sub{Zar}(Y,\{W_s\Omega_{Y_s}^\blob\}_s)\] is an isomorphism for $i<d-1$ and an injection for $i=d-1$. But weak Lefschetz for crystalline cohomology (as well as its finite generation) implies that the kernel and cokernel of $H^i_\sub{crys}(X/W(k))\to H^i_\sub{crys}(Y/W(k))=\bb H^i_\sub{Zar}(Y,\{W_s\Omega_Y^\blob\}_s)$ are killed by a power of $p$ for $i<d-1$ (and the kernel is killed by a power of $p$ if $i=d-1$), and so we finally deduce that the kernel and cokernel of \begin{equation}\bb H^i_\sub{Zar}(Y,\{W_s\Omega_{Y_s}^\blob\}_s)\To\bb H^i_\sub{Zar}(Y,\{W_s\Omega_Y^\blob\}_s)\label{eqn_lef}\end{equation} are killed by a power of $p$ for $i<d-1$ (and we can not conclude anything if $i=d-1$).

Corollary \ref{corollary_p_power_eigenspace} provides us with compatible long sequences, in which the failure of exactness is killed by a power of $p$,
\[\cdots \To H^{i-n}_\sub{\'et}(Y,\{W_s\Omega_{Y_s,\sub{log}}^n\}_s)\To \bb H^i_\sub{Zar}(Y,\{W_s\Omega_{Y_s}^\blob\}_s)\xTo{p^n-\phi} \bb H^i_\sub{Zar}(Y,\{W_s\Omega_{Y_s}^\blob\}_s)\To\cdots,\]
\[\cdots \To H^{i-n}_\sub{\'et}(Y,\{W_s\Omega_{Y,\sub{log}}^n\}_s)\To \bb H^i_\sub{Zar}(Y,\{W_s\Omega_{Y}^\blob\}_s)\xTo{p^n-\phi} \bb H^i_\sub{Zar}(Y,\{W_s\Omega_{Y}^\blob\}_s)\To\cdots \]
and comparing these via the $5$-lemma with (\ref{eqn_lef}) in mind shows that the map \begin{equation}H^i_\sub{\'et}(Y,\{W_s\Omega_{Y_s,\sub{log}}^n\}_s)\To H^i_\sub{\'et}(Y,\{W_s\Omega_{Y,\sub{log}}^n\}_s)\label{eqn_for_flat}\end{equation} has kernel and cokernel killed by a power of $p$ if $i+n<d-1$.

Finally, take continuous cohomology of the homotopy cartesian square (\ref{eqn_heart}) in the proof of Theorem \ref{theorem_higher_codim} (applied to the formal completion of $X$ along $Y$ of course) to obtain a homotopy cartesian square
\[\xymatrix{
\op{Rlim}_sR\Gamma_\sub{Zar}(Y_s,\cal K_{n,Y_s})\ar[d]\ar[r]& \op{Rlim}_sR\Gamma_\sub{\'et}(Y_s,W_s\Omega_{Y_s,\sub{log}}^n)\ar[d]\\
R\Gamma_\sub{Zar}(Y,\cal K_{n,Y})\ar[r]&R\Gamma_\sub{\'et}(Y,W_s\Omega_{Y,\sub{log}}^n)
}\]
Since the right vertical arrow has just been shown to induce an isomorphism (up to $p$-power torsion) on cohomology in degrees $<d-n-1$, the same is true of the left vertical arrow; i.e., the kernel and cokernel of $H^i_\sub{Zar}(Y,\{\cal K_{n,Y_s}\})\to H^i_\sub{Zar}(Y,\cal K_{n,Y})$ are killed by a power of $p$ if $i+n<d-1$. Since this maps always kills its subgroup $\projlim^1H^{i-1}_\sub{Zar}(Y,\cal K_{n,Y})$, and thus factors through the quotient $\projlim_sH^i_\sub{Zar}(Y_s,\cal K_{n,Y_s})$ (simply because the target is the continuous cohomology of a constant pro system), it follows that $\projlim^1H^{i-1}_\sub{Zar}(Y,\cal K_{n,Y})$ is killed by a power of $p$ and that the proof is complete.
\end{proof}

\begin{remark}
The previous theorem remains true if we replace Quillen by Milnor $K$-theory, by taking continuous cohomology of the sheaf version of the bicartesian square of Theorem~\ref{theorem_indecomposable}.
\end{remark}

\small
\bibliographystyle{acm}
\bibliography{../Bibliography}

\noindent Matthew Morrow\\
Mathematisches Institut\\
Universit\"at Bonn\\
Endenicher Allee 60\\
53115 Bonn, Germany\\
{\tt morrow@math.uni-bonn.de}

\end{document}